\newtheorem{theorem}{Theorem}[section]
\newtheorem{lemma}[theorem]{Lemma}
\newtheorem{claim}[theorem]{Claim}
\newtheorem{corollary}[theorem]{Corollary}
\newtheorem{proposition}[theorem]{Proposition}
\newtheorem{cor}[theorem]{Corollary}
\theoremstyle{definition}
\newtheorem{definition}[theorem]{Definition}
\newtheorem{remark}[theorem]{Remark}
\newcommand{\ra}{\rightarrow}
\newcommand{\xra}{\xrightarrow}
\newcommand{\wt}[1]{\widetilde{#1}}
\newcommand{\mc}[1]{\mathcal{#1}}
\newcommand{\Es}[1]{\EuScript{#1}}
\newcommand{\cc}{\curvearrowright}
\numberwithin{equation}{section}
\DeclareSymbolFont{AMSb}{U}{msb}{m}{n}
\DeclareMathSymbol{\N}{\mathbin}{AMSb}{"4E}
\DeclareMathSymbol{\Z}{\mathbin}{AMSb}{"5A}
\DeclareMathSymbol{\R}{\mathbin}{AMSb}{"52}
\DeclareMathSymbol{\Q}{\mathbin}{AMSb}{"51}
\DeclareMathSymbol{\I}{\mathbin}{AMSb}{"49}
\DeclareMathSymbol{\B}{\mathbin}{AMSb}{"42}
\DeclareMathSymbol{\C}{\mathbin}{AMSb}{"43}
\DeclareMathSymbol{\T}{\mathbin}{AMSb}{"54}
\DeclareMathSymbol{\F}{\mathbin}{AMSb}{"46}
\DeclareMathSymbol{\PP}{\mathbin}{AMSb}{"50}
\renewcommand{\emptyset}{\varnothing}
\DeclarePairedDelimiter\abs{\lvert}{\rvert}%
\DeclarePairedDelimiter\norm{\lVert}{\rVert}%
\let\oldabs\abs
\def\abs{\@ifstar{\oldabs}{\oldabs*}}
\let\oldnorm\norm
\def\norm{\@ifstar{\oldnorm}{\oldnorm*}}
\theoremstyle{plain}
\theoremstyle{definition}
\theoremstyle{remark}
\begin{document}
\pagestyle{headings}
\title{Superrigidity, measure equivalence, and Weak Pinsker entropy}
\author{Lewis Bowen and Robin D. Tucker-Drob}
\subjclass[2010]{Primary 37A20}

\begin{abstract}
We show that the class $\mathscr{B}$, of discrete groups which satisfy the conclusion of Popa's Cocycle Superrigidity Theorem for Bernoulli actions, is invariant under measure equivalence.

We generalize this to the setting of discrete p.m.p.\ groupoids, and as a consequence we deduce that any nonamenable lattice in a product of two noncompact, locally compact second countable groups, must belong to $\mathscr{B}$.

We also introduce a measure-conjugacy invariant called Weak Pinsker entropy and show that, if $G$ is a group in the class $\mathscr{B}$, then Weak Pinsker entropy is an orbit-equivalence invariant of every essentially free p.m.p.\ action of $G$.
\end{abstract}

\maketitle

\section*{Introduction}\label{sec:intro} Throughout this article all measure spaces are standard $\sigma$-finite measure spaces, i.e., standard Borel spaces equipped with a $\sigma$-finite Borel measure. We will suppress the given measure from our notation, e.g., stating that $X$, $Y$, and $Z$ are measure spaces. In this case we write $\mu _X$, $\mu _Y$, and $\mu _Z$ for the given measures on $X$, $Y$, and $Z$ respectively. We write $X\otimes Y$ for the independent product of the measure spaces $X$ and $Y$. We use the adjectives "Borel" and "measurable" interchangeably, with both meaning "Borel measurable."

\subsection{Superrigidity and measure equivalence} Two countable discrete groups $G$ and $H$ are said to be {\bf measure equivalent} if there exist commuting, essentially free measure preserving actions of $G$ and $H$ on a standard $\sigma$-finite nonzero measure space, such that each of the actions admits a measurable fundamental domain of finite measure. The example driving this definition, introduced by Gromov in \cite{Grom93}, is that of two groups $G$ and $H$ which are lattices in the same locally compact second countable group; in this situation, the measure equivalence is witnessed by the left and right translation actions, of $G$ and $H$ respectively, on the ambient locally compact group equipped with Haar measure.

Measure equivalence may also be characterized ergodic theoretically as follows \cite{Fu09}: $G$ and $H$ are measure equivalent if and only if there exist probability measure preserving (p.m.p.) actions $G\cc X$ and $H\cc Y$ whose translation groupoids are reduction equivalent (see Definition \ref{def:MEgroupoid}).

For a countable group $G$ and a probability space $K$, we let $G\cc K^G$ denote the standard Bernoulli action (a.k.a.\ Bernoulli shift) of $G$ with base $K$, i.e., the p.m.p.\ action of $G$ on $K^G$ given by $(gx)(h):= x(g^{-1}h)$ for $x\in K^G$, $g,h\in G$, where $K^G$ is equipped with product measure $\mu _{K^G}:=\mu _{K}^G$. We let $[0,1]$ denote the unit interval equipped with Lebesgue measure $\mu _{[0,1]}$.

\begin{definition}\label{def:BCS}
Let $\mathscr{C}$ be a class of Polish groups and let $G$ be a countable group.
\begin{itemize}
\item An extension $p: X\rightarrow Y$ of p.m.p.\ actions of $G$ is called {\bf relatively $\mathscr{C}$-superrigid} if every measurable cocycle $w:G\times X\rightarrow L$ taking values in a group $L\in \mathscr{C}$ is cohomologous to a cocycle which descends to $Y$.
\item The group $G$ is said to be {\bf Bernoulli $\mathscr{C}$-superrigid} if for every p.m.p.\ action $G\cc Y$ of $G$, the associated Bernoulli extension $Y\otimes [0,1]^G\rightarrow Y$ is relatively $\mathscr{C}$-superrigid.
\end{itemize}
\end{definition}

We are now ready to state the main theorem of the first part of this article.

\begin{theorem}\label{thm:main}
Let $\mathscr{C}$ be a class of Polish groups contained in the class $\mathscr{G}_{\mathrm{inv}}$ of Polish groups admitting a bi-invariant metric, and let $G$ and $H$ be countable groups which are measure equivalent. Then $G$ is Bernoulli $\mathscr{C}$-superrigid if and only if $H$ is Bernoulli $\mathscr{C}$-superrigid.
\end{theorem}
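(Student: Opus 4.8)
The plan is to show that Bernoulli $\mathscr{C}$-superrigidity transfers across a measure equivalence coupling; by symmetry it suffices to prove that if $H$ is Bernoulli $\mathscr{C}$-superrigid, then so is $G$. Fix a coupling $\Omega$ carrying commuting measure-preserving actions of $G$ and $H$, with finite-measure fundamental domains $Y \cong \Omega/G$ for the $G$-action and $X \cong \Omega/H$ for the $H$-action, normalized so that $Y$ and $X$ are probability spaces. The organizing observation is that the $\sigma$-finite Bernoulli shift $G \times H \cc [0,1]^\Omega$ (product measure) is \emph{simultaneously} a genuine probability-measure-preserving Bernoulli shift for each factor: using $\Omega \cong G \times Y$ as a $G$-space we get $[0,1]^\Omega \cong ([0,1]^Y)^G$ with $G$ shifting coordinates, and symmetrically $[0,1]^\Omega \cong ([0,1]^X)^H$ for $H$. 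Since $[0,1]^Y \cong [0,1]^X \cong [0,1]$ as standard probability spaces, each of these is isomorphic to the standard Bernoulli shift of the corresponding group. This single space is the bridge along which I will move Bernoulli extensions between $G$ and $H$.

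Now let $G \cc B$ be an arbitrary p.m.p.\ action and let $w : G \times (B \otimes [0,1]^G) \to L$ be a cocycle with $L \in \mathscr{C}$. Replacing the Bernoulli coordinate $[0,1]^G$ by the isomorphic $G$-shift $[0,1]^\Omega$, I regard $w$ as a cocycle on $B \otimes [0,1]^\Omega$, and then induce it through the coupling: form $\Omega \times (B \otimes [0,1]^\Omega)$ with the diagonal $G$-action (translation on $\Omega$, the given action on $B$, the shift on $[0,1]^\Omega$) and the commuting residual $H$-action (translation on $\Omega$, the other shift on $[0,1]^\Omega$), pass to the quotient $Q := (\Omega \times B \otimes [0,1]^\Omega)/G$, and let $\tilde w : H \times Q \to L$ be the cocycle induced from $w$ together with the $G$-valued ME-cocycle of $\Omega$. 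The crucial structural claim — and the first main obstacle — is that the residual $H$-action on $Q$ is isomorphic to a Bernoulli extension $Z \otimes [0,1]^H \to Z$ of the $H$-action $Z := (\Omega \times B)/G$. This is exactly where the symmetric description of $[0,1]^\Omega$ from the previous paragraph enters: after disentangling the Bernoulli coordinate under the combined $G$-quotient, the coordinatization $\Omega \cong H \times X$ exhibits the Bernoulli factor of $Q$ as $([0,1]^X)^H \cong [0,1]^H$ with $H$ shifting, tensored over the base $Z$. Verifying that this disentangling is measure-preserving and $H$-equivariant, and that $\tilde w$ restricts to a cocycle on this extension, is the technical heart of the step.

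With the structural claim in hand, I apply the hypothesis that $H$ is Bernoulli $\mathscr{C}$-superrigid to the extension $Z \otimes [0,1]^H \to Z$: the induced cocycle $\tilde w$ is cohomologous to a cocycle $\tilde w_0$ that descends to $Z$, i.e.\ is independent of the $[0,1]^H$-coordinate. It then remains to transfer this conclusion back across the (symmetric) coupling. I induce the transfer function and the descended cocycle $\tilde w_0$ back through $\Omega$ and check that "independence of the $H$-Bernoulli coordinate over $Z = (\Omega \times B)/G$" translates into "independence of the $G$-Bernoulli coordinate over $B$"; this yields that $w$ is cohomologous to a cocycle descending to $B$, which is precisely relative $\mathscr{C}$-superrigidity of $B \otimes [0,1]^G \to B$. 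Since $B$ was arbitrary, $G$ is Bernoulli $\mathscr{C}$-superrigid.

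The hypothesis $\mathscr{C} \subseteq \mathscr{G}_{\mathrm{inv}}$ is what makes the induction and the back-transfer legitimate, and I expect this to be the subtlest point. Inducing a cocycle through a coupling forces one to choose representatives in the finite-measure fundamental domain and to recombine the resulting $L$-valued data over the fibers; for a target group carrying a complete bi-invariant metric, this recombination (a barycenter/averaging over the finite-measure fundamental domain, together with solving the attendant coboundary equation) is well defined and produces a genuine measurable cocycle in the correct cohomology class, whereas for a general Polish group no such averaging is available. In the groupoid reformulation advertised in the abstract, this same point reappears as the assertion that $\mathscr{C}$-cocycle superrigidity is stable under reduction equivalence of discrete p.m.p.\ groupoids, and I would expect the cleanest write-up to isolate the bi-invariant-metric averaging as a standalone lemma and then run the coupling argument above as a corollary.
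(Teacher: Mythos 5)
Your bridge object and induction step are, in substance, the paper's Proposition \ref{prop:BernIsom} (in the orbit-equivalence case, Claim \ref{claim:OE}), so that part of the plan is sound. The genuine gap is the back-transfer. When you pull the conclusion of $H$'s superrigidity back through $\Omega$, you do \emph{not} obtain that $w$ is cohomologous to a cocycle descending to $B$: the descended cocycle $\tilde w_0$ lives on $Z=(\Omega\times B)/G$, and on the $G$-side $Z$ corresponds to $Y\otimes B$ (where $Y=\Omega/G$), not to $B$. What you actually get is that the lift of $w$ to $Y\otimes B\otimes[0,1]^G$ is cohomologous to a cocycle depending only on the coordinates in $Y\otimes B$ --- the coupling coordinate survives, exactly as in the paper's equation \eqref{eqn:gw0}. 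It cannot be wished away: passing through the coupling and back never returns $B$ itself, but $B$ tensored with an extra factor. The paper removes this extra dependence with a separate, unavoidable ingredient, the asymmetric untwisting lemma (Lemma \ref{lem:Furman}; in groupoid form Lemma \ref{lem:GFurman}): since the Bernoulli extension $B\otimes[0,1]^G\to B$ is relatively weakly mixing and $L$ admits a bi-invariant metric, the transfer function $F$ on the relatively independent joining splits as $F(x_0,x_1)=\varphi_0(x_0)^{-1}\varphi_1(x_1)$, which produces a cocycle on the common base $B$. Your proposal contains no statement playing this role, and without it the argument stops at ``cohomologous to a cocycle on $Y\otimes B$,'' which is strictly weaker than what Bernoulli $\mathscr{C}$-superrigidity of $G$ requires.

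That this is a missing idea rather than an omitted verification is confirmed by where you locate the hypothesis $\mathscr{C}\subseteq\mathscr{G}_{\mathrm{inv}}$. Inducing a cocycle through a coupling needs no barycenter or averaging: it is composition with the $G$-valued rearrangement cocycle and is valid for an arbitrary Polish target. The bi-invariant metric is needed precisely in the untwisting step you omitted; it enters through Lemma \ref{lem:Gbasic}, where bi-invariance makes the $L$-valued distance function $d(\Phi(x),\Phi(x'))$ invariant so that relative weak mixing forces $\Phi$ to descend. A secondary defect: $[0,1]^\Omega$, a product of uncountably many copies of $[0,1]$ with product measure, is not a standard probability space, so ``the $\sigma$-finite Bernoulli shift $G\times H\cc[0,1]^\Omega$'' is not well defined as written; the legitimate substitute is the fibered Bernoulli construction $K^{\otimes\mc{G}}$ over the relevant groupoids, which is how the paper formalizes your simultaneous-Bernoulli observation. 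That defect is repairable, but the absent untwisting lemma is the mathematical heart of the theorem and must be supplied.
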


In practice, $\mathscr{C}$ will often be a subclass of the class $\mathscr{U}_{\mathrm{fin}}\subseteq \mathscr{G}_{\mathrm{inv}}$, of Polish groups which are isomorphic to a closed subgroup of the unitary group of a finite von Neumann algebra. This class was isolated by Popa in \cite{Pop07} as a class of target groups to which his deformation/rigidity techniques naturally apply, and it includes all countable discrete groups and all compact metrizable groups.

Conveniently, Definition \ref{def:BCS} was already anticipated by Popa in the groundbreaking articles \cite{Pop07, Pop08}. It follows directly from \cite[Theorem 0.1]{Pop07} and \cite[Theorem 4.1]{Pop08}, for example, that a group $G$ is Bernoulli $\mathscr{U}_{\mathrm{fin}}$-superrigid whenever it contains an infinite normal subgroup $N$ such that either (i) the pair $(G,N)$ has relative property (T), or (ii) $N$ is generated by (element-wise) commuting subgroups $H$ and $K$, with $H$ nonamenable and $K$ infinite. Theorem \ref{thm:main} now implies that any group which is measure equivalent to such a group $G$ is also Bernoulli $\mathscr{U}_{\mathrm{fin}}$-superrigid. For example, the simple groups constructed by Burger and Mozes in \cite{BM00}, being lattices in a product of automorphism groups of regular trees, are measure equivalent to a product of free groups, hence are Bernoulli $\mathscr{U}_{\mathrm{fin}}$-superrigid. More generally, by applying the groupoid version of Theorem \ref{thm:main} (Theorem \ref{thm:Gmain}), we obtain:

\begin{corollary}\label{cor:lattice}
Let $G=G_0\times G_1$ be a product of locally compact second countable groups, with $G_0$ nonamenable and $G_1$ noncompact. Then any lattice in $G$ is Bernoulli $\mathscr{U}_{\mathrm{fin}}$-superrigid.
\end{corollary}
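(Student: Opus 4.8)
The plan is to deduce this from the groupoid version of Theorem~\ref{thm:main} (Theorem~\ref{thm:Gmain}) together with a groupoid analog of Popa's criterion (ii), exactly as indicated in the discussion preceding the statement. Write $G=G_0\times G_1$ and let $\Gamma\le G$ be a lattice, so that $X:=G/\Gamma$ carries a finite $G$-invariant measure, which we normalize to a probability measure. The transformation groupoid $G\ltimes X$ is then a measured groupoid that is equivalent to $\Gamma$, regarded as a groupoid over a one-point base; this is the ergodic-theoretic incarnation of the fact that $\Gamma$ is a lattice in $G$, and it is the coupling through which the product structure of $G$ will be transported to $\Gamma$.

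Since the framework of Theorem~\ref{thm:Gmain} is that of \emph{discrete} p.m.p.\ groupoids, the first step is to replace $G\ltimes X$ by a discrete groupoid without losing the product structure. I would choose a cross-section $Y\subseteq X$ for the $G$-action, i.e.\ a lacunary Borel transversal meeting a.e.\ orbit, equipped with its transverse probability measure, and let $\mathcal{G}$ be the associated discrete p.m.p.\ groupoid $\{(g,x)\:x\in Y,\ gx\in Y\}$. Standard cross-section theory gives that $\mathcal{G}$ is reduction equivalent to $G\ltimes X$, hence to $\Gamma$. The decomposition $G=G_0\times G_1$ then furnishes two subgroupoids $\mathcal{G}_0,\mathcal{G}_1\le\mathcal{G}$, namely the parts of $\mathcal{G}$ coming from $G_0$ and from $G_1$; because $G_0$ and $G_1$ commute inside $G$, the subgroupoids $\mathcal{G}_0$ and $\mathcal{G}_1$ commute, jointly generate $\mathcal{G}$, and each is normalized by $\mathcal{G}$.

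It then remains to check that $\mathcal{G}_0$ and $\mathcal{G}_1$ satisfy the hypotheses of the groupoid form of Popa's criterion (ii). The subgroupoid $\mathcal{G}_0$ is the cross-section groupoid of $G_0\ltimes X$, and since amenability of a measured groupoid is preserved under reduction equivalence, nonamenability of $G_0$ forces $\mathcal{G}_0$ to be nonamenable. Likewise $\mathcal{G}_1$ is the cross-section groupoid of $G_1\ltimes X$, and noncompactness of $G_1$ makes this groupoid infinite (aperiodic) on a conull set. Thus $\mathcal{G}$ contains an infinite normal subgroupoid, namely all of $\mathcal{G}$, generated by the commuting subgroupoids $\mathcal{G}_0$ (nonamenable) and $\mathcal{G}_1$ (infinite). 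The groupoid version of Popa's theorem then yields that $\mathcal{G}$ is Bernoulli $\mathscr{U}_{\mathrm{fin}}$-superrigid in the groupoid sense, and Theorem~\ref{thm:Gmain}, applied to the reduction equivalence between $\mathcal{G}$ and $\Gamma$, transports this back to show that $\Gamma$ is Bernoulli $\mathscr{U}_{\mathrm{fin}}$-superrigid.

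The main obstacle is the middle step: producing a \emph{single} discrete cross-section that simultaneously tames both continuous directions while keeping the $G_0$- and $G_1$-contributions as honest commuting subgroupoids, and then proving that nonamenability of $G_0$ and noncompactness of $G_1$ really do descend to the corresponding cross-section groupoids $\mathcal{G}_0$ and $\mathcal{G}_1$, rather than being diluted or destroyed by the transversal. One must also deal with possible torsion in $\Gamma$ and with the measure-theoretic bookkeeping ensuring that the transverse measures on $Y$ are compatible across the two factors; I expect these to be routine once the cross-section is correctly set up, whereas the compatibility of the transversal with the product structure is where the genuine work lies. Everything else is an application of the cited results: the groupoid ME-invariance of Theorem~\ref{thm:Gmain} and the groupoid analog of Popa's cocycle superrigidity.
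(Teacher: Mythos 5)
Your overall strategy (discretize via cross sections, apply a groupoid form of Popa's product rigidity, transfer to $\Gamma$ by Theorem \ref{thm:Gmain}) is the paper's strategy too, but the middle of your argument has a genuine gap, and it sits exactly at the point you flag as "where the genuine work lies." First, the action $G\cc G/\Gamma$ is transitive with stabilizers the conjugates of $\Gamma$, so it is not essentially free; the cross-section theory invoked in the paper (\cite{Fo74}, \cite[Section 4]{KPV15}) is for free actions, and the discrete model of $G\ltimes G/\Gamma$ is just the group $\Gamma$ itself, in which the product structure of $G$ is invisible (for an irreducible lattice, $\Gamma\cap G_i$ can be trivial). Second, and more fundamentally, even if you replace $G/\Gamma$ by an essentially free $G$-space $X$, a cross section $Y$ for the $G$-action does \emph{not} induce usable subgroupoids from the factors: setting $\mathcal{G}_i:=\{(y',y)\in Y\times Y \,:\, y'\in G_i\cdot y\}$, the return-time set $\{g\in G : gy\in Y\}$ is a countable lacunary subset of $G$ whose intersection with $G_0\times\{1\}$ or $\{1\}\times G_1$ is typically just the identity. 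Concretely, if $X$ is induced from a free ergodic p.m.p.\ action of an irreducible lattice $\Lambda\le G$ and $Y$ is the canonical cross section, then all return times lie in $\Lambda$ and $\Lambda\cap G_i=\{1\}$, so $\mathcal{G}_0$ and $\mathcal{G}_1$ are both the trivial (diagonal) groupoid. The claim that "$\mathcal{G}_0$ is the cross-section groupoid of $G_0\ltimes X$" is false because $G_0\cdot Y$ is a \emph{null} subset of $X$: $Y$ is transverse to the $G$-orbits, not to the $G_i$-orbits, so nonamenability of $G_0$ and noncompactness of $G_1$ are destroyed, not inherited. A third problem is that the result you want to cite --- a groupoid version of Popa's criterion (ii) for a groupoid generated by \emph{commuting subgroupoids} over a common base --- is not available (in the paper or elsewhere in that form); "element-wise commuting subgroupoids" is not even a workable notion for equivalence relations on the same space. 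What the paper proves (Theorem \ref{thm:prodGroupoid}) concerns \emph{independent products} $\mathcal{G}\otimes\mathcal{H}$ over a product base, and its proof goes through embedding free ergodic $\F_2$- and $\Z$-actions (via \cite[Theorem A]{BHI15}) and a q-normality argument (Lemma \ref{lem:qnorm}), not through a malleability argument for commuting subgroupoids.

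The paper's fix is to encode the product structure \emph{before} discretizing: choose auxiliary free ergodic p.m.p.\ actions $G_i\cc X_i$ of each factor separately, take a cross section $Y_i\subseteq X_i$ for each factor's own action, and observe that $Y_0\otimes Y_1$ is a cross section for the product action $G\cc X_0\otimes X_1$ whose groupoid is literally the independent product $\mathcal{R}_{Y_0}\otimes\mathcal{R}_{Y_1}$; here $\mathcal{R}_{Y_0}$ is genuinely nonamenable and $\mathcal{R}_{Y_1}$ genuinely aperiodic because each $Y_i$ is transverse to its own factor's orbits. Theorem \ref{thm:prodGroupoid} then gives Bernoulli $\mathscr{U}_{\mathrm{fin}}$-superrigidity of this product groupoid. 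The lattice enters only afterwards, through a separate measure-equivalence argument: the free action $G\cc (X_0\otimes X_1)\otimes G/\Gamma$ admits two cross sections, $(Y_0\otimes Y_1)\otimes G/\Gamma$ and $(X_0\otimes X_1)\otimes\{1_G\Gamma\}$; the first is a groupoid extension of $\mathcal{R}_{Y_0}\otimes\mathcal{R}_{Y_1}$, the second a groupoid extension of $\Gamma$, and the two cross-section groupoids are reduction equivalent. Hence $\Gamma\simeq_{\mathrm{ME}}\mathcal{R}_{Y_0}\otimes\mathcal{R}_{Y_1}$, and Theorem \ref{thm:Gmain} transfers superrigidity to $\Gamma$. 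If you want to salvage your write-up, replace your single transversal of $G/\Gamma$ by this two-stage construction; without it, the product structure simply does not survive the passage to a discrete groupoid.
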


\begin{remark}
Corollary \ref{cor:lattice} does not follow naively from Theorem \ref{thm:main} and Popa's Theorem, since even though $G$ contains a lattice, one of the groups $G_0$ or $G_1$ may fail to contain a lattice. This can be seen in the example $G_0 := \mathrm{SL}_n(\Z )\ltimes \R ^n$ and $G_1 := \mathrm{SL}_n(\Z _p)\ltimes \Q _p ^n$. The group $G_1$ does not contain a lattice, but $G_0\times G_1$ contains the lattice $\mathrm{SL}_n(\Z )\ltimes (\Z [\tfrac{1}{p}])^n$ via the diagonal embedding $\varphi _0 \times \varphi _1$, where $\varphi _i :\mathrm{SL}_n(\Z )\ltimes (\Z [\tfrac{1}{p}])^n\rightarrow G_i$ is the natural inclusion into $G_i$. However, as pointed out by P. Wesolek, if both $G_0$ and $G_1$ are totally disconnected locally compact groups, and $G_0\times G_1$ contains a lattice, then both $G_0$ and $G_1$ contain a lattice as well.
\end{remark}

Theorem \ref{thm:main} is related to, and largely inspired by, a theorem of Peterson and Sinclair stating that the class of groups $G$ whose group von Neumann algebra $LG$ is $L^2$-rigid, is closed under orbit equivalence \cite{PS11}. It also follows from \cite{PS11} and \cite{Pop07} that if $LG$ is $L^2$-rigid then $G$ is Bernoulli $\mathscr{U}_{\mathrm{fin}}$-superrigid. The converse is unclear; for example, a standard relativization of the proof of \cite[Theorem 11]{TD14} shows that any nonamenable inner amenable group is Bernoulli $\mathscr{U}_{\mathrm{fin}}$-superrigid, although it is unclear at the moment whether the group von Neumann algebra of any such group is necessarily $L^2$-rigid. (We note that it follows from \cite{Ki15}, \cite{Pet09}, and the aforementioned orbit equivalence invariance theorem from \cite{PS11}, that the group von Neumann algebra of Vaes's group from \cite{Va12} is $L^2$-rigid.) Part of the motivation for investigating Theorem \ref{thm:main} was to find a natural invariant of measure equivalence which applies to nonamenable inner amenable groups, and which implies cocycle superrigidity for Bernoulli actions.

While the results of \cite{Pop07,Pop08,PS11,TD14} provide many examples of Bernoulli $\mathscr{U}_{\mathrm{fin}}$-superrigid groups, the extent of this class of groups remains unclear. It follows from \cite{PS11} that the first $\ell ^2$-Betti number of a nonamenable group which is Bernoulli $\mathscr{U}_{\mathrm{fin}}$-superrigid must vanish. The converse -- whether every nonamenable group with vanishing first $\ell ^2$-Betti number is Bernoulli $\mathscr{U}_{\mathrm{fin}}$-superrigid -- is an open problem, a version of which appears in \cite{PopaProblems}. Note that, by \cite{Ga02}, vanishing of the first $\ell ^2$-Betti number is also an invariant of measure equivalence, so Theorem \ref{thm:main} at least puts these two properties on equal footing.

Many of the ideas going into the proof of Theorem \ref{thm:main} are already contained in A. Furman's article \cite{Fu07} and S. Popa's article \cite{Pop07}. In fact, not one of the three main ingredients involved in the proof of Theorem \ref{thm:main} is particularly new. Specifically, these main ingredients are: (1) the definition of Bernoulli $\mathscr{C}$-superrigidity, which was already implicitly considered in \cite{Pop07}; (2) Lemma \ref{lem:GFurman}, which is essentially already contained in \cite[Theorem 3.4]{Fu07}; and (3) Proposition \ref{prop:BernIsom}, which is more or less well known (and, in any case, trivial). The main new contribution of the first part of this article is the observation that these three ingredients play well together in order to produce Theorem \ref{thm:main}.

The most natural setting for the proof of Theorem \ref{thm:main} is that of discrete p.m.p.\ groupoids. However, we first present a "quick and dirty" direct proof of Theorem \ref{thm:main} for the special case of orbit equivalence in \S\ref{sec:OE}. The proof of Theorem \ref{thm:main} in general, given in \S\ref{sec:ME}, boils down to many of the same ideas, although the need to handle restrictions to positive measure sets makes it much more convenient to work in the groupoid context. This also has the benefit of producing Corollary \ref{cor:lattice}. We therefore gather the necessary background in \S\ref{sec:prelim}, and generalize Theorem \ref{thm:main} to the setting of discrete p.m.p.\ groupoids in Theorem \ref{thm:Gmain}, deducing Theorem \ref{thm:main} as a special case.

\subsection{When is entropy an orbit equivalence invariant?} The second part of this article explores consequences of Theorem \ref{thm:main} for entropy, orbit equivalence, and their interaction.

\subsubsection{What is entropy?} Kolmogorov and Sinai introduced entropy as a measure-conjugacy invariant for measure-preserving actions of the integers \cite{Kol58, Kol59, Sin59}. Kolmogorov showed that the entropy  of a Bernoulli shift action $\Z \cc K^{\Z}$ is the {\bf Shannon entropy} $H(K)$ of the base probability space $K$. The latter is defined by: if $\mu _K$ has countable support then
\[
H(K) := - \sum_{k \in K} \mu _K (\{k\}) \log (\mu _K (\{k\})).
\]
Otherwise $H(K):=+\infty$.

Later, Ornstein proved a converse: if $K$ and $L$ are probability spaces with the same Shannon entropy then the corresponding Bernoulli shifts $\Z \cc K^\Z$ and $\Z \cc L^\Z$ are measurably conjugate \cite{Orn70}. Building on work of the first author \cite{Bo11b}, Seward has recently generalized this result to arbitrary countably infinite groups \cite{Sew18}.

Sinai proved that an ergodic p.m.p.\ action $\Z \cc X$ of $\Z$ factors onto any Bernoulli shift $\Z \cc K^\Z$ for which $h(\Z \cc X ) \geq H(K)$ \cite{Sin64}. Since entropy of a $\Z$-action cannot increase under a factor, the condition $h(\Z \cc X)\geq H(K)$ is necessary. So entropy could have been defined as the supremum, over all Bernoulli shifts $\Z \cc K^\Z$ onto which the system factors, of the Shannon entropy $H(K)$ of the base space.

\subsubsection{Generalizations of classical entropy}
There are two main generalizations of classical entropy: sofic entropy and Rokhlin entropy. The sofic entropy of a p.m.p. action $G \cc X$ is defined in \cite{bowen-jams-2010, kerr-li-variational} (see also \cite{Bow17a, Bow17b} for an introduction and survey) in terms of counting approximations to the action relative to a given sofic approximation of the group. Rokhlin entropy was developed in a series of papers \cite{seward-kreiger-1, MR3959054}. The Rokhlin entropy of a p.m.p. action $G \cc X$ is always well-defined, unlike the sofic entropy which requires the group to be sofic (it is unknown whether all groups are sofic). In fact, the Rokhlin entropy of an action is the infimum of the Shannon entropies of generating partitions. By \cite{seward-kreiger-1}, Rokhlin entropy upper bounds sofic entropy. It is unknown whether they are equal (except when the sofic entropy is $-\infty$ in which case they cannot be equal). By \cite{seward-kreiger-1}, if $G$ is sofic then the sofic and Rokhlin entropies agree on Bernoulli shifts: they both equal the Shannon entropy of the base. However, if $G$ is not assumed to be sofic, then the Rokhlin entropy is only known to be upper-bounded by the Shannon entropy of the base. 

\subsubsection{Factors}

 In recent spectacular work, Seward generalized Sinai's Factor Theorem to all countable groups using Rokhlin entropy in place of Kolmogorov-Sinai entropy \cite{MR4066472}. This suggests another generalization of entropy as follows. Let $G \cc X$ be a p.m.p. action. Consider the supremum  over all Bernoulli shifts $G \cc K^G$ onto which $G\cc X$ factors, of the Shannon entropy $H(K)$ of the base space of the Bernoulli factor. By work of Ornstein-Weiss, this gives the usual notion of entropy when $G$ is amenable \cite{OW80}. However it fails badly when $G$ is non-amenable because then all Bernoulli shifts factor onto each other \cite{Bow17c}. To amend this situation, we restrict our attention to direct Bernoulli factors, as explained next.



\subsubsection{Weak Pinsker entropy}

Fix a countable discrete group $G$. Let $\alpha : G\cc X$ be an ergodic p.m.p.\ action of $G$ on a standard probability space $X$. If $\alpha$ is measurably conjugate to a direct product $\alpha \cong \alpha _0\otimes \alpha _1$, of two actions $\alpha _0$ and $\alpha _1$, then each of the actions $\alpha _0$ and $\alpha _1$ is called a {\bf direct factor} of $\alpha$. If $\beta$ is a direct factor of $\alpha$ which is measurably conjugate to a Bernoulli shift of $G$, then we say that $\beta$ is a {\bf direct Bernoulli factor} of $\alpha$. We define the {\bf Weak Pinsker entropy} $h^{\mathrm{WP}}(G\cc X)$ of the action $G\cc X$ to be the supremum, over all direct Bernoulli factors $G\cc K^G$ of $G\cc X$, of the base space Shannon entropy $H(K)$. Symbolically,
\[
h^{\mathrm{WP}}(G\cc X ) := \sup \{ H(K) \, : \, G\cc K^G \text{ is a direct Bernoulli factor of }G\cc X \} .
\]
It is immediate that Weak Pinsker entropy is invariant under measure conjugacy. Note that the trivial Bernoulli shift $G\cc \{ \ast \} ^G$, over a one point base space $\{ \ast \}$, is just the trivial action of $G$ on a one point space, and hence every action has the trivial one point action as a direct Bernoulli factor. In particular, $h^{\mathrm{WP}}(G\cc X )\geq 0$ for any ergodic p.m.p.\ action $G\cc X$. It follows immediately from the fact that the sofic entropy of a direct product of a Bernoulli shift with an arbitrary action is the sum of the sofic entropies \cite{bowen-jams-2010} that Weak Pinsker entropy is bounded from above by sofic entropy whenever the latter is not minus infinity. In particular, this implies that if $G$ is a sofic group, then the Weak Pinsker entropy of a Bernoulli shift is the Shannon entropy of its base.

 This definition is motivated by Thouvenot's problem, described next.

\subsubsection{The Weak Pinsker Property}
Suppose for now that $G$ is amenable. An ergodic action $\alpha$ of $G$ is said to have the {\bf Weak Pinsker Property} if for every $\epsilon >0$, the action $\alpha$ is measurably conjugate to a direct product $\alpha \cong \beta \otimes \alpha _0$, where $\beta$ is a Bernoulli shift of $G$, and the Kolmogorov-Sinai entropy of $\alpha _0$ is at most $\epsilon$. Since the Kolmogorov-Sinai entropy of a direct product action is the sum of the Kolmogorov-Sinai entropies of the corresponding direct factors, it follows that if $\alpha$ has the Weak Pinsker Property, then the Weak Pinsker entropy and Kolmogorov-Sinai entropy of $\alpha$ coincide.

Resolving a long-standing question due to Jean-Paul Thouvenot \cite{Thou77}, Tim Austin recently proved that all ergodic essentially free actions of amenable groups have the Weak Pinsker Property \cite{Aust17}. This is a major advance in classical entropy theory.
The Weak Pinsker Property can be generalized beyond the class of amenable groups by employing either sofic entropy or Rokhlin entropy in place of Kolmogorov-Sinai entropy. However, the paper \cite{bowen-wp} gives an example of an ergodic free group action that does not have the Weak Pinsker property (with respect to either generalization). In particular, for this action, the Weak Pinsker entropy is strictly less than both sofic and Rokhlin entropies. It would be interesting to obtain sufficient criteria for Weak Pinsker entropy to equal Rokhlin entropy, perhaps by determining when Seward's Factor Theorem \cite{MR4066472} produces a direct factor.

\subsubsection{Invariance under orbit equivalence} We let $\mathscr{G}_{\mathrm{dsc}}$ denote the class of all discrete countable groups. We propose the following ambiguous conjecture:

\medskip

\noindent {\bf Conjecture:} Whenever $G$ is Bernoulli $\mathscr{G}_{\mathrm{dsc}}$-superrigid, the entropy of any ergodic essentially free action of $G$ is an orbit-equivalence invariant.

\medskip

\noindent The ambiguity here is in the notion of entropy. The conjecture remains open for sofic and Rokhlin entropy. However, we prove this conjecture for Weak Pinsker entropy below. More generally, we prove that under a stable orbit equivalence, the expected scaling occurs. Recall that two ergodic essentially free p.m.p.\ actions $G\cc X$ and $H\cc Y$ of countable groups $G$ and $H$ are said to be {\bf stably orbit equivalent} if there exist positive measure sets $A\subseteq X$ and $B\subseteq Y$ such that the groupoids $(G\ltimes X)_A$ and $(H\ltimes Y)_B$ are isomorphic.
%
%

\begin{theorem}\label{thm:soe} Let $G\cc X$ and $H \cc Y$ be ergodic essentially free p.m.p.\ actions of countable groups $G$ and $H$, and assume that $G$ is Bernoulli $\mathscr{G}_{\mathrm{dsc}}$-superrigid. Suppose that the actions are stably orbit equivalent, so that there exist positive measure sets $A\subseteq X$ and $B\subseteq Y$ such that $(G\ltimes X)_A$ and $(H\ltimes Y)_B$ are isomorphic. Then
\[
h^{\mathrm{WP}}(G\cc X ) = \frac{\mu _X (A)}{\mu _Y (B)} h^{\mathrm{WP}}(H\cc Y ) .
\]
\end{theorem}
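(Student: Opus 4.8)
The plan is to prove the inequality $h^{\mathrm{WP}}(G\cc X)\le \frac{\mu_X(A)}{\mu_Y(B)}\,h^{\mathrm{WP}}(H\cc Y)$ and to recover the reverse, and hence equality, by symmetry. The first thing I would do is restore that symmetry: a stable orbit equivalence between $G\cc X$ and $H\cc Y$ witnesses a measure equivalence between $G$ and $H$, so the groupoid form of the main theorem (Theorem \ref{thm:Gmain}) upgrades the hypothesis on $G$ to the conclusion that $H$ is Bernoulli $\mathscr{G}_{\mathrm{dsc}}$-superrigid as well. With both groups superrigid, interchanging the triples $(G,X,A)$ and $(H,Y,B)$ replaces the index $\mu_X(A)/\mu_Y(B)$ by its reciprocal, so it suffices to show that every direct Bernoulli factor $G\cc K^G$ of $G\cc X$ gives rise to a direct Bernoulli factor $H\cc L^H$ of $H\cc Y$ whose base satisfies $H(L)=\frac{\mu_Y(B)}{\mu_X(A)}H(K)$.

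For the transfer I would realize the stable orbit equivalence $(G\ltimes X)_A\cong (H\ltimes Y)_B$ as a measure equivalence coupling of $G$ and $H$ (the dictionary recalled in the introduction), in which $X$ and $Y$ appear as fundamental domains whose relative covolume is $\mu_X(A)/\mu_Y(B)$. Writing $X\cong K^G\otimes Z$ for the given direct Bernoulli factor, I would co-induce $G\cc K^G$ through the coupling to an action of $H$; tracking one copy of the base $K$ per $G$-point of the coupling and reindexing by $H$ produces a Bernoulli action $H\cc L^H$, and the change in the density of base coordinates rescales the per-site Shannon entropy by the covolume ratio $\frac{\mu_Y(B)}{\mu_X(A)}$, giving $H(L)=\frac{\mu_Y(B)}{\mu_X(A)}H(K)$. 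This much is the content of Proposition \ref{prop:BernIsom} together with the additivity $H(K_1\otimes K_2)=H(K_1)+H(K_2)$, and it pins down the scaling factor appearing in the statement.

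The step I expect to be the main obstacle is showing that the co-induced action $H\cc L^H$ is not merely a factor but a \emph{direct} Bernoulli factor of $H\cc Y$, i.e.\ that it splits off an independent complement; a priori the orbit-equivalence cocycle could entangle the Bernoulli coordinate with the rest of the system, and it is exactly here that superrigidity is indispensable. Concretely, I would view $X\cong Z\otimes K^G$ as a factor of the full Bernoulli extension $Z\otimes[0,1]^G\to Z$, pull the $H$-valued orbit-equivalence cocycle back along this factor map, and apply the relative $\mathscr{G}_{\mathrm{dsc}}$-superrigidity of the Bernoulli extension to conclude that the cocycle is cohomologous to one descending to $Z$. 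A routine argument then pushes this untwisting down to $X$, so that the straightened cocycle no longer depends on the $K^G$-coordinate; feeding the straightened cocycle into the Furman-type Lemma \ref{lem:GFurman} shows that the coupling splits compatibly with the Bernoulli direction, which is precisely the statement that $H\cc L^H$ occurs as a direct factor of $H\cc Y$.

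Finally I would assemble the estimate: the transfer sends each direct Bernoulli factor of $G\cc X$ of base entropy $s$ to a direct Bernoulli factor of $H\cc Y$ of base entropy $\frac{\mu_Y(B)}{\mu_X(A)}s$, so taking the supremum over all such factors yields $h^{\mathrm{WP}}(H\cc Y)\ge \frac{\mu_Y(B)}{\mu_X(A)}h^{\mathrm{WP}}(G\cc X)$, and the symmetric argument with the roles of the two systems reversed gives the matching inequality. The two combine to the claimed identity, the only remaining bookkeeping being the monotone approximation needed to realize arbitrary target entropies, which is routine.
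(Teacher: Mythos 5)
Your outline parallels the paper's actual route: the paper also symmetrizes via Theorem \ref{thm:Gmain} (so $H$ is Bernoulli $\mathscr{G}_{\mathrm{dsc}}$-superrigid too), and then packages your two transfer steps as properties of the groupoid invariant $h^{\mathrm{gWP}}$ — Theorem \ref{thm:gWPred} for the covolume scaling under reduction, and Theorem \ref{thm:gWPvsWP} for the statement that superrigidity upgrades a groupoid Bernoulli presentation to a genuine direct Bernoulli factor of the action. But the step you yourself flag as the main obstacle is resolved with the wrong tool, and this is a genuine gap. Untwisting the cocycle so that it is \emph{cohomologous} to one descending to the base (your Proposition \ref{prop:furman0}-style pull-back/push-down) is the easy half; the hard half is converting that cohomology into an actual measure conjugacy exhibiting the product splitting. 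Lemma \ref{lem:GFurman} cannot do this: it only produces equivalences of homomorphisms, and the transfer function implementing the equivalence is an arbitrary measurable map into a countable group — nothing forces it to come from an isomorphism of the underlying spaces. The paper devotes a dedicated technical result to exactly this conversion, Lemma \ref{lem:GOEcocycle}: from "the projection homomorphism $w$ is equivalent to one descending through $p$" one must manufacture a bisection $\theta\in[\mc{K}]$ (an inner conjugation) and an honest groupoid \emph{extension} $w_0$ with $w\circ c_{\theta}=w_0\circ p$ holding exactly, and its proof needs a periodicity argument (Claim 0) and a maximality/exhaustion construction of $\theta$ (Claims 1--2). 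Only after that does the commutative diagram in the proof of Theorem \ref{thm:gWPvsWP} identify $G\cc X$ with the product action $G\cc \mc{H}^0\otimes K^G$. Without this ingredient your argument stops at a cohomological statement and never produces the direct factor.

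There is a second, smaller gap: the entropy rescaling is not "the content of Proposition \ref{prop:BernIsom} together with additivity of Shannon entropy." Proposition \ref{prop:BernIsom} only says that Bernoulli extensions pull back along groupoid extensions, with no change of base. What you actually need is that compressing a Bernoulli action to a positive-measure set $A$ is again Bernoulli with base entropy scaled by $1/\mu(A)$ (and the converse statement for amplifications); this is the Ornstein-theoretic input \cite[Theorem 3.3]{BHI15}, which rests on Theorem \ref{thm:OrnIso} (Ornstein--Stepin--Bowen--Seward) and its groupoid extension Corollary \ref{cor:GOrnIso}, and it is what the paper's Theorem \ref{thm:gWPred} is built on, together with Propositions \ref{prop:amp} and \ref{prop:ExtIsoRed}. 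Your "reindex the base coordinates through the coupling" heuristic gives the right scaling factor, but naive reindexing only yields a generalized Bernoulli-type action with variable-size base over a cocycle; identifying that with an honest Bernoulli shift of fixed base entropy is precisely where the isomorphism theory enters, and it is not routine bookkeeping.
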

Theorem \ref{thm:soe} suggests a way of defining Weak Pinsker entropy directly and intrinsically for an ergodic discrete p.m.p.\ groupoid, as we now describe.

\subsubsection{Groupoid Weak Pinsker entropy}\label{subsec:gwpe} Given an ergodic discrete p.m.p.\ groupoid $\mc{G}$, we define the {\bf groupoid weak Pinsker entropy} of $\mc{G}$, denoted $h^{\mathrm{gWP}}(\mc{G})$, to be the supremum of $H(K)$, where the supremum is taken over all pairs $(\mc{H}, K)$ for which there exists an isomorphism between $\mc{G}$ and the translation groupoid $\mc{H}\ltimes K^{\otimes \mc{H}}$ associated to the Bernoulli action of $\mc{H}$ with base $K$ (see section \ref{sec:prelim} for the definition of Bernoulli actions of groupoids):
\[
h^{\mathrm{gWP}}(\mc{G}) := \sup \{ H(K) \, : \, \mc{G}\cong \mc{H}\ltimes K^{\otimes \mc{H}} \text{ for some discrete p.m.p.\ groupoid }\mc{H} \} .
\]
It is clear from the definition that $0\leq h^{\mathrm{gWP}}(\mc{G})\leq +\infty$. What is not obvious is whether $h^{\mathrm{gWP}}(\mc{G})$ can take values other than $0$ and $+\infty$. For example, if $\mc{G}$ is periodic, or if $\mc{G}$ is not principal, then $h^{\mathrm{gWP}}(\mc{G})=0$, since any ergodic groupoid which can be expressed nontrivially as a Bernoulli extension must be (by ergodicity) aperiodic and principal. On the other hand, if we let $\mc{R}_0$ denote the ergodic hyperfinite type-$\mathrm{II}_1$ equivalence relation, then we have $h^{\mathrm{gWP}}(\mc{R}_0)=+\infty$, since $\mc{R}_0$ is isomorphic to the translation groupoid $\Z \ltimes [0,1]^{\Z}$ associated to the infinite entropy Bernoulli shift of $\Z$. The fact that $h^{\mathrm{gWP}}(\mc{G})$ can take finite nonzero values is a consequence of Theorem \ref{thm:gWPvsWP} below, which shows that the groupoid Weak Pinsker entropy of a translation groupoid associated to an ergodic action of a Bernoulli $\mathscr{G}_{\mathrm{dsc}}$-superrigid group coincides with the Weak Pinsker entropy of the action.



\medskip

\noindent {\bf Acknowledgements:} LB thanks Daniel Drimbe for an inspiring conversation. RTD would like to thank Adrian Ioana and Phillip Wesolek for valuable feedback and stimulating discussions. LB was supported by NSF grant DMS-1500389 and a Simons Fellowship. RTD was supported by NSF grant DMS-1855825.

\begin{remark}\label{rem:null}
Throughout the article, when working in any measure theoretic context, such as that of discrete p.m.p.\ groupoids, we are only interested in properties which are not sensitive to changes on a null set. With this in mind, we will identify two sets when their symmetric difference has measure zero, we will identify two maps when their domains and values agree almost everywhere, and we will often ignore null sets when it is convenient and appropriate.
\end{remark}

\setcounter{tocdepth}{4}
\tableofcontents

\section{The case of orbit equivalence}\label{sec:OE}
Recall that two countable groups $G$ and $H$ are said to be {\bf orbit equivalent} if they admit free ergodic p.m.p.\ actions $G\cc X$ and $H\cc Y$ which generate isomorphic orbit equivalence relations, i.e., for which the translation groupoids $G\ltimes X$ and $H\ltimes Y$ are isomorphic.

Let $G\cc X$ and $G\cc Y$ be two p.m.p.\ actions of a countable group $G$. An {\bf extension} of p.m.p.\ actions of $G$, from $G\cc X$ to $G\cc Y$, is a measure preserving map $q:X\rightarrow Y$ such that $q(gx)=gq(x)$ for all $g\in G$ and a.e.\ $x\in X$. If $X_0\rightarrow Y$ and $X_1\rightarrow Y$ are two extensions of p.m.p.\ actions of $G$ then we consider the p.m.p.\ action of $G$ on the relatively independent product $X_0\otimes _Y X_1$ (see \S \ref{subsec:fibmeas}), given by $g(x_0,x_1):=(gx_0,gx_1)$. This action is then naturally viewed as an extension $X_0\otimes _Y X_1\rightarrow Y$, of $Y$.

An extension $q:X\rightarrow Y$ of p.m.p.\ actions of $G$ is called {\bf relatively ergodic} if for every $G$-invariant measurable subset $A$ of $X$, there is some measurable subset $B$ of $Y$ such that $\mu _X (A\triangle q^{-1}(B) ) =0$. The extension $q$ is called {\bf relatively weakly mixing} if the relatively independent extension $X\otimes _{Y} X \rightarrow Y$ is relatively ergodic.

We will need the following "asymmetric" generalization of the cocycle untwisting result \cite[Theorem 3.4]{Fu07}, \cite[Theorem 3.1]{Pop07}. A complete proof is provided (in groupoid language) in Lemma \ref{lem:GFurman} below.

\begin{lemma}[Untwisting lemma, asymmetric version]\label{lem:Furman}
Let $q_0:X_0\rightarrow Y$ and $q_1 : X_1\rightarrow Y$ be two extensions of a p.m.p.\ action $G\cc Y$ of $G$, and assume that the extension $q_1:X_1\rightarrow  Y$ is relatively weakly mixing. Let $L$ be a Polish group with a bi-invariant metric, and let $w_0 : G\times X_0 \rightarrow L$ and $w_1 :G\times X_1 \rightarrow L$ be measurable cocycles. Let $X= X_0\otimes _Y X_1$ be the relatively independent joining over $Y$. Suppose that $F:X\rightarrow L$ is a measurable map satisfying, for $\mu _X$-a.e.\ $(x_0,x_1)\in X$ and all $g\in G$,
\begin{equation}\label{eqn:Fw}
w_0(g,x_0) = F(gx_0,gx_1)w_1(g,x_1)F(x_0,x_1)^{-1}.
\end{equation}
Then there exist measurable maps $\varphi _0:X_0\rightarrow L$, $\varphi _1:X_1\rightarrow L$, and a measurable cocycle $w:G\times Y \rightarrow L$ such that for $\mu _X$-a.e. $(x_0,x_1)\in X$ and all $g\in G$ we have
\begin{align*}
F(x_0,x_1)&=\varphi _0(x_0)^{-1}\varphi _1(x_1), \\
w (g,q_0(x_0)) &= \varphi _0(gx_0) w_0(g,x_0)\varphi _0 (x_0)^{-1},\\
w (g,q_1(x_1)) &= \varphi _1 (gx_1)w_1(g,x_1)\varphi _1(x_1)^{-1} .
\end{align*}
\end{lemma}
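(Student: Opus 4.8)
The plan is to isolate the cocycle $w_1$ by forming a suitable ``difference'' of $F$ in the $X_0$-direction, and then to exploit relative weak mixing of $q_1$ together with bi-invariance of the metric on $L$ to show that this difference is independent of the $X_1$-coordinate. It is essential to difference in the $X_0$-direction (thereby eliminating $w_1$ rather than $w_0$), precisely because it is $q_1$, and not $q_0$, that is assumed relatively weakly mixing.

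First I rewrite \eqref{eqn:Fw} as the transformation rule $F(gx_0,gx_1)=w_0(g,x_0)F(x_0,x_1)w_1(g,x_1)^{-1}$. Working on the triple relatively independent joining $X_0\otimes _Y X_0\otimes _Y X_1$, I set $\Xi(x_0,x_0',x_1):=F(x_0,x_1)F(x_0',x_1)^{-1}$. A direct computation using the rule above shows that the factors $w_1(g,x_1)^{\pm 1}$ cancel, leaving $\Xi(gx_0,gx_0',gx_1)=w_0(g,x_0)\,\Xi(x_0,x_0',x_1)\,w_0(g,x_0')^{-1}$; in particular the twisting depends only on $(x_0,x_0')$ and not on $x_1$.

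Next I pass to $X_0\otimes _Y X_0\otimes _Y X_1\otimes _Y X_1$ and consider $D(x_0,x_0',x_1,x_1'):=d\big(\Xi(x_0,x_0',x_1),\Xi(x_0,x_0',x_1')\big)$, where $d$ is the bi-invariant metric. Since $d$ is invariant under left and right translations, the common two-sided factors $w_0(g,x_0)$ and $w_0(g,x_0')^{-1}$ drop out, so $D$ is $G$-invariant. Because $q_1$ is relatively weakly mixing, so is $X_1\otimes _Y X_1\to Y$ (a standard permanence property), and hence, base-changing over $X_0\otimes _Y X_0$, the extension $X_0\otimes _Y X_0\otimes _Y X_1\otimes _Y X_1\to X_0\otimes _Y X_0$ is relatively ergodic; therefore $D$ descends to a function $\rho(x_0,x_0')$ on $X_0\otimes _Y X_0$. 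The crucial step is to upgrade this to $\rho\equiv 0$: fixing $(x_0,x_0')$ over a common $y\in Y$ and pushing the conditional measure $\mu _{X_1}^{y}$ forward under $x_1\mapsto \Xi(x_0,x_0',x_1)$ yields a Borel probability measure $\lambda$ on $L$ almost every pair of whose points lies at distance exactly $\rho(x_0,x_0')$. If $\rho(x_0,x_0')>0$ this forces the support of $\lambda$ to be an equilateral set, which must be countable since $L$ is separable; the resulting atom then produces a positive-measure set of pairs at distance $0$, contradicting the a.e.\ constancy of the distance. Hence $\rho\equiv 0$, and $\Xi(x_0,x_0',x_1)$ is independent of $x_1$.

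With $\Xi(x_0,x_0',x_1)=\Xi(x_0,x_0')$ established, I choose (via a measurable disintegration) a section $y\mapsto x_0^{y}\in q_0^{-1}(y)$ and set $\varphi _0(x_0):=\Xi(x_0^{q_0(x_0)},x_0)$ and $\varphi _1(x_1):=F(x_0^{q_1(x_1)},x_1)$, so that $F(x_0,x_1)=\varphi _0(x_0)^{-1}\varphi _1(x_1)$. Substituting this factorization back into \eqref{eqn:Fw} gives $\varphi _0(gx_0)w_0(g,x_0)\varphi _0(x_0)^{-1}=\varphi _1(gx_1)w_1(g,x_1)\varphi _1(x_1)^{-1}$ for a.e.\ $(x_0,x_1)$; the left side is independent of $x_1$ and the right side independent of $x_0$, so both depend only on $g$ and the common image $y\in Y$, defining a measurable cocycle $w:G\times Y\to L$ satisfying the two remaining displayed identities. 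The main obstacle is the passage $\rho\equiv 0$, that is, showing that relative weak mixing rules out a genuine $x_1$-dependence of $\Xi$; this is exactly the point at which both the relative weak mixing hypothesis on $q_1$ and the separability of the bi-invariantly metrized group $L$ are indispensable.
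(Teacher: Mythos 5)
Your proof is correct and follows essentially the same route as the paper's: you form the difference $\Xi((x_0,x_0'),x_1)=F(x_0,x_1)F(x_0',x_1)^{-1}$ on the triple relatively independent joining, use relative weak mixing of $q_1$ to show it is independent of $x_1$, factor $F$ through a measurable section, and recover the cocycle $w$ by a Fubini argument, which is exactly the scheme of the paper's Lemma \ref{lem:GFurman}. The only difference is that where the paper quotes the basic untwisting lemma (Lemma \ref{lem:Gbasic}, i.e.\ \cite[Lemma 3.2]{Fu07}) to descend $\Xi$, you prove that descent inline via the bi-invariant-metric and separability argument on the four-fold joining; this is precisely the standard proof of the cited lemma, so no new idea or gap is introduced.
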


\begin{proof}[Proof sketch] The assumption that $q_1$ is relatively weakly mixing ensures that the extension $(X_0\otimes _Y X_0 )\otimes _Y X_1 \rightarrow X_0\otimes _Y X_0$ is relatively weakly mixing. We then define the map $\Phi : (X_0\otimes _Y X_0)\otimes _Y X_1 \rightarrow L$ by $\Phi ((x_0,x_0'),x_1) := F(x_0,x_1)F(x_0',x_1)^{-1}$. The rest of the proof of Lemma \ref{lem:Furman} is exactly analogous to that of Theorem 3.4 of \cite{Fu07}.
\end{proof}

We can now prove that, for $\mathscr{C}\subseteq \mathscr{G}_{\mathrm{inv}}$, Bernoulli $\mathscr{C}$-superrigidity is an orbit equivalence invariant:

\begin{proof}[Proof of Theorem \ref{thm:main} in the case of orbit equivalence]
Suppose that $G$ and $H$ are orbit equivalent. Assuming that $H$ is Bernoulli $\mathscr{C}$-superrigid, we must show that $G$ is Bernoulli $\mathscr{C}$-superrigid as well. Toward this goal, let $G\cc Y$ be a p.m.p.\ action of $G$, let $L\in \mathscr{C}$, and let $w_1:G\times (Y\otimes [0,1]^G) \rightarrow L$ be a measurable cocycle for the product action $G\cc Y\otimes [0,1]^G$. We must show that $w_1$ is cohomologous to a cocycle which descends to $Y$. This is clear if $G$ is finite, so we may assume that $G$ is infinite.

Since $G$ and $H$ are orbit equivalent we may find free ergodic p.m.p.\ actions $G\cc Z$ and $H\cc Z$ which generate the same orbit equivalence relation. Let $u:H\times Z\rightarrow G$ and $v:G\times Z \rightarrow H$ be the associated rearrangement cocycles, so that $u(h,z)z = hz$ and $v(g,z)z=gz$ for all $g\in G$, $h\in H$, $z\in Z$. Let $X_0:= Z\otimes Y$, let $G\cc X_0$ be the diagonal product action, and let $H\cc X_0$ be the action defined by $h\cdot (z,y)= (hz,u(h,z)y)$. These actions of $G$ and $H$ then generate the same orbit equivalence relation on $X_0$.

Let $X_G:= X_0\otimes [0,1]^G$ and let $G\cc X_G$ be the diagonal product of $G\cc X_0$ with the standard Bernoulli action of $G$. Likewise, let $X_H:= X_0\otimes [0,1]^H$, and let $H\cc X_H$ be the diagonal product of $H\cc X_0$ with the standard Bernoulli action of $H$. There is then a natural orbit equivalence between these actions of $G$ and $H$, which we now describe (and which is essentially the same fact underlying \cite[Proposition 3.2]{BHI15} and Proposition \ref{prop:BernIsom} below). For each $z\in Z$ define $\Phi _z: [0,1]^G \rightarrow [0,1]^H$ by $\Phi _z (b)(h) = b(u(h, h^{-1}z))$, and define $\Phi : X_G \rightarrow X_H$ by $\Phi (z,y,b) := (z,y,\Phi _z(b))$.

\begin{claim}\label{claim:OE} $\Phi$ is an orbit equivalence.
\end{claim}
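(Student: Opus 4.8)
The plan is to verify that the explicitly defined map $\Phi : X_G \rightarrow X_H$, given by $\Phi(z,y,b) := (z,y,\Phi_z(b))$ where $\Phi_z(b)(h) = b(u(h,h^{-1}z))$, is a measure-space isomorphism carrying $G$-orbits onto $H$-orbits. Since $G\cc Z$ and $H\cc Z$ already generate the same orbit equivalence relation on $Z$ (and hence on $X_0 = Z\otimes Y$), and $\Phi$ acts as the identity on the $X_0$-coordinate, the essential content is to show two things: first, that $\Phi$ is a bijective measure-preserving map between $X_G$ and $X_H$; and second, that $\Phi$ intertwines the $G$-orbit equivalence relation on $X_G$ with the $H$-orbit equivalence relation on $X_H$.

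First I would check that $\Phi$ is a measure isomorphism. The key point is that for each fixed $z$, the map $\Phi_z : [0,1]^G \rightarrow [0,1]^H$ is a bijection that pushes the product Lebesgue measure forward to product Lebesgue measure. This follows because $h\mapsto u(h,h^{-1}z)$ is a bijection from $H$ onto $G$ for each fixed $z$: indeed, the rearrangement cocycle satisfies $u(h,z')z' = hz'$, and one checks using the cocycle identity that $g\mapsto v(g,z)$ inverts this correspondence in the appropriate sense, so that $\Phi_z$ is simply a coordinate permutation of the product space, hence measure-preserving. Since the coordinate reindexing is Borel in $z$, the full map $\Phi$ is a Borel measure isomorphism of $X_G$ onto $X_H$ by Fubini.

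Next I would verify the orbit-matching. It suffices to show that $\Phi$ sends each $G$-orbit into the corresponding $H$-orbit and vice versa; by symmetry (and since $\Phi$ is a bijection) it is enough to show that for every $g\in G$ and a.e.\ point, $\Phi(g\cdot(z,y,b))$ lies in the $H$-orbit of $\Phi(z,y,b)$. Concretely, I would compute $\Phi(g\cdot(z,y,b))$ and show it equals $v(g,z)\cdot \Phi(z,y,b)$, using that $v(g,z)z = gz$ so the $Z$- and $Y$-coordinates match the prescribed $H$-action $h\cdot(z,y)=(hz,u(h,z)y)$, and then checking the Bernoulli coordinate: one must confirm that $\Phi_{gz}(g\cdot b)$ and $v(g,z)\cdot \Phi_z(b)$ agree as elements of $[0,1]^H$. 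This last identity is where the precise definition $\Phi_z(b)(h)=b(u(h,h^{-1}z))$ is engineered to work, and unwinding it reduces to the cocycle relations among $u$ and $v$ evaluated at the appropriate group elements.

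The main obstacle will be the bookkeeping in this final Bernoulli-coordinate computation: one must carefully track how the Bernoulli shift interacts with the $z$-dependent reindexing $\Phi_z$, and repeatedly apply the cocycle identities $u(h_1 h_2, z) = u(h_1, h_2 z)u(h_2,z)$ and the compatibility $u(v(g,z),z)=g$ to see that the two expressions coincide coordinate-by-coordinate. None of the individual manipulations is deep, but the index substitutions (evaluating shifted sequences at group elements of the form $h^{-1}z$, etc.) are error-prone, and the relatively independent nature of the construction must be respected throughout. Once this identity is established, the claim follows immediately.
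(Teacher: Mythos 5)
Your proposal is correct and follows essentially the same route as the paper: first use the bijection $h\mapsto u(h,h^{-1}z)$ to see that each $\Phi_z$, and hence $\Phi$, is a measure space isomorphism, then establish the intertwining identity $\Phi(g\cdot(z,y,b)) = v(g,z)\cdot\Phi(z,y,b)$ from the cocycle relations and conclude that $G$-orbits are carried onto $H$-orbits. The only cosmetic difference is at the last step, where the paper invokes bijectivity of $g\mapsto v(g,z)$ to get surjectivity onto the $H$-orbit directly, while you appeal to symmetry of $\Phi^{-1}$ (whose Bernoulli coordinate is $c\mapsto c(v(g,g^{-1}z))$, so the symmetric argument does go through).
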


\begin{proof}[Proof of Claim \ref{claim:OE}] Since for each $z\in Z$ the map $h\mapsto u(h,h^{-1}z )$ is a bijection from $H$ to $G$, it follows that $\Phi _z$ is a measure space isomorphism from $[0,1]^G$ to $[0,1]^H$, and therefore $\Phi$ is a measure space isomorphism from $X_G$ to $X_H$. Moreover, the cocycle identity implies that $\Phi _{gz}(gb) = v(g,z)\Phi _z(b)$ and hence $\Phi (g(z,y,b)) = v(g,z)\Phi (z,y,b)$ for all $g\in G$ and $(z,y,b)\in X_G$. Since for each $z\in Z$ the map $g\mapsto v(g,z)$ is a bijection from $G$ to $H$, it follows that $\Phi$ maps the $G$-orbit of each $(z,y,b) \in X_G$ bijectively onto the $H$-orbit of $\Phi (z,y,b)\in X_H$, and hence $\Phi$ is an orbit equivalence. \qedhere[Claim \ref{claim:OE}]
\end{proof}

Let $\tilde{w}_1 : G\times X_G \rightarrow L$ be the lift of the cocycle $w_1$ to $X_G$, i.e., $\tilde{w}_1(g,(z,y,b)) := w_1(g,(y,b))$. Then the map $\tilde{w}_1^{\Phi} : H\times X_H \rightarrow L$, defined by
\[
\tilde{w}_1^{\Phi}(h,(z,y,c)):= \tilde{w}_1(u(h,z), \Phi ^{-1}(z,y,c))= w_1(u(h,z), (y,\Phi _z^{-1}(c))),
\]
is a cocycle of $H\cc X_H$. Since $H$ is Bernoulli $\mathscr{C}$-superrigid, there exists a map $F':X_H\rightarrow L$ and a cocycle $w_0' : H\times X_0 \rightarrow L$ such that for a.e.\ $(z,y,c)\in X_H$ and all $h\in H$ we have
\[
F'(h(z,y,c))w_0' (h, (z,y))F'(z,y,c)^{-1} = \tilde{w}_1^{\Phi}(h,(z,y,c)) ,
\]
i.e.,
\begin{equation}\label{eqn:hw0}
F'(h(z,y,c))w_0'(h, (z,y))F'(z,y,c)^{-1} = w_1(u(h,z), (y,\Phi _z^{-1}(c))) .
\end{equation}
Let $F= F'\circ \Phi$ and define the cocycle $w_0:G\times X_0 \rightarrow L$ by $w_0(g,(z,y)) := w_0' (v(g,z), (z,y))$.  Then for a.e.\ $(z,y,b)\in  X_G$, for all $g\in G$, by applying \eqref{eqn:hw0} to $h:= v(g,z)$ (so that $g=u(h,z)$) and $c:= \Phi _z(b)$ we obtain
\begin{equation}\label{eqn:gw0}
F(g(z,y,b))w_0(g,(z,y))F(z,y,b)^{-1} = w_1(g,(y,b)).
\end{equation}
Let $X_1:= Y\otimes [0,1]^G$. Since $G$ is infinite, the Bernoulli action $G\cc [0,1]^G$ is weakly mixing, and hence the extension $X_1\rightarrow  Y$ is relatively weakly mixing. It now follows from \eqref{eqn:gw0} and Lemma \ref{lem:Furman} that $w_1$ is cohomologous to a cocycle which descends to $Y$. This completes the proof.
\end{proof}

\section{Groupoid preliminaries}\label{sec:prelim}

\subsection{Discrete p.m.p.\ groupoids}
Given a groupoid $\mc{G}$, we denote its unit space by $\mc{G}^0$, and its source and range maps by $s_{\mc{G}}:\mc{G}\rightarrow \mc{G}^0$ and $r_{\mc{G}}:\mc{G}\rightarrow \mc{G}^0$ respectively, or simply by $s$ and $r$ when $\mc{G}$ is clear from the context. We always view $\mc{G}^0$ as a subset of $\mc{G}$, so that $\mc{G}^0 = \{ g\in \mc{G} \, : \, s(g)=g=r(g) \}$, and we will often denote elements of $\mc{G}^0$ by the letters $x$, $y$, and $z$. Given subsets $A,B\subseteq \mc{G}$ we let $AB = \{ gh \, : \, g\in A , \ h\in B , \, s(g)=r(h) \}$, and we let $A^{-1}= \{ g^{-1} \, : \, g\in A \}$. Given $g\in \mc{G}$ and $A\subseteq \mc{G}$ we write $gA$ for $\{ g \} A$. Thus, for $x\in \mc{G}^0$ we have $\mc{G}x=s^{-1}(x)$ and $x\mc{G}=r^{-1}(x)$.

Given $A\subseteq \mc{G}^0$, we let $\mc{G}_A:=A\mc{G}A$, so that $\mc{G}_A$ is itself a groupoid, called the {\bf reduction} of $\mc{G}$ to $A$, with unit space $\mc{G}_A^0:=A$, and groupoid operations inherited from $\mc{G}$. A {\bf complete unit section} of $\mc{G}$ is a subset $A$ of $\mc{G}^0$ satisfying $\mc{G}A\mc{G}= \mc{G}$; equivalently, $A\subseteq \mc{G}^0$ is a complete unit section of $\mc{G}$ if and only if $A$ meets every equivalence class of the equivalence relation
\[
\mc{R}_{\mc{G}}:= \{ (r(g),s(g)) \, : \, g\in \mc{G} \}
\]
on $\mc{G}^0$, associated to $\mc{G}$. The groupoid $\mc{G}$ is said to be {\bf principal} if the map $\mc{G}\rightarrow \mc{R}_{\mc{G}}$, $g\mapsto (r(g),s(g))$, is injective, i.e., $\mc{G}$ is principal if $\mc{G}$ is isomorphic to an equivalence relation. A subset $A$ of $\mc{G}^0$ is {\bf $\mc{G}$-invariant} if it is $\mc{R}_{\mc{G}}$-invariant, i.e., if $A$ is a union of $\mc{R}_{\mc{G}}$-classes. A subset $\phi$ of $\mc{G}$ is called a {\bf bisection} of $\mc{G}$ if the source and range maps are both injective on $\phi$. If $\phi$ is a bisection of $\mc{G}$, then for each $g\in \mc{G}r(\phi )$ (respectively: $g\in s(\phi ) \mc{G})$ the set $g\phi$ (respectively: $\phi g$) consists of a single element of $\mc{G}$, and by abuse of notation we will also denote this element by $g\phi$ (respectively: $\phi g$). Likewise, if $g\in \mc{G}_{r(\phi )}$ then we identify $\phi ^{-1} g\phi$ with an element of $\mc{G}_{s(\phi )}$; the map $g\mapsto \phi ^{-1}g\phi$ is then a groupoid isomorphism from $\mc{G}_{r(\phi )}$ to $\mc{G}_{s(\phi )}$.

Let $\mc{H}$ and $\mc{G}$ be groupoids. A groupoid homomorphism $q:\mc{H}\rightarrow \mc{G}$ is said to be {\bf locally bijective} if for each $y\in \mc{H}^0$ the restriction $q_y: \mc{H}y\rightarrow \mc{G}q(y)$, of $q$ to $\mc{H}y$, is a bijection from $\mc{H}y$ to $\mc{G}q(y)$. Since for each $y\in \mc{H}$ the inverse map $h\mapsto h^{-1}$ provides a bijection from $\mc{H}y$ to $y\mc{H}$, this is equivalent to requiring that, for each $y\in \mc{H}^0$, the restriction $q^y:y\mc{H}\rightarrow q(y)\mc{G}$, of $q$  to $y\mc{H}$, is a bijection from $y\mc{H}$ to $q(y)\mc{G}$. 
If $p:\mc{H}\rightarrow \mc{G}$ is a groupoid homomorphism then we let $p^0 : \mc{H}^0\rightarrow \mc{G}^0$ denote the restriction of $p$ to $\mc{H}^0$.

A {\bf discrete Borel groupoid} is a groupoid $\mc{G}$, equipped with the structure of a standard Borel space, such that $\mc{G}^0$ is a Borel subset of $\mc{G}$, the source and range maps $s$ and $r$ are Borel and countable-to-one, and the multiplication and inversion maps are both Borel. In the category of discrete Borel groupoids, an isomorphism from $\mc{G}$ to $\mc{H}$ is a groupoid isomorphism $\varphi :\mc{G}\rightarrow \mc{H}$, from $\mc{G}$ to $\mc{H}$, which is Borel (i.e., $\varphi$ is a Borel map). We will often make use of the {\bf Lusin-Novikov Uniformization Theorem} \cite[Theorem 18.10]{Ke95}, which implies that if $f:A\rightarrow Y$ is a countable-to-one Borel function from a Borel subset $A$ of a standard Borel space $X$ into a standard Borel space $Y$, then $f(A)$ is Borel and there is a countable partition of $A$ into Borel sets on each of which $f$ is in injective.

A {\bf discrete p.m.p.\ groupoid} is a discrete Borel groupoid $\mc{G}$, along with a Borel probability measure $\mu _{\mc{G}^0}$ on $\mc{G}^0$ satisfying
\begin{equation}\label{eqn:pmp}
\int _{\mc{G}^0}|xD| \, d\mu _{\mc{G}^0}(x) = \int _{\mc{G}^0}|Dx| \, d\mu _{\mc{G}^0}(x)
\end{equation}
for all Borel subsets $D$ of $\mc{G}$, where $|D|$ denotes the cardinality of a set $D$. We let $\mu _{\mc{G}}$ denote the associated $\sigma$-finite Borel measure on $\mc{G}$, i.e., with $\mu _{\mc{G}}(D)$ given by \eqref{eqn:pmp} for $D\subseteq \mc{G}$ Borel.  A discrete p.m.p.\ groupoid $\mc{G}$ is called {\bf aperiodic} if $x\mc{G}$ is infinite for a.e.\ $x\in \mc{G}^0$, and $\mc{G}$ is called {\bf periodic} if $x\mc{G}$ is finite for a.e.\ $x\in \mc{G}^0$. We say that $\mc{G}$ is {\bf ergodic} if every $\mc{G}$-invariant Borel subsets of $\mc{G}^0$ is either $\mu _{\mc{G}^0}$-null or $\mu _{\mc{G}^0}$-conull. We will frequently make use of the fact that if $\mc{G}$ is an ergodic discrete p.m.p.\ groupoid, and if $A$ and $B$ are Borel subsets of $\mc{G}^0$ having the same measure then, after discarding a null set, there exists a Borel bisection $\theta$ of $\mc{G}$ with $s(\theta )=A$ and $r(\theta ) =B$; this follows from the case of principal groupoids (see \cite[Lemma 7.10]{KM04}) by the Lusin-Novikov Uniformization Theorem.

We call a measure preserving groupoid homomorphism $\mc{G}\xra{p}\mc{H}$ between discrete p.m.p.\ groupoids $\mc{G}$ and $\mc{H}$ a {\bf groupoid extension}. Equivalently, a groupoid extension from $\mc{G}$ to $\mc{H}$ is a Borel homomorphism $p$ from $\mc{G}$ to $\mc{H}$ such that (i) $p$ takes $\mu _{\mc{G}^0}$ to $\mu _{\mc{H}^0}$ and (ii) $p$ is locally bijective a.e., i.e., $p$ maps $x\mc{G}$ bijectively onto $p(x)\mc{H}$ for $\mu _{\mc{G} ^0}$-a.e.\ $x\in \mc{G}^0$. As indicated by Remark \ref{rem:null}, we will identify two extensions $\mc{G}\xra{p}\mc{H}$ and $\mc{G}\xra{q}\mc{H}$ if they agree on a $\mu _{\mc{G}}$-conull set. Section \ref{sec:ext} discusses extensions in more detail.

An {\bf isomorphism} from the discrete p.m.p.\ groupoid $\mc{G}$ to the discrete p.m.p.\ groupoid $\mc{H}$ is a measure preserving Borel groupoid isomorphism between conull subgroupoids $\mc{G}'\subseteq \mc{G}$ and $\mc{H}'\subseteq \mc{H}$. Given such an isomorphism from $\mc{G}$ to $\mc{H}$, after discarding null sets from $\mc{G}'$ and $\mc{H}'$ we can always assume that $\mc{G}' = \mc{G}_A$ and $\mc{H}'=\mc{H}_B$ for some $\mu _{\mc{G}^0}$-conull subset $A$ of $\mc{G}^0$ and some $\mu _{\mc{H}^0}$-conull subset $B$ of $\mc{H}^0$. Since we identify maps which agree a.e., this definition of isomorphism coincides with isomorphism in the category $\bm{\mathrm{DPG}}$, whose objects are discrete p.m.p.\ groupoids, and whose morphisms are given by (equality-a.e.\ equivalence classes of) groupoid extensions.
%

\subsection{Fibered Borel spaces} Let $X$ be a standard Borel space.

A {\bf fibered Borel space} over $X$ is a standard Borel space $Z$, along with a Borel map $p:Z\rightarrow X$ from $Z$ to $X$. We write $Z_x$ for the fiber $p^{-1}(x)$ over $x\in X$. We will often leave the fibering map implicit, e.g., stating that $Z$ is a fibered Borel space over $X$, or that $Z\rightarrow X$ is a fibered Borel space. A fibered Borel space $Z$ is called {\bf discrete} if each fiber is countable. If $\mc{G}$ is a discrete Borel groupoid then the source and range maps each make $\mc{G}$ into a discrete fibered Borel space over $\mc{G}^0$.

Let $\mc{Y}$ be a countable collection of fibered Borel spaces over $X$. Then the {\bf fibered product} over $X$ of the collection $\mc{Y}$, denoted $\bigotimes _X \mc{Y}$, is the fibered Borel space whose fiber over $x\in X$ is the direct product $(\bigotimes _X \mc{Y} )_x := \prod \{ Y_x \, : \, Y\in \mc{Y} \}$. We equip $\bigotimes _X \mc{Y}$ with the standard Borel structure which it inherits as a Borel subset of the direct product $\prod \mc{Y}$. In the case where $\mc{Y}= \{ Y, Z \}$ consists of two spaces, we denote the associated fibered product by $Y\otimes _X Z$.

If $\mc{G}$ is a discrete Borel groupoid and $Y$ is a fibered Borel space over $\mc{G}^0$ then we let $\mc{G}\otimes _{\mc{G}^0}Y$ denote the fibered product with respect to the fibering $s:\mc{G}\rightarrow \mc{G}^0$, so that $\mc{G}\otimes _{\mc{G}^0}Y = \{ (g,y) \in \mc{G}\times Y \, : \, y\in Y_{s(g)} \}$.
\subsection{Fibered measure spaces}\label{subsec:fibmeas} Let $X$ be a standard $\sigma$-finite measure space.

A {\bf fibered measure space} over $X$ is a fibered Borel space $Z$ over $X$, together with an assignment, $x\mapsto \mu _{Z_x}$, where
\begin{itemize}
\item[(i)] $\mu _{Z_x}$ is a $\sigma$-finite Borel measure on $Z$ which concentrates on $Z_x$ for each $x\in X$, and
\item[(ii)] the map $x\mapsto \mu _{Z_x}(B)$ is Borel whenever $B\subseteq Z$ is Borel.
\end{itemize}
If $Z\rightarrow X$ is a fibered measure space over $X$ then we will naturally consider $Z$ itself as a measure space by equipping it with the measure $\mu _Z:=\int _X \mu _{Z_x}\, d\mu _X$. A fibered measure space $Z$ over $X$ is called {\bf discrete} if each fiber $Z_x$ is countable and $\mu _{Z_x}$ is counting measure on $Z_x$. For example, a discrete p.m.p.\ groupoid $\mc{G}$ is naturally a discrete fibered measure space over $\mc{G}^0$ with respect to each of the fibering maps $s:\mc{G}\rightarrow \mc{G}^0$ and $r:\mc{G}\rightarrow \mc{G}^0$. In this case, both fiberings induce the same measure on $\mc{G}$, namely $\mu _{\mc{G}}$.

A {\bf fibered probability space} over $X$ is a fibered measure space $Z$ over $X$ in which $\mu _{Z_x}$ is a probability measure for a.e.\ $x\in X$; in this case the fibering map is measure preserving, taking $\mu _Z$ to $\mu _X$. Conversely, by the measure disintegration theorem, if $p:Z\rightarrow X$ is a measure preserving map between standard $\sigma$-finite measure spaces $Z$ and $X$, then there is an essentially unique integral representation, $\mu _Z = \int _X \mu _{Z_x}\, d\mu _X$, of $\mu _Z$ that makes $Z$ a fibered probability space over $X$.

Let $Y$ and $Z$ be fibered measure spaces over $X$. Then $Y\otimes _X Z$ is naturally a fibered measure space over $X$, where the fiber $(Y\otimes _X Z )_x = Y_x \times Z_x$ over $x\in X$ is equipped with the product measure $\mu _{(Y\otimes _X Z)_x} := \mu _{Y_x}\otimes \mu _{Z_x}$. In this setting, we call $Y\otimes _X Z$ the {\bf relatively independent product} over $X$ of $Y$ and $Z$. Let $\mc{Y}$ be a countable collection of fibered probability spaces over $X$. Then $\bigotimes _X \mc{Y}$ is naturally a fibered probability space over $X$, where the fiber $(\bigotimes _X \mc{Y})_x = \prod \{ Y_x \, : \, Y\in \mc{Y} \}$ over $x\in X$ is equipped with the product measure $\mu _{(\bigotimes _X\mc{Y})_x} := \prod \{ \mu _{Y_x} \, : \, Y\in \mc{Y} \}$. We call the resulting fibered probability space $\bigotimes _X \mc{Y}$ the {\bf relatively independent product} over $X$ of the collection $\mc{Y}$.

Let $Y$ and $Z$ be two fibered measure spaces over $X$. A {\bf fiberwise measure preserving map} over $X$ from $Y$ to $Z$ is a Borel map $\varphi :Y\rightarrow Z$ satisfying $\varphi _*\mu _{Y_x}= \mu _{Z_x}$ for a.e.\ $x\in X$. We identify two such maps if they agree on a $\mu _Y$-conull set. We call such a map $\varphi$ a {\bf fiberwise isomorphism} over $X$ if there is a $\mu _Y$-conull subset of $Y$ on which $\varphi$ is injective.

\subsection{Actions of groupoids} Let $\mc{G}$ be a discrete Borel groupoid. A {\bf Borel action} of $\mc{G}$ consists of a fibered Borel space $Y$ over $\mc{G}^0$, along with an assignment, $g\mapsto \alpha (g)$, of a Borel isomorphism $\alpha (g) : Y_{s(g)}\rightarrow Y_{r(g)}$ to each $g\in \mc{G}$, such that $\alpha (g)\alpha (h)=\alpha (gh)$ whenever $s(g)=r(h)$, and the associated map $(g,y)\mapsto \alpha (g)y$ is Borel from $\mc{G}\otimes _{\mc{G}^0}Y$ to $Y$. We denote such an action by $\alpha$, or $\alpha : \mc{G}\cc Y$, or $\mc{G}\cc Y$, depending on the context. 

Suppose now that $\mc{G}$ is a discrete p.m.p.\ groupoid. A {\bf measure preserving action} of $\mc{G}$ is a Borel action $\alpha :\mc{G}\cc Y$, in which $Y$ is a fibered measure space over $\mc{G}^0$, and for each $g\in \mc{G}$ the transformation $\alpha (g) : Y_{s(g)}\rightarrow Y_{r(g)}$ is a measure space isomorphism. We call such an action $\mc{G}\cc Y$ a {\bf discrete action} of $\mc{G}$ if $Y$ is a discrete fibered measure space over $\mc{G}^0$, and we call $\mc{G}\cc Y$ a {\bf p.m.p.\ action} of $\mc{G}$ if $Y$ is a fibered probability space over $\mc{G}^0$.

Two measure preserving actions $\alpha : \mc{G}\cc Y$ and $\beta : \mc{G}\cc Z$ of $\mc{G}$ are {\bf isomorphic}, denoted $\alpha \cong \beta$, if there exists a fiberwise isomorphism $\varphi :Y\rightarrow Z$ over $\mc{G}^0$ which is $\mc{G}$-equivariant, i.e., which satisfies $\varphi (\alpha (g)y)=\beta (g)\varphi (y)$ for a.e.\ $(g,y)\in \mc{G}\otimes _{\mc{G}^0}Y$.

The {\bf product} of two measure preserving actions $\alpha : \mc{G}\cc Y$ and $\beta : \mc{G}\cc Z$ of $\mc{G}$ is the measure preserving action $\alpha \otimes \beta : \mc{G}\cc Y\otimes _{\mc{G}^0}Z$ defined by
\[
(\alpha \otimes \beta )(g):=\alpha (g)\otimes \beta (g): Y_{s(g)}\otimes Z_{s(g)}\rightarrow Y_{r(g)}\otimes Z_{r(g)}
\]
for each $g\in \mc{G}$, i.e., $(\alpha \otimes \beta )(g)(y,z) = (\alpha (g)y,\beta (g)z)$ for $(y,z)\in Y_{s(g)}\times Z_{s(g)}$. The product of a countable collection $\{ \alpha _i :\mc{G}\cc Y_i \} _{i \in I}$ of p.m.p.\ actions of $\mc{G}$ is the p.m.p.\ action $\bigotimes _{i\in I} \alpha _i : \mc{G}\cc \bigotimes _{\mc{G}^0} \{ Y_i\, : \, i\in I \}$, defined by $(\bigotimes _{i\in I}\alpha _i )(g) := \prod \{ \alpha _i (g) \, : \, i \in I \}$ for each $g\in \mc{G}$, i.e., $(\bigotimes _{i\in I}\alpha _i )(g)$ maps $(y_i)_{i\in I} \in \prod \{ (Y_i)_{s(g)} \, : \, i \in I \}$ to $(\alpha _i (g)y_i )_{i\in I}\in \prod \{ (Y_i)_{r(g)} \, : \, i \in I \}$.

\subsection{Bernoulli actions of groupoids}
Let $X$ be a standard probability space and let $V$ be a discrete fibered measure space over $X$. Given another standard probability space $K$, we define the fibered probability space $K^{\otimes V}$ over $X$ as follows: for each $x\in X$, the fiber $K^{\otimes V}_x$ over $x$ is the product space $K^{\otimes V}_x := K^{V_x}$, equipped with the product measure $\mu _{K^{\otimes V}_x} := \mu _K ^{V_x}$. We equip $K^{\otimes V}:= \bigsqcup _{x\in X}K^{V_x}$ with the $\sigma$-algebra generated by the fibering map $p: K^{\otimes V}\rightarrow X$ along with all maps $K^{\otimes V}\rightarrow K$ of the form $f\mapsto f(t(p(f)))$, where $t:X\rightarrow V$ ranges over all Borel sections of $V\rightarrow X$. As a consequence of the Lusin-Novikov Uniformization Theorem, the resulting measurable space is standard Borel, and hence $K^{\otimes V}$ is a fibered probability space over $X$.

\begin{definition}[Bernoulli actions]
Let $\mc{G}$ be a discrete p.m.p.\ groupoid. Let $a: \mc{G}\cc V$ be a discrete action of $\mc{G}$ and let $K$ be a standard probability space. The {\bf generalized Bernoulli action associated to $a$ with base $K$} is the p.m.p.\ action $\beta ^a_{K}: \mc{G} \cc K^{\otimes V}$ defined by taking, for each $g\in \mc{G}$, the transformation $\beta ^a_{K}(g) : K^{V_{s(g)}}\rightarrow K^{V_{r(g)}}$ to be given by
\[
\beta ^a_{K}(g) (f) (v) := f(a(g)^{-1}v )
\]
for $f\in K^{V_{s(g)}}$, $v\in V_{r(g)}$. In the special case where $V=\mc{G}$, with fibering $V_x:=x\mc{G}$ for $x\in \mc{G}^0$, and where $a=\ell _{\mc{G}}$ is the left translation action $\ell _{\mc{G}}: \mc{G}\cc \mc{G}$ given by $\ell _{\mc{G}}(g)(h):=gh$, we write $\beta ^{\mc{G}}_K$ for the p.m.p.\ action $\beta ^{\ell _{\mc{G}}}_K$, and we call the p.m.p.\ action
\[
\beta ^{\mc{G}}_{K}: \mc{G}\cc K^{\otimes \mc{G}}
\]
the {\bf standard Bernoulli action of $\mc{G}$ with base $K$}. If $A$ is a Borel subset of $\mc{G}^0$ then $\mc{G}A$ fibers over $\mc{G}^0$ via $(\mc{G}A)_x:=x\mc{G}A$ for $x\in \mc{G}^0$, and we let $\ell _{\mc{G}A}:\mc{G}\cc \mc{G}A$ denote the left translation action of $\mc{G}$ on $\mc{G}A$, and write $\beta ^{\mc{G}A}_K$ for the action $\beta ^{\ell _{\mc{G}A}}_K$. If no base space $K$ is specified then we will always take the base space to be the unit interval $[0,1]$, equipped with Lebesgue measure $\mu _{[0,1]}$. For example, we will write $\beta ^{\mc{G}A}$ for $\beta ^{\mc{G}A}_{[0,1]}$.
\end{definition}

It is clear that $\beta ^a _K$ and $\beta ^a_M$ are isomorphic whenever the probability spaces $K$ and $M$ are isomorphic, and that $\beta ^a_K$ and $\beta ^b _K$ are isomorphic whenever the discrete actions $a$ and $b$ are isomorphic. We also have the following isomorphisms, whose proof is straightforward.

\begin{proposition}\label{prop:clear}
Let $a:\mc{G}\cc V$ be a discrete action of the discrete p.m.p.\ groupoid $\mc{G}$.
\begin{enumerate}
\item Let $\{ V_i\} _{i\in I}$ be a countable partition of $V$ into $a$-invariant Borel sets, and for each $i\in I$ let $a_i:\mc{G}\cc V_i$ denote the action of $\mc{G}$ restricted to $V_i$. Then $\beta ^a_K \cong \bigotimes _{i\in I}\beta ^{a_i}_K$ for any standard probability space $K$.
\item Let $\{ K_i\} _{i\in I}$ be a countable collection of standard probability spaces and let $K=\prod \{ K_i \, : \, i\in I \}$ be the product space. Then $\bigotimes _{i\in I}\beta ^a _{K_i}\cong \beta ^a _{K}$.
\end{enumerate}
\end{proposition}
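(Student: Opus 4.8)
The plan is to exhibit in each case a single canonical fiberwise rearrangement of coordinates and to verify that it is Borel, fiberwise measure preserving, and $\mc{G}$-equivariant; the claimed isomorphisms then follow immediately from the definition of isomorphism of p.m.p.\ actions. No analytic input is needed beyond Fubini's theorem and the explicit description of the Borel structure on $K^{\otimes V}$ given just before the definition of Bernoulli actions.

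For part (1), I would define $\varphi$ fiberwise by $\varphi (f) := (f\res (V_i)_x)_{i\in I}$ for $f\in K^{V_x}$, viewed as a map from $K^{\otimes V}$ into $\bigotimes _{\mc{G}^0}\{ K^{\otimes V_i} \, : \, i\in I \}$. Since the fiber $V_x$ is the disjoint union $\bigsqcup _{i\in I}(V_i)_x$, restriction to the pieces is a bijection $K^{V_x}\to \prod _{i\in I}K^{(V_i)_x}$, and under this bijection the product measure $\mu _K^{V_x}$ becomes the product $\prod _{i\in I}\mu _K^{(V_i)_x}$ of the fiber measures; hence $\varphi$ is fiberwise bijective and fiberwise measure preserving. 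Because each $V_i$ is $a$-invariant, the map $a(g)^{-1}$ carries $(V_i)_{r(g)}$ onto $(V_i)_{s(g)}$, so $(\beta ^a_K(g)f)\res (V_i)_{r(g)}$ depends only on $f\res (V_i)_{s(g)}$ and equals $\beta ^{a_i}_K(g)(f\res (V_i)_{s(g)})$. This is precisely the statement that $\varphi$ intertwines $\beta ^a_K$ with $\bigotimes _{i\in I}\beta ^{a_i}_K$.

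For part (2), I would use the exponential law. Writing $K=\prod _i K_i$, define $\psi$ fiberwise by $\psi ((f_i)_{i\in I})(v) := (f_i(v))_{i\in I}$, a map from $\bigotimes _{\mc{G}^0}\{ K_i^{\otimes V} \, : \, i\in I \}$ to $K^{\otimes V}$ that curries a tuple of $K_i$-valued functions on $V_x$ into a single $K$-valued function on $V_x$. This is a bijection $\prod _{i\in I}K_i^{V_x}\to K^{V_x}$, it carries $\prod _{i\in I}\mu _{K_i}^{V_x}$ to $\mu _K^{V_x}=(\prod _i\mu _{K_i})^{V_x}$ by Fubini, and, since the action on either side is nothing but precomposition with $a(g)^{-1}$ in each coordinate, it commutes with the two actions. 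Thus $\psi$ is $\mc{G}$-equivariant and witnesses $\bigotimes _{i\in I}\beta ^a_{K_i}\cong \beta ^a_K$.

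The only point requiring care — the \emph{main obstacle}, such as it is — is Borel measurability of $\varphi$ and $\psi$ as maps of the total spaces, since the fibered structures are specified only fiber-by-fiber. For this I would check measurability on the generators of the $\sigma$-algebra on $K^{\otimes V}$, namely the fibering map together with the evaluations $f\mapsto f(t(p(f)))$ along Borel sections $t$ of $V\rightarrow \mc{G}^0$. Composing these generators with $\varphi$ (respectively $\psi$) yields Borel functions, where one invokes the Lusin--Novikov Uniformization Theorem to produce the needed Borel sections of the pieces $V_i\rightarrow \mc{G}^0$ (respectively of $V\rightarrow \mc{G}^0$). Once measurability is established, both maps are fiberwise isomorphisms over $\mc{G}^0$ and are $\mc{G}$-equivariant, so they furnish the asserted isomorphisms of p.m.p.\ actions.
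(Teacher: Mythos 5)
Your proposal is correct; note that the paper itself offers no proof of Proposition \ref{prop:clear}, simply declaring it straightforward, and your argument (restriction of configurations to the invariant pieces $(V_i)_x$ for part (1), and the currying bijection $\prod_i K_i^{V_x}\to K^{V_x}$ for part (2), with Borel measurability checked against the generating evaluations along Lusin--Novikov sections) is exactly the canonical verification the authors intended to leave to the reader. Nothing is missing, and the measurability discussion is the right place to spend the care you spent.
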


\begin{lemma}\label{lem:Biso}
Let $\mc{G}$ be a discrete p.m.p.\ groupoid and let $A$ be a measurable subsets of $\mc{G}^0$ which is a complete unit section for $\mc{G}$. Then the actions $\beta ^{\mc{G}}$ and $\beta ^{\mc{G}A}$ are isomorphic.
\end{lemma}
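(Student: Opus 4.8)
The plan is to exhibit $\beta ^{\mc{G}A}$ as the factor of $\beta ^{\mc{G}}$ obtained by restricting coordinates from $\mc{G}$ to $\mc{G}A$, and then to recover the lost coordinates equivariantly by absorbing them into the base, using the measure isomorphism $[0,1]^{\N}\cong [0,1]$. Throughout, recall that $x\mc{G}=r^{-1}(x)$ and $x\mc{G}A=r^{-1}(x)\cap s^{-1}(A)$, so that the fibre of $[0,1]^{\otimes \mc{G}}$ over $x$ is $[0,1]^{x\mc{G}}$ and that of $[0,1]^{\otimes \mc{G}A}$ is $[0,1]^{x\mc{G}A}$, and that $g$ acts on either by $(\beta (g)f)(v)=f(g^{-1}v)$.

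First I would fix a Borel transversal for the section. Since $A$ is a complete unit section, every $\mc{R}_{\mc{G}}$-class meets $A$, so $r\colon \mc{G}A\to \mc{G}^0$ is a.e.\ surjective and countable-to-one; by the Lusin--Novikov Uniformization Theorem I may choose a Borel section $y\mapsto \psi _y$ of this map, arranged so that $\psi _y$ is the unit $y$ whenever $y\in A$. Thus $r(\psi _y)=y$ and $s(\psi _y)\in A$ for all $y$. Define the Borel map $\Theta \colon \mc{G}\to \mc{G}A$ by $\Theta (h):=h\,\psi _{s(h)}$. One checks directly that $\Theta$ is $\mc{G}$-equivariant for left translation ($\Theta (gh)=g\Theta (h)$), that it restricts to a surjection $x\mc{G}\to x\mc{G}A$ fixing $x\mc{G}A$ pointwise, and that its fibre over $h'\in x\mc{G}A$ is $\{ h'\psi _y^{-1}:y\in N_{s(h')}\}$, where $N_a:=\{ y\in \mc{G}^0:s(\psi _y)=a\}$. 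Each $N_a$ is a nonempty (it contains $a$) countable set, and $y\mapsto h'\psi _y^{-1}$ is a bijection from $N_{s(h')}$ onto $\Theta ^{-1}(h')$; in particular these fibres partition $x\mc{G}$.

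Next I would build the isomorphism. For each $a\in A$ the set $N_a$ is nonempty and countable, so $[0,1]^{N_a}\cong [0,1]$ as measure spaces; choosing such isomorphisms $\iota _a$ in a Borel fashion in $a$ (partition $A$ by the cardinality of $N_a$ and enumerate $N_a$ via Lusin--Novikov applied to $s\colon T\to A$), I define $\varphi \colon [0,1]^{\otimes \mc{G}}\to [0,1]^{\otimes \mc{G}A}$ fibrewise by
\[
\varphi _x(f)(h'):=\iota _{s(h')}\big((f(h'\psi _y^{-1}))_{y\in N_{s(h')}}\big),\qquad h'\in x\mc{G}A.
\]
Since the fibres $\Theta ^{-1}(h')$ partition $x\mc{G}$, regrouping coordinates gives a measure-preserving identification $[0,1]^{x\mc{G}}\cong \prod _{h'\in x\mc{G}A}[0,1]^{N_{s(h')}}$, and applying the $\iota _{s(h')}$ coordinatewise shows each $\varphi _x$ is a measure isomorphism onto $[0,1]^{x\mc{G}A}$; hence $\varphi$ is a fibrewise isomorphism over $\mc{G}^0$. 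Equivariance is where the construction pays off: from $\Theta (g^{-1}v)=g^{-1}\Theta (v)$ and $s(g^{-1}h')=s(h')$ one sees that $g$ carries $\Theta ^{-1}(g^{-1}h')$ onto $\Theta ^{-1}(h')$ by the identity on the index set $N_{s(h')}$, so that applying the single isomorphism $\iota _{s(h')}$ in every fibre intertwines the two actions; a one-line computation then yields $\varphi _{r(g)}\circ \beta ^{\mc{G}}(g)=\beta ^{\mc{G}A}(g)\circ \varphi _{s(g)}$ for a.e.\ $g$, so $\varphi$ witnesses $\beta ^{\mc{G}}\cong \beta ^{\mc{G}A}$.

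I expect the only real difficulty to be organizational rather than conceptual: the measurable-selection bookkeeping of constructing the Borel transversal $y\mapsto \psi _y$ with $\psi _y=y$ on $A$, and of choosing the absorbing isomorphisms $\iota _a$ Borel-measurably in $a$ as the finite-or-countable sets $N_a$ vary. Once the transversal is in place the equivariance is essentially automatic, since under the identification furnished by $\Theta$ the groupoid acts trivially within each coordinate fibre.
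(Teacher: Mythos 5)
Your proof is correct, but it takes a genuinely different route from the paper's. The paper argues in three stages: for ergodic $\mc{G}$ with $\mu_{\mc{G}}(A)=1/n$ it partitions $\mc{G}^0$ into $n$ sets of equal measure, uses ergodicity to produce bisections identifying each left-translation action $\ell_{\mc{G}A_i}$ with $\ell_{\mc{G}A}$, and then invokes Proposition \ref{prop:clear} together with $[0,1]^n\cong[0,1]$; the general ergodic case follows by partitioning $A$ into pieces of measure $1/n_i$; and the non-ergodic case is handled by ergodic decomposition plus a measurable selection of the fibrewise isomorphisms $T_w$. Your argument instead builds the isomorphism directly and uniformly, with no ergodicity and no case analysis: the Lusin--Novikov transversal $y\mapsto\psi_y$ of $r\colon\mc{G}A\to\mc{G}^0$ (normalized to be the unit on $A$) gives the equivariant retraction $\Theta(h)=h\psi_{s(h)}$, whose fibres over $h'\in x\mc{G}A$ are indexed by the set $N_{s(h')}$ depending only on $s(h')$; since left translation acts as the identity on these index sets, regrouping coordinates along the fibres and absorbing each block into the base via Borel-chosen isomorphisms $\iota_a\colon[0,1]^{N_a}\to[0,1]$ is automatically equivariant. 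What your approach buys is uniformity and self-containedness: ergodicity and the ergodic-decomposition selection step (which the paper only sketches) disappear, and the only selection issues are the transversal and the Borel choice of the $\iota_a$, both local and routine. What the paper's approach buys is economy within its own development: it recycles Proposition \ref{prop:clear} and the bisection technique, and the same pattern reappears almost verbatim in the proof of Lemma \ref{lem:Bsubg}. It is worth noting that your transversal device is essentially the one the paper itself uses for Proposition \ref{prop:restrict} (the section $\psi\subseteq A\mc{H}$ with $A\subseteq\psi$), so your proof can be read as transporting that technique to the Bernoulli setting.
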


\begin{proof}
Assume first that $\mc{G}$ is ergodic. Then the hypothesis that the measurable set $A$ is a complete unit sections for $\mc{G}$ is equivalent, modulo a null set, to assuming that $A$ has positive measure.

Case 1: $\mu _{\mc{G}}(A) = 1/n$ for some positive integer $n$. In this case we may find a measurable partition $A_0,A_1,\dots , A_{n-1}$ of $\mc{G}^0$ with $\mu _{\mc{G}}(A_i)=\mu _{\mc{G}}(A)$ for all $i<n$. The set $\mc{G}$ decomposes $\ell _{\mc{G}}$-invariantly as the disjoint union of the sets $\mc{G}A_i$, $0\leq i<n$. Since $\mc{G}$ is ergodic and $\mu _{\mc{G}}(A_i)=\mu _{\mc{G}}(A)$, each of the actions $\ell _{\mc{G}A_i}$ is isomorphic to $\ell _{\mc{G}A}$. This is because, by ergodicity of $\mc{G}$, we can find a measurable bisection $\sigma _i$ with $s(\sigma _i)=A$ and $r(\sigma _i)= A_i$, and hence an isomorphism from $\ell _{\mc{G}A_i}$ to $\ell _{\mc{G}A}$ is given by the map $\mc{G}A_i\rightarrow \mc{G}A$, $g\mapsto g\sigma$. These isomorphisms of discrete actions yield isomorphisms $\beta ^{\mc{G}A_i}_{[0,1]}\cong \beta ^{\mc{G}A}_{[0,1]}$, of p.m.p.\ actions, for all $0\leq i < n$. We therefore have the following isomorphisms of p.m.p.\ actions
\[
\textstyle{\beta ^{\mc{G}}_{[0,1]} \cong \bigotimes _{i<n} \beta ^{\mc{G}A_i}_{[0,1]} \cong \bigotimes _{i<n}\beta ^{\mc{G}A}_{{[0,1]}}\cong \beta ^{\mc{G}A}_{[0,1]^n}  \cong \beta ^{\mc{G}A}_{[0,1]}} ,
\]
where the first and third isomorphisms follow from (1) and (2) respectively of Proposition \ref{prop:clear}, and the last isomorphism holds since the probability spaces $[0,1]^n$ and $[0,1]$ are isomorphic.

Case 2: The general ergodic case. We may find some finite or countably infinite partition $(A_i)_{i\in I}$ of $A$ into measurable sets, such that for each $i\in I$ we have $\mu _{\mc{G}}(A_i)=1/n_i$ for some integer $n_i\geq 1$. By Case 1, we have $\beta ^{\mc{G}A_i}_{[0,1]}\cong \beta ^{\mc{G}}_{[0,1]}$ for each $i\in I$ and hence
\[
\textstyle{\beta ^{\mc{G}A}_{[0,1]}\cong \bigotimes _{i\in I} \beta ^{\mc{G}A_i}_{[0,1]} \cong \bigotimes _{i\in I} \beta ^{\mc{G}}_{[0,1]} \cong \beta ^{\mc{G}}_{[0,1]^{I}} \cong \beta ^{\mc{G}}_{{[0,1]}}},
\]
where the first and third isomorphisms once again follow from (1) and (2) respectively of Proposition \ref{prop:clear}, and the last isomorphism holds since the probability spaces $[0,1]^{I}$ and $[0,1]$ are isomorphic.

Case 3: The general case. Let $\pi : \mc{G}\rightarrow W$ be an ergodic decomposition map for $\mc{G}$, i.e., $\pi$ is a $\mc{G}$-invariant ($\pi (g)= \pi (s(g))=\pi (r(g))$ for all $g\in \mc{G}$) Borel map to a standard Borel space $W$, and for each $w\in W$ the fiber $\mc{G}_w$, equipped with the groupoid operations inherited from $\mc{G}$, is an ergodic discrete p.m.p.\ groupoid with respect to the probability measure $\mu _{\mc{G}^0_w}$ coming from the disintegration of $\mu _{\mc{G}^0}$ over $W$ via the restriction of $\pi$ to $\mc{G}^0$. For each $w\in W$ the Borel set $A_w:= A\cap \mc{G}_w$ is then a complete unit section for $\mc{G}_w$. We view $[0,1]^{\otimes \mc{G}A}$ as a fibered probability space over $W$ via the composition $[0,1]^{\otimes \mc{G}A}\rightarrow \mc{G}^0\rightarrow W$, and likewise for $[0,1]^{\otimes \mc{G}}$. For each $w\in W$, by restricting the p.m.p.\ actions $\beta ^{\mc{G}A}$ and $\beta ^{\mc{G}}$ to the fiber over $w$, we obtain p.m.p.\ actions $(\beta ^{\mc{G}A})_w: \mc{G}_w \cc ([0,1]^{\otimes \mc{G}A})_w$ and $(\beta ^{\mc{G}})_w: \mc{G}_w\cc ([0,1]^{\otimes \mc{G}})_w$ of $\mc{G}_w$ which are seen to coincide with the actions $\beta ^{\mc{G}_wA_w}$ and $\beta ^{\mc{G}_w}$ respectively.

Thus, by Case 2, for each $w\in W$ we can find an isomorphism $T _w : ([0,1]^{\otimes \mc{G}A})_w \rightarrow ([0,1]^{\otimes \mc{G}})_w$ of $\mc{G}_w$-actions $(\beta ^{\mc{G}A})_w$ and $(\beta ^{\mc{G}})_w$. By a standard measurable selection argument, we can choose the assignment $w\mapsto T_w$ in such a way that, after discarding a null set, the disjoint union $T:= \bigsqcup _W T_w$ is measurable, and hence gives an isomorphism of the p.m.p.\ $\mc{G}$-actions $\beta ^{\mc{G}A}$ and $\beta ^{\mc{G}}$. Alternatively, an inspection of the proof of the ergodic case shows that one may choose the isomorphisms $T_w$, $w\in W$, systematically to ensure that their union is measurable.
\end{proof}

\subsection{Groupoid extensions and translation groupoids}\label{sec:ext}
\label{sec:transl}
\begin{definition}[Groupoid extensions]\label{def:ext}
We call a measure preserving groupoid homomorphism $\mc{G}\xra{p}\mc{H}$ between discrete p.m.p.\ groupoids $\mc{G}$ and $\mc{H}$ a {\bf groupoid extension}. Equivalently, a groupoid extension from $\mc{G}$ to $\mc{H}$ is a Borel homomorphism $p$ from $\mc{G}$ to $\mc{H}$ such that (i) $p$ takes $\mu _{\mc{G}^0}$ to $\mu _{\mc{H}^0}$ and (ii) $p$ is locally bijective, i.e., $p$ maps $x\mc{G}$ bijectively onto $p(x)\mc{H}$ for $\mu _{\mc{G} ^0}$-a.e.\ $x\in \mc{G}^0$. We identify two extensions $\mc{G}\xra{p}\mc{H}$ and $\mc{G}\xra{q}\mc{H}$ if they agree on a $\mu _{\mc{G}}$-conull set.
\end{definition}

If $\mc{G}\xra{p}\mc{H}$ is a groupoid extension then we write $p^0$ for the restriction of $p$ to $\mc{G}^0$. By disintegrating $\mu _{\mc{G}}$ via $p$, we will also view $\mc{G}$ as a fibered probability space over $\mc{H}$. Likewise, we will also view $\mc{G}^0$ as a fibered probability space over $\mc{H}^0$.

\begin{definition}[Translation groupoids] Let $\alpha :\mc{H}\cc Z$ be a Borel action of a discrete Borel groupoid $\mc{H}$. The associated {\bf translation groupoid}, denoted $\mc{H}\ltimes Z$, is the discrete Borel \ groupoid on the set $\mc{H}\ltimes Z := \mc{H}\otimes _{\mc{H}^0}Z$ with unit space $(\mc{H}\ltimes Z )^0 := \mc{H}^0 \otimes _{\mc{H}^0}Z$, source and range maps
\[
s(h,z):= (s_{\mc{H}}(h),z), \text{ and } r(h,z):= (r_\mc{H}(h), \alpha (h)z ) ,
\]
and multiplication and inversion defined by $(h_1, \alpha (h_0)z )(h_0, z) := (h_1h_0, z )$, and $(h,z)^{-1}:=(h^{-1},\alpha (h)z )$. This makes $\mc{H}\ltimes Z$ a discrete Borel groupoid, whose unit space $(\mc{H}\ltimes Z )^0$, viewed as a fibered Borel space over $\mc{H}^0$, is naturally isomorphic to $Z$ via the right projection map $(\mc{H}\ltimes Z)^0\rightarrow Z$. The left projection map $\mc{H}\ltimes Z \rightarrow \mc{H}$ is then a locally bijective Borel groupoid homomorphism

If $\mc{H}$ is a discrete p.m.p.\ groupoid and $\alpha : \mc{H}\cc Z$ is a p.m.p.\ action, then we naturally view the translation groupoid $\mc{H}\ltimes Z$ as a discrete p.m.p.\ groupoid by equipping it with the measure $\mu _{\mc{H}\ltimes Z} := \mu _{\mc{H}} \otimes _{\mc{H}^0} \mu _Z$, so that $\mu _{(\mc{H}\ltimes Z )^0} = \mu _{\mc{H}^0}\otimes _{\mc{H}^0} \mu _Z$. This makes the left projection map $\mc{H}\ltimes Z \rightarrow \mc{H}$ an extension of discrete p.m.p.\ groupoids.
\end{definition}

\begin{remark}[Category theoretic remark]\label{rem:category} We obtain a category $\bm{\mathrm{DPG}}$, whose objects are discrete p.m.p.\ groupoids, and whose morphisms are given by (equality-a.e.\ equivalence classes of) groupoid extensions. We let $\bm{\mathrm{DPG}^2}$ denote the associated arrow category, i.e., whose objects are groupoid extensions and whose morphisms are commuting squares, i.e., a morphism from $\mc{G}_0\xra{p_0} \mc{H}_0$ to $\mc{G}_1\xra{p_1}\mc{H}_1$ is a pair $(\mc{G}_0\xra{p_{\mc{G}}} \mc{G}_1, \mc{H}_0\xra{p_{\mc{H}}} \mc{H}_1 )$ of extensions with $p_1\circ p_{\mc{G}}=p_{\mc{H}}\circ p_0$. We will also be interested in the slice category $\bm{\mathrm{DPG}}_{\mc{H}}$ over a fixed discrete p.m.p.\ groupoid $\mc{H}$, whose objects are extensions $\mc{G}\rightarrow \mc{H}$ of $\mc{H}$, and whose morphisms from $\mc{G}_0\xra{p_0}\mc{H}$ to $\mc{G}_1\xra{p_1}\mc{H}$ are given by groupoid extensions $\mc{G}_0\xra{p}\mc{G}_1$ satisfying $p_1\circ p = p_0$. When we wish to indicate this context, we will call an object of $\bm{\mathrm{DPG}}_{\mc{H}}$ an {\bf $\mc{H}$-extension}, and a morphism of $\bm{\mathrm{DPG}}_{\mc{H}}$ a {\bf morphism of $\mc{H}$-extensions}.

There is a straightforward equivalence between the category $\bm{\mathrm{DPG}}_{\mc{H}}$, of $\mc{H}$-extensions, and the category of p.m.p.\ actions of $\mc{H}$ (with $\mc{H}$-equivariant fiberwise measure preserving maps as morphisms) which, at the level of objects, is implemented by the translation groupoid construction together with the following proposition, whose proof may be found in \cite[Lemma 3.27]{GL17b}.
\end{remark}

\begin{proposition}\label{prop:ExtActCorr}
Let $\mc{G}$ and $\mc{H}$ be discrete Borel groupoids and let $p:\mc{G}\rightarrow \mc{H}$ be a locally bijective Borel groupoid homomorphism. Then there exists a unique Borel action $\alpha _p : \mc{H}\cc \mc{G}^0$, of $\mc{H}$ on the fibered Borel space $p^0:\mc{G}^0\rightarrow \mc{H}^0$, satisfying $\alpha _p(p(g))s_{\mc{G}}(g) = r_{\mc{G}}(g)$ for all $g\in \mc{G}$.

If $\mc{G}$ and $\mc{H}$ are discrete p.m.p.\ groupoid and $\mc{G}\xra{p}\mc{H}$ is an $\mc{H}$-extension, then the action $\alpha _p : \mc{H}\cc \mc{G}^0$ is a p.m.p.\ action on the fibered probability space $p^0: \mc{G}^0\rightarrow \mc{H}^0$, and the map $p\otimes s_{\mc{G}}$: $g\mapsto (p(g),s_{\mc{G}}(g))$ is an isomorphism of $\mc{H}$-extensions:
\[
\xymatrix{
\mc{G}\ar[d]_{p} \ar[rr]^{p\otimes s_{\mc{G}}}_{\cong} & \    &\mc{H}\ltimes \mc{G}^0 \ar[d] \\
\mc{H}\ar[rr]^{\mathrm{id}} & \     &\mc{H}
}
\]
\end{proposition}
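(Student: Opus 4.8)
The plan is to build the action $\alpha _p$ directly from the local bijectivity of $p$ and then to recognize $\mc{G}$ as the translation groupoid of that action. First I would observe that the map $p\otimes s_{\mc{G}}\colon \mc{G}\to \mc{H}\otimes _{\mc{H}^0}\mc{G}^0$, $g\mapsto (p(g),s_{\mc{G}}(g))$, does land in $\mc{H}\otimes _{\mc{H}^0}\mc{G}^0$ (because $s_{\mc{H}}(p(g))=p^0(s_{\mc{G}}(g))$ since $p$ is a homomorphism), and that it is a \emph{bijection}: given $(h,x)$ with $s_{\mc{H}}(h)=p^0(x)$, local bijectivity of $p$ says the restriction $\mc{G}x\to \mc{H}p^0(x)$ is a bijection, so there is a unique $g\in \mc{G}x$ with $p(g)=h$. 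As a Borel bijection of standard Borel spaces its inverse is Borel (Lusin--Novikov), and I would then \emph{define} $\alpha _p(h)x:=r_{\mc{G}}\big((p\otimes s_{\mc{G}})^{-1}(h,x)\big)$. This forces the defining relation $\alpha _p(p(g))s_{\mc{G}}(g)=r_{\mc{G}}(g)$; conversely that relation determines $\alpha _p$ on every admissible pair, since each such pair is $(p(g),s_{\mc{G}}(g))$ for a unique $g$, giving both existence and uniqueness, with Borelness of $(h,y)\mapsto \alpha _p(h)y$ immediate from that of $(p\otimes s_{\mc{G}})^{-1}$ and $r_{\mc{G}}$.

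Next I would verify the action axioms. That $\alpha _p(h)$ carries the fiber over $s_{\mc{H}}(h)$ into the fiber over $r_{\mc{H}}(h)$ follows from $p^0(r_{\mc{G}}(g))=r_{\mc{H}}(p(g))$. For the cocycle identity, given composable $h_1,h_0$ and $x$ over $s_{\mc{H}}(h_0)$, pick the unique $g_0,g_1$ with $p(g_i)=h_i$, $s_{\mc{G}}(g_0)=x$, and $s_{\mc{G}}(g_1)=\alpha _p(h_0)x=r_{\mc{G}}(g_0)$; then $g_1g_0$ is the unique element over $x$ mapping to $h_1h_0$, so $\alpha _p(h_1h_0)x=r_{\mc{G}}(g_1g_0)=r_{\mc{G}}(g_1)=\alpha _p(h_1)\alpha _p(h_0)x$, and bijectivity of each $\alpha _p(h)$ then follows from $\alpha _p(h^{-1})\alpha _p(h)=\mathrm{id}$. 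The same bookkeeping shows that $p\otimes s_{\mc{G}}$ intertwines the groupoid operations of $\mc{G}$ with those of $\mc{H}\ltimes \mc{G}^0$ (using the multiplication and inversion rules $(h_1,\alpha _p(h_0)z)(h_0,z)=(h_1h_0,z)$ and $(h,z)^{-1}=(h^{-1},\alpha _p(h)z)$), and clearly the left projection $\mc{H}\ltimes \mc{G}^0\to \mc{H}$ pulls back along $p\otimes s_{\mc{G}}$ to $p$, so the square commutes. This completes the Borel statement.

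For the p.m.p.\ statement I would first show that $p\otimes s_{\mc{G}}$ carries $\mu _{\mc{G}}$ to $\nu :=\mu _{\mc{H}}\otimes _{\mc{H}^0}\mu _{\mc{G}^0}$. Disintegrating $\mu _{\mc{G}^0}$ over $\mc{H}^0$ via $p^0$ and using \eqref{eqn:pmp} to write $\mu _{\mc{G}}(E)=\int _{\mc{H}^0}\int _{\mc{G}^0_y}\sum _{g\in \mc{G}x}\mathbf{1}_E(g)\,d\mu _{\mc{G}^0_y}(x)\,d\mu _{\mc{H}^0}(y)$, and recalling that $g\mapsto p(g)$ is a bijection $\mc{G}x\to \mc{H}p^0(x)$, the pushforward of $\mu _{\mc{G}}$ under $p\otimes s_{\mc{G}}$ is exactly $\nu$. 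The substantive point is that this computation uses only local bijectivity, \emph{not} the p.m.p.\ condition, so it cannot by itself give that $\alpha _p$ preserves fiber measures; for that I would exploit that \eqref{eqn:pmp} for $\mc{G}$ is equivalent to the inversion $g\mapsto g^{-1}$ preserving $\mu _{\mc{G}}$. Transporting this inversion across $p\otimes s_{\mc{G}}$ yields precisely the translation-groupoid inversion $(h,x)\mapsto (h^{-1},\alpha _p(h)x)$, which is therefore $\nu$-preserving; unpacking this invariance against the disintegration of $\nu$ gives $\alpha _p(h)_*\mu _{\mc{G}^0_{s(h)}}=\mu _{\mc{G}^0_{r(h)}}$ for a.e.\ $h$, i.e.\ that $\alpha _p$ is a p.m.p.\ action. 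Consequently $\mc{H}\ltimes \mc{G}^0$ is a genuine discrete p.m.p.\ groupoid, and $p\otimes s_{\mc{G}}$ is a measure-preserving groupoid isomorphism commuting with the projections to $\mc{H}$, i.e.\ an isomorphism of $\mc{H}$-extensions.

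The main obstacle is exactly the logical ordering just flagged: since the translation-groupoid measure $\nu$ is only declared p.m.p.\ once $\alpha _p$ is known to preserve fibers, one cannot circularly invoke $\mc{H}\ltimes \mc{G}^0\in \bm{\mathrm{DPG}}$ in order to conclude that $\alpha _p$ is p.m.p. The inversion-invariance argument is what breaks this circle, and the care required lies in the measurable-selection and disintegration bookkeeping that converts the global identity "inversion preserves $\nu$" into the a.e.\ fiberwise statement about $\alpha _p$.
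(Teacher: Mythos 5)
Your proposal is correct, and in fact there is nothing in the paper to compare it against: the paper does not prove Proposition \ref{prop:ExtActCorr} at all, but instead cites \cite[Lemma 3.27]{GL17b} for it, so your argument supplies a proof that the paper outsources. The structure you chose is the natural one and is complete. The observation that $p\otimes s_{\mc{G}}$ is a Borel bijection onto $\mc{H}\otimes _{\mc{H}^0}\mc{G}^0$ gives existence, uniqueness, and Borelness of $\alpha _p$ in one stroke, and the verification of the action axioms and of the homomorphism property of $p\otimes s_{\mc{G}}$ is, as you say, bookkeeping (one elided point worth writing out: units of $\mc{H}$ act as the identity, by applying the defining relation to units $g=x$ of $\mc{G}$, which is what makes the cocycle identity yield bijectivity of each $\alpha _p(h)$). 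The genuinely substantive step is exactly the one you isolated: the pushforward identity $(p\otimes s_{\mc{G}})_*\mu _{\mc{G}}=\mu _{\mc{H}}\otimes _{\mc{H}^0}\mu _{\mc{G}^0}$ uses only the source-fiber formula for $\mu _{\mc{G}}$ and the condition $p^0_*\mu _{\mc{G}^0}=\mu _{\mc{H}^0}$, so it carries no information about $\alpha _p$; the missing input is the equality of the two integrals in \eqref{eqn:pmp} for $\mc{G}$, i.e.\ invariance of $\mu _{\mc{G}}$ under inversion, and conjugating inversion by $p\otimes s_{\mc{G}}$ turns this into invariance of $\mu _{\mc{H}}\otimes _{\mc{H}^0}\mu _{\mc{G}^0}$ under $(h,x)\mapsto (h^{-1},\alpha _p(h)x)$. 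Comparing the resulting two disintegrations over $(\mc{H},\mu _{\mc{H}})$ via the left projection, and using inversion-invariance of $\mu _{\mc{H}}$ together with essential uniqueness of disintegration, does give $\alpha _p(h)_*\mu _{\mc{G}^0_{s(h)}}=\mu _{\mc{G}^0_{r(h)}}$ for a.e.\ $h$, which is all that is required by the paper's conventions (Remark \ref{rem:null}); the same conventions absorb the fact that, for an $\mc{H}$-extension, local bijectivity holds only a.e., so that $p\otimes s_{\mc{G}}$ is a bijection only after discarding a null set.
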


\subsection{Products and relatively independent products of extensions} Let $\mc{G}$, $\mc{H}$, and $\mc{K}$ be discrete Borel groupoids and let $p : \mc{G}\rightarrow \mc{H}$ and $q:\mc{K}\rightarrow \mc{H}$ be Borel measurable groupoid homomorphisms. The fibered product $\mc{G}\otimes _{\mc{H}}\mc{K}$ is naturally endowed with the structure of a discrete Borel groupoid, by taking the unit space to be the fibered product $(\mc{G}\otimes _{\mc{H}}\mc{K} )^0 := \mc{G}^0\otimes _{\mc{H}^0}\mc{K}^0$, and performing all groupoid operations coordinate-wise, i.e., defining the source, range, multiplication, and inversion maps respectively by $s(g,k):= (s_{\mc{G}}(g),s_{\mc{K}}(k))$ and $r(g,k):= (r_{\mc{G}}(g),r_{\mc{K}}(k))$, $(g_1,k_1)(g_0,k_0):= (g_1g_0,k_1k_0)$, and $(g,k)^{-1}:= (g^{-1},k^{-1})$.

\begin{definition}[Relatively Independent products]
Let $\mc{H}$ be a discrete p.m.p.\ groupoid and let $\mc{G}\xra{p}\mc{H}$ and $\mc{K}\xra{q} \mc{H}$ be two $\mc{H}$-extensions. The relatively independent product measure $\mu _{\mc{G}\otimes _{\mc{H}}\mc{K}}$ then makes the fibered product groupoid $\mc{G}\otimes _{\mc{H}}\mc{K}$ a discrete p.m.p.\ groupoid, called the {\bf relatively independent product} over $\mc{H}$ of $\mc{G}$ and $\mc{K}$. The unit space of $\mc{G}\otimes _{\mc{H}}\mc{K}$, as a probability space, is seen to be the relatively independent product $(\mc{G}\otimes _{\mc{H}}\mc{K} )^0 = \mc{G}^0\otimes _{\mc{H}^0}\mc{K}^0$ over $\mc{H}^0$ of the fibered probability spaces $p^0:\mc{G}^0\rightarrow \mc{H}^0$ and $q^0:\mc{K}^0\rightarrow \mc{H}^0$. The left and right projection maps, $\mc{G}\otimes _{\mc{H}}\mc{K} \xra{p_{\mc{G}}}\mc{G}$ and $\mc{G}\otimes _{\mc{H}}\mc{K} \xra{p_{\mc{K}}} \mc{K}$, are then groupoid extensions, and we have the $\mc{H}$-extension $\mc{G}\otimes _{\mc{H}}\mc{K}\rightarrow \mc{H}$ via the map $p\circ p_{\mc{G}} = q\circ p_{\mc{K}}$.
\end{definition}

\begin{remark}\label{rem:translext} If $\mc{H}\cc Y$ and $\mc{H}\cc Z$ are p.m.p.\ actions of $\mc{H}$ then the map $((h,y),(h,z))\mapsto (h,(y,z))$ gives an isomorphism from the relatively independent product $(\mc{H}\ltimes Y)\otimes _{\mc{H}} (\mc{H}\ltimes Z )$ of the associated translation groupoids, to the translation groupoid $\mc{H}\ltimes (Y\otimes _{\mc{H}^0}Z )$ associated to the product action $\mc{H}\cc Y\otimes _{\mc{H}^0}Z$.
\end{remark}

\begin{remark}\label{rem:prodprod}
Parallel to Remark \ref{rem:translext}, if $\mc{K}$, $\mc{G}$, and $\mc{L}$ are all extensions of $\mc{H}$, then the $\mc{K}$-extensions $(\mc{K}\otimes _{\mc{H}}\mc{G})\otimes _{\mc{K}}(\mc{K}\otimes _{\mc{H}}\mc{L})\rightarrow \mc{K}$ and $\mc{K}\otimes _{\mc{H}}(\mc{G}\otimes _{\mc{H}}\mc{L})\rightarrow \mc{K}$ are isomorphic via $((k,g),(k,l))\mapsto (k,(g,l))$.
\end{remark}

\begin{definition}[Independent products]
The {\bf independent product} of two discrete p.m.p.\ groupoids $\mc{G}$ and $\mc{K}$ is the discrete p.m.p.\ groupoid $\mc{G}\otimes \mc{K}$ with unit space $\mc{G}^0\otimes \mc{K}^0$, and all groupoid operations performed coordinate-wise.
\end{definition}

\begin{definition}[Ergodic and weakly mixing extensions]
A groupoid extension $\mc{G}\xra{p}\mc{H}$ is said to be {\bf relatively ergodic} if for every measurable subset $A$ of $\mc{G}^0$ which is $\mc{G}$-invariant (i.e., with $s(g)\in A$ if and only if $r(g)\in A$ for $\mu _{\mc{G}}$-a.e.\ $g\in \mc{G}$) there is some measurable subset $B$ of $\mc{H}^0$ such that $\mu _{\mc{G}^0}(A\triangle p^{-1}(B))=0$. 
A groupoid extension $\mc{G}\ra \mc{H}$ is said to be {\bf relatively weakly mixing} if the extension $\mc{G}\otimes _{\mc{H}}\mc{G}\rightarrow \mc{H}$ is relatively ergodic.
\end{definition}

We will use the following well-known characterization of relatively weakly mixing extensions.

\begin{lemma}\label{lem:Gweakmix}
Let $\mc{G}\rightarrow \mc{H}$ be an extension of discrete p.m.p.\ groupoids. Then the following are equivalent:
\begin{enumerate}
\item The extension $\mc{G}\rightarrow \mc{H}$ is relatively weakly mixing.
\item The extension $\mc{G}\otimes _{\mc{H}}\mc{G} \rightarrow \mc{H}$ is relatively weakly mixing.
\item Given any extension $\mc{K}\rightarrow \mc{H}$, the extension $\mc{K}\otimes _{\mc{H}}\mc{G}\rightarrow \mc{K}$ is relatively ergodic.
\end{enumerate}
\end{lemma}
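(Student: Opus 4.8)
The plan is to treat the implication $(1)\Rightarrow(3)$ as the one substantive step and to obtain everything else by soft formal manipulations. First I would record two elementary facts. (a) \emph{Transitivity of relative ergodicity}: if $\mc{A}\to\mc{B}$ and $\mc{B}\to\mc{C}$ are relatively ergodic extensions, then so is the composite $\mc{A}\to\mc{C}$, because for an $\mc{A}$-invariant $A\subseteq\mc{A}^0$ relative ergodicity of $\mc{A}\to\mc{B}$ writes $A$ as the pullback of some $B\subseteq\mc{B}^0$, local bijectivity of the extension forces $B$ to be $\mc{B}$-invariant, and relative ergodicity of $\mc{B}\to\mc{C}$ then pulls $B$ back from $\mc{C}^0$. (b) \emph{Relative weak mixing implies relative ergodicity}: this is precisely the case $\mc{K}=\mc{H}$ of statement $(3)$, since $\mc{H}\otimes_{\mc{H}}\mc{G}=\mc{G}$, so it falls out of $(1)\Rightarrow(3)$. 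I would also note that the implication ``property $(3)$ for an extension $\Rightarrow$ that extension is relatively weakly mixing'' is general: applying $(3)$ with $\mc{K}=\mc{G}$ gives $\mc{G}\otimes_{\mc{H}}\mc{G}\to\mc{G}$ relatively ergodic, applying it with $\mc{K}=\mc{H}$ gives $\mc{G}\to\mc{H}$ relatively ergodic, and composing via (a) yields $(1)$; the same two special cases work verbatim for any extension in place of $\mc{G}\to\mc{H}$.

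Granting $(1)\Rightarrow(3)$, the remaining directions are bookkeeping with the associativity of relative products (Remark \ref{rem:prodprod}) and (a),(b). The paragraph above already gives $(3)\Rightarrow(1)$. For $(2)\Leftrightarrow(1)$: the direction $(2)\Rightarrow(1)$ is (b) applied to the extension $\mc{G}\otimes_{\mc{H}}\mc{G}\to\mc{H}$. Conversely, to prove $(1)\Rightarrow(2)$ I would check that $\mc{G}\otimes_{\mc{H}}\mc{G}\to\mc{H}$ itself satisfies the conclusion of $(3)$: for any $\mc{K}\to\mc{H}$ one has $\mc{K}\otimes_{\mc{H}}(\mc{G}\otimes_{\mc{H}}\mc{G})\cong(\mc{K}\otimes_{\mc{H}}\mc{G})\otimes_{\mc{H}}\mc{G}$, which maps relatively ergodically onto $\mc{K}\otimes_{\mc{H}}\mc{G}$ (by $(3)$ for $\mc{G}$ with base $\mc{K}\otimes_{\mc{H}}\mc{G}$) and then onto $\mc{K}$ (by $(3)$ for $\mc{G}$ with base $\mc{K}$, using (a)); feeding this into the general ``$(3)\Rightarrow$ weakly mixing'' implication for the extension $\mc{G}\otimes_{\mc{H}}\mc{G}\to\mc{H}$ gives $(2)$.

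The heart is $(1)\Rightarrow(3)$, which I would prove by a relative, groupoid version of the Furstenberg--Zimmer argument. Fix $\mc{K}\to\mc{H}$ and an invariant $f\in L^\infty\big((\mc{K}\otimes_{\mc{H}}\mc{G})^0\big)=L^\infty(\mc{K}^0\otimes_{\mc{H}^0}\mc{G}^0)$ for the diagonal $\mc{H}$-action; it suffices to show $f$ is $\mc{K}^0$-measurable. The fiberwise integral $E(f\mid\mc{K}^0)(k):=\int_{\mc{G}^0_y}f(k,x)\,d\mu_{\mc{G}^0_y}(x)$, where $y=q^0(k)$, is again invariant (by a change of variables using that the fiber maps $\alpha_p(h)$, $\alpha_q(h)$ are measure preserving), so subtracting it I may assume $E(f\mid\mc{K}^0)=0$ and aim for $f=0$. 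Form $g(k,x,x'):=f(k,x)\overline{f(k,x')}$ on $\mc{K}^0\otimes_{\mc{H}^0}\mc{G}^0\otimes_{\mc{H}^0}\mc{G}^0$, which is invariant, and integrate out the $\mc{K}^0$-coordinate to obtain
\[
G(x,x'):=\int_{\mc{K}^0_y}f(k,x)\overline{f(k,x')}\,d\mu_{\mc{K}^0_y}(k)=\big\langle f(\cdot,x),\,f(\cdot,x')\big\rangle_{L^2(\mc{K}^0_y)},
\]
which by the same computation is an invariant function on $\mc{G}^0\otimes_{\mc{H}^0}\mc{G}^0$. Hypothesis $(1)$ says $\mc{G}\otimes_{\mc{H}}\mc{G}\to\mc{H}$ is relatively ergodic, so $G$ depends only on $y$, say $G\equiv G_0(y)$; integrating $G$ over the fiber $\mc{G}^0_y\times\mc{G}^0_y$ identifies the constant as $G_0(y)=\big\|E(f\mid\mc{K}^0)|_{\mc{K}^0_y}\big\|^2_{L^2(\mc{K}^0_y)}=0$. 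Thus the Gram kernel $(x,x')\mapsto\langle f(\cdot,x),f(\cdot,x')\rangle$ vanishes for a.e.\ pair; expanding in a countable orthonormal basis of the separable Hilbert space $L^2(\mc{K}^0_y)$ shows its Hilbert--Schmidt mass is zero, which forces $f(\cdot,x)=0$ for a.e.\ $x$, i.e.\ $f=0$.

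I expect the main obstacle to be not conceptual but the measurable bookkeeping of the fibered picture: one must verify that $y\mapsto L^2(\mc{K}^0_y)$ and the sections $x\mapsto f(\cdot,x)$ form genuine measurable fields, that the conditional expectation $E(\cdot\mid\mc{K}^0)$ and the integrated kernel $G$ are Borel, and that the substitutions via $\alpha_p(h)$ and $\alpha_q(h)$ are valid almost everywhere. The disintegration framework of \S\ref{subsec:fibmeas} together with the Lusin--Novikov Uniformization Theorem supplies exactly these measurability statements, after which the argument is entirely soft, using only invariance, fiberwise measure preservation, the relative ergodicity furnished by $(1)$, and separability of the fiber $L^2$-spaces.
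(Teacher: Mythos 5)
Your proposal is correct, but there is no proof in the paper to compare it against: the authors state Lemma \ref{lem:Gweakmix} as a ``well-known characterization'' and give no argument at all, so you have supplied a proof where the paper supplies none. Your route is a reasonable self-contained one. The formal skeleton is sound: isolating $(1)\Rightarrow(3)$ as the only analytic step, you recover everything else from transitivity of relative ergodicity along composed extensions (which, as you note, uses that an extension pushes $\mu_{\mc{A}}$ forward to $\mu_{\mc{B}}$ and is locally bijective, so the image of an invariant set is invariant mod null), the identification $\mc{H}\otimes_{\mc{H}}\mc{G}\cong\mc{G}$, and associativity of relatively independent products in the spirit of Remark \ref{rem:prodprod}; I checked that all the composites you invoke agree with the canonical projections, so the bookkeeping goes through. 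The analytic heart is also correct: the fiberwise conditional expectation is $\mc{K}$-invariant, hence its lift is invariant and may be subtracted; the Gram kernel $G(x,x')=\langle f(\cdot,x),f(\cdot,x')\rangle_{L^2(\mc{K}^0_y)}$ is invariant for $\mc{G}\otimes_{\mc{H}}\mc{G}$; hypothesis $(1)$ makes it a.e.\ constant on fibers; integrating identifies the constant with $\|E(f\mid\mc{K}^0)\|^2_{L^2(\mc{K}^0_y)}=0$; and $G=0$ a.e.\ does force $f=0$ fiberwise, since the integral operator $T\phi=\int f(\cdot,x)\phi(x)\,d\mu_{\mc{G}^0_y}(x)$ then satisfies $T^*T=0$, hence $T=0$, hence $f(k,\cdot)=0$ in $L^2$ for a.e.\ $k$ (this is the honest version of your Hilbert--Schmidt remark, and it correctly avoids the trap of evaluating the kernel on the null diagonal). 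Two routine points you should still write out in a full version: (i) the paper's definition of relative ergodicity is phrased for invariant \emph{sets}, so the passage to invariant $L^\infty$ functions (via countably many rational level sets) needs a sentence, both when you apply $(1)$ to $G$ and when you convert your conclusion back into the set statement for $\mc{K}\otimes_{\mc{H}}\mc{G}\rightarrow\mc{K}$; and (ii) the measurable-field bookkeeping you defer to \S\ref{subsec:fibmeas} and Lusin--Novikov (joint measurability of $G$ and of $y\mapsto G_0(y)$, validity of Fubini fiber by fiber) is genuinely needed, though standard. Neither is a gap in the mathematics.
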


\subsection{Equivalence of homomorphisms} Let $\mc{G}$ be a discrete p.m.p.\ groupoid, and let $L$ be a Polish group. Two measurable homomorphisms $u:\mc{G}\rightarrow L$ and $v:\mc{G}\rightarrow L$ into $L$ are said to be {\bf equivalent} if there exists a measurable map $F: \mc{G}^0 \rightarrow L$ such that $F(r(g))u(g)F(s(g))^{-1} = v(g)$ for a.e.\ $g\in \mc{G}$. This is an equivalence relation on measurable homomorphisms from $\mc{G}$ to $L$.

We let $Z(\mc{G},L)$ denote the set of all measurable homomorphisms from $\mc{G}$ to $L$, and we let $H(\mc{G},L )$ denote the set of all equivalence classes of measurable homomorphisms from $\mc{G}$ to $L$.

\subsection{Reductions}\label{sec:red} Let $\mc{H}$ be a discrete p.m.p.\ groupoid and let $A$ be a non-null measurable subset of $\mc{H}^0$. Then we endow the reduction, $\mc{H}_A=A\mc{H}A$, of $\mc{H}$ to $A$, with the structure of a discrete p.m.p.\ groupoid by taking $\mu _{\mc{H}_A^0}$ to be the normalized restriction of $\mu _{\mc{H}^0}$ to $A$. 
If $w:\mc{H}\rightarrow L$ is a homomorphism to a group $L$ then we let $w_A$ denote the restriction of $w$ to $\mc{H}_A$.

Let $\mc{G}\xra{p}\mc{H}$ be an $\mc{H}$-extension. Then we abuse notation and write $\mc{G}_A$ for the reduction $\mc{G}_{p^{-1}(A)}$ of $\mc{G}$ to $p^{-1}(A)\subseteq \mc{G}^0$. We let $\mc{G}_A\xra{p_A} \mc{H}_A$ denote the $\mc{H}_A$-extension obtained by restricting $p$. Likewise, if $w:\mc{G}\rightarrow L$ is a homomorphism to a group $L$ then we write $w_A$ for the restriction of $w$ to $\mc{G}_A$, and if $\mc{G}_0\xra{q}\mc{G}_1$ is a morphism of $\mc{H}$-extensions from $\mc{G}_0\xra{p_0}\mc{H}$ to $\mc{G}_1\xra{p_1}\mc{H}$ then we let $(\mc{G}_0)_A\xra{q_A}(\mc{G}_1)_A$ denote the corresponding morphism of $\mc{H}_A$-extensions, from $(p_0)_A$ to $(p_1)_A$.

\begin{proposition}\label{prop:restrict}
Let $\mc{H}$ be a discrete p.m.p.\ groupoid, let $A\subseteq \mc{H}^0$ be a measurable complete unit section for $\mc{H}$, and let $L$ be a Polish group.
\begin{enumerate}
\item The restriction map $Z(\mc{H}, L ) \rightarrow Z(\mc{H}_A, L)$, $w\mapsto w_A$, is surjective, and descends to a bijection $H(\mc{H}, L ) \rightarrow H(\mc{H}_A , L)$ on the space of equivalence classes of homomorphisms.
\item Let $\mc{G}\xra{p}\mc{H}$ be a groupoid extension and let $w \in Z(\mc{G},L )$. Then $w$ is equivalent to a homomorphism which descends to $\mc{H}$ if and only if $w_A$ is equivalent to a homomorphism which descends to $\mc{H}_A$.
\end{enumerate}
\end{proposition}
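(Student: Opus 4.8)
The plan is to reduce everything to $A$ by means of a single measurable \emph{selector} $\sigma:\mc{H}^0\to\mc{H}$ that pushes each unit into $A$; its construction is the one genuinely measure-theoretic point, after which both parts become algebraic bookkeeping with the equivalence relation on homomorphisms. First I would build $\sigma$ as follows. Since $A$ is a complete unit section, for every $x\in\mc{H}^0$ there is $g$ with $r(g)=x$ and $s(g)\in A$, so the range map restricts to a countable-to-one Borel surjection $r:\mc{H}A\to\mc{H}^0$. The Lusin--Novikov Uniformization Theorem partitions $\mc{H}A$ into countably many Borel pieces on each of which $r$ is injective; disjointifying their $r$-images, and using $A$ itself (as units, which lie in $\mc{H}A$) for the first piece, I obtain a Borel map $\sigma:\mc{H}^0\to\mc{H}$ with $r(\sigma(x))=x$, $s(\sigma(x))\in A$ for a.e.\ $x$, and $\sigma(a)=a$ for $a\in A$. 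For $g\in\mc{H}$ the element $\widetilde{g}:=\sigma(r(g))^{-1}\,g\,\sigma(s(g))$ then lies in $\mc{H}_A$.

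\textbf{Part (1).} For surjectivity of $w\mapsto w_A$, given $w_A\in Z(\mc{H}_A,L)$ I set $w(g):=w_A(\widetilde{g})$. When $s(g_1)=r(g_2)$ one has $\sigma(s(g_1))=\sigma(r(g_2))$, whence $\widetilde{g_1g_2}=\widetilde{g_1}\,\widetilde{g_2}$, so $w$ is a homomorphism; and $\sigma|_A=\mathrm{id}$ gives $w|_{\mc{H}_A}=w_A$. That the induced map $H(\mc{H},L)\to H(\mc{H}_A,L)$ is well defined is immediate, by restricting a conjugating function to $A$. For injectivity, suppose $w,w'\in Z(\mc{H},L)$ have $w_A\sim w'_A$ via $F_A:A\to L$, and define $F:\mc{H}^0\to L$ by $F(x):=w'(\sigma(x))\,F_A(s(\sigma(x)))\,w(\sigma(x))^{-1}$. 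Expanding $F(r(g))\,w(g)\,F(s(g))^{-1}$, the homomorphism property of $w$ collapses the inner factors to $w_A(\widetilde{g})$; the relation for $F_A$ turns $F_A(r(\widetilde g))\,w_A(\widetilde g)\,F_A(s(\widetilde g))^{-1}$ into $w'_A(\widetilde{g})=w'(\sigma(r(g)))^{-1}w'(g)\,w'(\sigma(s(g)))$; and the outer $w'(\sigma(\cdot))$ factors cancel, leaving $w'(g)$. Thus $F$ witnesses $w\sim w'$, giving injectivity, and (1) follows.

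\textbf{Part (2).} I would first observe that $p^{-1}(A)$ is a complete unit section for $\mc{G}$: given $x\in\mc{G}^0$, choose $h\in\mc{H}$ with $r(h)=p(x)$ and $s(h)\in A$, and lift it through the local bijection $x\mc{G}\to p(x)\mc{H}$ to $g\in\mc{G}$ with $r(g)=x$ and $p(s(g))=s(h)\in A$, so $s(g)\in p^{-1}(A)$. Hence Part (1) applies to both $(\mc{H},A)$ and $(\mc{G},p^{-1}(A))$, and in each case restriction-to-$A$ is a bijection on equivalence classes. Both implications are now formal. For ($\Rightarrow$), if $w\sim \bar v\circ p$ with $\bar v\in Z(\mc{H},L)$, then restricting the equivalence and $\bar v$ to $A$ shows $w_A\sim (\bar v|_{\mc{H}_A})\circ p_A$, which descends to $\mc{H}_A$. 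For ($\Leftarrow$), if $w_A\sim \bar v'\circ p_A$ with $\bar v'\in Z(\mc{H}_A,L)$, I extend $\bar v'$ to $\bar v\in Z(\mc{H},L)$ by Part (1) and set $v:=\bar v\circ p$; then $v_A=\bar v'\circ p_A$, so $w_A\sim v_A$, and the injectivity of $H(\mc{G},L)\to H(\mc{G}_{p^{-1}(A)},L)$ forces $w\sim v$, a homomorphism descending to $\mc{H}$.

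The hard part is entirely the construction of the measurable selector $\sigma$ and the verification that the pointwise identities $\widetilde{g_1g_2}=\widetilde{g_1}\widetilde{g_2}$ and the cancellations in the formula for $F$ hold on a conull set; once $\sigma$ is in hand, the bijectivity in Part (1) makes Part (2) a purely formal consequence, with the only extra ingredient being the easy check that $p^{-1}(A)$ is a complete unit section.
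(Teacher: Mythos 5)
Your proof is correct and follows essentially the same route as the paper's: your selector $\sigma$ is just the inverse of the paper's Lusin--Novikov section $\psi\subseteq A\mc{H}$ (oriented by range instead of source), your map $g\mapsto\widetilde{g}=\sigma(r(g))^{-1}g\,\sigma(s(g))$ is the paper's retraction homomorphism $c(h)=\psi h\psi^{-1}$, and your conjugating function $F$ and the formal reduction of Part (2) to the bijectivity in Part (1) match the paper's argument step for step. The only (welcome) difference is that you explicitly verify that $p^{-1}(A)$ is a complete unit section for $\mc{G}$, a point the paper uses implicitly when it applies Part (1) to $\mc{G}$.
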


\begin{proof}
Since $\mc{H}A\mc{H}=\mc{H}$, the countable-to-one Borel map $s: A\mc{H}\rightarrow \mc{H}^0$ is surjective, and it is clearly injective on $A$. By the Lusin-Novikov Uniformization Theorem we may therefore find a Borel subset $\psi$ of $A\mc{H}$, containing $A$, with $|\psi \cap s^{-1}(x) | =1$ for all $x\in \mc{H}^0$. Then for each $h\in \mc{H}$ the sets $\psi h$, $h\psi ^{-1}$, and $\psi h\psi ^{-1}$ each consist of a single element of $A\mc{H}$, $\mc{H}A$, and $\mc{H}_A$ respectively, and by abuse of notation we will also denote these elements by $\psi h$, $h\psi ^{-1}$, and $\psi h\psi ^{-1}$ respectively. The map $c: \mc{H}\rightarrow \mc{H}_A$, $c(h):=\psi h \psi ^{-1}$, is then a groupoid homomorphism extending the identity map on $\mc{H}_A$. The homomorphism $c$ is not measure preserving in general, but it is measure-class preserving since, for a subset $D$ of $\mc{H}_A$ we have $c^{-1}(D)\subseteq \mc{H}D\mc{H}$, so that $(c_*\mu _{\mc{H}})(D)=0$ if and only if $\mu _{\mc{H}}(\mc{H}D\mc{H})=0$ if and only if $\mu _{\mc{H}}(D) =0$ if and only if $\mu _{\mc{H}_A}(D)=0$.

(1): It is clear that the restriction map descends to a well-defined map $H(\mc{H},L)\rightarrow H(\mc{H}_A,L)$. The map $w\mapsto w_A$ is surjective, since if $v\in Z(\mc{H}_A, L)$ then the homomorphism $w\in Z(\mc{H}, L )$, defined by $w(h) := v(\psi h\psi ^{-1})$, satisfies $w_A=v$. To see that the map on equivalence classes is injective, let $w^0, w^1 \in Z(\mc{H}, L )$ be such that $w^0_A$ is equivalent to $w^1_A$, so that there exists a measurable map $f: A \rightarrow L$ with $w^0(h)=f(r(h))w^1(h)f(s(h))^{-1}$ for a.e.\ $h\in \mc{H}_A$. Then for a.e.\ $h\in \mc{H}$ we have
\begin{align}
\nonumber w^0 (\psi h \psi ^{-1}) &= f(r(\psi h \psi ^{-1}))w^1(\psi h \psi ^{-1})f(s(\psi h \psi ^{-1}))^{-1} \\
\label{eqn:php}  &= f(r(\psi r(h)))w^1(\psi h \psi ^{-1})f(r(\psi s(h)))^{-1} ,
\end{align}
where the identities $r(\psi h \psi ^{-1})=r(\psi r(h))$ and $s(\psi h \psi ^{-1})=r(\psi s(h))$ follow from $\psi h \psi ^{-1}= (\psi r(h))h(\psi s(h))^{-1}$. Define $F:\mc{H}^0 \rightarrow L$ by
\[
F(x) := w^0(\psi x )^{-1}f(r(\psi x)) w^1(\psi x )
\]
for $x\in \mc{H}^0$. Then for a.e.\ $h\in \mc{H}$ we have
\begin{align*}
w^0(h)&= w ^0(\psi r(h))^{-1} w^0(\psi h \psi ^{-1}) w^0(\psi s(h)) \\
&= w ^0(\psi r(h))^{-1} f(r(\psi r(h)))w^1(\psi h \psi ^{-1}) f(r(\psi s(h)))^{-1}w^0(\psi s(h)) \\
&= w ^0(\psi r(h))^{-1} f(r(\psi r(h)))w^1(\psi r(h))w^1(h)w^1(\psi s(h))^{-1}f(r(\psi s(h)))^{-1}w^0(\psi s(h)) \\
&=F(r(h))w^1(h)F(s(h))^{-1},
\end{align*}
where the second equality follows from \eqref{eqn:php}, and the first and third equalities come from applying the homomorphisms $w^0$ and $w^1$ respectively to the identity $\psi h \psi ^{-1} = (\psi r(h))h (\psi s(h))^{-1}$. This shows that $w^0$ and $w^1$ are equivalent.

(2): If $w$ is equivalent to a homomorphism $u$ which descends to $\mc{H}$, then $w_A$ is equivalent to the homomorphism $u_A$, and $u_A$ descends to $\mc{H}_A$. Conversely, assume that $w_A$ is equivalent to a homomorphism which descends to $\mc{H}_A$, i.e., $w_A$ is equivalent to a homomorphism of the form $v\circ p_A$, where $v\in Z(\mc{H}_A,L)$. By part (1) there is some $u\in Z(\mc{H},L)$ such that $u_A=v$. Then $(u\circ p)_A=v\circ p_A$ is equivalent to $w_A$, so $u\circ p$ is equivalent to $w$ by part (1).
\end{proof}

\begin{proposition}\label{prop:extact}
Let $\mc{H}$ be a discrete p.m.p.\ groupoid and let $A\subseteq \mc{H}^0$ be a measurable complete unit section for $\mc{H}$. Let $\mc{K}\xra{q}\mc{H}_A$ be an $\mc{H}_A$-extension. Then there exists an $\mc{H}$-extension $\mc{G}\xra{p}\mc{H}$ whose reduction $\mc{G}_A\xra{p_A}\mc{H}_A$ is isomorphic as an $\mc{H}_A$-extension to $\mc{K}\xra{q}\mc{H}_A$.

Moreover, this $\mc{H}$-extension is unique: if $\mc{G}_0\xra{p_0}\mc{H}$ and $\mc{G}_1\xra{p_1}\mc{H}$ are two $\mc{H}$-extensions, and if $(\mc{G}_0)_A\xra{t}(\mc{G}_1)_A$ is an isomorphism of the $\mc{H}_A$-extensions $(p_0)_A$ and $(p_1)_A$, then there exists a unique isomorphism of $\mc{H}$-extensions $\mc{G}_0\xra{T}\mc{G}_1$ with $T_A = t$.
\end{proposition}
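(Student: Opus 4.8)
The plan is to transport the whole statement across the equivalence between the slice category $\bm{\mathrm{DPG}}_{\mc{H}}$ and the category of p.m.p.\ actions of $\mc{H}$ supplied by Proposition \ref{prop:ExtActCorr}, and then to carry out an explicit induction of actions from $\mc{H}_A$ to $\mc{H}$ using the Borel retraction furnished by the section $\psi$ built in the proof of Proposition \ref{prop:restrict}. Recall that $\psi\subseteq A\mc{H}$ is Borel, contains $A$, and satisfies $|\psi\cap s^{-1}(x)|=1$ for every $x\in\mc{H}^0$; write $\psi_x$ for the unique element of $\psi$ with $s(\psi_x)=x$, so that $r(\psi_x)\in A$ and $\psi_x=x$ whenever $x\in A$. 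The homomorphism $c:\mc{H}\to\mc{H}_A$, $c(h):=\psi_{r(h)}h\psi_{s(h)}^{-1}$, restricts to the identity on $\mc{H}_A$. Throughout I identify, via $\mc{G}\cong\mc{H}\ltimes\mc{G}^0$, an $\mc{H}$-extension with its associated p.m.p.\ action of $\mc{H}$, and I use that the reduction $(\mc{H}\ltimes Z)_A$ is canonically $\mc{H}_A\ltimes Z_A$, where $Z_A$ denotes $Z$ restricted to fibers over $A$, carrying the restricted action.

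For existence, let $\beta:\mc{H}_A\cc W$ be the p.m.p.\ action corresponding to $\mc{K}\xra{q}\mc{H}_A$, with $W=\mc{K}^0$. I would induce it to $\mc{H}$ as follows. Define a fibered probability space $Z$ over $\mc{H}^0$ by $Z_x:=W_{r(\psi_x)}$, with $\mu_{Z_x}:=\mu_{W_{r(\psi_x)}}$; concretely $Z$ is the fibered product of $x\mapsto r(\psi_x)$, $\mc{H}^0\to A$, with $W\to A$, which is standard Borel, and $x\mapsto\mu_{Z_x}$ is Borel because $x\mapsto r(\psi_x)$ is. Define $\alpha:\mc{H}\cc Z$ by $\alpha(h):=\beta(c(h))$; the fibers match exactly because $s(c(h))=r(\psi_{s(h)})$ and $r(c(h))=r(\psi_{r(h)})$, and $\alpha$ is a genuine p.m.p.\ action since $c$ is a groupoid homomorphism and $\beta$ preserves the fiber measures. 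Because $c$ restricts to the identity on $\mc{H}_A$ and $\psi_x=x$ for $x\in A$, one gets $Z_A=W$ and $\alpha\res\mc{H}_A=\beta$ on the nose, so the $\mc{H}$-extension $\mc{G}:=\mc{H}\ltimes Z$ satisfies $\mc{G}_A=\mc{H}_A\ltimes W\cong\mc{K}$.

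For uniqueness, pass $\mc{G}_0,\mc{G}_1$ to their actions $\alpha_0:\mc{H}\cc Z_0$ and $\alpha_1:\mc{H}\cc Z_1$, and let $\tau:(Z_0)_A\to(Z_1)_A$ be the $\mc{H}_A$-equivariant fiberwise isomorphism over $A$ corresponding to $t$. I would define $\Theta:Z_0\to Z_1$ fiberwise by
\[
\Theta(z):=\alpha_1(\psi_x)^{-1}\,\tau\!\left(\alpha_0(\psi_x)z\right),\qquad z\in(Z_0)_x,
\]
which lands in $(Z_1)_x$ since $\alpha_0(\psi_x)z$ lies over $r(\psi_x)\in A$. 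For $x\in A$ one has $\psi_x=x$, so $\Theta$ extends $\tau$; and factoring $\psi_{r(h)}h=(\psi_{r(h)}h\psi_{s(h)}^{-1})\psi_{s(h)}=c(h)\psi_{s(h)}$ and invoking $\mc{H}_A$-equivariance of $\tau$ on $c(h)\in\mc{H}_A$ shows $\Theta(\alpha_0(h)z)=\alpha_1(h)\Theta(z)$, i.e.\ $\Theta$ is $\mc{H}$-equivariant. Hence $T:=\mathrm{id}_{\mc{H}}\ltimes\Theta$ is an isomorphism of $\mc{H}$-extensions with $T_A=t$. Uniqueness of $\Theta$, hence of $T$, is forced by the same retraction: any equivariant extension $\Theta'$ of $\tau$ must satisfy $\Theta'(z)=\alpha_1(\psi_x)^{-1}\Theta'(\alpha_0(\psi_x)z)=\alpha_1(\psi_x)^{-1}\tau(\alpha_0(\psi_x)z)$ for $z\in(Z_0)_x$, because $\alpha_0(\psi_x)z$ lies over $A$, where $\Theta'$ agrees with $\tau$.

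I expect the only real work to be bookkeeping rather than conceptual content. The algebraic identities reduce to the homomorphism property of $c$ and the cocycle identity for the $\alpha_i$, and are routine. The points requiring genuine care are measurability and measure-theoretic coherence: that $\psi$, and hence $x\mapsto\psi_x$, is Borel, obtained from the Lusin--Novikov Uniformization Theorem; that $Z$, $\alpha$, and $\Theta$ are Borel, obtained by a standard measurable selection argument together with the Borel dependence of the disintegrations $x\mapsto\mu_{W_{r(\psi_x)}}$; and that the assembled fiber measures make $Z$ a fibered probability space, so that $\mc{H}\ltimes Z$ is genuinely a discrete p.m.p.\ groupoid, which is immediate from the translation groupoid construction once $\alpha$ is known to be p.m.p.
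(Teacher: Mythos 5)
Your proof is correct and takes essentially the same approach as the paper: the paper packages the induced extension as the fibered product groupoid $\mc{H}\otimes _{\mc{H}_A}\mc{K}$ with respect to the retraction $c(h)=\psi h\psi ^{-1}$ and then verifies its claims by passing to precisely the translation groupoid $\mc{H}\ltimes Z$ of the induced action that you construct directly, and its uniqueness argument uses the identical formula $T^0_x=\alpha _1(\psi x)^{-1}\circ t^0_{r(\psi x)}\circ \alpha _0(\psi x)$. The difference is purely presentational.
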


\begin{proof}
As in the proof of Proposition \ref{prop:restrict}, we fix a Borel subset $\psi$ of $\mc{H}$ with $A\subseteq \psi \subseteq A\mc{H}$ and $|\psi \cap s^{-1}(x) | =1$ for all $x\in \mc{H}^0$, and for each $h\in \mc{H}$ we identify $\psi h$, $\psi h\psi ^{-1}$, and $h\psi ^{-1}$ with elements of $\mc{H}$, so that the map $c: \mc{H}\rightarrow \mc{H}_A$, $c(h):= \psi h \psi ^{-1}$, is a measure-class preserving groupoid homomorphism which restricts to the identity map on $\mc{H}_A$.

As a discrete Borel groupoid we take $\mc{G}$ to be the fibered product $\mc{G}:=\mc{H}\otimes _{\mc{H}_A}\mc{K}$ with respect to the homomorphisms $c$ and $q$, and we take $p$ to be the left projection from $\mc{G}$ to $\mc{H}$. We equip the unit space $\mc{G}^0 = \mc{H}^0\otimes _{\mc{H}_A^0}\mc{K}^0$ with the probability measure
\begin{equation*}
\mu _{\mc{G}^0} := \int _{\mc{H}^0}\updelta _x \times \mu _{\mc{K}^0_{c(x)}} \, d\mu _{\mc{H}^0}(x).
\end{equation*}
These definitions make $\mc{G}$ a discrete p.m.p.\ groupoid, $\mc{G}\xra{p}\mc{H}$ an $\mc{H}$-extension, and the right projection $\mc{G}_A\ra\mc{K}$ provides an isomorphism of $\mc{H}_A$-extensions from $\mc{G}_A\xra{p_A}\mc{H}_A$ to $\mc{K}\xra{q}\mc{H}_A$.

These claims can be verified using translation groupoids as follows. The p.m.p.\ action $\alpha : \mc{H}_A\cc \mc{K}^0$, obtained from the extension $q$ by Proposition \ref{prop:ExtActCorr}, gives rise to a p.m.p.\ action $\beta : \mc{H}\cc \mc{G}^0$ defined by $\beta (h)(s(h),y):= (r(h), \alpha (c(h))y)$ for $h\in \mc{H}$, $y\in \mc{K}^0_{c(s(h))}$. By Proposition \ref{prop:ExtActCorr}, the $\mc{H}_A$-extension $\mc{K}\xra{q}\mc{H}_A$ is isomorphic to the $\mc{H}_A$-extension $\mc{H}_A\ltimes \mc{K}^0\rightarrow \mc{H}_A$ associated to $\alpha$, and this in turn is isomorphic as an $\mc{H}_A$-extension to the reduction $(\mc{H}\ltimes \mc{G}^0)_A\rightarrow \mc{H}_A$, of the $\mc{H}$-extension associated to the translation groupoid of $\beta$, via the map $(h,y)\mapsto (h,(s(h),y))$. The Borel map $p\otimes s_{\mc{G}}:\mc{G}\rightarrow \mc{H}\ltimes \mc{G}^0$, $(h,k)\mapsto (h,(s(h),s(k)))$, is then a bijective groupoid homomomorphism which sends $\mu _{\mc{G}^0}$ to $\mu _{(\mc{H}\ltimes \mc{G}^0)^0}$. The claims from the previous paragraph now follow.

We now prove the uniqueness statement. By Proposition \ref{prop:ExtActCorr} it is enough to consider the case where each $\mc{G}_i$ is a translation groupoid $\mc{G}_i = \mc{H}\ltimes Z_i$ associated to a p.m.p.\ action $\alpha _i:\mc{H}\cc Z_i$ of $\mc{H}$, and $\mc{G}_i\xra{p_i}\mc{H}$ is the projection map $(h,z)\mapsto h$. Let $(\mc{G}_0)_A \xra{t}(\mc{G}_1)_A$ be an isomorphism of $\mc{H}_A$-extensions. Then $t$ has the form $t(h,z)=(h,t^0(z))$, where $t^0 : (Z_0)_A \rightarrow (Z_1)_A$ is an isomorphism of the reduced actions $\alpha _0 |_{\mc{H}_A}:\mc{H}_A \cc (Z_0)_A$ and $\alpha _1|_{\mc{H}_A}:\mc{H}_A\cc (Z_1)_A$. Define $T:\mc{G}_0\rightarrow \mc{G}_1$ by $T(h,z)= (h,T^0(z))$, where $T^0:Z_0\rightarrow Z_1$ is defined fiberwise by taking, for $x\in \mc{H}^0$, the restriction $T^0_x:(Z_0)_x\rightarrow (Z_1)_x$ to $(Z_0)_x$ to be given by
\[
T^0_x =\alpha _1(\psi x)^{-1}\circ t^0_{r(\psi x)}\circ \alpha _0(\psi x).
\]
Then $T^0$ is an isomorphism of the actions $\alpha _0$ and $\alpha _1$ which extends the isomorphism $t^0$, hence $T$ is an isomorphism of $\mc{H}$-extensions with $T_A=t$. To see that $T$ is unique, suppose that $U : \mc{G}_0\rightarrow \mc{G}_1$ is another isomorphism of $\mc{H}$-extensions with $U_A=t$, so that $U(h,z)=(h,U^0(z))$ for some isomorphism $U^0: Z_0\rightarrow Z_1$ extending $t^0$. Then for any $x\in \mc{H}^0$, the restriction $U_x$, of $U$ to $(Z_0)_x$, satisfies $U^0_x = \alpha _1(\psi x)^{-1}\circ U^0_{r(\psi x)} \circ \alpha _0(\psi x)=\alpha _1(\psi x)^{-1}\circ t^0_{r(\psi x)}\circ \alpha _0(\psi x) = T^0_x$, whence $U=T$.
\end{proof}

\begin{proposition}\label{prop:ExtIsoRed}
Let $\mc{G}$ and $\mc{H}$ be ergodic discrete p.m.p.\ groupoids, let $A\subseteq \mc{G}^0$ and $B\subseteq \mc{H}^0$ be positive measure sets with $\mu _{\mc{G}}(A) = \mu _{\mc{H}}(B)$, and suppose that $t:\mc{G}_A\rightarrow \mc{H}_B$ is an isomorphism. Then $t$ extends to an isomorphism $T:\mc{G}\rightarrow \mc{H}$.
\end{proposition}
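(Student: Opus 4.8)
The plan is to extend $t$ by a "matrix amplification" that transports the groupoid structure of $\mc{G}$ onto that of $\mc{H}$ one block at a time, using that both groupoids are ergodic (so that $A$ and $B$ are complete unit sections) together with the fact, recorded in \S\ref{sec:prelim}, that an ergodic discrete p.m.p.\ groupoid admits a Borel bisection between any two Borel subsets of its unit space having equal measure. Throughout I write $t^0\colon A\to B$ for the restriction of $t$ to unit spaces, a measure isomorphism.

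First I would tile $\mc{G}$ over $A$. Since $\mc{G}$ is ergodic and $\mu_{\mc{G}}(A)>0$, the set $A$ is a complete unit section, so $r\colon \mc{G}A\to \mc{G}^0$ is onto modulo a null set. Applying the Lusin--Novikov Uniformization Theorem, I partition $\mc{G}A$ into countably many Borel bisections and disjointify their ranges to obtain a Borel partition $\mc{G}^0=\bigsqcup_{i\in I}A_i$ with $A_0=A$, together with Borel bisections $\phi_i\subseteq \mc{G}$ satisfying $r(\phi_i)=A_i$ and $s(\phi_i)=C_i\subseteq A$, and where $\phi_0=A$ is the identity bisection on $A$ (so $C_0=A$). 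Because bisection translations preserve $\mu_{\mc{G}}$, we have $\sum_i\mu_{\mc{G}}(C_i)=\sum_i\mu_{\mc{G}}(A_i)=1$.

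Next I would build a matching tiling of $\mc{H}$ on the $t$-image side: a Borel partition $\mc{H}^0=\bigsqcup_{i\in I}B_i'$ with $B_0'=B$, together with Borel bisections $\eta_i\subseteq \mc{H}$ satisfying $s(\eta_i)=t^0(C_i)\subseteq B$ and $r(\eta_i)=B_i'$, where $\eta_0=B$ is the identity bisection. Since $\mu_{\mc{H}}(t^0(C_i))=\mu_{\mc{G}}(C_i)$ and $\sum_{i\neq 0}\mu_{\mc{G}}(C_i)=1-\mu_{\mc{H}}(B)=\mu_{\mc{H}}(\mc{H}^0\setminus B)$, I can realize the sets $B_i'$ for $i\neq 0$ as a partition of $\mc{H}^0\setminus B$ into pieces of the prescribed measures by a routine measurable exhaustion: processing the indices one at a time, at each stage I invoke ergodicity of $\mc{H}$ to choose a Borel bisection from $t^0(C_i)$ onto an as-yet-unused subset of $\mc{H}^0\setminus B$ of measure $\mu_{\mc{G}}(C_i)$. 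With this data in hand, for $g\in \mc{G}$ I let $i,j\in I$ be determined by $r(g)\in A_i$, $s(g)\in A_j$; then $\phi_i^{-1}g\phi_j\in \mc{G}_A$, and I define
\[
T(g):=\eta_i\,t(\phi_i^{-1}g\phi_j)\,\eta_j^{-1}\in \mc{H},
\]
an element with $r(T(g))\in B_i'$ and $s(T(g))\in B_j'$. Since $\phi_0$ and $\eta_0$ are the identity bisections on $A$ and $B$, one has $T(g)=t(g)$ for all $g\in \mc{G}_A$, so $T$ extends $t$. That $T$ is a homomorphism follows by telescoping: if $s(g_1)=r(g_2)\in A_k$, the inner factors $\eta_k^{-1}\eta_k$ and $\phi_k\phi_k^{-1}$ collapse to units and $t$ is multiplicative, giving $T(g_1)T(g_2)=T(g_1g_2)$. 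Finally, $T$ maps the block $A_i\mc{G}A_j$ bijectively and measure-preservingly onto $B_i'\mc{H}B_j'$ (being a composition of the measure-preserving bisection translations with the measure isomorphism $t$); as these blocks partition $\mc{G}$ and $\mc{H}$ respectively, $T$ is a measure-preserving Borel groupoid isomorphism $\mc{G}\to \mc{H}$ extending $t$.

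I expect the construction of the matching tiling (the second step above) to be the main obstacle. Transferring the tiling of $\mc{G}^0$ across $t$ to a genuine tiling of $\mc{H}^0$ with the prescribed sources and block measures cannot be accomplished by a single bisection, since the sources $t^0(C_i)$ overlap inside $B$; it genuinely requires the measurable exhaustion argument, and this is precisely the point at which ergodicity of $\mc{H}$ enters. (Ergodicity of $\mc{G}$ is used in the first step, to guarantee that $A$ is a complete unit section and hence that the $\phi_i$ cover $\mc{G}^0$.) The remaining verifications — the homomorphism property and measure preservation — are routine block-by-block bookkeeping.
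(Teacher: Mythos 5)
Your proof is correct and takes essentially the same approach as the paper: both arguments tile $\mc{G}^0$ by countably many Borel bisections anchored in $A$, transport the anchor sets through $t$, build a matching tiling of $\mc{H}^0$ by invoking ergodicity of $\mc{H}$ to produce bisections with prescribed sources and measures, and then define $T$ blockwise by conjugation. Indeed, the paper's bisections $\sigma_i,\tau_i$ are precisely the inverses of your $\phi_i,\eta_i$, and its formula $T(g)=\tau_{r(g)}^{-1}\,t(\sigma_{r(g)}g\sigma_{s(g)}^{-1})\,\tau_{s(g)}$ is your formula $T(g)=\eta_i\,t(\phi_i^{-1}g\phi_j)\,\eta_j^{-1}$ after this relabeling.
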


\begin{proof}
Let $A_0,A_1,\dots$ be a sequence of non-null measurable subsets of $A$, with $A_0 = A$, and $\sum _{i\in \N }\mu _{\mc{G}}(A_i) =1$ (we allow some of the $A_i$'s to be empty). Then for each $i\in \N$, letting $B_i:=t(A_i)$, we have $\mu _{\mc{H}}(B_i)= \mu _{\mc{G}}(A_i)$ since $t$ is measure preserving and $\mu _{\mc{G}}(A)=\mu _{\mc{H}}(B)$. By ergodicity, after discarding a null set, we may find a sequence $\sigma _0,\sigma _1,\dots$, of measurable bisections of $\mc{G}$ with $s(\sigma _0), s(\sigma _1),\dots$ partitioning $\mc{G}^0$, and with $\sigma _0=A$ and $r(\sigma _i ) = A_i$ for all $i\in \N$. Likewise we may find measurable bisections $\tau _0,\tau _1,\dots$, of $\mc{H}$ with $s(\tau _0), s(\tau _1),\dots$ partitioning $\mc{H}^0$, and with $\tau _0 = B_0=B$ and $r(\tau _i)=B_i$ for all $i\in \N$. For $x\in \mc{G}^0$ define $\sigma _x := \sigma _i$ and $\tau _x := \tau _i$ if and only if $x\in s(\sigma _i)$. Then $\sigma _{r(g)}g\sigma _{s(g)}^{-1}  \in \mc{G}_A$ for all $g\in \mc{G}$, and the map $T:\mc{G}\rightarrow \mc{H}$ defined by
\[
T(g) := \tau _{r(g)}^{-1}t(\sigma _{r(g)}g\sigma _{s(g)}^{-1} )\tau _{s(g)} ,
\]
is easily verified to be an isomorphism extending $t$.
\end{proof}

\begin{proposition}\label{prop:amp}
Let $\mc{H}$ be an ergodic discrete p.m.p.\ groupoid and assume $\mu _{\mc{H}^0}$ has no atoms. Then for each real number $t\geq 1$, up to isomorphism there exists a unique ergodic discrete p.m.p.\ groupoid $\mc{G}$ such that $\mc{H}$ is isomorphic to $\mc{G}_A$ for some measurable $A\subseteq \mc{G}^0$ with $\mu _{\mc{G}}(A) = 1/t$.
\end{proposition}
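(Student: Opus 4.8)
The plan is to obtain uniqueness immediately from Proposition \ref{prop:ExtIsoRed}, and to establish existence by the standard amplification construction, realizing $\mc{G}$ as a reduction of an independent product of $\mc{H}$ with a complete relation on finitely many points.

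For uniqueness, suppose $\mc{G}_0$ and $\mc{G}_1$ are ergodic discrete p.m.p.\ groupoids equipped with measurable sets $A_i\subseteq \mc{G}_i^0$ satisfying $\mu _{\mc{G}_i}(A_i)=1/t$ and isomorphisms $\varphi _i:\mc{H}\to (\mc{G}_i)_{A_i}$. Then $\varphi _1\circ \varphi _0^{-1}$ is an isomorphism from $(\mc{G}_0)_{A_0}$ onto $(\mc{G}_1)_{A_1}$, between reductions of equal measure $\mu _{\mc{G}_0}(A_0)=\mu _{\mc{G}_1}(A_1)=1/t$. Proposition \ref{prop:ExtIsoRed} then extends it to an isomorphism $\mc{G}_0\to \mc{G}_1$, which is exactly uniqueness. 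So the only real work is in existence.

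For existence, fix an integer $n\geq t$ and let $\mc{I}_n$ denote the complete principal groupoid $\{1,\dots ,n\}^2$ on the $n$-point set, equipped with the uniform probability measure on its unit space; this is an ergodic discrete p.m.p.\ groupoid. Form the independent product $\mc{F}:=\mc{H}\otimes \mc{I}_n$, again a discrete p.m.p.\ groupoid, with unit space $\mc{H}^0\times \{1,\dots ,n\}$ carrying the product measure. It is ergodic: an $\mc{F}$-invariant set must (using the $\mc{I}_n$-coordinate, where any two points are joined) have the form $C\times \{1,\dots ,n\}$ with $C\subseteq \mc{H}^0$ an $\mc{H}$-invariant set, hence null or conull by ergodicity of $\mc{H}$. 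Writing $A_0:=\mc{H}^0\times \{1\}$, we have $\mu _{\mc{F}}(A_0)=1/n$, and the reduction $\mc{F}_{A_0}$ is canonically isomorphic to $\mc{H}$. Since $\mu _{\mc{H}^0}$ has no atoms, neither does $\mu _{\mc{F}^0}$, so I may choose a measurable $B\subseteq \mc{F}^0$ with $\mu _{\mc{F}}(B)=t/n$ (note $0<t/n\leq 1$), and set $\mc{G}:=\mc{F}_B$, an ergodic discrete p.m.p.\ groupoid.

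It remains to produce $A\subseteq \mc{G}^0=B$ with $\mu _{\mc{G}}(A)=1/t$ and $\mc{G}_A\cong \mc{H}$. Because $1\leq t$ gives $\mu _{\mc{F}}(A_0)=1/n\leq t/n=\mu _{\mc{F}}(B)$, I may pick $A\subseteq B$ with $\mu _{\mc{F}}(A)=1/n=\mu _{\mc{F}}(A_0)$; then, since reduced measures are normalized to total mass one, $\mu _{\mc{G}}(A)=\mu _{\mc{F}}(A)/\mu _{\mc{F}}(B)=(1/n)/(t/n)=1/t$. Using ergodicity of $\mc{F}$ together with the fact recorded in \S\ref{sec:prelim} that two equal-measure subsets of an ergodic groupoid are joined by a Borel bisection, choose such a bisection $\theta$ of $\mc{F}$ with $s(\theta )=A_0$ and $r(\theta )=A$; conjugation by $\theta$ is then a measure-preserving isomorphism $\mc{F}_{A_0}\cong \mc{F}_A$, and $\mc{F}_A=(\mc{F}_B)_A=\mc{G}_A$ since $A\subseteq B$. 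Composing with $\mc{F}_{A_0}\cong \mc{H}$ gives $\mc{H}\cong \mc{G}_A$, completing existence. The only genuinely delicate point is the bookkeeping of the normalizing constants of the reduced measures: the conjugation by $\theta$ is measure preserving precisely because $\mu _{\mc{F}}(A_0)=\mu _{\mc{F}}(A)$, so the two normalizations agree. Everything else is formal, the non-atomicity hypothesis being used only to secure a set $B$ of the prescribed measure $t/n$.
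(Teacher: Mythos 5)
Your proof is correct and takes essentially the same approach as the paper: uniqueness is deduced from Proposition \ref{prop:ExtIsoRed}, and existence comes from reducing the independent product of $\mc{H}$ with the full equivalence relation on $n$ points (your $\mc{I}_n$, the paper's $\mc{J}_n$) to a set $B$ of measure $t/n$. The only cosmetic difference is that the paper chooses $B$ to \emph{contain} the canonical copy $A=\mc{H}^0\otimes\{(0,0)\}$, so that $\mc{G}_A=\mc{K}_A\cong\mc{H}$ holds immediately, whereas you take $B$ arbitrary and then transport $A_0$ into $B$ by an equal-measure bisection (in effect re-proving Proposition \ref{prop:IsoRed}) --- an extra step that the paper's choice of $B$ renders unnecessary.
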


\begin{proof}
The uniqueness of $\mc{G}$ follows from Proposition \ref{prop:ExtIsoRed}, so it remains prove existence. Let $n$ be any integer greater than $t$. Let $[n]:= \{ 0,1,\dots ,n-1 \}$ equipped with normalized counting measure $\mu _{[n]}$ and let $\mc{J}_n := [n]\times [n]$ be the equivalence relation on $[n]$ consisting of a single equivalence class. Then $\mc{J}_n$ is an ergodic discrete p.m.p.\ groupoid, whose unit space is $\mc{J}_n^0 = \{ (0,0), (1,1),\dots , (n,n) \} $. Let $\mc{K}:= \mc{H}\otimes \mc{J}_n$ and let $A=\mc{H}^0\otimes \{ (0,0) \}$, so that $\mu _{\mc{K}}(A)=1/n$. Let $B$ be any measurable subset of $\mc{K}^0$ containing $A$ with $\mu _{\mc{K}}(B) = t/n$ and let $\mc{G}:=\mc{K}_B$. Since $\mc{H}$ and $\mc{J}_n$ are both ergodic, so is $\mc{K}$, and hence so is $\mc{G}$. We have $\mu _{\mc{G}}(A)= \mu _{\mc{K}}(A)/\mu _{\mc{K}}(B)= 1/t$, and $\mc{H}$ is isomorphic to $\mc{G}_A=\mc{K}_A$ via the map $h\mapsto (h,(0,0))$.
\end{proof}

\begin{proposition}\label{prop:IsoRed}
Let $\mc{G}$ be an ergodic discrete p.m.p.\ groupoid and let $A$ and $B$ be positive measure subsets of $\mc{G}^0$ with $\mu _{\mc{G}}(A) =\mu _{\mc{G}}(B)$. Then the reductions $\mc{G}_A$ and $\mc{G}_B$ are isomorphic.
\end{proposition}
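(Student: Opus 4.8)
The plan is to build the isomorphism directly from a single bisection of $\mc{G}$ joining $A$ to $B$, and then invoke the bisection–conjugation isomorphism of reductions recorded in Section \ref{sec:prelim}.

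First I would use ergodicity to produce the bisection. Note that for a subset of the unit space we have $\mu _{\mc{G}}(A) = \mu _{\mc{G}^0}(A)$, since for $x\in \mc{G}^0$ the set $xA = A\cap r^{-1}(x)$ is $\{x\}$ when $x\in A$ and empty otherwise; thus the hypothesis reads $\mu _{\mc{G}^0}(A) = \mu _{\mc{G}^0}(B)$. Because $\mc{G}$ is ergodic and $A,B$ have equal measure, the fact recorded in Section \ref{sec:prelim} (obtained from the principal case \cite[Lemma 7.10]{KM04} via the Lusin--Novikov Uniformization Theorem) yields, after discarding a null set, a Borel bisection $\theta$ of $\mc{G}$ with $s(\theta )=A$ and $r(\theta )=B$.

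Next I would apply the general fact about bisections stated in Section \ref{sec:prelim}: since $\theta$ is a bisection with $r(\theta )=B$ and $s(\theta )=A$, the conjugation map $c\: g\mapsto \theta ^{-1}g\theta$ is a Borel groupoid isomorphism from $\mc{G}_B=\mc{G}_{r(\theta )}$ onto $\mc{G}_A=\mc{G}_{s(\theta )}$. It remains only to check that $c$ respects the normalized unit-space measures. On unit spaces, $c$ restricts to the partial transformation $B\rightarrow A$ inverse to the one induced by $\theta$; to see the latter is measure preserving, apply the defining p.m.p.\ identity \eqref{eqn:pmp} to the Borel set $D:=\theta \cap s^{-1}(C)$ for an arbitrary measurable $C\subseteq A$. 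Since $\theta$ is a bisection, $Dx=D\cap s^{-1}(x)$ is a singleton exactly when $x\in C$, while $xD=D\cap r^{-1}(x)$ is a singleton exactly when $x$ lies in the $\theta$-image of $C$; hence \eqref{eqn:pmp} gives $\mu _{\mc{G}^0}(\theta\text{-image of }C)=\mu _{\mc{G}^0}(C)$, so $\theta$ induces a measure-preserving bijection between $\mu _{\mc{G}^0}\!\restriction\! A$ and $\mu _{\mc{G}^0}\!\restriction\! B$.

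Finally, because $\mu _{\mc{G}^0}(A)=\mu _{\mc{G}^0}(B)$, the normalizing constants defining $\mu _{\mc{G}_A^0}$ and $\mu _{\mc{G}_B^0}$ agree, so $c$ carries $\mu _{\mc{G}_B^0}$ to $\mu _{\mc{G}_A^0}$ and is therefore an isomorphism of discrete p.m.p.\ groupoids, giving $\mc{G}_A\cong \mc{G}_B$. The only point demanding any care is the measure-preservation check, i.e.\ confirming that a bisection of a p.m.p.\ groupoid induces a measure-preserving map on the unit space; as this is immediate from the axiom \eqref{eqn:pmp}, I do not anticipate a genuine obstacle.
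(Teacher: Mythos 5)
Your proof is correct and follows exactly the paper's own argument: ergodicity plus equality of measures yields a bisection $\sigma$ with $s(\sigma)=A$ and $r(\sigma)=B$, and conjugation $g\mapsto \sigma g\sigma^{-1}$ is the desired isomorphism. The only difference is that you spell out the measure-preservation check via \eqref{eqn:pmp}, which the paper leaves implicit; that verification is accurate.
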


\begin{proof}
Since $\mc{G}$ is ergodic and $\mu _{\mc{G}}(A) =\mu _{\mc{G}}(B)$, we can find a measurable bisection $\sigma$ of $\mc{G}$ with $s(\sigma ) = A$ and $r(\sigma ) = B$. Then the map $\mc{G}_A \rightarrow \mc{G}_B$, $g\mapsto \sigma g \sigma ^{-1}$ is an isomorphism.
\end{proof}

\subsection{Untwisting lemma}
The following is Lemma 3.2 of \cite{Fu07} stated for discrete p.m.p. groupoids (see also Lemma 3.11 of \cite{Pop07}). We note that while the statement of Lemma 3.2 of \cite{Fu07} corresponds to the case where $\mc{G}$ is assumed to be ergodic, this assumption is unnecessary and is easily removed.

\begin{lemma}\label{lem:Gbasic}
Let $\mc{G}\xra{p}\mc{H}$ be a relatively weakly mixing extension of discrete p.m.p.\ groupoids. Let $L$ be a Polish group admitting a bi-invariant metric, and let $u:\mc{H}\rightarrow L$ and $v:\mc{H}\rightarrow L$ be measurable maps. Suppose that $\Phi :\mc{G}^0 \rightarrow L$ is a measurable map satisfying, for a.e.\ $g\in \mc{G}$,
\[
u(p(g)) = \Phi (r(g))v(p(g))\Phi (s(g))^{-1} .
\]
Then $\Phi$ descends to $\mc{H}^0$, i.e., there is some measurable map $f :\mc{H}^0\rightarrow L$ such that $\Phi (x) = f(p (x))$ for a.e.\ $x\in \mc{G}^0$.
\end{lemma}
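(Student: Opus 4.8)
The plan is to exploit relative weak mixing through the relatively independent self-join $\mc{G}\otimes _{\mc{H}}\mc{G}$ together with the bi-invariant metric, reducing the assertion that $\Phi$ descends to the assertion that a certain invariant distance function must vanish. First I would rearrange the hypothesis into the form
\[
\Phi (r(g)) = u(p(g))\,\Phi (s(g))\,v(p(g))^{-1}\qquad \text{for a.e. } g\in \mc{G}.
\]
Fix a bi-invariant metric $d$ on $L$ and define a measurable function $\rho :\mc{G}^0\otimes _{\mc{H}^0}\mc{G}^0\rightarrow [0,\infty )$ on the unit space of the self-join by $\rho (x,x'):= d(\Phi (x),\Phi (x'))$.

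The key computation is that $\rho$ is $\mc{G}\otimes _{\mc{H}}\mc{G}$-invariant. An element of $\mc{G}\otimes _{\mc{H}}\mc{G}$ is a pair $(g,g')$ with $p(g)=p(g')=:h$, having source $(s(g),s(g'))$ and range $(r(g),r(g'))$; applying the displayed identity to both $g$ and $g'$ and then using bi-invariance of $d$ to cancel the common left factor $u(h)$ and the common right factor $v(h)^{-1}$ yields $\rho (r(g),r(g'))=\rho (s(g),s(g'))$ for a.e.\ $(g,g')$. (Here I would note that the conull set of $g$ on which the rearranged hypothesis holds lifts to a conull set of pairs in the relatively independent product.) Since, by definition, relative weak mixing of $\mc{G}\rightarrow \mc{H}$ means exactly that the extension $\mc{G}\otimes _{\mc{H}}\mc{G}\rightarrow \mc{H}$ is relatively ergodic, the invariant function $\rho$ must agree a.e.\ with $\phi \circ p$ for some measurable $\phi :\mc{H}^0\rightarrow [0,\infty )$; that is, the distance $d(\Phi (x),\Phi (x'))$ depends only on $y:=p(x)=p(x')$.

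It remains to show $\phi =0$ a.e., which will force $\Phi$ to be essentially constant on fibers. Disintegrating $\mu _{\mc{G}^0}$ over $\mc{H}^0$ via $p^0$, for a.e.\ $y\in \mc{H}^0$ set $\nu _y := \Phi _*\mu _{(\mc{G}^0)_y}$. Then $d(s,s')=\phi (y)$ for $\nu _y\otimes \nu _y$-a.e.\ $(s,s')$. Since $L$ is separable and $d$ is continuous, a function that equals a constant almost everywhere with respect to a Borel measure equals that constant throughout the support of the measure; as $\mathrm{supp}(\nu _y\otimes \nu _y)=\mathrm{supp}(\nu _y)\times \mathrm{supp}(\nu _y)$, evaluating on the diagonal gives $\phi (y)=d(s,s)=0$ for any $s\in \mathrm{supp}(\nu _y)$, which is nonempty since $\nu _y$ is a probability measure. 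Hence $\Phi$ is $\mu _{(\mc{G}^0)_y}$-a.e.\ constant for a.e.\ $y$, and a standard measurable disintegration argument produces a measurable $f:\mc{H}^0\rightarrow L$ with $\Phi = f\circ p^0$ a.e., as desired.

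I expect the last step to be the main obstacle. Invariance together with relative ergodicity only delivers that all pairs of points in a common fiber sit at a single shared distance $\phi (y)$, and one must rule out a positive shared distance. The naive argument — that $\Phi (x)=\Phi (x')$ with positive probability when $\nu _y$ has an atom, forcing $\phi (y)=0$ — handles only atomic fiber distributions; the non-atomic case genuinely requires the continuity-plus-support argument above, which is where separability of $L$ and continuity of the bi-invariant metric are essential.
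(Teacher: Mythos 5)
Your proof is correct, and it is essentially the argument this paper relies on: the paper offers no proof of this lemma, instead citing Furman's Lemma 3.2 \cite{Fu07} (see also \cite{Pop07}, Lemma 3.11), whose proof is exactly your scheme --- rearrange the hypothesis, note that $\rho (x,x'):=d(\Phi (x),\Phi (x'))$ is an invariant function on the unit space of $\mc{G}\otimes _{\mc{H}}\mc{G}$ by bi-invariance of $d$, use relative ergodicity of $\mc{G}\otimes _{\mc{H}}\mc{G}\rightarrow \mc{H}$ (i.e., the definition of relative weak mixing) to write $\rho =\phi \circ p$, and then use separability of $L$ and continuity of $d$ to force $\phi =0$, so that $\Phi$ is a.e.\ constant on a.e.\ fiber. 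The only difference is cosmetic and occurs in the endgame: where you pass to $\mathrm{supp}(\nu _y)\times \mathrm{supp}(\nu _y)=\mathrm{supp}(\nu _y\otimes \nu _y)$ and evaluate on the diagonal, the cited proof rules out a constant value $\phi (y)=r>0$ by covering $L$ with countably many balls of radius $r/2$, finding one of positive $\nu _y$-measure, and observing that a positive-measure set of pairs then lies at distance $<r$ --- the same use of separability --- and, like yours, that argument nowhere uses ergodicity, which is precisely the paper's remark that Furman's ergodicity hypothesis is easily removed.
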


We will need the following "asymmetric" generalization of \cite[Theorem 3.4]{Fu07} (see also \cite[Theorem 3.1]{Pop07}). We provide a complete proof, although we note that the proof in this asymmetric setting is nearly identical to the proof of \cite[Theorem 3.4]{Fu07}.

\begin{lemma}[Untwisting lemma, asymmetric version]\label{lem:GFurman}
Let $\mc{H}$ be a discrete p.m.p.\ groupoid and let $\mc{G}_0 \xra{q_0} \mc{H}$ and $\mc{G}_1\xra{q_1} \mc{H}$ be two $\mc{H}$-extensions. Let $\mc{G}_0\otimes _{\mc{H}}\mc{G}_1$ be the associated relatively independent product over $\mc{H}$, with projections $p_0$ and $p_1$ to $\mc{G}_0$ and $\mc{G}_1$ respectively:
\[
\xymatrix{
\mc{G}_0\otimes _{\mc{H}}\mc{G}_1 \ar[d]_{p_0}\ar[rr]^{p_1}  &    &\mc{G}_1 \ar[d]^{q_1} \\
\mc{G}_0\ar[rr]^{q_0}    &   & \mc{H}
}
\]
Assume that the extension $\mc{G}_1\xra{q_1}\mc{H}$ is relatively weakly mixing. Let $w_0 : \mc{G}_0 \rightarrow L$ and $w_1 :\mc{G}_1 \rightarrow L$ be measurable homomorphisms to a Polish group $L$ which admits a bi-invariant metric, and suppose that the homomorphisms $w_0\circ p_0$ and $w_1\circ p_1$ are equivalent. Then there exists a measurable homomorphism $w: \mc{H}\rightarrow L$ such that $w_0$ is equivalent to $w\circ q_0$ and $w_1$ is equivalent to $w\circ q_1$.

In fact, if $F:(\mc{G}_0\otimes _{\mc{H}}\mc{G}_1)^0 \rightarrow L$ is a measurable map witnessing that $w_0\circ p_0$ and $w_1\circ p_1$ are equivalent, so that
\begin{equation}\label{eqn:GFw}
w_0(g_0) = F(r(g_0, g_1)) w_1(g_1)F(s(g_0, g_1))^{-1}
\end{equation}
for a.e.\ $(g_0,g_1) \in \mc{G}_0\otimes _{\mc{H}}\mc{G}_1$, then there exist measurable maps $\varphi _0:\mc{G}_0^0\rightarrow L$ and $\varphi _1:\mc{G}_1^0\rightarrow L$, and a measurable homomorphism $w:\mc{H}\rightarrow L$ such that
\begin{align*}
F(x_0,x_1)&=\varphi _0(x_0)^{-1}\varphi _1(x_1) \\
w (p_0(g_0)) &= \varphi _0(r(g_0))w_0(g_0)\varphi _0 (s(g_0))^{-1}\\
w(p_1(g_1)) & = \varphi _1(r(g_1))w_1(g_1)\varphi _1 (s(g_1))^{-1}
\end{align*}
for a.e.\ $(x_0,x_1)\in (\mc{G}_0 \otimes _{\mc{H}}\mc{G}_1)^0$, $g_0\in \mc{G}_0$, and $g_1\in \mc{G}_1$.
\end{lemma}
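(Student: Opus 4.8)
The plan is to transpose Furman's argument for \cite[Theorem 3.4]{Fu07} into the groupoid setting, the whole point being to manufacture an auxiliary map which the basic untwisting Lemma \ref{lem:Gbasic} forces to descend. First I would form the triple relatively independent product $(\mc{G}_0\otimes_{\mc{H}}\mc{G}_0)\otimes_{\mc{H}}\mc{G}_1$ and define $\Phi$ on its unit space by $\Phi((x_0,x_0'),x_1):=F(x_0,x_1)F(x_0',x_1)^{-1}$; this makes sense because $x_0,x_0',x_1$ lie over a common unit of $\mc{H}$, so $(x_0,x_1)$ and $(x_0',x_1)$ belong to $(\mc{G}_0\otimes_{\mc{H}}\mc{G}_1)^0$. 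The goal of the first half of the proof is to show that $\Phi$ does not depend on the $\mc{G}_1$-coordinate $x_1$.

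Next I would check the hypotheses of Lemma \ref{lem:Gbasic} for the projection $P:(\mc{G}_0\otimes_{\mc{H}}\mc{G}_0)\otimes_{\mc{H}}\mc{G}_1\to\mc{G}_0\otimes_{\mc{H}}\mc{G}_0$. Writing a generic element of the domain as $g=((g_0,g_0'),g_1)$ and substituting \eqref{eqn:GFw}, rearranged as $F(r(g_0),r(g_1))=w_0(g_0)F(s(g_0),s(g_1))w_1(g_1)^{-1}$, into the definition of $\Phi$, the two $w_1(g_1)^{\pm1}$ factors cancel and one obtains $\Phi(r(g))=w_0(g_0)\,\Phi(s(g))\,w_0(g_0')^{-1}$. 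Setting $u:=w_0\circ\mathrm{pr}_0$ and $v:=w_0\circ\mathrm{pr}_0'$, where $\mathrm{pr}_0,\mathrm{pr}_0'$ are the two projections $\mc{G}_0\otimes_{\mc{H}}\mc{G}_0\to\mc{G}_0$, this rearranges to precisely $u(P(g))=\Phi(r(g))\,v(P(g))\,\Phi(s(g))^{-1}$. I must also verify that $P$ is relatively weakly mixing; this is a permanence property immediate from Lemma \ref{lem:Gweakmix} together with the associativity isomorphism of Remark \ref{rem:prodprod}, namely that $\mc{K}\otimes_{\mc{H}}\mc{G}_1\to\mc{K}$ is relatively weakly mixing for every $\mc{H}$-extension $\mc{K}$ (here $\mc{K}=\mc{G}_0\otimes_{\mc{H}}\mc{G}_0$), because $\mc{G}_1\to\mc{H}$ is. Lemma \ref{lem:Gbasic} then produces a measurable $\phi$ on $(\mc{G}_0\otimes_{\mc{H}}\mc{G}_0)^0=\mc{G}_0^0\otimes_{\mc{H}^0}\mc{G}_0^0$ with $F(x_0,x_1)F(x_0',x_1)^{-1}=\phi(x_0,x_0')$ a.e.; in particular this quantity is independent of $x_1$.

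With this independence I would split $F$. Fixing a measurable section $\iota:\mc{H}^0\to\mc{G}_0^0$ of $q_0^0$ (which exists by measurable selection), I set $\varphi_1(x_1):=F(\iota(q_1^0(x_1)),x_1)$ and $\varphi_0(x_0):=\phi(x_0,\iota(q_0^0(x_0)))^{-1}$; substituting $x_0'=\iota(h)$ into the displayed identity and invoking Fubini yields $F(x_0,x_1)=\varphi_0(x_0)^{-1}\varphi_1(x_1)$ a.e. Feeding this back into \eqref{eqn:GFw} and cancelling, the equation becomes $\tilde w_0(g_0)=\tilde w_1(g_1)$ for a.e.\ $(g_0,g_1)\in\mc{G}_0\otimes_{\mc{H}}\mc{G}_1$, where $\tilde w_i:=\varphi_i(r(\cdot))\,w_i(\cdot)\,\varphi_i(s(\cdot))^{-1}$ is a measurable homomorphism on $\mc{G}_i$ equivalent to $w_i$. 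Local bijectivity of $q_0$ and $q_1$ identifies the fiber of $\mc{G}_0\otimes_{\mc{H}}\mc{G}_1$ over any $h\in\mc{H}$ with the product $(\mc{G}_0^0)_{s(h)}\times(\mc{G}_1^0)_{s(h)}$ carrying the product measure, so the a.e.\ equality $\tilde w_0=\tilde w_1$ on each such fiber forces both sides to be a.e.\ constant there; the common value defines $w:\mc{H}\to L$, which is measurable after a further selection argument and satisfies $w\circ q_0=\tilde w_0$ and $w\circ q_1=\tilde w_1$. That $w$ is a homomorphism follows by lifting composable elements of $\mc{H}$ through $q_0$ and using that $\tilde w_0$ is one.

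I expect the main obstacle to be the bookkeeping around the a.e.\ quantifiers: confirming that $P$ is relatively weakly mixing, so that Lemma \ref{lem:Gbasic} genuinely applies (mere relative ergodicity would not suffice), and organizing the two Fubini/selection steps — the split $F=\varphi_0^{-1}\varphi_1$ and the passage from $\tilde w_0=\tilde w_1$ on product fibers to a measurable $w$ on $\mc{H}$ — so that the null sets align coherently over $\mc{H}^0$. The conceptual content, by contrast, is concentrated in the single observation that $\Phi$ satisfies a conjugation-type identity over $\mc{G}_0\otimes_{\mc{H}}\mc{G}_0$ in the relatively weakly mixing $\mc{G}_1$-direction, which is exactly the situation Lemma \ref{lem:Gbasic} is built to resolve.
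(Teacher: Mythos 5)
Your proposal is correct and takes essentially the same route as the paper's own proof: the same auxiliary map $\Phi ((x_0,x_0'),x_1)=F(x_0,x_1)F(x_0',x_1)^{-1}$ on the unit space of $(\mc{G}_0\otimes _{\mc{H}}\mc{G}_0)\otimes _{\mc{H}}\mc{G}_1$, the same verification via Lemma \ref{lem:Gweakmix} and Remark \ref{rem:prodprod} that this triple product is relatively weakly mixing over $\mc{G}_0\otimes _{\mc{H}}\mc{G}_0$, the same application of Lemma \ref{lem:Gbasic}, and the same Fubini/selection steps to split $F=\varphi _0^{-1}\varphi _1$ and then extract the homomorphism $w$ fiberwise over $\mc{H}$. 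The only difference is expository (you package the weak mixing claim as a permanence statement for arbitrary $\mc{H}$-extensions $\mc{K}$), so there is nothing to correct.
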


\begin{proof}
We begin with a claim.
\begin{claim}\label{claim:Gwm} The extension $(\mc{G}_0\otimes _\mc{H}\mc{G}_0)\otimes _{\mc{H}}\mc{G}_1  \rightarrow \mc{G}_0\otimes _\mc{H}\mc{G}_0$ is relatively weakly mixing.
\end{claim}
\begin{proof}[Proof of Claim \ref{claim:Gwm}]
Define $\mc{K}_i := \mc{G}_i\otimes _\mc{H}\mc{G}_i$ for $i=0,1$, and let $\mc{K}:= \mc{K}_0\otimes _{\mc{H}}\mc{G}_1$. We must show that the extension $\mc{K}\otimes _{\mc{K}_0} \mc{K} \rightarrow \mc{K}_0$ is relatively ergodic. This $\mc{K}_0$-extension is isomorphic to the $\mc{K}_0$-extension $\mc{K}_0\otimes _{\mc{H}}\mc{K}_1 \rightarrow \mc{K}_0$ by Remark \ref{rem:prodprod}. Since $\mc{G}_1 \rightarrow  \mc{H}$ is relatively weakly mixing, by applying (1)$\Rightarrow$(2) of Lemma \ref{lem:Gweakmix} we see that the extension $\mc{K}_1 \rightarrow \mc{H}$ is relatively weakly mixing, and hence we conclude from (1)$\Rightarrow$(3) of Lemma \ref{lem:Gweakmix} that the extension $\mc{K}_0 \otimes _{\mc{H}} \mc{K}_1 \rightarrow \mc{K}_0$ is relatively ergodic. \qedhere[Claim \ref{claim:Gwm}]
\end{proof}

Keeping the notation from the claim, let $\Phi :\mc{K}^0 \rightarrow L$ be the map $\Phi ((x_0,x_0'),x_1) := F(x_0,x_1)F(x_0',x_1)^{-1}$, for $((x_0,x_0'),x_1)\in \mc{K}^0$. Then \eqref{eqn:GFw} implies that
\[
\Phi (r((g_0,g_0'),g_1)) = w_0(g_0) \Phi (s((g_0,g_0'),g_1)) w_0(g_0') ^{-1}
\]
for a.e.\ $((g_0,g_0'),g_1)\in \mc{K}$. Therefore, by Claim \ref{claim:Gwm} and Lemma \ref{lem:Gbasic}, there is a measurable map $f:\mc{K}_0^0 \rightarrow L$ such that for a.e.\ $((x_0,x_0'),x_1)\in \mc{K}^0$ we have $\Phi ((x_0,x_0'),x_1) = f(x_0,x_0')$, i.e.,
\[
F(x_0,x_1) = f(x_0,x_0')F(x_0',x_1).
\]
Since for $y\in \mc{H}^0$ we have $\mu _{\mc{K}^0_y}= \mu _{(\mc{G}^0_0)_y}\otimes \mu _{(\mc{G}^0_0)_y}\otimes \mu _{(\mc{G}^0_1)_y}$, by applying Fubini's theorem and an appropriate selection theorem (e.g., \cite[Theorem 18.1]{Ke95}), we can find a measurable section $t: \mc{H}^0\rightarrow \mc{G}_0^0$ for $q_0^0:\mc{G}_0^0 \ra\mc{H}^0$ such that for a.e.\ $y\in \mc{H}^0$, for $\mu _{(\mc{G}_0\otimes \mc{G}_1)^0_y}$-a.e.\ $(x_0,x_1)\in (\mc{G}_0\otimes _{\mc{H}}\mc{G}_1)^0_y$ we have $F(x_0,x_1)=f(x_0,t(y))F(t(y), x_1)$. Let $\varphi _0 : \mc{G}_0^0\rightarrow L$ be the map $\varphi _0(x_0) := f(x_0,t(q_0(x_0)))^{-1}$ and let $\varphi _1 :\mc{G}_1^0\rightarrow L$ be the map $\varphi _1(x_1) := F(t(q_1(x_1)),x_1 )$, so that $F(x_0,x_1)= \varphi _0(x_0)^{-1}\varphi _1(x_1)$ for a.e.\ $(x_0,x_1)\in (\mc{G}_0\otimes _{\mc{H}}\mc{G}_1)^0$. Then, by \eqref{eqn:GFw}, for a.e.\ $(g_0,g_1)\in \mc{G}_0\otimes _{\mc{H}}\mc{G}_1$ we have
\begin{equation}\label{eqn:Gfub}
\varphi _0(r(g_0))w_0(g_0) \varphi _0 (s(g_0))^{-1} = \varphi _1 (r(g_1))w_1(g_1)\varphi _1(s(g_1))^{-1} .
\end{equation}
Thus, for a.e.\ $h\in \mc{H}$, for $\mu _{(\mc{G}\otimes _{\mc{H}}\mc{G})_h} = \mu _{(\mc{G}_0)_h}\otimes \mu _{(\mc{G}_1)_h}$ a.e.\ $(g_0,g_1)\in (\mc{G}_0\otimes _{\mc{H}} \mc{G}_1)_h=(\mc{G}_0)_h\otimes (\mc{G}_1)_h$, \eqref{eqn:Gfub} holds. Fubini's Theorem then implies that there is some $w(h)\in L$, along with a $\mu _{(\mc{G}_0)_h}$-conull subset $C_{0,h}$ of $(\mc{G}_0)_h$ and a $\mu _{(\mc{G}_1)_h}$-conull subset $C_{1,h}$ of $(\mc{G}_1)_h$ such that the left and right hand sides of \eqref{eqn:Gfub} are equal to $w(h)$ for all $g_0\in C_{0,h}$ and $g_1\in C_{1,h}$. It then follows that, after discarding a $\mu _{\mc{H}}$-null set, the assignment $h\mapsto w(h)$ is a homomorphism, and satisfies the conclusion of the lemma.
\end{proof}

\subsection{Bernoulli extensions}

\begin{definition}
Let $K$ be a standard probability space. We let $\EuScript{B}_K(\mc{G})$ denote the translation groupoid associated to the Bernoulli action $\beta ^{\mc{G}}_{K}$ of $\mc{G}$ with base $K$, i.e., $\EuScript{B}_K (\mc{G}) := \mc{G}\ltimes K^{\otimes \mc{G}}$. The projection map $\EuScript{B}_K(\mc{G})\rightarrow \mc{G}$ is a groupoid extension called the {\bf Bernoulli extension of $\mc{G}$ with base $K$}. We write $\EuScript{B}(\mc{G})$ for $\EuScript{B}_{[0,1]}(\mc{G})$.
\end{definition}

\begin{remark}\label{rem:translbern}
If $\mc{G}=\mc{H}\ltimes X$ is a translation groupoid associated to a p.m.p.\ action of $\mc{H}$ then, by Proposition \ref{prop:BernIsom} below and Remark \ref{rem:translext}, the $\mc{G}$-extensions $\EuScript{B}_K(\mc{H}\ltimes X)\rightarrow \mc{H}\ltimes X$ and $\mc{H}\ltimes (X\otimes _{\mc{H}^0} K^{\otimes \mc{H}})\rightarrow \mc{H}\ltimes X$ are isomorphic. In particular, for a group $H$, we have an isomorphism of groupoids $\EuScript{B}_K(H\ltimes X) \cong H\ltimes (X\otimes K^H)$, where $H\cc X\otimes K^H$ is the product of $H\cc X$ with the Bernoulli action of $H$ over the base space $K$.
\end{remark}

If $\mc{G}\xra{p}\mc{H}$ is a groupoid extension, then we obtain an extension $\EuScript{B}_K(\mc{G})\xra{\EuScript{B}_K(p)}\EuScript{B}_K(\mc{H})$ as follows: for each $x\in \mc{G}^0$ let $p^x$ denote the restriction of $p$ to $x\mc{G}$, so that $p^x : x\mc{G}\rightarrow p(x)\mc{H}$ is a bijection for a.e.\ $x\in \mc{G}^0$. For such $x$, define $\EuScript{B}_K(p)^x : K^{x\mc{G}}\rightarrow K^{p(x)\mc{H}}$ by $\EuScript{B}_K(p)^x (f)(h) = f((p^x)^{-1}(h))$, for $f\in K^{p(x)\mc{H}}$, $h\in p(x)\mc{H}$. The map $\EuScript{B}_K(p)^x$ is measure preserving, and defining $\EuScript{B}_K(p)(g,f):= (p(g),\EuScript{B}_K(p)^{s(g)}(f))$ for $(g,f)\in \EuScript{B}_K(\mc{G})$, makes $\EuScript{B}_K(p)$ a groupoid extension. Moreover, we have the following commuting square of groupoid extensions
\begin{equation}\label{eqn:Bsquare}
\xymatrix{
\EuScript{B}_K(\mc{G})\ar[d]\ar[r]^{\EuScript{B}_K(p)}    &\EuScript{B}_K(\mc{H})\ar[d] \\
\mc{G}\ar[r]^{p}                        &\mc{H}
}
\end{equation}

\begin{proposition}\label{prop:functor}
Let $K$ be a standard probability space. The assignment which takes each discrete p.m.p.\ groupoid $\mc{G}$ to the associated Bernoulli extension $\EuScript{B}_K(\mc{G})\rightarrow \mc{G}$, and which takes each groupoid extension $\mc{G}\xra{p}\mc{H}$ to the associated commuting square \eqref{eqn:Bsquare}, is a functor from the category $\bm{\mathrm{DPG}}$ of discrete p.m.p.\ groupoids, to the category $\bm{\mathrm{DPG ^2}}$, of groupoid extensions.
\end{proposition}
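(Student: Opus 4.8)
The plan is to unwind the definition of functor into its four requirements and to observe that all of the real work is concentrated in a single compatibility computation. A functor $\EuScript{B}_K : \bm{\mathrm{DPG}} \to \bm{\mathrm{DPG}^2}$ must (a) send each object to an object; (b) send each morphism to a morphism with the correct source and target; (c) preserve identities; and (d) preserve composition. Requirements (a)--(c) follow essentially at once from the construction given just before the statement, so the only substantive point is (d).

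For (a), the projection $\EuScript{B}_K(\mc{G}) = \mc{G}\ltimes K^{\otimes \mc{G}} \to \mc{G}$ is a groupoid extension by the definition of the Bernoulli extension, hence is a legitimate object of $\bm{\mathrm{DPG}^2}$. For (b), given an extension $\mc{G}\xra{p}\mc{H}$, I would note that $\EuScript{B}_K(p)$ is a groupoid extension (verified in the construction above) and that the square \eqref{eqn:Bsquare} commutes directly from the formula $\EuScript{B}_K(p)(g,f) = (p(g), \EuScript{B}_K(p)^{s(g)}(f))$, whose first coordinate records the equality of projecting-then-applying-$p$ with applying-$\EuScript{B}_K(p)$-then-projecting; this exhibits the square as a morphism of $\bm{\mathrm{DPG}^2}$ from $(\EuScript{B}_K(\mc{G})\to\mc{G})$ to $(\EuScript{B}_K(\mc{H})\to\mc{H})$. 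For (c), when $p = \mathrm{id}_{\mc{G}}$ each restriction $p^x\colon x\mc{G}\to x\mc{G}$ is the identity, so $\EuScript{B}_K(p)^x(f)(h) = f((p^x)^{-1}(h)) = f(h)$, whence $\EuScript{B}_K(\mathrm{id}_{\mc{G}}) = \mathrm{id}$.

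The heart of the proof is (d). Given composable extensions $\mc{G}\xra{p}\mc{H}\xra{q}\mc{K}$, I would first isolate the set-theoretic fact that restriction to range-fibers is itself functorial: for a.e.\ $x\in\mc{G}^0$ the restrictions satisfy $(q\circ p)^x = q^{p(x)}\circ p^x$ as bijections $x\mc{G}\to q(p(x))\mc{K}$, so that $((q\circ p)^x)^{-1} = (p^x)^{-1}\circ (q^{p(x)})^{-1}$. Feeding this into the defining formula for $\EuScript{B}_K$ yields, for a.e.\ $x$ and all $k\in q(p(x))\mc{K}$,
\[
\EuScript{B}_K(q\circ p)^x(f)(k) = f\big((p^x)^{-1}((q^{p(x)})^{-1}(k))\big) = \EuScript{B}_K(q)^{p(x)}\big(\EuScript{B}_K(p)^x(f)\big)(k),
\]
which is precisely the fiber coordinate of $\EuScript{B}_K(q)\circ\EuScript{B}_K(p)$ evaluated at $f$ (using $s(p(g)) = p(s(g))$), while the base coordinates agree because $q(p(g)) = (q\circ p)(g)$. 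Hence $\EuScript{B}_K(q\circ p) = \EuScript{B}_K(q)\circ\EuScript{B}_K(p)$ as extensions.

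The only genuine subtlety --- and thus the closest thing to an obstacle --- is that morphisms in $\bm{\mathrm{DPG}}$ and $\bm{\mathrm{DPG}^2}$ are equivalence classes of extensions modulo null sets, so I must ensure the composite $\EuScript{B}_K(q)\circ\EuScript{B}_K(p)$ is defined on a conull set and that the displayed identity holds almost everywhere. This is handled by observing that $p^0$ is measure preserving, so the $p^0$-preimage of the null set on which $q^y$ fails to be bijective is itself null; intersecting it with the null set on which $p^x$ fails to be bijective produces a single conull set of $x\in\mc{G}^0$ on which the pointwise computation above is valid, which is all that is needed.
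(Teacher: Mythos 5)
Your proof is correct, and it is exactly the routine verification the paper has in mind: the paper states this proposition without proof, treating it as immediate from the construction of $\EuScript{B}_K(p)$, and your write-up simply makes that implicit argument explicit. In particular, your identification of the fiberwise identity $(q\circ p)^x = q^{p(x)}\circ p^x$ as the substance of composition-preservation, together with the null-set bookkeeping using the fact that $p^0$ is measure preserving, is precisely what is needed and matches the intended (omitted) argument.
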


The next proposition generalizes the fact that an orbit equivalence between essentially free p.m.p.\ actions $G\cc X$ and $H\cc Y$ of countable groups induces an orbit equivalence of the actions $G\cc X \otimes K^G$ and $H\cc Y\otimes K^H$.

\begin{proposition}\label{prop:BernIsom}
Let $\mc{G}\xra{p} \mc{H}$ be an extension of of discrete p.m.p.\ groupoids. Then the $\mc{G}$-extensions $\EuScript{B}_K(\mc{G})\xra{q}\mc{G}$ and $\mc{G}\otimes _{\mc{H}}\EuScript{B}_K(\mc{H})\rightarrow \mc{G}$ are isomorphic, with an isomorphism being given by the map $q\otimes \EuScript{B}_K(p): (g,f)\mapsto (q(g,f), \EuScript{B}_K(p)(g,f))=(g,\EuScript{B}_K(p)(g,f))$,
\begin{equation}\label{eqn:BernIsom}
\xymatrix{
\EuScript{B}_K(\mc{G})\ar[d]_q \ar[rr]_{\cong}^{q\otimes \EuScript{B}_K(p)} & \                                     & \mc{G}\otimes _{\mc{H}}\EuScript{B}_K(\mc{H}) \ar[d] \\
\mc{G} \ar[rr]^{\mathrm{id}}               & \                                     & \mc{G}
}
\end{equation}
\end{proposition}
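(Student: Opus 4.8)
The plan is to exploit the equivalence (Remark \ref{rem:category}, Proposition \ref{prop:ExtActCorr}) between the slice category $\bm{\mathrm{DPG}}_{\mc{G}}$ of $\mc{G}$-extensions and the category of p.m.p.\ actions of $\mc{G}$: under this equivalence, an isomorphism of $\mc{G}$-extensions corresponds exactly to a $\mc{G}$-equivariant fiberwise isomorphism on unit spaces. So it suffices to verify that $q\otimes\EuScript{B}_K(p)$ is a morphism of $\mc{G}$-extensions whose induced map on unit spaces is an a.e.\ fiberwise isomorphism over $\mc{G}^0$.

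First I would confirm that $q\otimes\EuScript{B}_K(p)$ is a well-defined groupoid homomorphism landing in the fibered product, and that it is a morphism of $\mc{G}$-extensions. It sends $(g,f)\in\EuScript{B}_K(\mc{G})$ to $(g,\EuScript{B}_K(p)(g,f))$, and since $p(q(g,f))=p(g)=\pi(\EuScript{B}_K(p)(g,f))$, where $\pi:\EuScript{B}_K(\mc{H})\to\mc{H}$ is the projection --- this being precisely the commutativity of \eqref{eqn:Bsquare} --- the image lies in $\mc{G}\otimes_{\mc{H}}\EuScript{B}_K(\mc{H})$. Because both $q$ and $\EuScript{B}_K(p)$ are groupoid homomorphisms and the operations on the fibered product groupoid are performed coordinatewise, $q\otimes\EuScript{B}_K(p)$ is itself a homomorphism; as its first coordinate is $q$, it commutes with the projections to $\mc{G}$ and is therefore a morphism of $\mc{G}$-extensions.

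Next I would identify the induced map on unit spaces and check it is a fiberwise isomorphism. Writing a unit of $\EuScript{B}_K(\mc{G})$ as a pair $(x,f)$ with $x\in\mc{G}^0$ and $f\in K^{x\mc{G}}$, the map sends it to $(x,\EuScript{B}_K(p)^x(f))$ with $\EuScript{B}_K(p)^x(f)=f\circ(p^x)^{-1}\in K^{p(x)\mc{H}}$, a unit of $\mc{G}\otimes_{\mc{H}}\EuScript{B}_K(\mc{H})$. This is fiberwise over $\mc{G}^0$, and on the fiber over $x$ it is the reindexing map $K^{x\mc{G}}\to K^{p(x)\mc{H}}$, $f\mapsto f\circ(p^x)^{-1}$. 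The conceptual crux is that, because $p$ is locally bijective, $p^x:x\mc{G}\to p(x)\mc{H}$ is a bijection for a.e.\ $x\in\mc{G}^0$; for such $x$ the reindexing map is a bijection carrying the product measure $\mu_K^{x\mc{G}}$ to the product measure $\mu_K^{p(x)\mc{H}}$.

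Finally I would match up the measures to confirm this is genuinely a fiberwise isomorphism. Disintegrating the relatively independent product measure on $(\mc{G}\otimes_{\mc{H}}\EuScript{B}_K(\mc{H}))^0=\mc{G}^0\otimes_{\mc{H}^0}K^{\otimes\mc{H}}$ over $\mc{G}^0$ shows that its conditional measure on the fiber over $x$ is exactly $\mu_K^{p(x)\mc{H}}$, which matches the pushforward computed above; on the source side the conditional measure over $x$ is $\mu_K^{x\mc{G}}$. Thus the unit-space map is an a.e.\ fiberwise measure isomorphism over $\mc{G}^0$, and by the category equivalence $q\otimes\EuScript{B}_K(p)$ is an isomorphism of $\mc{G}$-extensions. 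I expect the only real care to be taken in this last measure-theoretic bookkeeping --- correctly disintegrating the relatively independent product over $\mc{H}^0$ and refibering it over $\mc{G}^0$ --- while the underlying content is simply that local bijectivity of $p$ turns the reindexing of coordinates into a measure-preserving identification of the two Bernoulli structures.
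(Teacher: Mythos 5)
Your proposal is correct, and it is precisely the ``straightforward verification'' that the paper's proof asserts and omits: you check that $q\otimes\EuScript{B}_K(p)$ is a homomorphism into the fibered product commuting with the projections to $\mc{G}$, and that on unit spaces it is the fiberwise reindexing $f\mapsto f\circ(p^x)^{-1}$, which local bijectivity of $p$ makes a measure isomorphism from $\mu_K^{x\mc{G}}$ onto the conditional measure $\mu_K^{p(x)\mc{H}}$ of the relatively independent product. The measure-theoretic bookkeeping (disintegrating $\mc{G}^0\otimes_{\mc{H}^0}K^{\otimes\mc{H}}$ over $\mc{G}^0$) and the appeal to the equivalence between $\mc{G}$-extensions and p.m.p.\ $\mc{G}$-actions are exactly the intended argument.
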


\begin{proof}
This is a straightforward verification.
\end{proof}

In the case of equivalence relations, the next lemma follows from \cite[Theorem 3.3]{BHI15}.

\begin{lemma}\label{lem:Binfl}
Let $\mc{G}$ be a discrete p.m.p.\ groupoid and let $A$ be a non-null measurable subset of $\mc{G}^0$.  Then the $\mc{G}_A$-extensions $\EuScript{B}(\mc{G}_A)\rightarrow \mc{G}_A$ and $\EuScript{B}(\mc{G})_A \rightarrow \mc{G}_A$ are isomorphic.
\end{lemma}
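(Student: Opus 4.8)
The plan is to reduce the asserted isomorphism of $\mc{G}_A$-extensions to an isomorphism of p.m.p.\ $\mc{G}_A$-actions, and then to carry out a measurable Bernoulli-absorption argument in the spirit of Lemma \ref{lem:Biso}. First I would reduce to the case that $A$ is a complete unit section for $\mc{G}$: since $\EuScript{B}(\mc{G})_A$, $\EuScript{B}(\mc{G}_A)$, and $\mc{G}_A$ depend only on the reduction $\mc{G}_{[A]}$ of $\mc{G}$ to the saturation $[A]$ of $A$ (for $x\in A$ the fiber $x\mc{G}$ already lies over the class of $x$), we may replace $\mc{G}$ by $\mc{G}_{[A]}$ and assume $\mc{G}A\mc{G}=\mc{G}$. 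Unwinding the definitions, $\EuScript{B}(\mc{G})_A=(\mc{G}\ltimes [0,1]^{\otimes\mc{G}})_A=\mc{G}_A\ltimes([0,1]^{\otimes\mc{G}}|_A)$ is the translation groupoid of the $\mc{G}_A$-action $\beta^{V}_{[0,1]}$, where $V:=(\ell_{\mc{G}})|_{\mc{G}_A}$ is the discrete $\mc{G}_A$-action obtained by restricting $\ell_{\mc{G}}:\mc{G}\cc\mc{G}$ to $\mc{G}_A$ acting on $A\mc{G}=\bigsqcup_{x\in A}x\mc{G}$, while $\EuScript{B}(\mc{G}_A)$ is the translation groupoid of $\beta^{\ell_{\mc{G}_A}}_{[0,1]}$. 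By the equivalence between $\mc{G}_A$-extensions and p.m.p.\ $\mc{G}_A$-actions (Proposition \ref{prop:ExtActCorr}, Remark \ref{rem:category}), it therefore suffices to produce an isomorphism of $\mc{G}_A$-actions $\beta^{V}_{[0,1]}\cong\beta^{\ell_{\mc{G}_A}}_{[0,1]}$.

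To compare these two discrete actions I would fix, as in the proof of Proposition \ref{prop:restrict}, a Borel set $\psi\subseteq A\mc{G}$ with $A\subseteq\psi$ and $|\psi\cap s^{-1}(x)|=1$ for every $x\in\mc{G}^0$, writing $\psi_x$ for the unique element of $\psi$ with $s(\psi_x)=x$ (so $\psi_x=x$ for $x\in A$) and $\rho:=r\circ\psi:\mc{G}^0\to A$ for the resulting countable-to-one Borel retraction onto $A$. Every $g\in A\mc{G}$ factors uniquely as $g=(g\psi_{s(g)}^{-1})\psi_{s(g)}$ with $g\psi_{s(g)}^{-1}\in\mc{G}_A$, and left multiplication by $\mc{G}_A$ preserves this factorization on the $\mc{G}_A$-component while fixing the $\psi$-component. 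Consequently, as a discrete $\mc{G}_A$-action, $V$ is a disjoint union of orbits of $\ell_{\mc{G}_A}$: each orbit $A\mc{G}y$ of $V$ ($y\in\mc{G}^0$) is carried isomorphically onto the $\ell_{\mc{G}_A}$-orbit $A\mc{G}\rho(y)$ by $g\mapsto g\psi_y^{-1}$, and for fixed $x\in A$ the orbit $A\mc{G}x$ of $\ell_{\mc{G}_A}$ is thereby realized with multiplicity $m(x):=|\rho^{-1}(x)|\in\{1,2,\dots,\infty\}$, the bound $m(x)\ge 1$ holding because $x\in\rho^{-1}(x)$.

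The crux is to absorb this multiplicity into the base $[0,1]$ in a Borel fashion. Here I would partition $A=\bigsqcup_{1\le j\le\infty}A_j$ into the countably many Borel sets $A_j:=\{x\in A:m(x)=j\}$ on which the multiplicity is constant, giving a countable $\mc{G}_A$-invariant partition of $A\mc{G}$ into the pieces $V[A_j]:=\{g\in A\mc{G}:\rho(s(g))\in A_j\}$. Since $\rho$ is countable-to-one with fibers of constant size $j$ over $A_j$, the Lusin--Novikov Uniformization Theorem splits $\rho^{-1}(A_j)$ into $j$ Borel sections each mapping bijectively onto $A_j$, which identifies $V[A_j]$ with $j$ disjoint copies of the $\mc{G}_A$-action $\ell_{\mc{G}_A A_j}$ on $\mc{G}_A A_j$. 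Proposition \ref{prop:clear} then finishes the argument: part (1) gives $\beta^{V}_{[0,1]}\cong\bigotimes_j\beta^{V[A_j]}_{[0,1]}$ and, via the identification above, $\beta^{V[A_j]}_{[0,1]}\cong\bigotimes_{i<j}\beta^{\mc{G}_A A_j}_{[0,1]}$; part (2) together with $[0,1]^{j}\cong[0,1]$ and $[0,1]^{\N}\cong[0,1]$ yields $\bigotimes_{i<j}\beta^{\mc{G}_A A_j}_{[0,1]}\cong\beta^{\mc{G}_A A_j}_{[0,1]^{j}}\cong\beta^{\mc{G}_A A_j}_{[0,1]}$; and a final application of part (1) in reverse, using that $\{\mc{G}_A A_j\}_j$ is an invariant partition of $\mc{G}_A$, gives $\bigotimes_j\beta^{\mc{G}_A A_j}_{[0,1]}\cong\beta^{\ell_{\mc{G}_A}}_{[0,1]}$. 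Chaining these isomorphisms proves $\beta^{V}_{[0,1]}\cong\beta^{\ell_{\mc{G}_A}}_{[0,1]}$, and hence the lemma.

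I expect the main obstacle to be precisely this last, measure-theoretic, step. The discrete actions $V$ and $\ell_{\mc{G}_A}$ are genuinely non-isomorphic, because the orbit $A\mc{G}x$ occurs in $V$ with the extra multiplicity $m(x)$ coming from arrows into $x$ whose source lies outside $A$; the isomorphism can therefore only exist after passing to $[0,1]$-Bernoulli actions, and making the multiplicity-absorption Borel when $m(x)$ varies over $A$ is exactly what forces the decomposition into constant-multiplicity pieces and the Lusin--Novikov selection. In contrast to Lemma \ref{lem:Biso}, I do not expect to need any appeal to ergodic decomposition, since the section $\psi$ and the countable-to-one structure of $\rho$ allow the entire construction to be performed globally.
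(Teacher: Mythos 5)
Your proof is correct, and it takes a genuinely different route from the paper's. Both arguments make the same first move, replacing $\mc{G}$ by its reduction to the $\mc{G}$-invariant set $r(\mc{G}A)$ so that $A$ becomes a complete unit section, and both reduce the lemma, via the correspondence between extensions and actions, to an isomorphism of Bernoulli-type actions; but from there they diverge. The paper works at the level of $\mc{G}$-actions: it quotes Lemma \ref{lem:Biso} to get $\beta^{\mc{G}}_{[0,1]}\cong\beta^{\mc{G}A}_{[0,1]}$ as p.m.p.\ actions of $\mc{G}$, hence an isomorphism of the $\mc{G}$-extensions $\EuScript{B}(\mc{G})\rightarrow\mc{G}$ and $\mc{G}\ltimes[0,1]^{\otimes\mc{G}A}\rightarrow\mc{G}$, and then reduces that isomorphism to $A$, noting that $(\mc{G}\ltimes[0,1]^{\otimes\mc{G}A})_A\rightarrow\mc{G}_A$ is literally $\EuScript{B}(\mc{G}_A)\rightarrow\mc{G}_A$; the real work is thus outsourced to Lemma \ref{lem:Biso}, whose proof uses ergodicity (bisections carrying $A$ onto sets of equal measure) and, in the non-ergodic case, ergodic decomposition together with a measurable selection of fiberwise isomorphisms. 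You never leave $\mc{G}_A$: you compare the restricted action $\beta^{V}_{[0,1]}$, $V=A\mc{G}$, with $\beta^{\mc{G}_A}_{[0,1]}$ directly, using the transversal $\psi$ --- which is a section of the source map but not a bisection, since $r$ need not be injective on it --- and you repair that failure of injectivity by stratifying $A$ according to the multiplicity $m(x)=|\rho^{-1}(x)|$, splitting each stratum into full Borel sections by Lusin--Novikov, and absorbing the multiplicity into the base via Proposition \ref{prop:clear} and $[0,1]^{j}\cong[0,1]$. Each approach buys something: the paper's proof is three lines long given Lemma \ref{lem:Biso}, which it needs elsewhere anyway (e.g.\ in Lemma \ref{lem:Bsubg}); yours is self-contained and purely Borel, avoiding ergodic decomposition and measurable-selection arguments altogether, and it isolates the precise reason the statement is nontrivial, namely that each $\ell_{\mc{G}_A}$-orbit occurs in $A\mc{G}$ with multiplicity $m(x)\geq 1$, so that the discrete actions themselves are non-isomorphic and only become isomorphic after Bernoullization. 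The trade-off is that your argument yields only the reduced statement (an isomorphism of $\mc{G}_A$-actions), whereas Lemma \ref{lem:Biso} is the stronger $\mc{G}$-equivariant statement, which the paper reuses.
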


\begin{proof}
Let $X = r(\mc{G}A)$. Then $X$ is a $\mc{G}$-invariant measurable subset of $\mc{G}^0$, so the extensions $\EuScript{B}(\mc{G})_X\rightarrow \mc{G}_X$ and $\EuScript{B}(\mc{G}_X)\rightarrow \mc{G}_X$ coincide. Therefore, after replacing $\mc{G}$ by $\mc{G}_X$ if necessary, we may assume that $A$ is a complete unit section for $\mc{G}$. By Lemma \ref{lem:Biso}, the actions $\beta ^{\mc{G}}_{[0,1]} : \mc{G} \cc [0,1]^{\otimes \mc{G}}$ and $\beta ^{\mc{G}A}_{[0,1]} : \mc{G}\cc [0,1]^{\otimes \mc{G}A}$ are isomorphic, and hence the $\mc{G}$-extensions $\EuScript{B}(\mc{G})\rightarrow \mc{G}$ and $\mc{G}\ltimes [0,1]^{\otimes \mc{G}A}\rightarrow \mc{G}$ are isomorphic. Therefore, the $\mc{G}_A$-extensions $\EuScript{B}(\mc{G})_A \rightarrow \mc{G}_A$ and $(\mc{G}\ltimes [0,1]^{\otimes \mc{G}A})_A\rightarrow \mc{G}_A$ are isomorphic. The latter $\mc{G}_A$ extension coincides precisely with $\EuScript{B}(\mc{G}_A)\rightarrow \mc{G}_A$.
\end{proof}

The relative weak mixing assumption in Lemma \ref{lem:GFurman} will manifest in \S\ref{sec:ME} through the following well-known lemma.

\begin{lemma}\label{lem:Bwm}
Let $\mc{G}$ be a discrete p.m.p.\ groupoid which is aperiodic, i.e., with $x\mc{G}$ infinite for a.e.\ $x\in \mc{G}^0$, and let $K$ be a standard probability space. Then the Bernoulli extension $\EuScript{B}_K(\mc{G})\rightarrow \mc{G}$ is relatively weakly mixing.
\end{lemma}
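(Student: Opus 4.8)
The plan is to reduce relative weak mixing to the relative ergodicity of a single Bernoulli extension, and then to reduce that to ordinary ergodicity of a Bernoulli action of an ergodic aperiodic groupoid. First I would unwind the definition: relative weak mixing of $\EuScript{B}_K(\mc{G})\to\mc{G}$ means that $\EuScript{B}_K(\mc{G})\otimes_{\mc{G}}\EuScript{B}_K(\mc{G})\to\mc{G}$ is relatively ergodic. By Remark \ref{rem:translext} this relatively independent product is isomorphic over $\mc{G}$ to $\mc{G}\ltimes(K^{\otimes\mc{G}}\otimes_{\mc{G}^0}K^{\otimes\mc{G}})$, and by Proposition \ref{prop:clear}(2) the product action $\beta^{\mc{G}}_K\otimes\beta^{\mc{G}}_K$ is isomorphic to $\beta^{\mc{G}}_{K\otimes K}$. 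Hence $\EuScript{B}_K(\mc{G})\otimes_{\mc{G}}\EuScript{B}_K(\mc{G})\cong\EuScript{B}_{K\otimes K}(\mc{G})$ as $\mc{G}$-extensions, and it suffices to prove the general claim that for every standard probability space $L$ the Bernoulli extension $\EuScript{B}_L(\mc{G})\to\mc{G}$ of an aperiodic groupoid is relatively ergodic; applying this with $L=K\otimes K$ completes the reduction.

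Next I would reduce to the ergodic case. Using an ergodic decomposition map $\pi:\mc{G}\to W$ exactly as in the proof of Lemma \ref{lem:Biso}, the extension $\EuScript{B}_L(\mc{G})\to\mc{G}$ restricts over a.e.\ $w\in W$ to the Bernoulli extension $\EuScript{B}_L(\mc{G}_w)\to\mc{G}_w$ of the ergodic (and still aperiodic) groupoid $\mc{G}_w$. Writing $q$ for the extension map and $q^0:L^{\otimes\mc{G}}\to\mc{G}^0$ for its restriction to unit spaces, a $\mc{G}$-invariant set $A\subseteq L^{\otimes\mc{G}}=(\EuScript{B}_L(\mc{G}))^0$ restricts fiberwise to $\mc{G}_w$-invariant sets, so it is enough to show each of these is null or conull; then $A$ is a union of $W$-fibers, i.e.\ a pullback $(q^0)^{-1}(B)$ of a $\mc{G}$-invariant set $B=\pi^{-1}(B_W)\subseteq\mc{G}^0$, which is precisely what relative ergodicity demands. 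For ergodic $\mc{G}$ this reduces the problem to plain ergodicity of $\beta^{\mc{G}}_L$: an invariant $A$ forces its fiber-measure function $x\mapsto\mu_{L^{x\mc{G}}}(A_x)$ to be $\mc{G}$-invariant (each $\beta^{\mc{G}}_L(g)$ is a measure isomorphism between fibers), hence constant, equal to $a:=\mu(A)$, and we must show $a\in\{0,1\}$.

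For the ergodicity of $\beta^{\mc{G}}_L$ I would run the classical asymptotic-independence argument adapted to the fibered setting. By the very construction of the $\sigma$-algebra on $L^{\otimes\mc{G}}$, it is generated by the coordinate functions $f\mapsto f(t(x))$ attached to Borel sections $t$ of $r:\mc{G}\to\mc{G}^0$, so cylinder functions depending on finitely many such coordinates are dense in $L^2$. Given an invariant $A$, approximate $\mathbf 1_A$ in $L^2$ by a cylinder function $\psi$ depending on finitely many coordinate sections $t_1,\dots,t_n$, and then produce a bisection $\theta$ of $\mc{G}$ lying in the full group (so $s(\theta)=r(\theta)=\mc{G}^0$ a.e.) whose induced Bernoulli transformation $T_\theta$ of $L^{\otimes\mc{G}}$ carries these coordinates to the sections $x\mapsto\theta_x^{-1}t_i(r(\theta_x))$, chosen to be disjoint from $\{t_1,\dots,t_n\}$ in a.e.\ fiber. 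Independence of disjoint coordinate blocks under the product measure, together with the constancy of the fiber-measure established above, gives $\int\psi\,(\psi\circ T_\theta)\approx a^2$, while invariance gives $T_\theta A=A$ and hence $\mu(A\cap T_\theta A)=a$; refining the approximation yields $a=a^2$, so $a\in\{0,1\}$.

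I expect the main obstacle to be the construction of the shifting bisection $\theta$: one must use aperiodicity (each fiber $x\mc{G}$ is infinite, leaving room to displace finitely many coordinates) together with ergodicity and the Lusin--Novikov Uniformization Theorem to select, measurably and in a globally measure-preserving way, elements $\theta_x\in x\mc{G}$ so that the transported coordinates $\theta_x^{-1}t_i(r(\theta_x))$ avoid $t_j(x)$ for all $i,j$ and a.e.\ $x$. Once such a $\theta$ is in hand, the independence estimate and the passage to the limit are routine.
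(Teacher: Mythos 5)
Your reductions are sound: relative weak mixing does reduce, via Proposition \ref{prop:clear} and Remark \ref{rem:translext}, to relative ergodicity of $\EuScript{B}_L(\mc{G})\rightarrow\mc{G}$ for an arbitrary base $L$; the passage to ergodic $\mc{G}$ by ergodic decomposition, the constancy of the fiberwise measure $x\mapsto\mu_{L^{x\mc{G}}}(A_x)$, and the closing approximation/independence estimate are all fine (and routine, as you say). Your route is also genuinely different from the paper's: the paper never invokes ergodic decomposition and never builds any global transformation; it approximates $A_x$ fiberwise by a cylinder set on a finite window $Q^x$ and then, for each fixed good $x$ separately, picks a single element $g\in x\mc{G}\setminus xQQ^{-1}$ --- no measurability in $x$, no bisection, and no full group element are ever needed.

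However, there is a genuine gap, located exactly at the step you flag as the main obstacle, and your stated reason for why it should succeed is not the right one. You assert that the displacing bisection $\theta$ can be found using ``aperiodicity (each fiber $x\mc{G}$ is infinite, leaving room to displace finitely many coordinates)'' together with ergodicity and Lusin--Novikov. But the set of forbidden choices for $\theta_x$ is not finite for any trivial reason: writing $g$ for the element of $\theta$ with source $x$ and $y=r(g)$, the requirement $g^{-1}t_i(y)\neq t_j(x)$ forbids the elements $t_i(y)t_j(x)^{-1}$, and these constraints accumulate as $y$ runs over the entire (infinite) orbit of $x$. For fixed $(i,j)$ the forbidden elements are in bijection with $\{y\,:\,s(t_i(y))=s(t_j(x))\}$, a fiber of the map $y\mapsto s(t_i(y))$, whose size aperiodicity does nothing to control; a priori the forbidden set could even exhaust $\mc{G}x$. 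What makes it finite is the p.m.p.\ identity \eqref{eqn:pmp}: setting $T=\bigcup_i\mathrm{im}(t_i)$, one has $|xT|\le n$ for all $x$, hence $\int_{\mc{G}^0}|Tx|\,d\mu_{\mc{G}^0}=\int_{\mc{G}^0}|xT|\,d\mu_{\mc{G}^0}\le n$, so $|Tx|<\infty$ a.e., and therefore the forbidden set, being contained in $TT^{-1}$, has a.e.\ finite fibers; only after this does an exhaustion argument using ergodicity and Lusin--Novikov produce a full bisection $\theta$ avoiding $TT^{-1}$. This finiteness-from-invariance step is precisely the core of the paper's proof (its displayed computation $\int_{D_n}|Qx|\,d\mu_{\mc{G}^0}=\int_{D_n}|xQ|\,d\mu_{\mc{G}^0}=n\mu_{\mc{G}^0}(D_n)<\infty$), and it is the one idea missing from your proposal; without it the construction of $\theta$ cannot get started, while with it your argument does go through.
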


\begin{proof}
By Proposition \ref{prop:clear} and Remark \ref{rem:translext}, the $\mc{G}$-extensions $\EuScript{B}_K(\mc{G})\otimes _{\mc{G}} \EuScript{B}_K(\mc{G})\rightarrow \mc{G}$ and $\EuScript{B}_{K^2}(\mc{G})\rightarrow \mc{G}$ are isomorphic, hence it suffices to show that the extension $\EuScript{B}_K(\mc{G})\rightarrow \mc{G}$ is relatively ergodic. In terms of the Bernoulli action $\beta ^{\mc{G}}_K:\mc{G}\cc K^{\otimes \mc{G}}$, this means the following: given a measurable subset $A$ of $K^{\otimes \mc{G}}$ which is $\beta ^{\mc{G}}_K$-invariant, we must show that for a.e.\ $x\in \mc{G}^0$ the fiber $A_x\subseteq K^{x\mc{G}}$ is either null or conull.

For each $x\in \mc{G}^0$ let $\mu _x:= \mu _{K}^{x\mc{G}}$ denote the measure on the fiber $K^{x\mc{G}}$, and for each finite $Q\subseteq x\mc{G}$ let $\pi _Q : K^{x\mc{G}}\rightarrow K^{Q}$ denote the restriction map $\pi _Q(f):= f|Q$.

Fix $\epsilon >0$. For each $x\in \mc{G}^0$ let $n(x)$ be the least natural number $n$ such that there exists some subset $Q\subseteq x\mc{G}$ of cardinality $|Q|=n$ having the following property: $(*)$ there exists some $\pi _Q$-measurable subset $B$ of $K^{x\mc{G}}$ with $\mu _x(B\triangle A_x)\leq \epsilon \mu _x(A_x)$. We can then find a measurable selection $x\mapsto Q^x\subseteq x\mc{G}$ of such a set satisfying $(*)$, with $|Q^x| = n(x)$, so that the set $Q:=\bigcup _{x\in \mc{G}^0}Q^x \subseteq \mc{G}$ is measurable with $xQ=Q^x$ for $x\in \mc{G}^0$. Since $A$ is $\beta ^\mc{G}_K$-invariant, the function $x\mapsto n(x)$ is $\mc{G}$-invariant, hence for each $n\in \N$ the set $D_n: = \{ x\in \mc{G}^0 \, : \, n(x)=n \}$ is $\mc{G}$-invariant, which implies that
\[
\int _{D_n}|Qx|\, d\mu _{\mc{G}^0} =\int _{D_n}|xQ| \, d\mu _{\mc{G}^0} = n\mu _{\mc{G}^0}(D_n)<\infty  .
\]
In particular, the set $Qx$ is finite for a.e.\ $x\in \mc{G}^0$. It follows that $xQQ^{-1}$ is a finite subset of the infinite set $x\mc{G}$ for a.e.\ $x\in \mc{G}^0$. Fix such an $x$ and choose some $g\in x\mc{G}$ which does not belong to the finite set $xQQ^{-1}$, and hence satisfies $gQ\cap Q=\emptyset$. Let $y:=s(g)$ and find subsets $B_y\subseteq K^{y\mc{G}}$ and $B_x \subseteq K^{x\mc{G}}$ which are $\pi _{yQ}$-measurable and $\pi _{xQ}$-measurable respectively and satisfy $\mu _y(B_y\triangle A_y)\leq \epsilon \mu _y(A_y)$ and $\mu _x(B_x\triangle A_x)\leq \epsilon \mu _x (A_x)$. Since $A$ is $\beta ^{\mc{G}}_K$-invariant we have $\mu _y (A_y)=\mu _x (A_x)$ and
\begin{align}
\label{eqn:Ax} \mu _x(A_x) =\mu _{x}(\beta ^{\mc{G}}(g)A_y\cap A_x) &\leq \mu _x(\beta ^{\mc{G}}(g)B_y\cap B_x) +2\epsilon \mu _x(A_x) .
\end{align}
Since $gQ\cap Q=\emptyset$, the subsets $\beta ^{\mc{G}}(g)B_y$ and $B_x$ of $K^{x\mc{G}}$ are independent for the measure $\mu _x$, and hence
\begin{align*}
\mu _x (\beta ^{\mc{G}}(g)B_y \cap B_x) &= \mu _y(B_y)\mu _x (B_x) \leq (1+\epsilon )^2 \mu _x(A_x)^2 .
\end{align*}
Combining this with \eqref{eqn:Ax} shows that $\mu _x(A_x)\leq \frac{(1+\epsilon )^2 }{1-2\epsilon} \mu _x(A_x)^2$. This inequality holds for a.e.\ $x\in \mc{G}^0$. Since $\epsilon >0$ was arbitrary, we conclude that for a.e. $x\in \mc{G}^0$ we have $\mu _x(A_x)\leq \mu _x(A_x)^2$ and hence $A_x$ is either null or conull, as was to be shown.
\end{proof}

\begin{lemma}\label{lem:Bsubg}
Let $\mc{G}$ be a discrete p.m.p.\ groupoid, let $A$ be a non-null measurable subset of $\mc{G}^0$ and let $\mc{H}$ be a measurable subgroupoid of $\mc{G}_A$ with $\mc{H}^0=A$. Let $p:\EuScript{B}(\mc{G})\rightarrow \mc{G}$ be the Bernoulli extension of $\mc{G}$ with base space $[0,1]$, and let $\mc{H}':=p^{-1}(\mc{H})$. Then the $\mc{H}$-extension $p|_{\mc{H}'}: \mc{H}'\rightarrow \mc{H}$ is isomorphic to the Bernoulli $\mc{H}$-extension $\EuScript{B}(\mc{H})\rightarrow \mc{H}$.
\end{lemma}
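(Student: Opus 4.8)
The plan is to translate the statement into one about p.m.p.\ actions of $\mc{H}$ and then mimic the proof of Lemma~\ref{lem:Biso}. First I would identify $\mc{H}'=p^{-1}(\mc{H})$ explicitly. Since $\EuScript{B}(\mc{G})=\mc{G}\ltimes[0,1]^{\otimes\mc{G}}$ and $p$ is the projection to $\mc{G}$, the subgroupoid $\mc{H}'$ is exactly the translation groupoid $\mc{H}\ltimes\bigl([0,1]^{\otimes\mc{G}}\restriction_A\bigr)$, where $\mc{H}$ acts by restricting the Bernoulli action $\beta^{\mc{G}}$. Writing $A\mc{G}=r^{-1}(A)$, fibered over $A=\mc{H}^0$ via $r$ (so that its fiber over $x$ is $x\mc{G}$) and equipped with the left-translation action $a:=\ell_{\mc{G}}|_{\mc{H}}:\mc{H}\cc A\mc{G}$, this restricted action is precisely the generalized Bernoulli action $\beta^a$, i.e.\ $[0,1]^{\otimes\mc{G}}\restriction_A=[0,1]^{\otimes A\mc{G}}$ with the action $\beta^a$. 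Thus, under the correspondence between $\mc{H}$-extensions and p.m.p.\ actions of $\mc{H}$ (Proposition~\ref{prop:ExtActCorr}), the lemma is equivalent to showing $\beta^a\cong\beta^{\mc{H}}$ as p.m.p.\ actions of $\mc{H}$. (Using Lemma~\ref{lem:Binfl} to pass from $\mc{G}$ to $\mc{G}_A$, one may moreover assume $A=\mc{G}^0$, so that $A\mc{G}=\mc{G}$; this is merely a notational convenience.)

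The crucial structural observation is that $a$ is a \emph{free} discrete action of $\mc{H}$ all of whose orbits are copies of regular orbits of $\mc{H}$: the orbit of $v\in A\mc{G}$ is $\{gv:g\in\mc{H},\ s(g)=r(v)\}$, which the map $g\mapsto gv$ identifies $\mc{H}$-equivariantly with the fiber $\mc{H}r(v)$, a single orbit of $\ell_{\mc{H}}$. Moreover $\mc{H}\subseteq A\mc{G}$ is an $a$-invariant subset on which $a$ restricts to $\ell_{\mc{H}}$. Treating the ergodic case first, I would produce a Borel transversal $T$ for the $\mc{H}$-orbit equivalence relation $E$ on $A\mc{G}$ as follows: partition $\mc{G}$ into countably many Borel bisections $\psi_0,\psi_1,\dots$ with $A\subseteq\psi_0$ (e.g.\ $\psi_0=\mc{G}^0$). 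Since all elements of a given $E$-class share a common source, and a bisection contains at most one element with a given source, each $E$-class meets each $\psi_n$ in at most one point; selecting in each class the point lying in the $\psi_n$ with $n$ least yields a Borel selector, hence a transversal $T$, and shows incidentally that $E$ is smooth. Arranging $A\subseteq\psi_0$ forces $A\subseteq T$.

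I would then split $T=\bigsqcup_n T_n$ into Borel pieces with $T_0=A$ and $r\restriction_{T_n}$ injective for each $n$ (possible by Lusin--Novikov, since $r$ is countable-to-one), and set $V_n:=\mc{H}T_n$. The $V_n$ form an $a$-invariant Borel partition of $A\mc{G}$, and injectivity of $r\restriction_{T_n}$ identifies $a\restriction_{V_n}$ $\mc{H}$-equivariantly with $\ell_{\mc{H}B_n}$, where $B_n:=r(T_n)$; in particular $V_0=\mc{H}$ with $a\restriction_{V_0}=\ell_{\mc{H}}$. By Proposition~\ref{prop:clear}(1) this gives $\beta^a\cong\bigotimes_n\beta^{\mc{H}B_n}$. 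In the ergodic case each nonnull $B_n$ is a complete unit section, so Lemma~\ref{lem:Biso} yields $\beta^{\mc{H}B_n}\cong\beta^{\mc{H}}$; discarding the null $B_n$ (whose factors are trivial) and applying Proposition~\ref{prop:clear}(2) together with $[0,1]^{\N}\cong[0,1]$ collapses the product to $\beta^{\mc{H}}$, as required. The general case then follows by relativizing over the ergodic decomposition $\pi^0:A\to W$ of $\mc{H}$: the map $v\mapsto\pi^0(r(v))$ is $\mc{H}$-invariant on $A\mc{G}$, so the entire construction can be carried out fiberwise over $W$ and glued by a measurable selection, exactly as in the general (non-ergodic) case of the proof of Lemma~\ref{lem:Biso}.

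The hard part will be the transversal step. Unlike Lemma~\ref{lem:Binfl}, where the source map already separates the orbits of the ambient group\-oid, for a proper subgroupoid $\mc{H}$ the source-fibers of $A\mc{G}$ split into $\mc{H}$-cosets, and it is precisely the smoothness of $E$ (via the bisection-selection trick) that lets me reorganize $A\mc{G}$ into countably many regular pieces $\mc{H}B_n$ and thereby reduce to Lemma~\ref{lem:Biso}. The remaining verifications---that the displayed identifications are $\mc{H}$-equivariant and fiberwise measure preserving, and that the gluing in the non-ergodic case is measurable---are routine and parallel those already carried out for Lemma~\ref{lem:Biso}.
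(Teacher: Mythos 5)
Your proposal is correct and takes essentially the same route as the paper: after the identical reduction to $A=\mc{G}^0$ via Lemma \ref{lem:Binfl}, the paper likewise decomposes the left-translation action $\mc{H}\cc \mc{G}$ into countably many invariant pieces equivariantly isomorphic to actions $\mc{H}\cc \mc{H}B_n$ (it does this by disjointifying a cover of $\mc{G}$ by bisections, $\sigma_n:=\tau_n\setminus\bigcup_{i<n}\mc{H}\tau_i$, which is precisely your orbit transversal split into range-injective pieces, with the isomorphism $g\mapsto g\sigma_n^{-1}$ playing the role of your $v\mapsto vt^{-1}$), and then concludes exactly as you do via Proposition \ref{prop:clear}, Lemma \ref{lem:Biso}, and $[0,1]^{\N}\cong[0,1]$. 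The one divergence is that you treat the ergodic case first and relativize over the ergodic decomposition of $\mc{H}$ before invoking Lemma \ref{lem:Biso}, whereas the paper applies that lemma directly to the sets $r(\sigma_n)$ (which need not be complete unit sections, e.g.\ when they are null or when $\mc{H}$ is not ergodic); your extra care here is a welcome refinement of the same argument, not a different one.
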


\begin{proof}
By Lemma \ref{lem:Binfl}, we may assume that $A=\mc{G}^0$, and hence that $\mc{H}^0=\mc{G}^0$. In terms of p.m.p.\ actions of $\mc{H}$, we must show that the p.m.p.\ actions $\beta ^{\mc{G}}|_{\mc{H}} : \mc{H} \cc [0,1]^{\otimes \mc{G}}$ and $\beta ^{\mc{H}} : \mc{H}\cc [0,1]^{\otimes \mc{H}}$ are isomorphic, where $\beta ^{\mc{G}}|_{\mc{H}}$ denotes the restriction to $\mc{H}$ of the standard Bernoulli action of $\mc{G}$.

By the Lusin-Novikov Uniformization Theorem we may find a sequence $\tau _0, \tau _1,\dots$ of measurable bisections of $\mc{G}$ with $\mc{G}= \bigcup _{i\in \N} \tau _i$. Let $\sigma _{n} := \tau _n \setminus \bigcup _{i<n}\mc{H}\tau _i$ and put $\Sigma := \{ \sigma _n  \} _{n\in \N}$. Then $\Sigma$ is a countable collection of measurable bisections of $\mc{G}$, and the sets $\mc{H}\sigma$, $\sigma \in \Sigma$, form a partition of $\mc{G}$. For each $\sigma \in \Sigma$ the map $\rho _{\sigma} : \mc{H}\sigma \rightarrow \mc{H}r(\sigma)$, defined by $\rho _{\sigma}(g) := g\sigma ^{-1}$, gives an isomorphism of measure preserving discrete $\mc{H}$-actions, from the left translation action $\mc{H}\cc \mc{H}\sigma$, to the left translation action $\mc{H}\cc \mc{H}r(\sigma )$. Therefore, the left translation action of $\mc{H}$ on $\mc{G}$ is isomorphic to the action of $\mc{H}$ on the disjoint union $V:= \bigsqcup _{\sigma \in \Sigma}\mc{H}r(\sigma )$, and hence we have isomorphisms $\beta ^{\mc{G}}|_{\mc{H}} \cong \beta ^{V}_{[0,1]}$ of p.m.p.\ actions of $\mc{H}$. By Proposition \ref{prop:clear} and Lemma \ref{lem:Biso} we have $\beta ^V_{[0,1]} \cong \bigotimes _{\sigma \in \Sigma}\beta ^{\mc{H}r(\sigma)}_{[0,1]}\cong \bigotimes _{\sigma\in\Sigma}\beta ^{\mc{H}}_{[0,1]} \cong \beta ^{\mc{H}}$.
\end{proof}

\begin{proposition}\label{prop:Bprinc}
Let $\mc{G}$ be an aperiodic discrete p.m.p.\ groupoid. Let $K$ be a standard probability space and assume that $\mu _K$ is not a point mass. Then the discrete p.m.p.\ groupoid $\EuScript{B}_K(\mc{G})$ is principal.
\end{proposition}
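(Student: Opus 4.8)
The plan is to recognize principality of $\EuScript{B}_K(\mc{G})=\mc{G}\ltimes K^{\otimes \mc{G}}$ as essential freeness of the Bernoulli action $\beta^{\mc{G}}_K$. Concretely, two elements $(k,f)$ and $(k',f')$ of the translation groupoid have the same source and range precisely when $f=f'$, $s(k)=s(k')$, $r(k)=r(k')$, and $\beta^{\mc{G}}_K(k)f=\beta^{\mc{G}}_K(k')f$; setting $m:=k^{-1}k'$, this says exactly that $m$ lies in the isotropy group $\mc{G}^x_x$ at $x:=s(k)$ and fixes $f$, i.e.\ $\beta^{\mc{G}}_K(m)f=f$. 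Thus $\EuScript{B}_K(\mc{G})$ is principal if and only if, for a.e.\ $x\in\mc{G}^0$ and $\mu_K^{x\mc{G}}$-a.e.\ $f\in K^{x\mc{G}}$, the only element of $\mc{G}^x_x$ fixing $f$ is the unit $x$. Since $r$ is countable-to-one, each isotropy group $\mc{G}^x_x\subseteq x\mc{G}$ is countable, so it suffices to show that for each fixed nontrivial $m\in\mc{G}^x_x$ the fixed-point set $\{f:\beta^{\mc{G}}_K(m)f=f\}$ is $\mu_K^{x\mc{G}}$-null, and then take a countable union and integrate over $x$.

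Fix such an $m$. Recall that $\beta^{\mc{G}}_K(m)$ acts on $K^{x\mc{G}}$ by $(\beta^{\mc{G}}_K(m)f)(v)=f(m^{-1}v)$, so $f$ is fixed if and only if $f(mv)=f(v)$ for all $v\in x\mc{G}$. The left-translation map $v\mapsto mv$ is a permutation of $x\mc{G}$ (with inverse $v\mapsto m^{-1}v$), and it is fixed-point free: if $mv=v$ then, since $v=xv$ as well, right cancellation in the groupoid forces $m=x$, contradicting nontriviality of $m$. Because $\mc{G}$ is aperiodic, $x\mc{G}$ is infinite for a.e.\ $x$, so a greedy construction produces an infinite sequence $v_1,v_2,\dots$ in $x\mc{G}$ for which the pairs $\{v_i,mv_i\}$ are pairwise disjoint (at each stage only finitely many points, namely those already used together with their $m^{-1}$-translates, are forbidden). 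If $f$ is fixed by $m$ then $f(v_i)=f(mv_i)$ for every $i$, so the fixed-point set is contained in $\bigcap_i\{f:f(v_i)=f(mv_i)\}$. These events are mutually independent for $\mu_K^{x\mc{G}}$, since they involve disjoint coordinates, and each has probability
\[
c:=\int_K \mu_K(\{t\})\, d\mu_K(t)=\sum_{a}\mu_K(\{a\})^2,
\]
the sum being over the atoms of $\mu_K$. The hypothesis that $\mu_K$ is not a point mass gives $c<1$, whence the fixed-point set has measure at most $\prod_i c=0$.

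To globalize, I would use the Lusin--Novikov Uniformization Theorem on the Borel set $\{m\in\mc{G}:s(m)=r(m)\}$, on which $s$ is countable-to-one, to write it as a countable union of graphs of Borel maps $x\mapsto m_n(x)$ with $s(m_n(x))=r(m_n(x))=x$. Then the set of non-free points
\[
\{(x,f):\exists m\in\mc{G}^x_x,\ m\neq x,\ \beta^{\mc{G}}_K(m)f=f\}=\bigcup_n\{(x,f):m_n(x)\neq x,\ \beta^{\mc{G}}_K(m_n(x))f=f\}
\]
is Borel. By the previous paragraph every fiber of this set over $x$ is $\mu_K^{x\mc{G}}$-null, so by the disintegration $\mu_{K^{\otimes\mc{G}}}=\int_{\mc{G}^0}\mu_K^{x\mc{G}}\, d\mu_{\mc{G}^0}$ the whole set is null. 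Hence for a.e.\ $f$ the stabilizer of $f$ in its isotropy group is trivial, and $\EuScript{B}_K(\mc{G})$ is principal.

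The probabilistic heart of the matter --- fixed-point-freeness of left translation by a nontrivial isotropy element, combined with $c<1$ --- is short. The routine-but-delicate point, and the step I would be most careful with, is the final measurability bookkeeping: producing a Borel enumeration of the isotropy elements and checking that the fiberwise null argument genuinely assembles into a null Borel set under the fibering $K^{\otimes\mc{G}}\to\mc{G}^0$. I also note that when $\mc{G}$ itself is principal the isotropy groups are trivial and the statement is immediate; aperiodicity and the non-point-mass hypothesis are exactly what is needed to handle nontrivial isotropy.
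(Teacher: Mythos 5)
Your proof is correct and follows essentially the same route as the paper's: reduce principality to essential freeness of $\beta^{\mc{G}}_K$, observe that left translation by a nontrivial isotropy element $m\in\mc{G}^x_x$ is a fixed-point-free permutation of the infinite set $x\mc{G}$, extract infinitely many pairwise disjoint coordinate pairs $\{v_i,mv_i\}$, and conclude by independence that the fixed-point set is $\mu_K^{x\mc{G}}$-null since $\mu_K$ is not a point mass. Your additional Lusin--Novikov enumeration of the isotropy bundle and the explicit probability $c=\sum_a\mu_K(\{a\})^2<1$ simply make explicit the measurability bookkeeping and the nullity estimate that the paper leaves implicit.
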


\begin{proof}
This is equivalent to saying that the action $\beta ^{\mc{G}}_K$ is essentially free, i.e., for a.e. $g\in \mc{G}$ with $g\not\in \mc{G}^0$, for a.e.\ $f\in K^{s(g)\mc{G}}$ we have $\beta ^{\mc{G}}_K(g)f \neq f$. This is clear if $s(g)\neq r(g)$, so we may assume that $s(g)=r(g)=x$. Since $\mc{G}$ is aperiodic we may assume that $x\mc{G}$ is infinite. Since $g\not\in \mc{G}^0$, the permutation $x\mc{G} \rightarrow x\mc{G}$, $h\mapsto g^{-1}h$, of $x\mc{G}$ has no fixed points, so we may find an infinite subset $I$ of $x\mc{G}$ such that $g^{-1}I\cap I = \emptyset$. If $f\in K^{x\mc{G}}$ and $\beta ^{\mc{G}}_K(g)f=f$ then, in particular, we have $f(g^{-1}h)=f(h)$ for all $h\in I$. The set of $f\in K^{x\mc{G}}$ where this occurs is null for the product measure $\mu _K^{x\mc{G}}$, since $\mu _K$ is not a point mass.
\end{proof}

\subsection{Measure equivalence of discrete p.m.p.\ groupoids}

\begin{definition}\label{def:MEgroupoid}
Let $\mc{G}$ and $\mc{H}$ be discrete p.m.p.\ groupoids.
\begin{enumerate}
\item We say that $\mc{G}$ and $\mc{H}$ are {\bf extension equivalent}, denoted $\mc{G}\simeq _{\mathrm{ext}}\mc{H}$, if there exists a discrete p.m.p.\ groupoid $\mc{K}$ which is both an extension $\mc{K}\rightarrow \mc{G}$, of $\mc{G}$, and an extension $\mc{K}\rightarrow \mc{H}$, of $\mc{H}$.
\item We say that $\mc{G}$ and $\mc{H}$ are {\bf reduction equivalent}, denoted $\mc{G}\simeq _{\mathrm{red}}\mc{H}$, if there exist measurable complete unit sections $A\subseteq \mc{G}^0$ and $B\subseteq \mc{H}^0$, such that $\mc{G}_A$ and $\mc{H}_B$ are isomorphic.
\item We say that $\mc{G}$ and $\mc{H}$ are {\bf measure equivalent} (or {\bf extension-reduction equivalent}), denoted $\mc{G}\simeq _{\mathrm{ME}}\mc{H}$, if there exist extensions $\wt{\mc{G}}\rightarrow \mc{G}$ and $\wt{\mc{H}}\rightarrow \mc{H}$ such that $\wt{\mc{G}}$ and $\wt{\mc{H}}$ are reduction equivalent.
\end{enumerate}
\end{definition}

We note that, by Proposition \ref{prop:ExtActCorr}, $\mc{G}$ and $\mc{H}$ are measure equivalent if and only if they are stably orbit equivalent in the sense that there exist p.m.p.\ actions $\mc{G}\cc Y$ and $\mc{H}\cc Z$ such that the translation groupoids $\mc{G}\ltimes Y$ and $\mc{H}\ltimes Z$ are reduction equivalent. It follows that the definition of measure equivalence for groupoids given here is consistent with the definition of measure equivalence for groups given in the introduction.

\begin{proposition}
Each of the relations $\simeq _{\mathrm{ext}}$, $\simeq _{\mathrm{red}}$, and $\simeq _{\mathrm{ME}}$, is an equivalence relation on the class of discrete p.m.p.\ groupoids. In addition, $\simeq _{\mathrm{ext}}$ is the transitive closure of the relation
\[
R_{\mathrm{ext}}= \{ (\mc{G},\mc{H} ) \, : \, \text{there exists either an extension }\mc{G}\rightarrow \mc{H}\text{ or an extension } \mc{H}\rightarrow \mc{G} \} ,
\]
and $\simeq _{\mathrm{ME}}$ is the transitive closure of the union of $R_{\mathrm{ext}}$ and $\simeq _{\mathrm{red}}$. 
\end{proposition}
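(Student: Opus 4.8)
The plan is to handle the three relations in increasing order of difficulty, using relatively independent products to witness transitivity of the extension-type relations and a single transport lemma (built from Proposition \ref{prop:extact}) to reconcile extensions with reductions. Reflexivity and symmetry are immediate for all three: for $\simeq_{\mathrm{ext}}$ take $\mc{K}=\mc{G}$ with identity extensions, and the defining condition is visibly symmetric; for $\simeq_{\mathrm{red}}$ take $A=B=\mc{G}^0$ and invert the isomorphism; and $\simeq_{\mathrm{ME}}$ inherits both. Writing $\mathrm{tc}$ for transitive closure, the two displayed identities will follow formally once the relevant transitivity is in hand.

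For transitivity of $\simeq_{\mathrm{ext}}$: given $\mc{K}_1$ extending $\mc{G}$ and $\mc{H}$, and $\mc{K}_2$ extending $\mc{H}$ and $\mc{K}$, form the relatively independent product $\mc{M}:=\mc{K}_1\otimes_{\mc{H}}\mc{K}_2$ over $\mc{H}$. The projections $\mc{M}\to\mc{K}_1$ and $\mc{M}\to\mc{K}_2$ are extensions, and composing them with $\mc{K}_1\to\mc{G}$ and $\mc{K}_2\to\mc{K}$ exhibits $\mc{M}$ as a common extension of $\mc{G}$ and $\mc{K}$; here I use that a composition of extensions is an extension, which is immediate since "measure preserving" and "locally bijective" are each preserved under composition. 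Then $\simeq_{\mathrm{ext}}=\mathrm{tc}(R_{\mathrm{ext}})$ is formal: an extension $\mc{K}\to\mc{G}$ makes $\mc{K}$ a common extension of $\mc{G}$ and itself so $R_{\mathrm{ext}}\subseteq\simeq_{\mathrm{ext}}$, whence $\mathrm{tc}(R_{\mathrm{ext}})\subseteq\simeq_{\mathrm{ext}}$ by transitivity; conversely a common extension $\mc{K}$ yields $\mc{G}\mathrel{R_{\mathrm{ext}}}\mc{K}\mathrel{R_{\mathrm{ext}}}\mc{H}$.

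Transitivity of $\simeq_{\mathrm{red}}$ is the delicate case, because complete unit sections must meet every class. Suppose $\phi:\mc{G}_A\xrightarrow{\cong}\mc{H}_{B_1}$ and $\psi:\mc{H}_{B_2}\xrightarrow{\cong}\mc{K}_C$. The first ingredient is the elementary fact that reductions compose: if $A$ is a complete unit section of $\mc{G}$ and $A'\subseteq A$ is a complete unit section of $\mc{G}_A$, then $A'$ is a complete unit section of $\mc{G}$ and $(\mc{G}_A)_{A'}=\mc{G}_{A'}$. The heart is to relate the two complete unit sections $B_1,B_2$ of $\mc{H}$: passing to the ergodic decomposition of $\mc{H}$, I would choose a measurable $B_3\subseteq B_1$ whose intersection with a.e.\ ergodic component has positive measure not exceeding that of $B_2$, and then glue the component-wise bisections supplied by the ergodic case (Proposition \ref{prop:IsoRed}, i.e.\ the bisection-existence fact of \S\ref{sec:prelim}) via a measurable selection, exactly as in Case 3 of Lemma \ref{lem:Biso}, to obtain a bisection $\theta$ of $\mc{H}$ with $s(\theta)=B_3$ and $r(\theta)=B_2'\subseteq B_2$. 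Conjugation by $\theta$ gives $\mc{H}_{B_3}\cong\mc{H}_{B_2'}$, and composing $\phi^{-1}$, this isomorphism, and $\psi$ yields $\mc{G}_{\phi^{-1}(B_3)}\cong\mc{K}_{\psi(B_2')}$; the reduction-composition fact guarantees that $\phi^{-1}(B_3)$ and $\psi(B_2')$ are complete unit sections of $\mc{G}$ and $\mc{K}$, so $\mc{G}\simeq_{\mathrm{red}}\mc{K}$.

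Finally, for $\simeq_{\mathrm{ME}}$ I would isolate the transport lemma: if $\mc{G}\simeq_{\mathrm{red}}\mc{H}$ and $\mc{E}\to\mc{G}$ is an extension, then there is an extension $\mc{F}\to\mc{H}$ with $\mc{E}\simeq_{\mathrm{red}}\mc{F}$. This follows by restricting $\mc{E}\to\mc{G}$ to the $\mc{G}_A$-extension $\mc{E}_A\to\mc{G}_A$, transporting it across the isomorphism $\mc{G}_A\cong\mc{H}_B$ to an $\mc{H}_B$-extension, and invoking Proposition \ref{prop:extact} to extend the latter to an $\mc{H}$-extension $\mc{F}\to\mc{H}$ whose reduction over $B$ recovers $\mc{E}_A$. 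Transitivity of $\simeq_{\mathrm{ME}}$ then combines both mechanisms: given $\wt{\mc{G}}\to\mc{G},\ \wt{\mc{H}}\to\mc{H}$ with $\wt{\mc{G}}\simeq_{\mathrm{red}}\wt{\mc{H}}$, and $\wt{\mc{H}}'\to\mc{H},\ \wt{\mc{K}}\to\mc{K}$ with $\wt{\mc{H}}'\simeq_{\mathrm{red}}\wt{\mc{K}}$, set $\mc{M}:=\wt{\mc{H}}\otimes_{\mc{H}}\wt{\mc{H}}'$; applying the transport lemma to $\mc{M}\to\wt{\mc{H}}$ and to $\mc{M}\to\wt{\mc{H}}'$ produces extensions $\mc{N}\to\wt{\mc{G}}\to\mc{G}$ and $\mc{N}'\to\wt{\mc{K}}\to\mc{K}$ with $\mc{N}\simeq_{\mathrm{red}}\mc{M}\simeq_{\mathrm{red}}\mc{N}'$, whence $\mc{N}\simeq_{\mathrm{red}}\mc{N}'$ by transitivity of $\simeq_{\mathrm{red}}$ and thus $\mc{G}\simeq_{\mathrm{ME}}\mc{K}$. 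The identity $\simeq_{\mathrm{ME}}=\mathrm{tc}(R_{\mathrm{ext}}\cup\simeq_{\mathrm{red}})$ is then formal, since any witness of $\mc{G}\simeq_{\mathrm{ME}}\mc{H}$ gives a chain $\mc{G}\mathrel{R_{\mathrm{ext}}}\wt{\mc{G}}\simeq_{\mathrm{red}}\wt{\mc{H}}\mathrel{R_{\mathrm{ext}}}\mc{H}$. The main obstacle is the non-ergodic bookkeeping in the transitivity of $\simeq_{\mathrm{red}}$ — matching $B_1$ and $B_2$ across ergodic components and gluing bisections measurably — together with checking that the transport lemma's output reductions are genuine complete unit sections; the relatively-independent-product steps are then routine.
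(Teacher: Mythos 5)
Your proposal is correct and follows essentially the same route as the paper: relatively independent products over the middle groupoid give transitivity of $\simeq_{\mathrm{ext}}$ and (combined with Proposition \ref{prop:extact}, which is exactly your transport lemma, inlined in the paper) of $\simeq_{\mathrm{ME}}$, while transitivity of $\simeq_{\mathrm{red}}$ is handled by shrinking the two unit sections in the middle groupoid to have matching measure in almost every ergodic component and conjugating by a bisection as in Proposition \ref{prop:IsoRed}, with the two transitive-closure identities then following formally. The only differences are presentational: you state the transport step as a separate lemma and make the ergodic-decomposition gluing explicit, both of which the paper leaves implicit.
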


\begin{proof}
Suppose first that $\mc{H}_0\simeq _{\mathrm{red}}\mc{G}$ and $\mc{G} \simeq _{\mathrm{red}}\mc{H}_1$, so that for $i=0,1$ there exist complete unit sections $B_i\subseteq \mc{H}^0_i$ and $A_i\subseteq \mc{G}^0$ with $(\mc{H}_i)_{B_i}\cong \mc{G}_{A_i}$. The assumption that $A_i$ is a complete unit section of $\mc{G}$ is equivalent, modulo a $\mu _{\mc{G}}$-null set, to $A_i$ having positive measure within almost every ergodic component of $\mc{G}$. We may therefore find complete unit sections $A_0'\subseteq A_0$ and $A_1'\subseteq A_1$ such that $A_0'$ and $A_1'$ have the same positive measure within almost every ergodic component of $\mc{G}$, and hence $\mc{G}_{A_0'}\cong \mc{G}_{A_1'}$ by Proposition \ref{prop:IsoRed}. Then, for $i=0,1$, the isomorphisms $(\mc{H}_i)_{B_i}\cong \mc{G}_{A_i}$ yield complete unit sections $B_i'\subseteq B_i$ such that $(\mc{H}_i)_{B_i'}\cong \mc{G}_{A_i'}$, and therefore $(\mc{H}_0)_{B_0'}\cong (\mc{H}_1)_{B_1'}$, i.e., $\mc{H}_0\simeq _{\mathrm{red}}\mc{H}_1$.

Since $\simeq _{\mathrm{ext}}$ clearly lies between $R_{\mathrm{ext}}$ and its transitive closure, to see that $\simeq _{\mathrm{ext}}$ is the transitive closure of $R_{\mathrm{ext}}$ it is enough to show that $\simeq _{\mathrm{ext}}$ is an equivalence relation. Similarly, it is enough to show that $\simeq _{\mathrm{ME}}$ is an equivalence relation. For $\simeq _{\mathrm{ext}}$, if $\mc{K}_0$ is an extension of both $\mc{G}_0$ and $\mc{H}$, and $\mc{K}_1$ is an extension of both $\mc{H}$ and $\mc{G}_1$, then $\mc{K}_0\otimes _{\mc{H}}\mc{K}_1$ is an extension of both $\mc{G}_0$ and $\mc{G}_1$; it follows that $\simeq _{\mathrm{ext}}$ is an equivalence relation.

Assume now that $\mc{G}\simeq _{\mathrm{ME}}\mc{H}$ and $\mc{H}\simeq _{\mathrm{ME}} \mc{K}$, so that there are extensions $\mc{G}_0\rightarrow \mc{G}$ and $\mc{H}_0\rightarrow \mc{H}$ along with complete unit sections $A_0$ of $\mc{G}_0$ and $B_0$ of $\mc{H}_0$ such that $(\mc{G}_0)_{A_0}\cong (\mc{H}_0)_{B_0}$, and also there are extensions $\mc{H}_1\rightarrow \mc{H}$ and $\mc{K}_1\rightarrow \mc{K}$ along with complete unit sections $B_1$ of $\mc{H}_1$ and $C_1$ of $\mc{K}_1$ such that $(\mc{H}_1)_{B_1}\cong (\mc{K}_1)_{C_1}$. Let $\wt{\mc{H}}$ be the relatively independent product, $\wt{\mc{H}}:= \mc{H}_0\otimes _{\mc{H}}\mc{H}_1$, of $\mc{H}_0$ and $\mc{H}_1$ over $\mc{H}$. Then the composition $\wt{\mc{H}}_{B_0}\rightarrow (\mc{H}_0)_{B_0}\xra{\cong} (\mc{G}_0)_{A_0}$ realizes $\wt{\mc{H}}_{B_0}$ as an extension of $(\mc{G}_0)_{A_0}$, so by Proposition \ref{prop:extact} there exists an extension $\wt{\mc{G}}\rightarrow \mc{G}_0$ such that $\wt{\mc{G}}_{A_0}\cong \wt{\mc{H}}_{B_0}$. Likewise, there exists an extension $\wt{\mc{K}}\rightarrow \mc{K}_1$ such that $\wt{\mc{H}}_{B_1}\cong \wt{\mc{K}}_{C_1}$. Therefore $\wt{\mc{G}}\simeq _{\mathrm{red}}\wt{\mc{H}}\simeq _{\mathrm{red}}\wt{\mc{K}}$, and hence $\mc{G}\simeq _{\mathrm{ME}}\mc{K}$.
\end{proof}

\section{ME-invariance}\label{sec:ME}

\begin{definition}\label{def:GBCS}
Let $\mathscr{C}$ be a class of Polish groups. We say that an extension $\mc{G}\rightarrow \mc{H}$ of discrete p.m.p.\ groupoids is {\bf relatively $\mathscr{C}$-superrigid} if every measurable homomorphism $w: \mc{G} \rightarrow L$ taking values in a group $L\in \mathscr{C}$ is equivalent to a homomorphism which descends to $\mc{H}$. We say that a discrete p.m.p.\ groupoid $\mc{H}$ is {\bf Bernoulli $\mathscr{C}$-superrigid} if for every extension $\mc{G}\rightarrow \mc{H}$ of $\mc{H}$, the Bernoulli extension $\EuScript{B}(\mc{G})\rightarrow \mc{G}$ is relatively $\mathscr{C}$-superrigid.
\end{definition}

\begin{proposition}\label{prop:groupBCS} Let $H$ be a countable group and let $\mathscr{C}$ be a class of Polish groups. Then $H$ is Bernoulli $\mathscr{C}$-superrigid in the sense of Definition \ref{def:GBCS} if and only if $H$ is Bernoulli $\mathscr{C}$-superrigid in the sense of Definition \ref{def:BCS}.
\end{proposition}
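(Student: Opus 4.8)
The plan is to set up a dictionary between the groupoid data of Definition \ref{def:GBCS}, applied to the one-unit groupoid $\mc{H}=H$, and the group data of Definition \ref{def:BCS}, under which the two superrigidity conditions become literally the same statement. First I would identify the countable group $H$ with the discrete p.m.p.\ groupoid having a single unit (carrying the point mass), whose arrows are the elements of $H$ and whose multiplication is the group law. By Proposition \ref{prop:ExtActCorr}, every $\mc{H}$-extension $\mc{G}\ra\mc{H}$ is isomorphic to a translation groupoid $H\ltimes Y$ with $Y=\mc{G}^0$, and conversely every p.m.p.\ action $H\cc Y$ yields such an extension; thus quantifying over all extensions $\mc{G}\ra\mc{H}$ in Definition \ref{def:GBCS} is the same as quantifying over all p.m.p.\ actions $H\cc Y$ in Definition \ref{def:BCS}.

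Next I would record the translation between homomorphisms and cocycles. For any p.m.p.\ action $H\cc Z$, the multiplication rule $(h_1,h_0z)(h_0,z)=(h_1h_0,z)$ in the translation groupoid shows that a measurable homomorphism $w:H\ltimes Z\ra L$ is exactly a measurable cocycle $H\times Z\ra L$ for the action; two such homomorphisms are equivalent if and only if the corresponding cocycles are cohomologous (with the same witnessing map $Z\ra L$); and, for an $H$-equivariant factor $Z\ra Z'$ inducing $H\ltimes Z\ra H\ltimes Z'$, a homomorphism on $H\ltimes Z$ descends to $H\ltimes Z'$ if and only if the cocycle descends to $Z'$. Each of these assertions is immediate once the translation-groupoid structure is unwound.

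The key input is the identification of the Bernoulli extension. Taking $\mc{G}=H\ltimes Y$, Proposition \ref{prop:BernIsom} (with $\mc{H}=H$, $K=[0,1]$) followed by Remark \ref{rem:translext} gives isomorphisms of $\mc{G}$-extensions
\[
\EuScript{B}(\mc{G})\cong\mc{G}\otimes_{\mc{H}}\EuScript{B}(\mc{H})=(H\ltimes Y)\otimes_{H}\bigl(H\ltimes [0,1]^{\otimes H}\bigr)\cong H\ltimes\bigl(Y\otimes [0,1]^{H}\bigr),
\]
under which the groupoid projection $\EuScript{B}(\mc{G})\ra\mc{G}$ corresponds to the projection $H\ltimes(Y\otimes[0,1]^{H})\ra H\ltimes Y$ induced by the coordinate factor $Y\otimes[0,1]^{H}\ra Y$. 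In other words, the groupoid Bernoulli extension $\EuScript{B}(\mc{G})\ra\mc{G}$ is precisely the translation groupoid of the group-theoretic Bernoulli extension $Y\otimes[0,1]^{H}\ra Y$.

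Combining the two dictionaries, the statement that $\EuScript{B}(\mc{G})\ra\mc{G}$ is relatively $\mathscr{C}$-superrigid becomes exactly the statement that every cocycle $H\times(Y\otimes[0,1]^{H})\ra L$ with $L\in\mathscr{C}$ is cohomologous to one descending to $Y$, i.e.\ that $Y\otimes[0,1]^{H}\ra Y$ is relatively $\mathscr{C}$-superrigid in the sense of Definition \ref{def:BCS}; running this over all $Y$ (equivalently, all extensions $\mc{G}\ra\mc{H}$) yields both implications of the proposition at once. I expect the only genuine work to lie in the Bernoulli identification above, namely in checking that the chain of isomorphisms $\EuScript{B}(\mc{G})\cong H\ltimes(Y\otimes[0,1]^{H})$ carries the projection onto $\mc{G}$ to the projection onto $H\ltimes Y$ — equivalently, that ``descends to $\mc{G}$'' is carried to ``descends to $Y$.'' Since Proposition \ref{prop:BernIsom} and Remark \ref{rem:translext} are stated as isomorphisms of $\mc{G}$-extensions, this amounts to tracking the projection maps through them, and the remainder is a routine unwinding of definitions.
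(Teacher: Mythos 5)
Your proposal is correct and follows essentially the same route as the paper's own proof: the same dictionary between cocycles of $H\cc Z$ and homomorphisms of $H\ltimes Z$, the use of Proposition \ref{prop:ExtActCorr} to identify arbitrary $H$-extensions with translation groupoids, and the identification $\EuScript{B}(H\ltimes Y)\cong H\ltimes (Y\otimes [0,1]^H)$ via Proposition \ref{prop:BernIsom} and Remark \ref{rem:translext}. The only cosmetic difference is that the paper explicitly cites Proposition \ref{prop:functor} to transfer the Bernoulli extension across the isomorphism $\mc{G}\cong H\ltimes Y$, a step you handle implicitly; this is routine and not a gap.
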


\begin{proof} This is primarily an exercise in lexicography. Explicitly:
\begin{itemize}
\item[(i)] Measurable $L$-valued cocyles of a p.m.p.\ action $H\cc X$ of $H$ are the same as measurable homomorphisms from the translation groupoid $H\ltimes X$ into $L$, with two cocycles being cohomologous if and only if they are equivalent as homomorphisms.
\item[(ii)] By (i), an extension $X\rightarrow Y$ of p.m.p.\ actions of $H$ is relatively $\mathscr{C}$-superrigid in the sense of Definition \ref{def:BCS} if and only if the associated extension $H\ltimes X \rightarrow H\ltimes Y$ of translation groupoids is relatively $\mathscr{C}$-superrigid in the sense of Definition \ref{def:GBCS}.
\item[(iii)] If $H\cc Y$ is a p.m.p.\ action of $H$ then by Proposition \ref{prop:BernIsom} and Remark \ref{rem:translext} we have isomorphisms of $(H\ltimes Y)$-extensions $\EuScript{B}(H\ltimes Y)\cong (H\ltimes Y )\otimes _H \EuScript{B}(H)\cong H\ltimes (Y\otimes [0,1]^H )$. Thus, by (ii), the groupoid extension $\EuScript{B}(H\ltimes Y) \rightarrow H\ltimes Y$ is relatively $\mathscr{C}$-superrigid if and only if the extension $Y\otimes [0,1]^H \rightarrow Y$ of p.m.p.\ actions of $H$ is relatively $\mathscr{C}$-superrigid.
\item[(iv)] By Proposition \ref{prop:ExtActCorr}, every $H$-extension $\mc{G}\rightarrow H$ is isomorphic to an $H$-extension $H\ltimes Y\rightarrow H$ associated to a translation groupoid of some p.m.p.\ action $H\cc Y$ of $H$, and hence the associated Bernoulli extensions $\EuScript{B}(\mc{G})\rightarrow \mc{G}$ and $\EuScript{B}(H\ltimes Y )\rightarrow H\ltimes Y$ are isomorphic by Proposition \ref{prop:functor}.
\end{itemize}
The proposition follows from the compilation of (i)-(iv).
\end{proof}

\begin{lemma}\label{lem:srred}
Let $\mathscr{C}$ be a class of Polish groups. Let $\mc{H}$ be a discrete p.m.p.\ groupoid and let $A$ be a measurable complete unit section of $\mc{H}$. Then $\mc{H}$ is Bernoulli $\mathscr{C}$-superrigid if and only if $\mc{H}_{A}$ is Bernoulli $\mathscr{C}$-superrigid.
\end{lemma}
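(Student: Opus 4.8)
The plan is to isolate a single \emph{reduction-invariance} principle for relative superrigidity and then feed it through the correspondences already set up in this section. First I would record two bookkeeping facts. (a) If $\pi:\mc{E}\to\mc{G}$ is a groupoid extension and $C\subseteq\mc{G}^0$ is a complete unit section of $\mc{G}$, then $\pi^{-1}(C)$ is a complete unit section of $\mc{E}$: since $\pi$ is locally bijective, $\pi^0$ carries the $\mc{R}_{\mc{E}}$-class of any $x$ onto the $\mc{R}_{\mc{G}}$-class of $\pi(x)$, and lifting a representative in $C$ through the bijection $x\mc{E}\to\pi(x)\mc{G}$ produces a point of $\pi^{-1}(C)$ in the class of $x$. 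Applied to the composite $\EuScript{B}(\mc{G})\xra{q}\mc{G}\xra{p}\mc{H}$, this shows that $p^{-1}(A)$ and $q^{-1}(p^{-1}(A))$ are complete unit sections of $\mc{G}$ and $\EuScript{B}(\mc{G})$ respectively. (b) Relative $\mathscr{C}$-superrigidity is manifestly preserved under isomorphism of extensions, so I may replace an extension by an isomorphic one at will.

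The central claim I would then prove is: \emph{for a groupoid extension $\pi:\mc{E}\to\mc{G}$ and a complete unit section $C\subseteq\mc{G}^0$, the extension $\pi$ is relatively $\mathscr{C}$-superrigid if and only if its reduction $\pi_C:\mc{E}_C\to\mc{G}_C$ is.} For the forward direction, given $w:\mc{E}_C\to L$ with $L\in\mathscr{C}$, I apply Proposition \ref{prop:restrict}(1) to the groupoid $\mc{E}$ and its complete unit section $\pi^{-1}(C)$ to extend $w$ to some $\tilde w\in Z(\mc{E},L)$ with $\tilde w_C=w$; relative superrigidity of $\pi$ makes $\tilde w$ equivalent to a homomorphism descending to $\mc{G}$, and Proposition \ref{prop:restrict}(2) (for $\pi$ and $C$) then transfers this to say $w=\tilde w_C$ is equivalent to a homomorphism descending to $\mc{G}_C$. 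For the reverse direction, given $w:\mc{E}\to L$, the restriction $w_C$ is equivalent to a homomorphism descending to $\mc{G}_C$ by hypothesis, and Proposition \ref{prop:restrict}(2) again upgrades this to the statement that $w$ is equivalent to one descending to $\mc{G}$. This claim is where the substance lies, yet it is essentially immediate from Proposition \ref{prop:restrict}; the only real care needed is matching the layered reduction conventions $\mc{E}_C=\mc{E}_{\pi^{-1}(C)}$.

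With the claim in hand I would conclude as follows. Write $A'=p^{-1}(A)$. The claim applied to $q:\EuScript{B}(\mc{G})\to\mc{G}$ and $A'$ shows $\EuScript{B}(\mc{G})\to\mc{G}$ is relatively $\mathscr{C}$-superrigid iff $\EuScript{B}(\mc{G})_{A'}\to\mc{G}_{A'}$ is, and by Lemma \ref{lem:Binfl} the $\mc{G}_{A'}$-extension $\EuScript{B}(\mc{G})_{A'}$ is isomorphic to $\EuScript{B}(\mc{G}_{A'})=\EuScript{B}(\mc{G}_A)$, so by (b) this holds iff $\EuScript{B}(\mc{G}_A)\to\mc{G}_A$ is relatively $\mathscr{C}$-superrigid. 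Hence for every $\mc{H}$-extension $\mc{G}\to\mc{H}$,
\[
\EuScript{B}(\mc{G})\to\mc{G}\ \text{is relatively }\mathscr{C}\text{-superrigid}\ \Longleftrightarrow\ \EuScript{B}(\mc{G}_A)\to\mc{G}_A\ \text{is relatively }\mathscr{C}\text{-superrigid.}
\]
Finally, Proposition \ref{prop:extact} guarantees that the assignment $\mc{G}\mapsto\mc{G}_A$ realizes \emph{every} $\mc{H}_A$-extension $\mc{K}\to\mc{H}_A$, up to isomorphism, as the reduction of some $\mc{H}$-extension of $\mc{H}$. Quantifying the displayed equivalence over all $\mc{H}$-extensions $\mc{G}\to\mc{H}$ therefore yields exactly that $\mc{H}$ is Bernoulli $\mathscr{C}$-superrigid iff every $\mc{H}_A$-extension $\mc{K}$ has $\EuScript{B}(\mc{K})\to\mc{K}$ relatively $\mathscr{C}$-superrigid, i.e.\ iff $\mc{H}_A$ is Bernoulli $\mathscr{C}$-superrigid. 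The only genuine obstacle here is organizational: keeping the reductions, the pullbacks of complete unit sections, and the extension/reduction bijection of Proposition \ref{prop:extact} aligned, since every analytic ingredient is already in place.
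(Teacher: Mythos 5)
Your proof is correct and follows essentially the same route as the paper's: identify $\EuScript{B}(\mc{G})_{p^{-1}(A)}\to\mc{G}_{p^{-1}(A)}$ with $\EuScript{B}(\mc{G}_A)\to\mc{G}_A$ via Lemma \ref{lem:Binfl}, transfer relative $\mathscr{C}$-superrigidity between an extension and its reduction via Proposition \ref{prop:restrict}, and use Proposition \ref{prop:extact} to realize every $\mc{H}_A$-extension as the reduction of an $\mc{H}$-extension. Your write-up is in fact slightly more careful than the paper's, which cites only part (2) of Proposition \ref{prop:restrict} while the direction extending a homomorphism from the reduction also quietly uses the surjectivity in part (1), exactly as you make explicit.
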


\begin{proof}
Let $\mc{G} \xra{p}\mc{H}$ be an $\mc{H}$-extension, so that $\mc{G}_A\xra{p_A}\mc{H}_A$ is an $\mc{H}_A$-extension. We abuse notation and identify $A$ with its preimage $p^{-1}(A)\subseteq \mc{G}^0$ under $p$. Then $A$ is a measurable complete unit section for $\mc{G}$, so by Lemma \ref{lem:Binfl}, the $\mc{G}_A$-extensions $\EuScript{B}(\mc{G}_A)\rightarrow \mc{G}_A$ and $\EuScript{B}(\mc{G})_A \rightarrow \mc{G}_A$ are isomorphic.
Thus, the extension $\EuScript{B}(\mc{G}_A)\rightarrow \mc{G}_A$ being relatively $\mathscr{C}$-superrigid is equivalent to the extension $\EuScript{B}(\mc{G})_A \rightarrow \mc{G}_A$ being relatively $\mathscr{C}$-superrigid, and by (2) of Proposition \ref{prop:restrict}, this is equivalent to the extension $\EuScript{B}(\mc{G})\rightarrow \mc{G}$ being relatively $\mathscr{C}$-superrigid. Since, by Proposition \ref{prop:extact}, every $\mc{H}_{A}$-extension is isomorphic to the reduction of an $\mc{H}$-extension, it follows that $\mc{H}$ is Bernoulli $\mathscr{C}$-superrigid if and only if $\mc{H}_{A}$ is Bernoulli $\mathscr{C}$-superrigid.
\end{proof}

\begin{lemma}\label{lem:srext}
Let $\mathscr{C}$ be a class of Polish groups contained in $\mathscr{G}_{\mathrm{inv}}$, and let $\mc{G}\rightarrow \mc{H}$ be an extension of discrete p.m.p.\ groupoids. Then $\mc{G}$ is Bernoulli $\mathscr{C}$-superrigid if and only if $\mc{H}$ is Bernoulli $\mathscr{C}$-superrigid.
\end{lemma}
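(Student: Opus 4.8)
The plan is to prove the two implications separately, the first being formal and the second carrying all the content. For the implication that $\mc{H}$ Bernoulli $\mathscr{C}$-superrigid forces $\mc{G}$ Bernoulli $\mathscr{C}$-superrigid, I would use only that a composition of groupoid extensions is again a groupoid extension (both measure preservation and local bijectivity are preserved under composition). Given an arbitrary extension $\mc{K}\to\mc{G}$, composing with the given extension $\mc{G}\to\mc{H}$ exhibits $\mc{K}$ as an extension of $\mc{H}$; since $\mc{H}$ is Bernoulli $\mathscr{C}$-superrigid, the Bernoulli extension $\EuScript{B}(\mc{K})\to\mc{K}$ is relatively $\mathscr{C}$-superrigid. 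As this condition depends only on $\mc{K}$ and $\mc{K}\to\mc{G}$ was arbitrary, this is precisely the assertion that $\mc{G}$ is Bernoulli $\mathscr{C}$-superrigid.

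For the converse, fix an extension $\mc{K}\to\mc{H}$ and a measurable homomorphism $w:\EuScript{B}(\mc{K})\to L$ with $L\in\mathscr{C}$; the goal is to show that $w$ is equivalent to a homomorphism descending to $\mc{K}$. The central construction is the relatively independent product $\wt{\mc{K}}:=\mc{K}\otimes_{\mc{H}}\mc{G}$, with its projection extensions $\pi:\wt{\mc{K}}\to\mc{K}$ and $\rho:\wt{\mc{K}}\to\mc{G}$. First I would pull $w$ back along the functorially induced extension $\EuScript{B}(\pi):\EuScript{B}(\wt{\mc{K}})\to\EuScript{B}(\mc{K})$ of Proposition \ref{prop:functor}, obtaining $\wt{w}:=w\circ\EuScript{B}(\pi)$. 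Since $\rho$ presents $\wt{\mc{K}}$ as an extension of $\mc{G}$, and $\mc{G}$ is Bernoulli $\mathscr{C}$-superrigid, the Bernoulli extension $q:\EuScript{B}(\wt{\mc{K}})\to\wt{\mc{K}}$ is relatively $\mathscr{C}$-superrigid, so $\wt{w}$ is equivalent to $v\circ q$ for some homomorphism $v:\wt{\mc{K}}\to L$.

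It remains to descend from $\wt{\mc{K}}$ down to $\mc{K}$, which is where the untwisting lemma enters. By Proposition \ref{prop:BernIsom} applied to $\pi$, the map $q\otimes\EuScript{B}(\pi)$ identifies the $\wt{\mc{K}}$-extension $\EuScript{B}(\wt{\mc{K}})\to\wt{\mc{K}}$ with the relatively independent product $\wt{\mc{K}}\otimes_{\mc{K}}\EuScript{B}(\mc{K})\to\wt{\mc{K}}$ over $\mc{K}$, carrying $q$ to the projection onto $\wt{\mc{K}}$ and $\EuScript{B}(\pi)$ to the projection onto $\EuScript{B}(\mc{K})$. Viewing matters over the base $\mc{K}$, with the two $\mc{K}$-extensions $\mc{G}_0:=\wt{\mc{K}}$ and $\mc{G}_1:=\EuScript{B}(\mc{K})$ and projections $p_0,p_1$ from $\mc{G}_0\otimes_{\mc{K}}\mc{G}_1$, the equivalence of $\wt{w}$ and $v\circ q$ becomes exactly the equivalence of $w\circ p_1$ and $v\circ p_0$. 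I would then apply Lemma \ref{lem:GFurman}, with $\mc{G}_1=\EuScript{B}(\mc{K})$ playing the role of the relatively weakly mixing side, to produce a homomorphism $w':\mc{K}\to L$ with $w$ equivalent to $w'\circ(\EuScript{B}(\mc{K})\to\mc{K})$. Thus $w$ descends to $\mc{K}$, and since $\mc{K}$ and $w$ were arbitrary, $\mc{H}$ is Bernoulli $\mathscr{C}$-superrigid.

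The main obstacle is securing the relative weak mixing required by Lemma \ref{lem:GFurman}: the argument uses that $\EuScript{B}(\mc{K})\to\mc{K}$ is relatively weakly mixing, which by Lemma \ref{lem:Bwm} holds only when $\mc{K}$ is aperiodic. This forces a preliminary reduction to the aperiodic case. Because groupoid extensions are locally bijective, aperiodicity of $\mc{K}$ is equivalent to that of $\mc{H}$, and hence to that of $\mc{G}$; moreover, when $\mathscr{C}$ contains a nontrivial group, Bernoulli $\mathscr{C}$-superrigidity of $\mc{G}$ already forces $\mc{G}$ to be aperiodic, since a non-null periodic part would, via the identity extension, carry a homomorphism into a nontrivial $L\in\mathscr{C}$ not equivalent to any homomorphism descending to the base. (If every member of $\mathscr{C}$ is trivial the statement is vacuous.) Consequently every extension $\mc{K}\to\mc{H}$ arising in the converse is automatically aperiodic, and Lemma \ref{lem:Bwm} applies as needed.
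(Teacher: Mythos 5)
Your core argument is the paper's own: both directions, the relatively independent product $\wt{\mc{K}}=\mc{K}\otimes_{\mc{H}}\mc{G}$, the identification $\EuScript{B}(\wt{\mc{K}})\cong\wt{\mc{K}}\otimes_{\mc{K}}\EuScript{B}(\mc{K})$ via Proposition \ref{prop:BernIsom}, and the application of Lemma \ref{lem:GFurman} with $\EuScript{B}(\mc{K})\rightarrow\mc{K}$ as the relatively weakly mixing side are exactly the steps in the paper, and you execute them correctly. The genuine gap is in your preliminary reduction to the aperiodic case. You claim that, when $\mathscr{C}$ contains a nontrivial group, Bernoulli $\mathscr{C}$-superrigidity of $\mc{G}$ \emph{forces} $\mc{G}$ to be aperiodic, because a non-null periodic part would carry a homomorphism of the Bernoulli extension not equivalent to any homomorphism descending to the base. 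This is false, and in fact the truth is the opposite -- which is precisely the reduction the paper uses: \emph{periodic groupoids are always Bernoulli $\mathscr{C}$-superrigid}. Concretely, if $\mc{K}$ is periodic then $\EuScript{B}(\mc{K})$ is a periodic groupoid which, for the atomless base $[0,1]$, is principal (the proof of Proposition \ref{prop:Bprinc} does not use aperiodicity when the base is atomless), so its orbit equivalence relation has finite classes and admits a measurable transversal $D$. Assigning to each unit $z$ the unique arrow $\theta_z\in\EuScript{B}(\mc{K})$ with $r(\theta_z)=z$ and $s(\theta_z)\in D$, and setting $F(z):=w(\theta_z)$, one checks $\theta_{r(g)}=g\,\theta_{s(g)}$ and hence $w(g)=F(r(g))F(s(g))^{-1}$ for any measurable homomorphism $w:\EuScript{B}(\mc{K})\rightarrow L$; thus $w$ is equivalent to the \emph{trivial} homomorphism, which certainly descends. (In the extreme case of a groupoid consisting only of units, $\EuScript{B}(\mc{K})$ also consists only of units and admits no nontrivial homomorphisms into any group at all.)

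Because of this, the escape route in your last paragraph does not exist: there are Bernoulli $\mathscr{C}$-superrigid groupoids with non-null periodic part (any periodic groupoid, or the disjoint union of one with the translation groupoid of an action of a Bernoulli $\mathscr{C}$-superrigid group), and for such $\mc{G}$ your proof of the converse breaks down on the periodic part, where Lemma \ref{lem:Bwm} is unavailable and Lemma \ref{lem:GFurman} cannot be invoked. The repair is the paper's one-line observation turned the right way around: decompose $\mc{H}^0$ (and compatibly $\mc{G}^0$ and each $\mc{K}^0$) into the invariant periodic and aperiodic parts; on the periodic parts both $\mc{G}$ and $\mc{H}$ are automatically Bernoulli $\mathscr{C}$-superrigid by the untwisting above, so the equivalence holds there for free; on the aperiodic parts every extension $\mc{K}\rightarrow\mc{H}$ is aperiodic (as you correctly note, local bijectivity transfers aperiodicity both ways), and your main argument then goes through verbatim.
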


\begin{proof}
It is clear that periodic groupoids (i.e., groupoids whose source and range maps are finite-to-one) are Bernoulli $\mathscr{C}$-superrigid, hence by breaking $\mc{H}$ into its periodic and aperiodic parts, it is enough to consider the case where $\mc{H}$ is aperiodic.

If $\mc{H}$ is Bernoulli $\mathscr{C}$-superrigid, then it is clear that $\mc{G}$ is Bernoulli $\mathscr{C}$-superrigid, since every extension of $\mc{G}$ is also an extension of $\mc{H}$.

Assume now that $\mc{G}$ is Bernoulli $\mathscr{C}$-superrigid. Let $\mc{K}\rightarrow \mc{H}$ be another extension of $\mc{H}$. We must show that the Bernoulli extension $\EuScript{B}(\mc{K})\xra{q_1}\mc{K}$ is relatively $\mathscr{C}$-superrigid. Let $\mc{G}\otimes _{\mc{H}}\mc{K}\xra{q_0}\mc{K}$ be the right projection. We then have the following commutative diagram of groupoid extensions
\[
\xymatrix{
\EuScript{B}(\mc{G}\otimes _{\mc{H}}\mc{K})\ar[d]^p\ar[r]^{\cong \ \ \ \ \ }   &(\mc{G}\otimes  _{\mc{H}}\mc{K})\otimes _{\mc{K}}\EuScript{B}(\mc{K}) \ar[d]^{p_0}\ar[r]^{\ \ \ \ \ \ \ \ \ p_1} &\EuScript{B}(\mc{K})\ar[d]^{q_1} \\
\mc{G}\otimes _{\mc{H}}\mc{K} \ar[r]^{\mathrm{id}} &\mc{G}\otimes _{\mc{H}}\mc{K} \ar[r]^{q_0}    &\mc{K}
}
\]
where the left square is given by Proposition \ref{prop:BernIsom} applied to the extension $q_0$, and $p_0$ and $p_1$ are the left and right projections respectively. Since $\mc{G}$ is Bernoulli $\mathscr{C}$-superrigid, and $\mc{G}\otimes _{\mc{H}}\mc{K}$ is an extension of $\mc{G}$, the extension $p$ is relatively $\mathscr{C}$-superrigid. Therefore, the extension $p_0$ is relatively $\mathscr{C}$-superrigid as well. In addition, by Lemma \ref{lem:Bwm}, the Bernoulli extension $q_1$ is relatively weakly mixing.

Let $L\in \mathscr{C}$ and let $w_1 : \EuScript{B}(\mc{K})\rightarrow L$ be a measurable homomorphism. Since $p_0$ is relatively $\mathscr{C}$-superrigid, the homomorphism $w_1\circ p_1$ is equivalent to a homomorphism of the form $w_0\circ p_0$, where $w_0$ is some measurable homomorphism from $\mc{G}\otimes _{\mc{H}}\mc{K}$ into $L$. Therefore, by Lemma \ref{lem:GFurman}, $w_1$ is equivalent to a homomorphism which descends through $q_1$ to $\mc{K}$, as was to be shown.
\end{proof}

Theorem \ref{thm:main} is now a consequence of the following generalization to discrete p.m.p.\ groupoids.

\begin{theorem}\label{thm:Gmain}
Let $\mathscr{C}$ be a class of Polish groups contained in the class $\mathscr{G}_{\mathrm{inv}}$, and let $\mc{G}$ and $\mc{H}$ be discrete p.m.p.\ groupoids which are measure equivalent. Then $\mc{G}$ is Bernoulli $\mathscr{C}$-superrigid if and only if $\mc{H}$ is Bernoulli $\mathscr{C}$-superrigid.
\end{theorem}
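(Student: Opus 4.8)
The plan is to assemble Theorem \ref{thm:Gmain} directly from Lemmas \ref{lem:srext} and \ref{lem:srred}, which respectively record the invariance of Bernoulli $\mathscr{C}$-superrigidity under the two elementary operations out of which measure equivalence is built. First I would unfold the hypothesis: since $\mc{G}\simeq _{\mathrm{ME}}\mc{H}$, by Definition \ref{def:MEgroupoid}(3) there exist extensions $\wt{\mc{G}}\rightarrow \mc{G}$ and $\wt{\mc{H}}\rightarrow \mc{H}$ together with measurable complete unit sections $A\subseteq \wt{\mc{G}}^0$ and $B\subseteq \wt{\mc{H}}^0$ for which the reductions $\wt{\mc{G}}_A$ and $\wt{\mc{H}}_B$ are isomorphic. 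Thus the single-step form of measure equivalence is available immediately, and no zig-zag across $R_{\mathrm{ext}}$ and $\simeq _{\mathrm{red}}$ needs to be unwound.

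Next I would run the resulting chain of equivalences. Applying Lemma \ref{lem:srext} to the extension $\wt{\mc{G}}\rightarrow \mc{G}$ shows that $\mc{G}$ is Bernoulli $\mathscr{C}$-superrigid if and only if $\wt{\mc{G}}$ is, and applying it to $\wt{\mc{H}}\rightarrow \mc{H}$ shows that $\mc{H}$ is Bernoulli $\mathscr{C}$-superrigid if and only if $\wt{\mc{H}}$ is. Applying Lemma \ref{lem:srred} to the complete unit section $A$ shows that $\wt{\mc{G}}$ is Bernoulli $\mathscr{C}$-superrigid if and only if $\wt{\mc{G}}_A$ is, and likewise $\wt{\mc{H}}$ is Bernoulli $\mathscr{C}$-superrigid if and only if $\wt{\mc{H}}_B$ is. Finally, Bernoulli $\mathscr{C}$-superrigidity is manifestly an isomorphism invariant in the category $\bm{\mathrm{DPG}}$ (isomorphic groupoids have the same extensions, the Bernoulli construction transports across the isomorphism by Proposition \ref{prop:functor}, and relative $\mathscr{C}$-superrigidity of an extension is defined purely in terms of its homomorphisms), so the isomorphism $\wt{\mc{G}}_A\cong \wt{\mc{H}}_B$ gives that $\wt{\mc{G}}_A$ is Bernoulli $\mathscr{C}$-superrigid if and only if $\wt{\mc{H}}_B$ is. Concatenating these five equivalences shows that $\mc{G}$ is Bernoulli $\mathscr{C}$-superrigid if and only if $\mc{H}$ is, which is exactly the claim. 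Theorem \ref{thm:main} then follows by specializing to translation groupoids via Proposition \ref{prop:groupBCS}, using that measure equivalence of groups corresponds to measure equivalence of their translation groupoids.

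At the level of Theorem \ref{thm:Gmain} itself there is essentially no remaining obstacle, since all the genuine content has been front-loaded into the two lemmas; the role of this proof is purely to organize them. The real difficulty sits inside Lemma \ref{lem:srext}, whose argument passes a homomorphism up the relatively independent product $\mc{G}\otimes _{\mc{H}}\mc{K}$ and then descends it through a Bernoulli extension using the asymmetric untwisting Lemma \ref{lem:GFurman}, the relative weak mixing of Bernoulli extensions over aperiodic groupoids (Lemma \ref{lem:Bwm}), and the orbit-equivalence--type identification $\EuScript{B}(\mc{G}\otimes _{\mc{H}}\mc{K})\cong (\mc{G}\otimes _{\mc{H}}\mc{K})\otimes _{\mc{K}}\EuScript{B}(\mc{K})$ from Proposition \ref{prop:BernIsom}. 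This is precisely where the hypothesis $\mathscr{C}\subseteq \mathscr{G}_{\mathrm{inv}}$ is consumed, through the bi-invariant metric required to untwist $L$-valued cocycles. Lemma \ref{lem:srred}, by contrast, is a formal consequence of the restriction machinery (Propositions \ref{prop:restrict} and \ref{prop:extact}) together with the inflation identity $\EuScript{B}(\mc{G})_A\cong \EuScript{B}(\mc{G}_A)$ of Lemma \ref{lem:Binfl}. The one point I would take care to verify is the isomorphism-invariance step, namely that transporting extensions, Bernoulli extensions, and the equivalence relation on homomorphisms across $\wt{\mc{G}}_A\cong \wt{\mc{H}}_B$ is compatible with the a.e.\ identifications of Remark \ref{rem:null}; this is routine but is the only place where anything beyond a direct citation of the lemmas is needed.
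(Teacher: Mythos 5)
Your proposal is correct and is essentially the paper's own argument: the proof of Theorem \ref{thm:Gmain} in the paper consists of the single line "immediate from Lemma \ref{lem:srred} and Lemma \ref{lem:srext}," and your chain of five equivalences (extension, reduction, isomorphism invariance, reduction, extension) is precisely the explicit unwinding of that citation, using the single-step form of measure equivalence from Definition \ref{def:MEgroupoid}(3). The isomorphism-invariance step you flag is indeed the only point not literally contained in the two lemmas, and it is exactly what the paper treats as immediate.
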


\begin{proof}
This is immediate from Lemma \ref{lem:srred} and Lemma \ref{lem:srext}.
\end{proof}

\section{Proof of Corollary \ref{cor:lattice}} We record the following extension of Popa's cocycle superrigidity theorem for product groups \cite{Pop08} to the setting of ergodic discrete p.m.p.\ groupoids.

\begin{theorem}\label{thm:prodGroupoid}
Let $\mc{G}$ and $\mc{H}$ be ergodic discrete p.m.p.\ groupoids. Assume that $\mc{G}$ is nonamenable and $\mc{H}$ is aperiodic. Then the independent product $\mc{G}\otimes \mc{H}$ is Bernoulli $\mathscr{U}_{\mathrm{fin}}$-superrigid.
\end{theorem}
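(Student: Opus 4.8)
The plan is to carry out Popa's $s$-malleable deformation/spectral-gap argument \cite{Pop07,Pop08} in the groupoid framework of \S\ref{sec:prelim}, extracting the rigidity from nonamenability of $\mc{G}$ and the weak mixing from aperiodicity of $\mc{H}$. Unwinding Definition \ref{def:GBCS}, it suffices to fix an arbitrary extension $\mc{F}\xra{p}\mc{G}\otimes\mc{H}$, a group $L\in\mathscr{U}_{\mathrm{fin}}$ realized as a closed subgroup of the unitary group $\mathcal{U}(M)$ of a finite von Neumann algebra $(M,\tau)$, and a measurable homomorphism $w:\EuScript{B}(\mc{F})\to L$, and to show that $w$ is equivalent to a homomorphism that descends to $\mc{F}$. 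Since $\mc{H}$ is aperiodic, the fiber $(x,y)(\mc{G}\otimes\mc{H})=x\mc{G}\times y\mc{H}$ is a.e.\ infinite, so $\mc{G}\otimes\mc{H}$, and hence $\mc{F}$, is aperiodic; by Lemma \ref{lem:Bwm} the extension $\EuScript{B}(\mc{F})\to\mc{F}$ is relatively weakly mixing, which is the hypothesis I will need at the end. Using Proposition \ref{prop:BernIsom} I would identify $\EuScript{B}(\mc{F})$ with $\mc{F}\otimes_{\mc{G}\otimes\mc{H}}\EuScript{B}(\mc{G}\otimes\mc{H})$; this exposes the product index set $x\mc{G}\times y\mc{H}$ of the Bernoulli base, on which $\mc{G}$ acts by shifting the first coordinate.

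Next I would install the deformation. Via the base-doubling isomorphism $\EuScript{B}_{[0,1]^2}(\mc{G}\otimes\mc{H})\cong\EuScript{B}(\mc{G}\otimes\mc{H})\otimes_{\mc{G}\otimes\mc{H}}\EuScript{B}(\mc{G}\otimes\mc{H})$ furnished by Proposition \ref{prop:clear} and Remark \ref{rem:translext}, construct a one-parameter family $(\sigma_t)_{t\in\R}$ of automorphisms of the doubled Bernoulli extension, given fiberwise by Popa's rotation on the Gaussian model of the base and interpolating between the identity at $t=0$ and the flip of the two copies at $t=1$. Pulled back through $p$, these act on $\EuScript{B}(\mc{F})$ fixing the $\mc{F}$-coordinate, and one tracks the motion of the $w$-twisted vector in the Hilbert space $L^2(\EuScript{B}(\mc{F})^0)\otimes L^2(M,\tau)$ under $\sigma_t$, controlled by Popa's transversality inequality.

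The rigidity comes from $\mc{G}$. Because $\mc{G}$ is nonamenable, the $\mc{G}$-direction of the Koopman representation on the Bernoulli coordinate $x\mc{G}\times y\mc{H}$, restricted to the orthocomplement of the constants and amplified over $L^2(M,\tau)$, has a spectral gap: nonamenability means the fiberwise regular-type representation from which the Bernoulli representation is assembled admits no almost-invariant unit vectors. Feeding this spectral gap into the transversality estimate forces $\sigma_t$ to converge to the identity \emph{uniformly} in $\|\cdot\|_2$ as $t\to0$ on the twisted data; propagating the uniform smallness along the flow to $t=1$ and applying the flip shows that $w$ is equivalent to a homomorphism $w'$ that is $\sigma_t$-invariant, hence local, i.e., factoring through only finitely many Bernoulli coordinates. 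At this point I would invoke the aperiodicity of $\mc{H}$: the complementary $\mc{H}$-direction is weakly mixing, so the localized $w'$ can be untwisted all the way down. Concretely, writing $w'$ through a finite-coordinate subextension and applying the asymmetric untwisting Lemma \ref{lem:GFurman} with the relatively weakly mixing extension $\EuScript{B}(\mc{F})\to\mc{F}$, one concludes that $w$ is equivalent to a homomorphism descending to $\mc{F}$. As $\mc{F}\to\mc{G}\otimes\mc{H}$ was arbitrary, $\mc{G}\otimes\mc{H}$ is Bernoulli $\mathscr{U}_{\mathrm{fin}}$-superrigid.

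The step I expect to be the main obstacle is the rigidity input: proving, entirely within the category of discrete p.m.p.\ groupoids, that nonamenability of $\mc{G}$ yields a genuine spectral gap for the twisted Bernoulli representation, and then running Popa's uniform-convergence-of-the-deformation computation fiberwise over the $\sigma$-finite base while keeping all estimates uniform in the $\mc{H}$-direction. Recasting Popa's von Neumann algebraic transversality and averaging arguments faithfully in this setting is the crux; the malleability setup and the concluding untwisting are comparatively routine given the machinery of \S\ref{sec:prelim}.
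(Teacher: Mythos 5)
Your proposal is the ``direct'' route which the paper explicitly acknowledges is possible but deliberately avoids: a wholesale transplantation of Popa's malleability/spectral-gap machinery from \cite{Pop08} into the groupoid category. The paper's actual proof is entirely different and much shorter. It first applies Theorem \ref{thm:Gmain} to replace $\mc{G}$ and $\mc{H}$ by their Bernoulli extensions, so both may be assumed principal, i.e.\ ergodic p.m.p.\ equivalence relations. Nonamenability of $\mc{G}$ is then cashed in via \cite[Theorem A]{BHI15}: there is a free ergodic p.m.p.\ action $\F_2 \cc X$ of the free group on the unit space $X$ of $\EuScript{B}(\mc{G})$ with $\mc{R}_{\F_2}\subseteq \EuScript{B}(\mc{G})$, while aperiodicity of $\mc{H}$ yields a free ergodic $\Z$-action with $\mc{R}_{\Z}\subseteq \mc{H}$. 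Popa's theorem for the group $\F_2\times\Z$ gives Bernoulli $\mathscr{U}_{\mathrm{fin}}$-superrigidity of $\mc{R}_{\F_2}\otimes\mc{R}_{\Z}$, and then two applications of the q-normality lemma (Lemma \ref{lem:qnorm}, a groupoid version of \cite[Lemma 3.5]{Fu07}) propagate superrigidity up the chain $\mc{R}_{\F_2}\otimes\mc{R}_{\Z}\subseteq \mc{R}_{\F_2}\otimes\mc{H}\subseteq \EuScript{B}(\mc{G})\otimes\mc{H}$; a final application of Theorem \ref{thm:Gmain} brings it back down to $\mc{G}\otimes\mc{H}$. Thus the deformation/rigidity analysis is imported as a black box from the group case, and the only new groupoid-theoretic inputs are the ME-invariance theorem and the q-normality lemma.

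Measured against this, the step you flag as your ``main obstacle'' is a genuine gap, and it is bigger than your write-up suggests. For a countable group, nonamenability is equivalent to a \emph{uniform} spectral gap for multiples of the regular representation, which is exactly what feeds Popa's transversality and uniform-convergence estimates. For an ergodic p.m.p.\ groupoid, nonamenability is a fiberwise, almost-everywhere statement (failure of Reiter-type almost-invariant sections of the $\ell^2$-bundle), and your sentence asserting that this ``admits no almost-invariant unit vectors'' is doing all the work: one must (i) set up representations of groupoids on Hilbert bundles and weak containment, (ii) show the Koopman representation of the doubled Bernoulli action, cut to the relevant orthocomplement and amplified by $L^2(M,\tau)$, fiberwise embeds in multiples of the regular representation, (iii) extract from nonamenability a \emph{quantitative} gap (finitely many bisections of the $\mc{G}$-direction and a uniform constant) valid uniformly over the $\mc{H}$-direction and the base, and (iv) rerun Popa's transversality, averaging, and untwisting computations in this bundle setting. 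None of this apparatus exists in \S\ref{sec:prelim}, and it is precisely what the paper's reduction is engineered to avoid. There is also an irony worth noting: the cleanest known way to manufacture a genuine uniform gap inside a nonamenable ergodic equivalence relation is to embed a free ergodic $\F_2$-subrelation via \cite[Theorem A]{BHI15} -- at which point you are holding the paper's key ingredient, and the q-normality route finishes the proof without any von Neumann algebraic estimates. So your plan is not a dead end (the paper says as much), but as written it is an outline whose crux is asserted rather than proved, whereas the paper's argument closes that hole by routing around it.
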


The case where $\mc{G}=G\ltimes X$ and $\mc{H}=H\ltimes Y$ are translation groupoids associated to ergodic p.m.p.\ actions of countable groups $G$ and $H$ follows immediately from \cite{Pop08}. One may give a proof of Theorem \ref{thm:prodGroupoid} in general which is essentially the same as the proof from \cite{Pop08}. For convenience, we will instead give a different proof of how to deduce Theorem \ref{thm:prodGroupoid} directly from the case handled in \cite{Pop08}, using Theorem \ref{thm:Gmain} and \cite[Theorem A]{BHI15}, along with the next lemma, whose proof is just a groupoid version of the proof of \cite[Lemma 3.5]{Fu07} (see also \cite[Lemma 3.6]{Pop07}).

In what follows, a countable collection $\Phi$ of measurable bisections of $\mc{G}$ is said to {\bf generate} $\mc{G}$ if for a.e.\ $g\in \mc{G}$ there exists a finite sequence $\phi _n,\dots , \phi _1\in \Phi$ such that $g\in \phi _n\cdots \phi _1$. We say that an aperiodic measurable subgroupoid $\mc{H}$ of a discrete p.m.p.\ groupoid $\mc{G}$ is {\bf q-normal} in $\mc{G}$ if there exists a countable collection $\Phi$ of measurable bisections of $\mc{G}$, which generates $\mc{G}$, such that for each $\phi \in \Phi$ the groupoid $\phi ^{-1}\mc{H}\phi \cap \mc{H}$ is an aperiodic subgroupoid of $\mc{G}_{s(\phi )}$ (i.e., $\phi ^{-1}\mc{H}\phi \cap \mc{H}\cap s^{-1}(x)$ is infinite for a.e.\ $x\in s(\phi )$).

\begin{lemma}\label{lem:qnorm}
Let $\mathscr{C}$ be a class of Polish groups contained in $\mathscr{G}_{\mathrm{inv}}$. Let $\mc{G}$ be a discrete p.m.p.\ groupoid and let $\mc{H}$ be an aperiodic measurable subgroupoid of $\mc{G}$ which is q-normal in $\mc{G}$. Suppose that $\mc{H}$ is Bernoulli $\mathscr{C}$-superrigid. Then $\mc{G}$ is also Bernoulli $\mathscr{C}$-superrigid.
\end{lemma}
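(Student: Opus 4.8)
The plan is to run the groupoid version of Furman's argument in \cite[Lemma 3.5]{Fu07}, using the aperiodicity built into q-normality to feed the untwisting Lemma \ref{lem:Gbasic}. First note that since $\mc{H}$ is aperiodic with $\mc{H}^0=\mc{G}^0$, the ambient groupoid $\mc{G}$ is automatically aperiodic, hence so is every extension of it. To show $\mc{G}$ is Bernoulli $\mathscr{C}$-superrigid, fix an extension $\mc{K}\xra{p}\mc{G}$ and a measurable homomorphism $w:\EuScript{B}(\mc{K})\to L$ with $L\in\mathscr{C}$; the goal is to show $w$ is equivalent to a homomorphism descending to $\mc{K}$. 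Write $q:\EuScript{B}(\mc{K})\to\mc{K}$ for the Bernoulli projection, and set $\mc{H}':=p^{-1}(\mc{H})$, an aperiodic subgroupoid of $\mc{K}$ with $(\mc{H}')^{0}=\mc{K}^{0}$, so that $p|_{\mc{H}'}:\mc{H}'\to\mc{H}$ is an extension of $\mc{H}$. Throughout I write $w(\xi;f)$ for the value of $w$ on $(\xi,f)\in\EuScript{B}(\mc{K})$, so that the cocycle identity reads $w(\xi_1\xi_2;f)=w(\xi_1;\beta(\xi_2)f)\,w(\xi_2;f)$ and $(\eta,f)$ has source $(s(\eta),f)$ and range $(r(\eta),\beta(\eta)f)$.

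The first step is to untwist $w$ over $\mc{H}'$. By Lemma \ref{lem:Bsubg} the restricted extension $q^{-1}(\mc{H}')\to\mc{H}'$ is isomorphic to $\EuScript{B}(\mc{H}')\to\mc{H}'$, which is relatively $\mathscr{C}$-superrigid because $\mc{H}$ is Bernoulli $\mathscr{C}$-superrigid and $\mc{H}'\to\mc{H}$ is an extension. Hence $w|_{q^{-1}(\mc{H}')}$ is equivalent to a homomorphism descending to $\mc{H}'$. Since the unit space of $q^{-1}(\mc{H}')$ is all of $\EuScript{B}(\mc{K})^{0}$, the witnessing coboundary is defined on all of $\EuScript{B}(\mc{K})^{0}$, so after replacing $w$ by an equivalent homomorphism I may assume $w(\xi;f)=\rho(\xi)$ for a homomorphism $\rho:\mc{H}'\to L$ and a.e.\ $(\xi,f)$ with $\xi\in\mc{H}'$.

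The heart of the proof, and the main obstacle, is the second step: propagating the descent from $\mc{H}'$ across the generators. Let $\Phi$ be the countable generating family of bisections of $\mc{G}$ witnessing q-normality. For $\phi\in\Phi$ its preimage $\tilde\phi:=p^{-1}(\phi)$ is again a measurable bisection of $\mc{K}$ (by local bijectivity of $p$), and $\{\tilde\phi:\phi\in\Phi\}$ generates $\mc{K}$. Set $\mc{M}_\phi:=\tilde\phi^{-1}\mc{H}'\tilde\phi\cap\mc{H}'$; this is the $p$-preimage of the aperiodic groupoid $\phi^{-1}\mc{H}\phi\cap\mc{H}$, hence aperiodic, so by Lemma \ref{lem:Bsubg} and Lemma \ref{lem:Bwm} the extension $q^{-1}(\mc{M}_\phi)\to\mc{M}_\phi$ is relatively weakly mixing. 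Writing $\tilde\phi_y$ for the unique element of $\tilde\phi$ with source $y$, define $\Psi_\phi(y,f):=w(\tilde\phi_y;f)$ on the unit space of $q^{-1}(\mc{M}_\phi)$. For $\eta\in\mc{M}_\phi$ one has $\zeta:=\tilde\phi_{r(\eta)}\,\eta\,\tilde\phi_{s(\eta)}^{-1}\in\mc{H}'$; evaluating $w$ on the two factorizations of $\tilde\phi_{r(\eta)}\eta=\zeta\,\tilde\phi_{s(\eta)}$ and using $w(\eta;\cdot)=\rho(\eta)$ and $w(\zeta;\cdot)=\rho(\zeta)$ gives
\[
\rho(\zeta)=\Psi_\phi(r(\eta,f))\,\rho(\eta)\,\Psi_\phi(s(\eta,f))^{-1}
\]
for a.e.\ $(\eta,f)\in q^{-1}(\mc{M}_\phi)$. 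This is exactly the hypothesis of Lemma \ref{lem:Gbasic}, with the measurable maps $\eta\mapsto\rho(\zeta)$ and $\eta\mapsto\rho(\eta)$ in the roles of $u$ and $v$ and $\Phi=\Psi_\phi$; relative weak mixing of $q^{-1}(\mc{M}_\phi)\to\mc{M}_\phi$ then forces $\Psi_\phi$ to descend to $\mc{M}_\phi^{0}=s(\tilde\phi)$, i.e.\ $w(\tilde\phi_y;f)$ is essentially independent of $f$.

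Finally I would assemble the descents. The set $N:=\{\xi\in\mc{K}:w(\xi;\cdot)\text{ is essentially constant}\}$ contains $\mc{K}^{0}$, contains each $\tilde\phi$ by the previous step, and is closed under composition and inversion (immediate from the cocycle identity, since a product of $f$-independent values is $f$-independent). As $\{\tilde\phi\}$ generates $\mc{K}$, it follows that $N=\mc{K}$ up to null sets, so $\rho'(\xi):=w(\xi;f)$ is a well-defined homomorphism $\mc{K}\to L$ with $w=\rho'\circ q$. Thus $\EuScript{B}(\mc{K})\to\mc{K}$ is relatively $\mathscr{C}$-superrigid for every extension $\mc{K}\to\mc{G}$, which is precisely the statement that $\mc{G}$ is Bernoulli $\mathscr{C}$-superrigid. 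The only genuinely delicate points are the measurable-bisection bookkeeping (lifting $\Phi$ to $\mc{K}$ and checking that $\mc{M}_\phi$ is aperiodic and that $\{\tilde\phi\}$ generates), and confirming that the intertwining identity is in the precise left/right form demanded by Lemma \ref{lem:Gbasic}; everything else is formal.
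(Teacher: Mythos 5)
Your proposal is correct and follows essentially the same route as the paper's proof: untwist $w$ over the preimage of $\mc{H}$ using Lemma \ref{lem:Bsubg} and Bernoulli $\mathscr{C}$-superrigidity of $\mc{H}$, then for each generating bisection combine the conjugation identity with Lemmas \ref{lem:Bsubg}, \ref{lem:Bwm}, and \ref{lem:Gbasic} to kill the dependence on the Bernoulli coordinate, and finally propagate along products of generators. The only organizational differences are that the paper first reduces to the single Bernoulli extension $\EuScript{B}(\mc{G})\rightarrow\mc{G}$ (since q-normality and Bernoulli $\mathscr{C}$-superrigidity both pass to extensions) rather than carrying an arbitrary extension $\mc{K}$ throughout, and it concludes with an explicitly measurable choice of factorization where you use an essential-value closure argument; both differences are cosmetic.
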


\begin{proof}[Proof of Lemma \ref{lem:qnorm}]
Since q-normality and (by Theorem \ref{thm:Gmain}) Bernoulli $\mathscr{C}$-superrigidity both pass to extensions, it suffices to prove that, under the hypotheses of the lemma, the Bernoulli extension $p:\EuScript{B}(\mc{G})\rightarrow \mc{G}$ is relatively $\mathscr{C}$-superrigid. Let $w:\EuScript{B}(\mc{G})\rightarrow L$ be a measurable homomorphism to some $L\in \mathscr{C}$ and let $\mc{H}':= p^{-1}(\mc{H})$. Then, by Lemma \ref{lem:Bsubg}, the extension $p: \mc{H}'\rightarrow \mc{H}$ is isomorphic as an $\mc{H}$-extension to the Bernoulli extension $\EuScript{B}(\mc{H})\rightarrow \mc{H}$, so since $\mc{H}$ is Bernoulli $\mathscr{C}$-superrigid, after replacing $w$ by an equivalent homomorphism we may assume that there is a measurable homomorphism $w_0:\mc{H}\rightarrow L$ such that $w(h)=w_0(p(h))$ for all $h\in \mc{H}'$.

Let $\Phi$ be a countable collection of measurable bisections witnessing that $\mc{H}$ is q-normal in $\mc{G}$. For $\phi \in \Phi$ let $\mc{H}(\phi ) := \phi ^{-1}\mc{H}\phi \cap \mc{H}$, and let $\phi ' := p^{-1}(\phi )$, so that $\phi '$ is a bisection for $\EuScript{B}(\mc{G})$. By Lemma \ref{lem:Bsubg}, the extension $p: p^{-1}(\mc{H}(\phi ))\rightarrow \mc{H}(\phi )$ is a Bernoulli extension so by Lemma \ref{lem:Bwm}, since $\mc{H}(\phi )$ is aperiodic, this extension is relatively weakly mixing. For each $h\in p^{-1}(\mc{H}(\phi ))$ we have that $p(h), p(\phi ' h (\phi ')^{-1}) \in \mc{H}$ and hence
\[
w_0(\phi p(h) \phi ^{-1} ) = w(\phi ' h(\phi ') ^{-1}) = w(\phi ' r(h))w_0(p(h))w(\phi ' s(h))^{-1}.
\]
It now follows from Lemma \ref{lem:Gbasic} that for each $\phi \in \Phi$ there is a measurable map $f_{\phi} : s(\phi ) \rightarrow L$ such that $w(\phi 'x) = f_{\phi}(p(x))$ for a.e.\ $x\in p^{-1}(s(\phi ))=s(\phi ')$. Thus, for each $\phi \in \Phi$, almost every $g\in \phi '$ satisfies
\[
w(g) = w(\phi ' s(g))= f_{\phi}(p(s(g))) = f_{\phi } (s(p(g))).
\]
After discarding an invariant $\mu _{\Es{B}(\mc{G})^0}$-null set we may therefore assume that for all $\phi \in \Phi$ and $g\in \phi '$ we have $w(g)= f_{\phi}(s(p(g)))$. Given now $g\in \EuScript{B}(\mc{G})$, since $\Phi$ generates $\mc{G}$ we may find a finite sequence $(\phi _n, \phi _{n-1}\dots , \phi _1)$ of elements of $\Phi$ with $p(g)\in \phi _n\phi _{n-1} \cdots \phi _1$. We can moreover choose such a finite sequence so that it depends measurably on $p(g)$. We then have $g\in \phi _n'\phi _{n-1}' \cdots \phi _1'$. For $1\leq i\leq n$ let $g_i := \phi _i '\cdots \phi _1 ' s(g)$, so that $p(g_i)= \phi _i \cdots \phi _1 s(p(g))$ depends measurably on $p(g)$. Then
\begin{align*}
w(g)= w( g_n ) &= w(\phi _n ' r(g_{n-1}))w(g_{n-1}) \\
&= f_{\phi _n}(r(p(g_{n-1}))) w(g_{n-1}) =\cdots \\
\cdots &= f_{\phi _n } (r(p(g_{n-1})))f_{\phi _{n-1}}(r(p(g_{n-2})))\cdots f_{\phi _2}(r(p(g_1)))f_{\phi _1}(s(p(g))),
\end{align*}
which is a measurable function of $p(g)$. This shows that $w$ descends through $p$ to $\mc{G}$.
\end{proof}

\begin{proof}[Proof of Theorem \ref{thm:prodGroupoid}]
By Theorem \ref{thm:Gmain} it suffices to prove that some extension of $\mc{G}\otimes \mc{H}$ is Bernoulli $\mathscr{U}_{\mathrm{fin}}$-superrigid, so (after replacing both $\mc{G}$ and $\mc{H}$ by their respective Bernoulli extensions if necessary) we may assume without loss of generality that $\mc{G}$ and $\mc{H}$ are both principal groupoids, i.e., we may assume that $\mc{G}$ and $\mc{H}$ are ergodic discrete p.m.p.\ equivalence relations. Consider now the Bernoulli extension $\EuScript{B}(\mc{G})$ of $\mc{G}$, also viewed as an equivalence relation, and let $X$ denote the unit space of $\EuScript{B}(\mc{G})$.  Since $\mc{G}$ is ergodic and nonamenable, by \cite[Theorem A]{BHI15}, we may find a free ergodic p.m.p.\ action $\F _2 \cc X$, of the free group $\F _2$ on two generators, whose orbit equivalence relation $\mc{R}_{\F _2}$ is contained in $\EuScript{B}(\mc{G})$. Since $\mc{H}$ is ergodic and aperiodic we may find a free ergodic action $\Z \cc \mc{H}^0$, of $\Z$, whose orbit equivalence relation $\mc{R}_{\Z}$ is contained in $\mc{H}$. Then the independent product $\mc{R}_{\F _2} \otimes \mc{R}_{\Z }$ is Bernoulli $\mathscr{U}_{\mathrm{fin}}$-superrigid by \cite{Pop08}. Since the groupoid $\mc{R}_{\F _2}\otimes \mc{R}_{\Z }$ is q-normal in $\mc{R}_{\F _2}\otimes \mc{H}$, and $\mc{R}_{\F _2}\otimes \mc{H}$ is q-normal in $\EuScript{B}(\mc{G})\otimes \mc{H}$, it follows from Lemma \ref{lem:qnorm} that $\EuScript{B}(\mc{G})\otimes \mc{H}$ is Bernoulli $\mathscr{U}_{\mathrm{fin}}$-superrigid. Applying Theorem \ref{thm:Gmain} again we conclude that $\mc{G}\otimes \mc{H}$ is Bernoulli $\mathscr{U}_{\mathrm{fin}}$-superrigid.
\end{proof}

Let $G$ be a locally compact second countable group and let $G\cc X$ be a free p.m.p.\ action of $G$ on a standard probability space $X$. A Borel subset $Y$ of $X$ is said to be a {\bf cross section} for the action of $G$ if $G\cdot Y$ is conull and there exists a neighborhood $U$ of the identity in $G$ such that the sets $U\cdot y$, $y\in Y$, are pairwise disjoint. By \cite{Fo74}, a cross section always exists; we refer to \cite[Section 4]{KPV15} for a detailed discussion covering all of the following facts on cross sections which we will use.

Let $Y$ be a cross section for the action $G\cc X$, and denote by $\mc{R}_Y$ the restriction to $Y$ of the orbit equivalence relation associated to $G\cc X$, so that $\mc{R}_Y$ is a countable Borel equivalence relation on $Y$. If $G$ is unimodular, with Haar measure $\mu _G$, then there exists a unique $\mc{R}_Y$-invariant Borel probability measure $\mu _Y$ on $Y$ and constant $c=c_Y>0$ such that if $U$ is any neighborhood of the identity of $G$ as above, then the map $U\times Y\rightarrow X$, $(g,y)\mapsto g\cdot y$, takes the measure $(\mu _G|U) \otimes \mu _Y$ to $c \mu _X | (U\cdot Y)$. We will therefore view $\mc{R}_Y$ as a discrete p.m.p.\ groupoid whose unit space, $\mc{R}_Y^0$, we may naturally identify with the probability space $Y$. If $Z$ is any other cross section for $G\cc X$, then the discrete p.m.p.\ groupoids $\mc{R}_Y$ and $\mc{R}_Z$ are reduction equivalent in the sense of Definition \ref{def:MEgroupoid}.

\begin{proof}[Proof of Corollary \ref{cor:lattice}] Let $\Gamma$ be a lattice in $G$. Since $G=G_0\times G_1$ contains a lattice, it is unimodular, and therefore both $G_0$ and $G_1$ are unimodular as well. For $i=0,1$ let $G_i\cc X_i$ be a free ergodic p.m.p.\ action of $G_i$, and let $Y_i\subseteq X_i$ be a cross section for the action, with associated ergodic equivalence relation $\mc{R}_{Y_i}$. Then $Y_0\otimes Y_1$ is a cross section for the natural action $G\cc X_0\otimes X_1$ of $G=G_0\times G_1$ on $X_0\otimes X_1$, and the associated equivalence relation $\mc{R}_{Y_0\otimes Y_1}$ is naturally isomorphic to the independent product groupoid $\mc{R}_{Y_0\otimes Y_1} \cong \mc{R}_{Y_0}\otimes \mc{R}_{Y_1}$. The groupoid $\mc{R}_{Y_0}$ is nonamenable since the group $G_0$ is nonamenable, and likewise $\mc{R}_{Y_1}$ is aperiodic since $G_1$ is noncompact. Therefore, $\mc{R}_{Y_0\otimes Y_1}$ is Bernoulli $\mathscr{U}_{\mathrm{fin}}$-superrigid by Theorem \ref{thm:prodGroupoid}.

Consider now the free p.m.p.\ action of $G$ on $(X_0\otimes X_1 ) \otimes G/\Gamma$ given by $g\cdot ((x_0,x_1), h\Gamma ):= (g\cdot (x_0,x_1), gh\Gamma )$. The sets $Y:=(Y_0\otimes Y_1) \otimes G/\Gamma$ and $Z:= (X_0\otimes X_1 ) \otimes \{ 1_G\Gamma \}$ are both cross sections for this action, hence the groupoids $\mc{R}_Y$ and $\mc{R}_Z$ are reduction equivalent. The set $Z$ is $\Gamma$-invariant and $\mc{R}_Z$ is precisely the orbit equivalence relation generated by the action of $\Gamma$ on $Z$, hence $\mc{R}_Z$ is a groupoid extension of $\Gamma$. In addition, $\mc{R}_Y$ is a groupoid extension of $\mc{R}_{Y_0\otimes Y_1}$ via the projection map $Y\rightarrow Y_0\otimes Y_1$. The discrete p.m.p.\ groupoids $\Gamma$ and $\mc{R}_{Y_0\otimes Y_1}$ are therefore measure equivalent, and hence $\Gamma$ is Bernoulli $\mathscr{U}_{\mathrm{fin}}$-superrigid by Theorem \ref{thm:Gmain}.
\end{proof}

%

\section{Weak Pinsker entropy and orbit equivalence}

\subsection{Background and preparation}

\subsubsection{Ornstein's isomorphism theorem: from groups to groupoids} We will need the following theorem, which generalizes Ornstein's Isomorphism Theorem from $\Z$ to arbitrary infinite groups. The proof has recently been completed by Seward, building on Ornstein's original work, as well as the work of Stepin and the first author.

\begin{theorem}[Ornstein \cite{Orn70}, Stepin \cite{Step75}, Bowen \cite{Bo11b}, Seward \cite{Sew18}]\label{thm:OrnIso}
Let $G$ be a countably infinite group and let $K_0$ and $K_1$ be probability spaces with the same Shannon entropy, $H(K_0)= H(K_1)$. Then the Bernoulli shift actions $G\cc K_0^G$ and $G\cc K_1^G$ are isomorphic.
\end{theorem}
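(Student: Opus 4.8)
The plan is to follow Ornstein's original program for $\Z$, stripped of every ingredient that relies on amenability so that it runs for an arbitrary countably infinite $G$. The whole argument rests on two pillars: the \emph{finitely determined} property of Bernoulli processes, and a back-and-forth copying scheme, expressed through a suitable $\bar{d}$-type metric on $G$-processes, which promotes near-matching of finite marginals together with matching entropy into an honest measure-conjugacy. The equal-entropy hypothesis $H(K_0)=H(K_1)$ is precisely what makes the two shifts eligible to be copied onto one another, since a $\bar{d}$-limit of generating partitions again generates.

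First I would fix the entropy framework. Over $\Z$, and more generally over amenable $G$ by Ornstein--Weiss, the relevant notion is Kolmogorov--Sinai entropy together with F\o lner averaging; for a general countable $G$ the natural replacement is Rokhlin entropy, which is defined for every countable group. The coordinate partition shows that the Rokhlin entropy of $G\cc K^G$ is at most $H(K)$, but the matching lower bound is not known for general $G$ --- it is open whether a Bernoulli shift can be generated by a partition of strictly smaller Shannon entropy. Consequently the tempting shortcut, namely to compute the entropy of each shift and invoke a single Sinai factor theorem to factor each onto the other, is unavailable, and the isomorphism must instead be built directly, using a \emph{relative} Sinai-type factor theorem that compares entropies across an auxiliary common factor rather than absolutely.

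With that machinery in place I would run the back-and-forth construction: starting from the relative factor maps as seeds, one inductively builds a sequence of finite generating partitions of $K_0^G$ whose induced $G$-processes are Cauchy in $\bar{d}$ and whose limit is conjugate to the coordinate partition of $K_1^G$, thereby producing the conjugacy $G\cc K_0^G\cong G\cc K_1^G$. The hard part is twofold. First, essentially all of the classical analytic tools --- the Shannon--McMillan--Breiman theorem, Rokhlin's lemma, and the F\o lner definition of $\bar{d}$ --- break down for non-amenable $G$, so the finitely determined property of Bernoulli shifts and the copying lemmas must be re-established by purely combinatorial, Rokhlin-entropy-based arguments; this is the technical heart of Seward's contribution \cite{Sew18}. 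Second, the low-entropy regime, where $K_0$ and $K_1$ have very few atoms, is genuinely delicate: it is exactly the case that the first author's earlier ``almost Ornstein'' argument in \cite{Bo11b} left open, and closing it is where the most care is required.
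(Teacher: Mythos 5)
You should first be clear about what the paper does here: it contains no proof of Theorem \ref{thm:OrnIso} at all. The theorem is imported as a black box, with its proof attributed to \cite{Orn70}, \cite{Step75}, \cite{Bo11b}, and \cite{Sew18}; the only statement the paper proves in this vicinity is the groupoid generalization, Corollary \ref{cor:GOrnIso}. Your proposal has the same logical status: every load-bearing step --- ``re-establishing'' the finitely determined property and the copying lemmas for arbitrary countable $G$, and closing the few-atom case --- is explicitly delegated to \cite{Sew18} and \cite{Bo11b}. As a proof attempt it is therefore circular (it proves the theorem by citing its proof), which could still pass as an expository outline if it described what those references actually do; it does not.

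The concrete gap is that the mechanism you describe is precisely the part of Ornstein theory that does not survive outside the amenable world, and it is not the mechanism of \cite{Bo11b} or \cite{Sew18}. The $\bar{d}$-copying/back-and-forth scheme rests on the Rokhlin lemma, the Shannon--McMillan--Breiman theorem, and monotonicity of entropy under factors, all of which fail or are unavailable for general $G$; indeed, as this paper's introduction notes, over a nonamenable group all Bernoulli shifts factor onto one another \cite{Bow17c}, so no finitely-determined theory in the classical sense can hold, and Seward does not resurrect one --- that attribution is wrong. Your own (correct) remark that the Rokhlin entropy of $G\cc K^G$ is not known to equal $H(K)$ makes matters worse: a copying scheme needs matching \emph{process} entropies, which your framework cannot even certify. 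The actual architecture of the cited proof is different, and your outline omits its first and most structural ingredient: Stepin's co-induction argument \cite{Step75}, which shows that if some subgroup of $G$ is Ornstein then so is $G$; combined with Ornstein's theorem \cite{Orn70} this settles every group containing an element of infinite order. Bowen \cite{Bo11b} then handles the remaining (torsion) groups by direct splitting and exchange constructions on the base spaces, covering all pairs of bases except those where a base is supported on exactly two atoms, and Seward \cite{Sew18} closes that last case using Rokhlin-entropy machinery (his finite generator theorems and Sinai factor theorem) rather than any finitely-determined property. The omission of co-induction is not a historical quibble: it is exactly the ingredient this paper generalizes, since the proof of Corollary \ref{cor:GOrnIso} locates an ergodic hyperfinite ($\Z$-like) subgroupoid and co-induces Ornstein's $\Z$-theorem up from it.
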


Theorem \ref{thm:OrnIso} can in fact be generalized from infinite groups to all aperiodic ergodic discrete p.m.p.\ groupoids. The case of principal groupoids is handled in \cite[Theorem 3.1]{BHI15}. The proof in the general case follows from Theorem \ref{thm:OrnIso} itself, along with the methods from \cite{Bo11b} and \cite{BHI15}, as we now show. Generalizing Stepin's definition \cite{Step76} in the case of groups, let us say that an ergodic discrete p.m.p.\ groupoid $\mc{G}$ is {\bf Ornstein} if the Bernoulli shift actions $\mc{G}\cc K_0^{\otimes \mc{G}}$ and $\mc{G}\cc K_1^{\otimes \mc{G}}$ are isomorphic whenever $K_0$ and $K_1$ are probability spaces with the same Shannon entropy. Thus, Theorem \ref{thm:OrnIso} says that every countably infinite group is Ornstein.

\begin{corollary}\label{cor:GOrnIso}
Every aperiodic ergodic discrete p.m.p.\ groupoid is Ornstein.
\end{corollary}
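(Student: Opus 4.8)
The plan is to promote the two known instances of the Ornstein property --- the group case (Theorem \ref{thm:OrnIso}) and the principal case (\cite[Theorem 3.1]{BHI15}) --- to an arbitrary aperiodic ergodic $\mc{G}$ by a co-induction argument, in the spirit of Stepin's proof that Ornstein-ness passes from a subgroup to an overgroup, adapted to subgroupoids. Fix probability spaces $K_0, K_1$ with $H(K_0) = H(K_1) =: h$. If $h = 0$ then each $K_i$ is a one-point space and both Bernoulli actions are trivial, so we may assume $h > 0$; in particular $\mu_{K_i}$ is not a point mass. The goal is then to produce a $\mc{G}$-equivariant fiberwise isomorphism $\beta^{\mc{G}}_{K_0} \cong \beta^{\mc{G}}_{K_1}$.

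First I would locate a suitable subgroupoid. The claim is that $\mc{G}$ contains an aperiodic measurable subgroupoid $\mc{H}$ with $\mc{H}^0 = \mc{G}^0$ for which $\beta^{\mc{H}}_{K_0} \cong \beta^{\mc{H}}_{K_1}$. Because $\mc{G}$ is ergodic, both the isomorphism type of the isotropy group $\mc{G}_x^x$ and the cardinality of the $\mc{R}_{\mc{G}}$-class of $x$ are $\mc{R}_{\mc{G}}$-invariant, hence a.e.\ constant; aperiodicity of $\mc{G}$ (i.e.\ $x\mc{G}$ infinite a.e.) then forces either (A) a.e.\ class infinite, or (B) a.e.\ isotropy infinite. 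In case (A), realize an ergodic aperiodic $\Z$-action on $\mc{G}^0$ inside $\mc{R}_{\mc{G}}$ and lift its generating transformation to a single measurable bisection $\sigma \subseteq \mc{G}$ by Lusin--Novikov; since the action is free, no nonzero power of $\sigma$ meets the isotropy, so $\mc{H} := \langle \sigma \rangle \cong \Z \ltimes \mc{G}^0$ is an ergodic hyperfinite principal subgroupoid, which is Ornstein by \cite[Theorem 3.1]{BHI15}. In case (B), take $\mc{H}$ to be the isotropy bundle $\mc{I} := \{g \in \mc{G} : s(g) = r(g)\}$, whose fibers are a.e.\ isomorphic to a fixed countably infinite group; here $\beta^{\mc{I}}_{K_i}$ restricts over a.e.\ $x$ to the $\mc{G}_x^x$-Bernoulli shift, and Theorem \ref{thm:OrnIso} supplies a fiberwise isomorphism which can be chosen measurably in $x$, yielding $\beta^{\mc{I}}_{K_0} \cong \beta^{\mc{I}}_{K_1}$.

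Next I would carry out the co-induction. Using the partition of $\mc{G}$ into sets $\mc{H}\sigma$ over a countable family of bisections $\sigma$ (exactly as in the proof of Lemma \ref{lem:Bsubg}), define a co-induction functor $\mathrm{CoInd}_{\mc{H}}^{\mc{G}}$ from p.m.p.\ actions of $\mc{H}$ to p.m.p.\ actions of $\mc{G}$ via co-induced equivariant sections, and verify the groupoid analog of the identity $K^G = \mathrm{CoInd}_\Gamma^G K^\Gamma$, namely $\mathrm{CoInd}_{\mc{H}}^{\mc{G}}(\beta^{\mc{H}}_K) \cong \beta^{\mc{G}}_K$ for any base $K$ (the opposite, restriction direction being precisely Lemma \ref{lem:Bsubg}). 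Since $\mathrm{CoInd}_{\mc{H}}^{\mc{G}}$ sends isomorphic $\mc{H}$-actions to isomorphic $\mc{G}$-actions, applying it to the isomorphism $\beta^{\mc{H}}_{K_0} \cong \beta^{\mc{H}}_{K_1}$ from the previous step produces the desired $\beta^{\mc{G}}_{K_0} \cong \beta^{\mc{G}}_{K_1}$, completing the proof.

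The main obstacle I anticipate is case (B): upgrading the bare existence statement of Theorem \ref{thm:OrnIso} to a \emph{measurable field} of Ornstein isomorphisms between the fiberwise Bernoulli shifts over $x \mapsto \mc{G}_x^x$, on a possibly twisted bundle of groups. This is exactly where the constructive, measurable methods of \cite{Bo11b} and \cite{BHI15} --- rather than the abstract statement of the isomorphism theorem --- are required, since one must run the isomorphism construction with measurable dependence on the base point and on a measurable marking of the isotropy bundle. A secondary, more routine task is the bookkeeping needed to set up $\mathrm{CoInd}_{\mc{H}}^{\mc{G}}$ at the level of fibered probability spaces and to check measure-preservation together with the Bernoulli identification.
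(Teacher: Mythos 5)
Your Case (A) is, up to relabeling, exactly the paper's Case 1 (the atomless case): an ergodic aperiodic transformation in the full group of $\mc{R}_{\mc{G}}$, lifted via Lusin--Novikov to a single bisection generating a principal $\Z$-subgroupoid with full unit space, which is Ornstein, followed by promotion to $\mc{G}$ by groupoid coinduction (the paper's fact $(*)$, which it also treats as a routine extension of \cite{Bo11b} and \cite{BHI15} rather than proving in detail). That half of your argument matches the paper and is fine.

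The gap is in Case (B). You correctly identify that your route needs a \emph{measurable field} of Ornstein isomorphisms between the fiberwise Bernoulli shifts over the isotropy bundle, but you do not close this step, and it cannot be closed by citation: neither Theorem \ref{thm:OrnIso} nor \cite[Theorem 3.1]{BHI15} gives isomorphisms depending measurably on a parameter for a bundle of groups, and rerunning those constructions with such dependence is a substantial unproven project. The observation you are missing is that the obstacle is vacuous, because in Case (B) ergodicity forces $\mu _{\mc{G}^0}$ to be purely atomic, hence $\mc{G}^0$ finite. Indeed, if a.e.\ $\mc{R}_{\mc{G}}$-class is finite, of a.e.\ constant size $n$, then a Borel transversal $T$ has measure $1/n$ and the saturation of any Borel $B\subseteq T$ has measure $n\mu (B)$; were the measure atomless one could choose $B$ with $0<\mu (B)<1/n$ and obtain a nontrivial invariant set, contradicting ergodicity. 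So only finitely many isotropy groups occur and no selection issue arises. Even then, your coinduction step would need separate justification, since the isotropy bundle is \emph{not ergodic} once $|\mc{G}^0|>1$, while the coinduction fact $(*)$ is formulated for ergodic subgroupoids. The paper's Case 2 sidesteps both problems: by ergodicity all units are equivalent, so a single atom $A=\{ x_0 \}$ is a complete unit section; the reduction $\mc{G}_A=x_0\mc{G}x_0$ is a countably infinite group; one computes $(\beta ^{\mc{G}}_K)|_{\mc{G}_A}\cong \beta ^{\mc{G}_A}_{K^n}$ using Proposition \ref{prop:clear}, applies Theorem \ref{thm:OrnIso} to $\mc{G}_A$ with bases $K^n$ and $L^n$, and then extends the resulting isomorphism of reductions to all of $\mc{G}$ by Proposition \ref{prop:extact} --- no coinduction and no parametrized Ornstein theorem needed. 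A minor further point: your claim that the isomorphism type of $x\mc{G}x$ is "a.e.\ constant" is both unnecessary and not obviously meaningful (isomorphism classes of countable groups do not form a standard Borel space in a useful way here); your dichotomy only needs that the set of units with infinite $\mc{R}_{\mc{G}}$-class is invariant and measurable.
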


\begin{proof}
Let $\mc{G}$ be an aperiodic ergodic discrete p.m.p.\ groupoid. The proof breaks into two cases, according to whether or not the measure $\mu _{\mc{G}^0}$ has atoms. The case where $\mu _{\mc{G}^0}$ has atoms (Case 2 below) will be easily handled by Theorem \ref{thm:OrnIso}. When $\mu _{\mc{G}^0}$ is atomless (Case 1 below), the main fact which we use is: $(*)$ if $\mc{G}$ contains an ergodic subgroupoid $\mc{H}$ which is Ornstein, and with $\mc{H}^0=\mc{G}^0$, then $\mc{G}$ itself is Ornstein. This goes back to Stepin \cite{Step75} in the case of groups, and it was used in \cite{Bo11b} in the case of discrete p.m.p.\ equivalence relations. The proof of $(*)$ in general uses groupoid coinduction and it is a routine extension of the proof in the case of groups and equivalence relations; we refer the reader to \cite{Bo11b} and \cite[Theorem 3.1]{BHI15} for details.

Case 1: $\mu _{\mc{G}^0}$ is atomless. In this case, since $\mc{G}$ is aperiodic we may find an ergodic aperiodic principal amenable subgroupoid $\mc{H}$ of $\mc{G}$ as follows: by \cite[Proposition 9.3.2]{Zi84} we may find an ergodic aperiodic transformation $S:\mc{G}^0\rightarrow \mc{G}^0$ belonging to the full group of the equivalence relation $\mc{R}_{\mc{G}}$. We may then find a measurable bisection $\phi$ of $\mc{G}$ such that $r(\phi x)=S(x)$ for all $x\in \mc{G}^0$. Let $\mc{H}$ denote the subgroupoid of $\mc{G}$ generated by $\phi$. Then $\mc{R}_{\mc{H}}$ is the equivalence relation on $\mc{G}^0$ generated by the transformation $S$, and the map $\mc{H}\rightarrow \mc{R}_{\mc{H}}$, $h\mapsto (r(h),s(h))$, is an isomorphism of discrete p.m.p.\ groupoids, so $\mc{H}$ has the desired properties. Since $\mc{H}$ is isomorphic to a translation groupoid of a free ergodic action of $\Z$, it follows from Ornstein's Isomorphism Theorem that the groupoid $\mc{H}$ itself is Ornstein, and hence $\mc{G}$ is Ornstein by $(*)$.

Case 2: $\mu _{\mc{G}^0}$ has atoms. In this case, by ergodicity of $\mc{G}$ we may assume that $\mc{G}^0$ is finite and that $\mu _{\mc{G}^0}$ is normalized counting measure on $\mc{G}^0$. Say $\mc{G}^0$ contains exactly $n$ elements $x_0,\dots , x_{n-1}$. For each $i<n$ let $A_i := \{ x_i \}$, so that each of the sets $A_i$ is a complete unit section for $\mc{G}$, and the reduction groupoid $\mc{G}_{A_i} = x_i\mc{G}x_i$ is a group. Let $A:=A_0$. The group $\mc{G}_{A}$ is infinite since $\mc{G}$ is aperiodic, so by Theorem \ref{thm:OrnIso}, $\mc{G}_A$ is Ornstein. Let $K$ and $L$ be two probability spaces with $H(K)=H(L)$. We have the isomorphisms
\begin{equation}\label{eqn:betaKn}
\textstyle{\beta ^{\mc{G}}_K \cong \bigotimes _{i<n} \beta ^{\mc{G}A_i}_K \cong \bigotimes _{i<n}\beta ^{\mc{G}A}_K \cong \beta ^{\mc{G}A}_{K^n}} ,
\end{equation}
where the first and third isomorphisms come from Proposition \ref{prop:clear}, and the middle isomorphism comes from the isomorphisms of each of the discrete actions $\mc{G}\cc \mc{G}A_i$ with the action $\mc{G}\cc \mc{G}A$. Reducing to $\mc{G}_A$, we obtain the isomorphism $(\beta ^{\mc{G}}_K )|_{\mc{G}_A} \cong (\beta ^{\mc{G}A}_{K^n} )|_{\mc{G}_A} =\beta ^{\mc{G}_A}_{K^n}$. Likewise, we have the isomorphism $(\beta ^{\mc{G}}_L)|_{\mc{G}_A}  \cong  \beta ^{\mc{G}_A}_{L^n}$. Since $\mc{G}_A$ is Ornstein and $H(K^n)=H(L^n)$, the $\mc{G}_A$-actions $\beta ^{\mc{G}_A}_{K^n}$ and $\beta ^{\mc{G}_A}_{L^n}$ are isomorphic, and hence the reductions $(\beta ^{\mc{G}}_K)|_{\mc{G}_A}$ and $(\beta ^{\mc{G}}_L )|_{\mc{G}_A}$ are isomorphic. Proposition \ref{prop:extact} therefore implies that $\beta ^{\mc{G}}_K$ and $\beta ^{\mc{G}}_L$ are isomorphic. This shows that $\mc{G}$ is Ornstein.
\end{proof}

A corollary to this, which we will use often, is that if the groupoid Weak Pinsker entropy of $\mc{G}$ is positive, then the supremum in the definition of $h^{\mathrm{gWP}}(\mc{G})$ (given in \S\ref{subsec:gwpe}) can be taken over Bernoulli extensions of \emph{principal} groupoids.

\begin{corollary}\label{cor:principal}
Let $\mc{G}$ be an ergodic discrete p.m.p.\ groupoid and let $r\geq 0$. If $h^{\mathrm{gWP}}(\mc{G}) >r$ then $\mc{G}$ is isomorphic to $\mc{H}\ltimes K^{\otimes \mc{H}}$ for some principal ergodic discrete p.m.p.\ groupoid $\mc{H}$ and some probability space $K$ with $H(K)>r$.
\end{corollary}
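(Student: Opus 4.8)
The plan is to start from the definition of $h^{\mathrm{gWP}}$. Since $h^{\mathrm{gWP}}(\mc{G})>r\geq 0$, there is a pair $(\mc{H}_0,K_0)$ with $\mc{G}\cong\mc{H}_0\ltimes K_0^{\otimes\mc{H}_0}=\EuScript{B}_{K_0}(\mc{H}_0)$ and $H(K_0)>r$; in particular $H(K_0)>0$, so $\mu_{K_0}$ is not a point mass. As observed in \S\ref{subsec:gwpe}, $h^{\mathrm{gWP}}(\mc{G})>0$ forces $\mc{G}$ to be aperiodic (and principal). First I would record that $\mc{H}_0$ is ergodic and aperiodic: ergodicity passes from $\mc{G}$ to its quotient $\mc{H}_0$ along the extension $\mc{G}\to\mc{H}_0$, and since the source-fibers of the translation groupoid $\mc{H}_0\ltimes K_0^{\otimes\mc{H}_0}$ are in bijection with those of $\mc{H}_0$, aperiodicity of $\mc{G}$ forces aperiodicity of $\mc{H}_0$ (the set where $x\mc{H}_0$ is infinite is $\mc{H}_0$-invariant, hence null or conull). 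Consequently $\mc{H}_0$ is Ornstein by Corollary \ref{cor:GOrnIso}.

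The key construction is to principalize the base groupoid by absorbing a small Bernoulli factor into it. Using that $H(K_0)>r$, choose probability spaces $L$ and $M$ with $H(L)>r$, $H(M)>0$, and $H(L)+H(M)=H(K_0)$ (such spaces exist since every value in $(0,\infty]$ is realized as a Shannon entropy). Set $\mc{H}:=\EuScript{B}_M(\mc{H}_0)$. Since $\mc{H}_0$ is aperiodic and $\mu_M$ is not a point mass, Proposition \ref{prop:Bprinc} shows that $\mc{H}$ is principal; it is ergodic and aperiodic because the Bernoulli extension $\mc{H}\to\mc{H}_0$ is relatively weakly mixing (Lemma \ref{lem:Bwm}) over the ergodic aperiodic base $\mc{H}_0$. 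I would then compute, as $\mc{H}$-extensions and then as groupoids,
\[
\EuScript{B}_L(\mc{H})\ \cong\ \mc{H}\otimes_{\mc{H}_0}\EuScript{B}_L(\mc{H}_0)\ =\ \EuScript{B}_M(\mc{H}_0)\otimes_{\mc{H}_0}\EuScript{B}_L(\mc{H}_0)\ \cong\ \EuScript{B}_{M\times L}(\mc{H}_0),
\]
where the first isomorphism is Proposition \ref{prop:BernIsom} applied to the extension $\mc{H}\to\mc{H}_0$, and the last uses Remark \ref{rem:translext} together with Proposition \ref{prop:clear}(2). Since $H(M\times L)=H(K_0)$ and $\mc{H}_0$ is Ornstein, $\EuScript{B}_{M\times L}(\mc{H}_0)\cong\EuScript{B}_{K_0}(\mc{H}_0)\cong\mc{G}$. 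Chaining these identifications gives $\mc{G}\cong\mc{H}\ltimes L^{\otimes\mc{H}}$ with $\mc{H}$ principal and ergodic and $H(L)>r$, which is the claim with $K:=L$.

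The main obstacle is not the groupoid bookkeeping but the freedom to split the base entropy: one cannot in general factor a given base $K_0$ as a literal product $L\times M$ of probability spaces (for instance a two-point base is indecomposable), so a naive "peel off a Bernoulli factor" argument fails. The resolution is exactly the Ornstein property of $\mc{H}_0$ furnished by Corollary \ref{cor:GOrnIso}, which makes $\EuScript{B}_{M\times L}(\mc{H}_0)$ and $\EuScript{B}_{K_0}(\mc{H}_0)$ isomorphic as soon as the Shannon entropies agree, so that $L$ and $M$ may be chosen freely subject only to $H(L)>r$, $H(M)>0$, and $H(L)+H(M)=H(K_0)$. This is why the statement is naturally a corollary to the groupoid Ornstein theorem; the remaining steps are the formal identities for Bernoulli extensions recorded earlier in this section.
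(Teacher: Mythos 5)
Your proposal is correct and follows essentially the same route as the paper's proof: both split the base entropy as $H(K_0)=H(L)+H(M)$, invoke the groupoid Ornstein theorem (Corollary \ref{cor:GOrnIso}) to replace the base $K_0$ by the product $M\times L$, take $\mc{H}$ to be the Bernoulli extension $\mc{H}_0\ltimes M^{\otimes \mc{H}_0}$, and regroup via Proposition \ref{prop:BernIsom} and Remark \ref{rem:translext} to exhibit $\mc{G}\cong \mc{H}\ltimes L^{\otimes\mc{H}}$ with $\mc{H}$ principal. Your write-up is in fact slightly more explicit than the paper's on two points it leaves implicit, namely the verification that $\mc{H}_0$ is ergodic and aperiodic (so that Corollary \ref{cor:GOrnIso} applies) and the appeal to Proposition \ref{prop:Bprinc} for principality of $\mc{H}$.
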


\begin{proof}
Since $h^{\mathrm{gWP}}(\mc{G})>r\geq 0$, the groupoid $\mc{G}$ is aperiodic, and we may find an ergodic discrete p.m.p.\ groupoid $\mc{H}_1$ and a probability space $K_1$ with $H(K_1)>r$, such that $\mc{G}$ is isomorphic to the translation groupoid $\mc{H}_1\ltimes K_1^{\otimes \mc{H}_1}$, associated to the Bernoulli action of $\mc{H}_1$ with base $K_1$. Let $K_0$ and $K$ be probability spaces with $H(K_1) > H(K) > r$ and $H(K_0) = H(K_1)- H(K) >0$, so that $H(K_0\otimes K )= H(K_0)+H(K)=H(K_1)$. By Corollary \ref{cor:GOrnIso}, there exists a groupoid isomorphism $\mc{H}_1\ltimes K_1^{\otimes \mc{H}_1} \cong \mc{H}_1\ltimes (K_0\otimes K )^{\otimes \mc{H}_1}$. Let $\mc{H} = \mc{H}_1\ltimes K_0^{\otimes \mc{H}_1}$. Then we have isomorphisms
\[
\mc{H}\ltimes K^{\otimes \mc{H}}\cong \mc{H}\otimes _{\mc{H}_1} (\mc{H}_1 \ltimes K^{\otimes \mc{H}_1}) \cong \mc{H}_1\ltimes (K_0\otimes K )^{\otimes \mc{H}_1} \cong \mc{G}
\]
where the first isomorphism comes from Proposition \ref{prop:BernIsom} applied to the extension $\mc{H}\rightarrow \mc{H}_1$, the second isomorphism is by Remark \ref{rem:translext}, and the third was already established. The groupoid $\mc{H}$ is principal, since it is a nontrivial Bernoulli extension of $\mc{H}_1$.
\end{proof}

\subsubsection{Bernoulli superrigidity for atomic base spaces}

The next two propositions are groupoid versions of \cite[Proposition 1.2]{Fu07}.

\begin{proposition}\label{prop:furman0relWM}
Let $\mc{K}_1$, $\mc{K}_0$, and $\mc{G}$ be discrete p.m.p.\ groupoids and let $\mc{K}_1 \xra{p_1}\mc{G}$ and $\mc{K}_0\xra{p_0}\mc{G}$ be groupoid extensions. Suppose that $\mc{K}_1\xra{t}\mc{K}_0$ is a relatively weakly mixing extension satisfying $p_0\circ t = p_1$.  Let $w:\mc{K}_0\rightarrow L$ be a measurable homomorphism into a Polish group $L$ admitting a bi-invariant metric, and suppose that the lifted homomorphism $w\circ t : \mc{K}_1 \rightarrow \mc{G} $ is equivalent to a homomorphism which descends through $p_1$ to $\mc{G}$. Then $w$ is equivalent to a homomorphism which descends through $p_0$ to $\mc{G}$.
\end{proposition}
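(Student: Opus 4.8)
The plan is to deduce the statement from a single application of the untwisting result Lemma \ref{lem:Gbasic}, applied to the relatively weakly mixing extension $t:\mc{K}_1 \to \mc{K}_0$. First I would unpack the hypothesis. Since $w\circ t$ is equivalent to a homomorphism which descends through $p_1$, there is a measurable homomorphism $u:\mc{G}\to L$ and a measurable map $F:\mc{K}_1^0\to L$ with
\[
F(r(k))\,w(t(k))\,F(s(k))^{-1} = u(p_1(k))
\]
for a.e.\ $k\in\mc{K}_1$. Using the compatibility $p_1 = p_0\circ t$, the right-hand side equals $u(p_0(t(k))) = v(t(k))$, where $v := u\circ p_0:\mc{K}_0\to L$ is a homomorphism that by construction descends through $p_0$ to $\mc{G}$. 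Thus the displayed identity reads $v(t(k)) = F(r(k))\,w(t(k))\,F(s(k))^{-1}$ for a.e.\ $k$.

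Next I would observe that this is exactly the hypothesis of Lemma \ref{lem:Gbasic} for the relatively weakly mixing extension $t:\mc{K}_1\to\mc{K}_0$, taking the two measurable maps on $\mc{K}_0$ to be $v$ and $w$, and the twisting map $\mc{K}_1^0\to L$ to be $F$. The lemma then furnishes a measurable map $f:\mc{K}_0^0\to L$ with $F(y) = f(t(y))$ for a.e.\ $y\in\mc{K}_1^0$.

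Finally I would descend the cohomology. Since $t$ is a groupoid homomorphism we have $t(r(k)) = r(t(k))$ and $t(s(k)) = s(t(k))$, so $F(r(k)) = f(r(t(k)))$ and $F(s(k)) = f(s(t(k)))$; substituting gives $v(t(k)) = f(r(t(k)))\,w(t(k))\,f(s(t(k)))^{-1}$ for a.e.\ $k\in\mc{K}_1$. Because $t$ is a measure-preserving, a.e.\ locally bijective extension, it is essentially surjective onto $\mc{K}_0$, so this transfers to
\[
v(k_0) = f(r(k_0))\,w(k_0)\,f(s(k_0))^{-1}
\]
for a.e.\ $k_0\in\mc{K}_0$. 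This is precisely the statement that $w$ is equivalent to $v = u\circ p_0$, which descends through $p_0$ to $\mc{G}$, completing the argument.

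I expect essentially no serious obstacle here: the only real content is recognizing that the twisting map $F$ supplied by the hypothesis lives on $\mc{K}_1^0$, and that relative weak mixing of $t$ forces it to factor through $t$. The points requiring care are purely bookkeeping — matching the rearranged cohomology relation to the exact form of Lemma \ref{lem:Gbasic}, and verifying essential surjectivity of $t$ so that the descended relation, a priori valid only along the image of $t$, in fact holds for a.e.\ element of $\mc{K}_0$.
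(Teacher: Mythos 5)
Your proof is correct and is essentially identical to the paper's own argument: both rewrite the hypothesis using $p_1 = p_0\circ t$ so that the twisting relation reads $v(t(k)) = F(r(k))\,w(t(k))\,F(s(k))^{-1}$ with $v = u\circ p_0$, apply Lemma \ref{lem:Gbasic} to the relatively weakly mixing extension $t$ to factor $F$ as $f\circ t$, and then push the resulting identity down to $\mc{K}_0$ using that $t$ is measure preserving. The only difference is that you spell out the final descent step (essential surjectivity of $t$) which the paper leaves implicit.
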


\begin{proof}
By hypothesis there is a measurable map $\Phi :\mc{K}_1^0\rightarrow L$ and a homomorphism $u:\mc{G}\rightarrow L$ satisfying
\[
\Phi (r(k))w(t(k))\Phi (s(k))^{-1} = u(p_1(k)) = (u\circ p_0)(t(k))
\]
for a.e.\ $k\in \mc{K}_1$. Since $t$ is relatively weakly mixing, Lemma \ref{lem:Gbasic} implies that $\Phi = f\circ t$ for some map $f:\mc{K}_0^0\rightarrow L$. Hence $f(r(k_0))w(k_0)f(s(k_0))^{-1} = (u\circ p_0 )(k_0)$ for a.e. $k_0 \in \mc{K}_0$, and this shows that $w$ is equivalent to $u\circ p_0$.
\end{proof}

\begin{proposition}\label{prop:furman0}
Let $\mathscr{C}$ be a class of Polish groups contained in the class $\mathscr{G}_{\mathrm{inv}}$ of Polish groups admitting a bi-invariant metric. Let $\mathcal{H}$ be an aperiodic discrete p.m.p.\ groupoid which is Bernoulli $\mathscr{C}$-superrigid and let $K$ be any standard probability space. Then for every extension $\mathcal{G}\rightarrow\mathcal{H}$, the Bernoulli extension $p_0: \mathcal{G}\ltimes K^{\otimes \mathcal{G}} \rightarrow \mathcal{G}$ is relatively $\mathscr{C}$-superrigid.
\end{proposition}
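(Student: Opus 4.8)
The plan is to reduce to the relative $\mathscr{C}$-superrigidity of the \emph{standard} Bernoulli extension $\EuScript{B}(\mc{G})\to\mc{G}$ (base $[0,1]$), which I already have in hand, by interposing an auxiliary relatively weakly mixing extension and invoking Proposition~\ref{prop:furman0relWM}. First I would record two facts about $\mc{G}$ that follow from the hypotheses on $\mc{H}$. Since $\mc{G}\to\mc{H}$ is an extension, it is locally bijective, so $x\mc{G}$ is in bijection with the infinite set $(p(x))\mc{H}$ for a.e.\ $x$; hence $\mc{G}$ is aperiodic because $\mc{H}$ is. And since $\mc{H}$ is Bernoulli $\mathscr{C}$-superrigid, Lemma~\ref{lem:srext} shows $\mc{G}$ is Bernoulli $\mathscr{C}$-superrigid as well; taking the identity extension of $\mc{G}$ in Definition~\ref{def:GBCS}, this means precisely that $\EuScript{B}(\mc{G})\to\mc{G}$ is relatively $\mathscr{C}$-superrigid.

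Next I would set $\mc{K}_0 := \EuScript{B}_K(\mc{G}) = \mc{G}\ltimes K^{\otimes\mc{G}}$ with Bernoulli extension $p_0:\mc{K}_0\to\mc{G}$, form the relatively independent product $\mc{K}_1 := \mc{K}_0\otimes_{\mc{G}}\EuScript{B}(\mc{G})$, and write $t:\mc{K}_1\to\mc{K}_0$ for the left projection and $p_1 := p_0\circ t:\mc{K}_1\to\mc{G}$. I need two structural properties. Property (a): $p_1$ is relatively $\mathscr{C}$-superrigid. By Remark~\ref{rem:translext} the groupoid $\mc{K}_1$ is identified with $\mc{G}\ltimes(K^{\otimes\mc{G}}\otimes_{\mc{G}^0}[0,1]^{\otimes\mc{G}})$, and Proposition~\ref{prop:clear}(2) identifies the product action $\beta^{\mc{G}}_K\otimes\beta^{\mc{G}}_{[0,1]}$ with $\beta^{\mc{G}}_{K\otimes[0,1]}$; since $K\otimes[0,1]$ is an atomless standard probability space it is isomorphic to $[0,1]$, so $\mc{K}_1\cong\EuScript{B}(\mc{G})$ compatibly with the extension to $\mc{G}$, whence $p_1\cong(\EuScript{B}(\mc{G})\to\mc{G})$ is relatively $\mathscr{C}$-superrigid by the previous paragraph.

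Property (b): $t$ is relatively weakly mixing. Here I would use that $t$ is relatively weakly mixing if and only if $\mc{K}_1\otimes_{\mc{K}_0}\mc{K}_1\to\mc{K}_0$ is relatively ergodic. By Remark~\ref{rem:prodprod} this fibered product is isomorphic to $\mc{K}_0\otimes_{\mc{G}}(\EuScript{B}(\mc{G})\otimes_{\mc{G}}\EuScript{B}(\mc{G}))$. Since $\mc{G}$ is aperiodic, $\EuScript{B}(\mc{G})\to\mc{G}$ is relatively weakly mixing by Lemma~\ref{lem:Bwm}, so $\EuScript{B}(\mc{G})\otimes_{\mc{G}}\EuScript{B}(\mc{G})\to\mc{G}$ is relatively weakly mixing by $(1)\Rightarrow(2)$ of Lemma~\ref{lem:Gweakmix}; then $(1)\Rightarrow(3)$ of the same lemma, applied with the extension $\mc{K}_0\to\mc{G}$, yields relative ergodicity of $\mc{K}_0\otimes_{\mc{G}}(\EuScript{B}(\mc{G})\otimes_{\mc{G}}\EuScript{B}(\mc{G}))\to\mc{K}_0$, which is exactly what (b) requires.

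With (a) and (b) established the conclusion is immediate: given any measurable homomorphism $w:\mc{K}_0\to L$ with $L\in\mathscr{C}$, property (a) shows the lift $w\circ t:\mc{K}_1\to L$ is equivalent to a homomorphism descending through $p_1$ to $\mc{G}$, and since $p_0\circ t=p_1$ with $t$ relatively weakly mixing by (b), Proposition~\ref{prop:furman0relWM} gives that $w$ is equivalent to a homomorphism descending through $p_0$ to $\mc{G}$; as $w$ is arbitrary, $p_0:\EuScript{B}_K(\mc{G})\to\mc{G}$ is relatively $\mathscr{C}$-superrigid. I expect the main obstacle to be the verification of (b), namely extracting relative weak mixing of $t$ (rather than mere relative ergodicity) from the product structure, which is exactly why the double-Bernoulli reassociation of Remark~\ref{rem:prodprod} and the chain of equivalences in Lemma~\ref{lem:Gweakmix} are needed; everything else is bookkeeping with the Bernoulli isomorphisms of Proposition~\ref{prop:clear} and the identification $K\otimes[0,1]\cong[0,1]$.
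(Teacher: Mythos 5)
Your proof is correct and follows essentially the same route as the paper's: both interpose the groupoid $\mc{G}\ltimes (K^{\otimes \mc{G}}\otimes _{\mc{G}^0}[0,1]^{\otimes \mc{G}})\cong \EuScript{B}(\mc{G})$ between $\EuScript{B}_K(\mc{G})$ and $\mc{G}$, use the hypothesis on $\mc{H}$ to get relative $\mathscr{C}$-superrigidity of $\EuScript{B}(\mc{G})\rightarrow \mc{G}$, establish relative weak mixing of the projection onto $\EuScript{B}_K(\mc{G})$, and conclude via Proposition \ref{prop:furman0relWM}. The only minor differences are that the paper obtains superrigidity of $\EuScript{B}(\mc{G})\rightarrow \mc{G}$ directly from Definition \ref{def:GBCS} rather than routing through Lemma \ref{lem:srext}, and it gets the relative weak mixing of $t$ in one stroke by recognizing it (implicitly via Proposition \ref{prop:BernIsom} and Remark \ref{rem:translext}) as the Bernoulli extension of the aperiodic groupoid $\EuScript{B}_K(\mc{G})$ and citing Lemma \ref{lem:Bwm}, where you verify the same fact by hand from the definition using Remark \ref{rem:prodprod} and Lemma \ref{lem:Gweakmix}.
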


\begin{proof}
Let $\mc{G}$ be an extension of $\mc{H}$, so that $\mc{G}$ is aperiodic as well. We have the following commutative diagram of groupoid extensions
\[
\xymatrix{
\mc{G}\ltimes [0,1]^{\otimes \mc{G}} \ar[r]^{q \ \ \ \ \ \ \ \, \, }_{\cong \ \ \ \ \ \ \ \, \, } \ar[d]^{p_1}  &\mc{G}\ltimes (K^{\otimes \mc{G}} \otimes _{\mc{G}^0} [0,1]^{\otimes \mc{G}}) \ar[d] \ar[r]^{\ \ \ \ \ \ \ \, \, \, p}  &\mc{G}\ltimes K^{\otimes \mc{G}}\ar[d]^{p_0} \\
\mc{G}\ar[r]^{\mathrm{id}_{\mc{G}}} &\mc{G}\ar[r]^{\mathrm{id}_{\mc{G}}}  &\mc{G}
}
\]
where $q$ is the isomorphism induced by an isomorphism $[0,1]\cong K\otimes [0,1]$ of probability spaces, and $p$ is the natural projection map. Since $\mc{H}$ is Bernoulli $\mathscr{C}$-superrigid, the extension $p_1$ is relatively $\mathscr{C}$-superrigid. Since $\mc{G}$ is aperiodic, the extension $p$ is relatively weakly mixing by Lemma \ref{lem:Bwm}, so the extension $t:= p\circ q$ is relatively weakly mixing as well, since $q$ is an isomorphism. Proposition \ref{prop:furman0relWM} therefore implies that $p_0$ is relatively $\mathscr{C}$-superrigid.
\end{proof}

\subsubsection{Pushing extensions to quotients} We now deal with a technical lemma used in the proof of Theorem \ref{thm:soe}.

Let $\mc{G}$ be a discrete p.m.p.\ groupoid. We let $[[\mc{G}]]$ denote the collection of all measurable bisections of $\mc{G}$, and we let $[\mc{G}]$ denote the collection of all measurable bisections $\theta$ of $\mc{G}$ with $s(\theta )=r(\theta ) = \mc{G}^0$. Then $[[\mc{G}]]$ and $[\mc{G}]$ are closed under the natural inverse and product operations defined on subsets of $\mc{G}$, and these operations make $[[\mc{G}]]$ an inverse semigroup, and $[\mc{G}]$ a group. Given a measurable bisection $\theta$ of $\mc{G}$, we denote by $c_{\theta} : \mc{G}_{s(\theta )} \rightarrow \mc{G}_{r(\theta )}$ the associated partial automorphism of $\mc{G}$ defined by $c_{\theta}(g) := \theta g \theta ^{-1}$ for $g\in \mc{G}_{s(\theta )}$.

Let $\mc{K}$ be a discrete p.m.p.\ groupoid, let $\mc{G}$ be a discrete Borel groupoid, and let $w$ and $v$ be two measurable homomorphisms from $\mc{K}$ into $\mc{G}$. We say that $w$ and $v$ are {\bf equivalent} if there exists a measurable map $\psi : \mc{K}^0\rightarrow \mc{G}$, with $\psi (x)\in \mc{G}w(x)$ for a.e.\ $x\in \mc{K}^0$, such that $\psi (r(k))w(k)\psi (s(k))^{-1} = v(k)$ for a.e.\ $k\in \mc{K}$. When $\mc{G}$ is a group then this definition is consistent with the definition of equivalence of homomorphisms into a group.

\begin{lemma}\label{lem:GOEcocycle}
Let $p: \mc{K}\rightarrow \mc{H}$ and $w:\mc{K}\rightarrow \mc{G}$ be extensions of ergodic discrete p.m.p.\ groupoids and assume that $\mc{H}$ is principal. Suppose that $w$ is equivalent to a groupoid homomorphism $v$ which descends to $\mc{H}$, i.e., $v=v_0\circ p$ for some groupoid homomorphism $v_0:\mc{H}\rightarrow \mc{G}$. Then there exists a bisection $\theta \in [\mc{K}]$, and a groupoid extension $w_0:\mc{H}\rightarrow \mc{G}$ such that $w\circ c_{\theta} = w_0\circ p$, and $w$ is equivalent to $w_0\circ p$.
\end{lemma}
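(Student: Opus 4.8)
The plan is to reduce the statement to the construction of a single measurable \emph{framing} $\chi_0$, and then to build the bisection $\theta$ from it. First I would extract data from the hypothesis: since $w$ is equivalent to $v=v_0\circ p$, there is a measurable $\psi\colon\mc{K}^0\to\mc{G}$ with $s(\psi(x))=w(x)$ and $w(k)=\psi(r(k))^{-1}v_0(p(k))\psi(s(k))$ for a.e.\ $k\in\mc{K}$. The key observation is that for \emph{any} measurable $\chi_0\colon\mc{H}^0\to\mc{G}$ with $s(\chi_0(u))=v_0^0(u)$, if $\theta\in[\mc{K}]$ is a bisection whose element $\theta_x$ of source $x$ satisfies $w(\theta_x)=\chi_0(p(x))\psi(x)$, then setting $w_0(h):=\chi_0(r(h))v_0(h)\chi_0(s(h))^{-1}$ a direct coboundary computation gives
\[
w(c_\theta(k))=w(\theta_{r(k)})\,w(k)\,w(\theta_{s(k)})^{-1}=\chi_0(r(p(k)))\,v_0(p(k))\,\chi_0(s(p(k)))^{-1}=w_0(p(k)),
\]
i.e.\ $w\circ c_\theta=w_0\circ p$; and since $x\mapsto w(\theta_x)$ always witnesses $w\sim w\circ c_\theta$, the relation $w\sim w_0\circ p$ comes for free. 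Thus it suffices to produce $\chi_0$ together with such a $\theta\in[\mc{K}]$, with $w_0$ an extension.

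Next I would identify when the lift $\theta$ exists. Using the action $\gamma\colon\mc{G}\cc\mc{K}^0$ associated to the extension $w$ by Proposition \ref{prop:ExtActCorr} (so $\gamma(w(k))s(k)=r(k)$), the prescription $w(\theta_x)=\chi_0(p(x))\psi(x)$ forces $\theta_x$ to be the unique element with $s(\theta_x)=x$ and range $T(x):=\gamma(\chi_0(p(x))\psi(x))x$, which lies in the $\mc{R}_{\mc{K}}$-class of $x$; hence $\theta$ is determined by $\chi_0$, and $\theta\in[\mc{K}]$ exactly when $T$ is a bijection. Here principality of $\mc{H}$ enters twice: first, a groupoid extension of a principal groupoid is principal (an isotropy element of $\mc{K}$ maps to an isotropy element of $\mc{H}$, hence to a unit, hence is itself a unit by local injectivity of $p$), so $\mc{K}$ is principal and $\gamma$ is free; second, $p^0$ restricts to a bijection of each $\mc{R}_{\mc{K}}$-class onto the corresponding $\mc{R}_{\mc{H}}$-class. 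Granting these, suppose $T(x_1)=T(x_2)$ with $x_1\neq x_2$; then $x_1,x_2$ lie in a common class, so $u_1:=p(x_1)\neq u_2:=p(x_2)$, and freeness of $\gamma$ together with the identity for $\psi$ yields $\chi_0(u_1)=\chi_0(u_2)v_0(u_2,u_1)$, whence $w_0(u_2,u_1)=\chi_0(u_2)v_0(u_2,u_1)\chi_0(u_1)^{-1}$ is a unit with $u_1\neq u_2$ --- contradicting local injectivity of $w_0$ as soon as $w_0$ is an extension. Therefore, once $w_0$ is an extension, $T$ is injective; its graph is then a bisection of $\mc{K}$ with full source, so by the measure-preserving identity \eqref{eqn:pmp} it has full range, giving $T\in[\mc{R}_{\mc{K}}]$ and $\theta\in[\mc{K}]$.

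The whole problem is thereby reduced to the main step: choosing a measurable $\chi_0\colon\mc{H}^0\to\mc{G}$ with $s\circ\chi_0=v_0^0$ such that $w_0=\chi_0(r\,\cdot)\,v_0\,\chi_0(s\,\cdot)^{-1}$ is a groupoid extension, i.e.\ realizing $v_0$ as a coboundary-conjugate of an extension. The data for this is furnished by $w$ itself: on each $\mc{R}_{\mc{K}}$-class $C$ over an $\mc{R}_{\mc{H}}$-class $[u]$, the bijection $p^0|_C$ transports the local bijectivity of $w$ on $s^{-1}(x)$ into local bijectivity of a candidate $w_0$ on $s^{-1}(u)$, with $x\mapsto\psi(x)^{-1}$ serving as the class-local framing. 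I would assemble these class-local framings into a single measurable $\chi_0$ on $\mc{H}^0$ by a measurable selection argument (Lusin--Novikov uniformization), using ergodicity of $\mc{H}$ and the equality of fibre cardinalities forced by local bijectivity of $w$ and $p$ to arrange both that $w_0^0=r\circ\chi_0$ is measure preserving and that $u'\mapsto w_0(u',u)$ is a bijection of $[u]_{\mc{H}}$ onto $s^{-1}(w_0^0(u))$ for a.e.\ $u$.

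I expect this assembly to be the main obstacle. The naive attempt --- take $\chi_0(u)=\psi(x_u)^{-1}$ for a measurable section $x_\bullet$ of $p^0$ --- only yields local bijectivity of $w_0$ if $x_\bullet$ maps each $\mc{R}_{\mc{H}}$-class into a single $\mc{R}_{\mc{K}}$-class, and such an $\mc{R}_{\mc{K}}$-class-respecting section does not exist in the ergodic case. Hence the framing must be chosen without respecting classes, and the coherence of the selection --- making local bijectivity of $w_0$ hold simultaneously for a.e.\ $u$ while keeping $w_0^0$ measure preserving --- is exactly the point where ergodicity and the principality-driven class bijection of $p^0$ have to be used carefully.
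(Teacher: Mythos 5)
Your proposal is a correct \emph{reduction}, but it is not a proof: the entire content of the lemma has been pushed into the one step you explicitly leave open, namely the construction of a measurable framing $\chi_0:\mc{H}^0\rightarrow \mc{G}$ (with $s\circ \chi _0 = v_0^0$) for which $w_0(h)=\chi _0(r(h))v_0(h)\chi _0(s(h))^{-1}$ is a groupoid extension. Everything else you do --- deriving $\theta$ from $\chi_0$ via the action $\gamma$, the coboundary computation giving $w\circ c_{\theta}=w_0\circ p$, the injectivity of $T$ from local injectivity of $w_0$, fullness of $r(\theta )$ from \eqref{eqn:pmp} --- is routine once such a $\chi _0$ exists, and your own closing analysis (that a section $u\mapsto x_u$ of $p^0$ respecting $\mc{R}_{\mc{K}}$-classes cannot exist in the ergodic case, so the naive choice $\chi _0(u)=\psi (x_u)^{-1}$ fails) shows you are aware that no direct selection argument produces it. "Lusin--Novikov plus ergodicity, used carefully" is exactly the missing idea, not a sketch of one: untwisting $v_0$ into an extension \emph{is} the lemma, essentially up to the bookkeeping you carried out. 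There is also a structural inversion that makes your route harder than necessary: you need $w_0$ to be an extension (at least locally injective) as an \emph{input} in order to build $\theta$, whereas in fact the extension property of $w_0$ is most naturally obtained as an \emph{output}, since once one has a full bisection $\theta$ with $w\circ c_{\theta}$ descending through $p$, measure preservation and local bijectivity of $w_0$ follow automatically from $w_0\circ p = w\circ c_{\theta}$ with $p$, $w$, $c_{\theta}$ all measure preserving and locally bijective.

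For comparison, the paper's proof never attempts to define $w_0$ (or $\chi_0$) globally in one stroke. It first shows (Claim 0, a mass-transport count using the p.m.p.\ identity) that the subgroupoid $\mc{K}_1=(v_0\circ p)^{-1}(\mc{G}^0)$ is \emph{periodic}; principality of $\mc{H}$ then yields a transversal $Y_0$ of $\mc{H}_1=v_0^{-1}(\mc{G}^0)$, and the set $\theta _0:=\phi \, p^{-1}(Y_0)$ (where $\phi =\{k : w(k)=\phi _0(s(k))\}$ is the graph of your map $x\mapsto T(x)$) is shown to be a genuine partial bisection on which $w\circ c_{\theta _0}$ descends through $p$ --- note this injectivity is proved directly from $(\mc{K}_1)_{X_0}\subseteq \mc{K}^0$, with no reference to any global $w_0$. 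The heart of the argument is then an exhaustion (Claim 2): any such partial bisection of measure $<1$ can be strictly enlarged, using ergodicity to pick $\sigma \in [[\mc{H}]]$ and $\tau \in [\mc{G}]$ and gluing on a piece of $w^{-1}(\tau )\,\theta _1\,p^{-1}(\sigma )$, while verifying at each stage that the source stays $p$-measurable and $w\circ c_{\theta}$ stays $p$-measurable. This incremental construction, with its measurability verifications, is precisely the mechanism your proposal is missing; without it (or a substitute), the proof does not go through.
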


\begin{proof}
Note that $\mc{K}$ is principal, being an extension of the principal groupoid $\mc{H}$. By assumption we may find a measurable map $\phi _0 :\mc{K}^0\rightarrow \mc{G}$ with $\phi _0(x) \in \mc{G}w(x)$ for $x\in \mc{K}^0$, such that
\[
\phi _0(r(k))w(k)\phi _0(s(k))^{-1} = v_0(p(k))
\]
for a.e.\ $k\in \mc{K}$. After discarding a null set we may assume without loss of generality that this holds everywhere, and we may also assume that both of the maps $p$ and $w$ are locally bijective. Define $\phi \subseteq \mc{K}$ by $\phi := \{ k\in \mc{K} \, : \,  w(k) = \phi _0 (s(k)) \}$. Then $\phi$ might not be a bisection, but $s : \phi \rightarrow \mc{K}^0$ is bijective (after discarding a null set), so for each $k\in \mc{K}$ the sets $\phi k$, $k\phi ^{-1}$, and $\phi k \phi ^{-1}$ contain exactly one element, and we abuse notation and denote these elements by $\phi k$, $k\phi ^{-1}$, and $\phi k \phi ^{-1}$ respectively. Then the map $c_{\phi}: \mc{K}\rightarrow \mc{K}$, $c_{\phi}(k):= \phi k \phi ^{-1}$ is a groupoid homomorphism. For each $k\in \mc{K}$ we have
\begin{equation}\label{eqn:GwT}
w(c_{\phi}(k))=w(\phi r(k))w(k)w(\phi s(k))^{-1} = \phi _0 (r(k))w(k)\phi _0 (s(k))^{-1} = v_0(p(k)) .
\end{equation}
Let $\mc{H}_1 = v_0^{-1}(\mc{G}^0)$ and let $\mc{K}_1 := (v_0\circ p)^{-1}(\mc{G}^0)$, so that $\mc{H}_1$ and $\mc{K}_1$ are subgroupoids of $\mc{H}$ and $\mc{K}$ respectively with $\mc{K}_1 = p^{-1}(\mc{H}_1)$. By \eqref{eqn:GwT}, for $k\in \mc{K}$ we have $(v_0\circ p )(k) \in \mc{G}^0$ if and only if $c_{\phi}(k)\in w^{-1}(\mc{G}^0)= \mc{K}^0$ and hence
\begin{equation}\label{eqn:GTx10}
\mc{K}_1 = c_{\phi}^{-1}(\mc{K}^0) .
\end{equation}

\medskip

\noindent {\bf Claim 0}. The groupoid $\mc{K}_1$ is periodic, i.e., $\mc{K}_1x$ is finite for a.e.\ $x\in \mc{K}^0$.

\begin{proof}[Proof of Claim 0]
For $k\in \mc{K}_1$ we have $c_{\phi}(k)\in \mc{K}^0$, hence
\begin{equation}\label{eqn:cphi}
c_{\phi }(k) = s(c_{\phi }(k)) = s(\phi k \phi ^{-1}) = r(\phi s(k)) = c_{\phi}(s(k)) .
\end{equation}
It follows that for each $x\in \mc{K}^0$ we have $\mc{K}_1x\subseteq c_{\phi}^{-1}(c_{\phi}(x))$. It is therefore enough to show that $c_{\phi}^{-1}(c_{\phi}(x))$ is finite for a.e.\ $x\in \mc{K}^0$, and since $(c_{\phi})_*\mu _{\mc{K}^0}$ is absolutely continuous with respect to $\mu _{\mc{K}^0}$, it is enough to show that $c_{\phi}^{-1}(x)$ is finite for a.e.\ $x\in \mc{K}^0$. Since $\mc{K}_1$ is principal, for each $x\in \mc{K}^0$, the map sending $k\in c_{\phi}^{-1}(x)$ to the pair $(r(k),s(k))$ is an injection from $c_{\phi}^{-1}(x)$ to a subset of the set of all pairs $(x_1,x_0 ) \in \mc{R}_{\mc{K}}$ for which $c_{\phi}(x_1)=x=c_{\phi}(x_0)$.  Therefore, it is enough to show that the set $c_{\phi}^{-1}(x)\cap \mc{K}^0$ is finite for a.e.\ $x\in \mc{K}^0$.

Let $D = \{ (x_1, x_0) \in \mc{R}_{\mc{K}}\, : \, c_{\phi}(x_1) = x_0  \}$. Then for each $x \in \mc{K}^0$ the vertical slice $D_{x}:= \{ x_1 \in \mc{K}^0 \, : \, (x_1,x) \in D \}$ has cardinality $|D_{x}| = |c_{\phi} ^{-1}(x)\cap \mc{K}^0|$, and the horizontal slice $D^{x} := \{ x_0 \in \mc{K}^0 \, : \, (x,x_0 ) \in D \}$ has cardinality $|D^{x}|=1$. Since $\mc{R}_{\mc{K}}$ is p.m.p.\ we have
\begin{align*}
\int _{\mc{K}^0} |c_{\phi}^{-1}(x)\cap \mc{K}_0 | \, d\mu _{\mc{K}^0} (x) = \int _{\mc{K}^0} |D_{x}| \, d\mu _{\mc{K}^0}(x) = \int _{\mc{K}^0} |D^x| \, d\mu _{\mc{K}^0}(x) = 1,
\end{align*}
and hence $c_{\phi}^{-1}(x)\cap \mc{K}^0$ is finite almost surely.
\qedhere[Claim 0]
\end{proof}

Since $\mc{K}_1 = p^{-1}(\mc{H}_1)$ and $p$ is measure preserving and locally bijective, Claim 0 implies that the groupoid $\mc{H}_1$ is periodic as well. Therefore, since $\mc{H}$ is principal, after discarding a null set we may find a Borel complete unit section $Y_0\subseteq \mc{H}^0$ for $\mc{H}_1$ such that $(\mc{H}_1)_{Y_0}=Y_0$. In particular $\mu _{\mc{H}}(Y_0) >0$. Then $X_0:=p^{-1}(Y_0)$ is a complete unit section for $\mc{K}_1$ with $(\mc{K}_1)_{X_0}=X_0$. Let $\theta _0 := \phi X_0$. Then $\mu _{\mc{K}}(\theta _0)>0$, and we claim that $\theta _0$ is a measurable bisection for $\mc{K}$. Indeed, since $\theta _0 \subseteq \phi$ we have that $s|\theta _0$ is injective, and $r|\theta _0$ is injective since if $k_0,k_1\in \theta _0$ satisfy $r(k_0)= r(k_1)$ then $c_{\phi}(k_0^{-1}k_1) = \phi (\phi s(k_0))^{-1}(\phi s(k_1))\phi ^{-1} = c_{\phi}(s(k_0))c_{\phi}(s(k_1)) \in \mc{K}^0$, so $k_0^{-1}k_1 \in (\mc{K}_1 )_{X_0}\subseteq \mc{K}^0$ and hence $k_0=k_1$.

Define now the sets
\begin{align*}
\mathscr{P}_1 &:= \{ \theta \in [[ \mc{K} ]] \, : \, s(\theta ) \text{ is }p\text{-measurable} \} , \\
\mathscr{P} &:= \{ \theta \in \mathscr{P}_1 \, : \, \text{the homomorphism } w\circ c_{\theta} : \mc{K}_{s(\theta )} \rightarrow \mc{G} \text{ is }p\text{-measurable} \} .
\end{align*}
For $\theta \in \mathscr{P}$ let $v_{\theta} : \mc{H}_{p(s(\theta ))} \rightarrow \mc{G}$ denote the homomorphism which lifts to $w\circ c_{\theta}$, so that $w\circ c_{\theta} = v_{\theta}\circ p$. We have $\theta _0 \in \mathscr{P}$ by \eqref{eqn:GwT}, with $s(\theta _0) = X_0$, $p(s(\theta _0)) = Y_0$, and $v_{\theta _0} = v_0|\mc{H}_{Y_0}$. We will show how to extend $\theta _0$ to some element of $\mathscr{P}$ with $\mu _{\mc{K}}(\theta )= 1$.

\medskip

\noindent {\bf Claim 1}. Let $\theta \in \mathscr{P}_1$, let $\tau \in [\mc{G}]$, and let $\tilde{\tau}  := w^{-1}(\tau )\in [\mc{K}]$. Then $v_{\theta}^{-1}(\tau )$ is a measurable bisection for $\mc{H}$, and $\theta ^{-1} \tilde{\tau}\theta =p^{-1}(v_{\theta}^{-1}(\tau ))$.
\begin{proof}[Proof of Claim 1]
We have
\[
\theta ^{-1}\tilde{\tau} \theta  = (w\circ c_{\theta} )^{-1}(\tau ) = (v_{\theta} \circ p)^{-1}(\tau ) = p^{-1}(v_{\theta}^{-1}(\tau )) ,
\]
and it remains to show that $v_{\theta}^{-1}(\tau )$ is a bisection for $\mc{H}$. Since the inverse image of $v_{\theta}^{-1}(\tau )$ under $p$ is a bisection for $\mc{K}$, this easily follows from local bijectivity of $p$.
\qedhere[Claim 1]
\end{proof}

\noindent {\bf Claim 2}. For every $\theta _1 \in \mathscr{P}$ with $0<\mu _{\mc{K}}(\theta _1) <1$ there exists some $\theta_2 \in \mathscr{P}$ with $\theta _1 \subseteq \theta _2$ and $\mu _{\mc{K}}(\theta _2) >\mu _{\mc{K}}(\theta _1)$.

\begin{proof}[Proof of Claim 2]
For $\sigma \in [[\mc{H}]]$ define $\tilde{\sigma} := p^{-1}(\sigma )\in [[\mc{K}]]$. By ergodicity of $\mc{K}$ we may find some $\sigma \in [[\mc{H}]]$ with $r(\tilde{\sigma }) \subseteq s(\theta _1 )$ such that $\theta _1 \tilde{\sigma}$ is non-null and disjoint from $\mc{K}s(\theta _1 )$. By ergodicity again, we may find some $\tau \in [\mc{G}]$ such that
\[
\theta _2 ' := (\tilde{\tau} \theta _1 \tilde{\sigma} )\setminus r(\theta _1)\mc{K}
\]
is non-null. Since $\theta _2'$ is a bisection of $\mc{K}$ which is disjoint from $r(\theta _1)\mc{K}s(\theta _1)$, the union $\theta _2 := \theta _1 \cup \theta _2 '$ is a bisection of $\mc{K}$, and clearly $\theta _1\subseteq \theta _2$ and $\mu _{\mc{K}}(\theta _1)<\mu _{\mc{K}}(\theta _2 )$.

We now show that $s(\theta _2 ' )$ is $p$-measurable. Using Claim 1, we have
\begin{align*}
s((\tilde{\tau} \theta _1 \tilde{\sigma} )\cap r(\theta _1)\mc{K}) 
&= s(\theta _1^{-1}\tilde{\tau} \theta _1 \tilde{\sigma}) = s(p^{-1}(v_{\theta _1}^{-1}(\tau )\sigma )) = p^{-1}(s(v_{\theta _1}^{-1}(\tau )\sigma )).
\end{align*}
Therefore,
\[
s(\theta _2 ' ) = s(\tilde{\sigma })\setminus p^{-1}(s(v_{\theta _1}^{-1}(\tau )\sigma )) = p^{-1}(s(\sigma ) \setminus s(v_{\theta _1}^{-1}(\tau )\sigma )),
\]
hence $s(\theta _2')$ is $p$-measurable. It follows that $s(\theta _2) = s(\theta _1 )\cup s(\theta _2')$ is $p$-measurable as well.

It remains to show that $w\circ c_{\theta _2} : \mc{K}_{s(\theta _2)}\rightarrow \mc{G}$ is $p$-measurable. For this, it is enough to show that the restriction of $w\circ c_{\theta _2}$ to each of the ($p$-measurable) sets $\mc{K}_{s(\theta _1)}$, $\mc{K}_{s(\theta _2')}$, $s(\theta _2')\mc{K} s(\theta _1 )$, and $s(\theta _1)\mc{K}s(\theta _2 ')$, is $p$-measurable. For the first set this is clear since $c_{\theta _2}$ coincides with $c_{\theta _1}$ on $\mc{K}_{s(\theta _1)}$. For the second set, on $\mc{K}_{s(\theta _2 ')}$ we have $c_{\theta _2} = c_{\theta _2 '} = c_{\tilde{\tau}}\circ c_{\theta _1}\circ c_{\tilde{\sigma}}$ and hence on this set we have the identity
\begin{align*}
w\circ c_{\theta _2} &= c_{\tau}\circ w \circ c_{\theta _1} \circ c_{\tilde{\sigma}} = c_{\tau}\circ v_{\theta _1}\circ p\circ c_{\tilde{\sigma}} = c_{\tau}\circ v_{\theta _1}\circ c_{\sigma}\circ p ,
\end{align*}
which is $p$-measurable. Similarly, for $k \in s(\theta _2')\mc{K} s(\theta _1 )$ we have
\begin{align*}
w(c_{\theta _2} (k)) &= w(\tilde{\tau}\theta _1 \tilde{\sigma}k\theta _1 ^{-1}) = \tau w(c_{\theta _1}(\tilde{\sigma}k)) = \tau v_{\theta _1}(p(\tilde{\sigma}k)) = \tau v_{\theta _1}(\sigma p(k)),
\end{align*}
which is $p$-measurable. Finally, the restriction $(w \circ c_{\theta _2})|s(\theta _1)\mc{K}s(\theta _2 ')$ is just the composition of the restriction $(w \circ c_{\theta _2})|s(\theta _2')\mc{K}s(\theta _1)$ with the inverse map $k\mapsto k^{-1}$, hence it is $p$-measurable as well.
\qedhere[Claim 2]
\end{proof}

By Claim 2 and measure theoretic exhaustion we may find some $\theta \in \mathscr{P}$ with $\theta _0\subseteq \theta$ and $\mu _{\mc{K}}(\theta ) =1$. After discarding a null set we may assume that $s(\theta ) = r(\theta )=\mc{K}^0$. We have $w\circ c_{\theta} =  v_{\theta}\circ p$, and so the homomorphism $v_{\theta}: \mc{H}\rightarrow \mc{G}$ is measure preserving since both $w\circ c_{\theta}$ and $p$ are measure preserving, i.e., $v_{\theta}$ is a groupoid extension. We put $w_0:= v_{\theta}$. Let $\psi :\mc{K}^0\rightarrow \mc{G}$ be the function $\psi (x) := w(\theta x )\in \mc{G}w(x)$. Then $\psi (r(k))w(k)\psi (s(k))^{-1} = (w\circ c_{\theta})(k)=(w_0\circ p)(k)$, hence $w$ is equivalent to $w_0\circ p$.
\end{proof}

\subsection{Groupoid Weak Pinsker entropy and the proof of Theorem \ref{thm:soe}}

\begin{theorem}\label{thm:gWPvsWP}
Let $G$ be a countable group which is Bernoulli $\mathscr{G}_{\mathrm{dsc}}$-superrigid. Let $G\cc X$ be any ergodic p.m.p.\ action of $G$ and let $G\ltimes X$ be the associated translation groupoid. Then $h^{\mathrm{gWP}} (G\ltimes X ) = h^{\mathrm{WP}}(G\cc X )$.
\end{theorem}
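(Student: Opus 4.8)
The plan is to prove the two inequalities $h^{\mathrm{WP}}(G\cc X)\le h^{\mathrm{gWP}}(G\ltimes X)$ and $h^{\mathrm{WP}}(G\cc X)\ge h^{\mathrm{gWP}}(G\ltimes X)$ separately; only the second will use superrigidity. For the first, suppose $G\cc X$ admits a direct Bernoulli factor, so $G\cc X\cong(G\cc K^G)\otimes(G\cc X_0)$ for some action $G\cc X_0$. Setting $\mc{H}:=G\ltimes X_0$, I would identify $G\ltimes X\cong\EuScript{B}_K(\mc{H})$ as follows: by Remark \ref{rem:translext}, $G\ltimes(K^G\otimes X_0)\cong(G\ltimes K^G)\otimes_G(G\ltimes X_0)$, and since $\EuScript{B}_K(G)=G\ltimes K^G$ is the Bernoulli extension of the one-object groupoid $G$, Proposition \ref{prop:BernIsom} applied to the extension $G\ltimes X_0\to G$ gives $(G\ltimes X_0)\otimes_G\EuScript{B}_K(G)\cong\EuScript{B}_K(G\ltimes X_0)$. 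Thus $G\ltimes X\cong\EuScript{B}_K(\mc{H})$, so $h^{\mathrm{gWP}}(G\ltimes X)\ge H(K)$; taking the supremum over direct Bernoulli factors yields $h^{\mathrm{WP}}\le h^{\mathrm{gWP}}$.

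For the reverse inequality it suffices to show that whenever $h^{\mathrm{gWP}}(G\ltimes X)>r\ge 0$ one has $h^{\mathrm{WP}}(G\cc X)>r$. Fix such an $r$. By Corollary \ref{cor:principal} I may fix a principal ergodic groupoid $\mc{H}$, a base $K$ with $H(K)>r$, and an isomorphism $\mc{G}:=G\ltimes X\cong\EuScript{B}_K(\mc{H})$; let $q:\mc{G}\to\mc{H}$ be the (pulled-back) Bernoulli extension, and let $\pi:\mc{G}\to G$ be the canonical projection, which is an extension onto the one-object groupoid $G\in\mathscr{G}_{\mathrm{dsc}}$ (local bijectivity of $\pi$ holds for any action). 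Since $q$ is locally bijective, aperiodicity of $\mc{G}$ (which holds because $h^{\mathrm{gWP}}>0$) transfers to $\mc{H}$. Moreover $\mc{H}$ is Bernoulli $\mathscr{G}_{\mathrm{dsc}}$-superrigid: applying Lemma \ref{lem:srext} to the extension $\mc{G}\to G$ shows $\mc{G}$ is Bernoulli $\mathscr{G}_{\mathrm{dsc}}$-superrigid (as $G$ is), and applying it again to the extension $\mc{G}=\EuScript{B}_K(\mc{H})\to\mc{H}$ transfers this property to $\mc{H}$.

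Next I would descend $\pi$ to $\mc{H}$. Since $\mc{H}$ is aperiodic and Bernoulli $\mathscr{G}_{\mathrm{dsc}}$-superrigid, Proposition \ref{prop:furman0} (applied to the identity extension $\mc{H}\to\mc{H}$) shows that $q:\EuScript{B}_K(\mc{H})\to\mc{H}$ is relatively $\mathscr{G}_{\mathrm{dsc}}$-superrigid; applied to the homomorphism $\pi:\mc{G}\to G$ this gives that $\pi$ is equivalent to $v_0\circ q$ for some homomorphism $v_0:\mc{H}\to G$. I then invoke Lemma \ref{lem:GOEcocycle} with $\mc{K}=\mc{G}$, $p=q$, and $w=\pi$ (the target being the ergodic one-object groupoid $G$, and $\mc{H}$ principal): it produces a bisection $\theta\in[\mc{G}]$ and a groupoid extension $w_0:\mc{H}\to G$ with $\pi\circ c_\theta=w_0\circ q$. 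Because $w_0$ is an extension onto the group $G$, Proposition \ref{prop:ExtActCorr} identifies $\mc{H}\cong G\ltimes Y$ with $Y:=\mc{H}^0$.

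Finally I assemble the conjugacy. The relation $\pi\circ c_\theta=w_0\circ q$ says precisely that the inner automorphism $c_\theta$ of $\mc{G}$ is an isomorphism of $G$-extensions from $(\mc{G},w_0\circ q)$ to $(\mc{G},\pi)$, the latter being the $G$-extension of the original action $G\cc X$. For the source, functoriality of $\EuScript{B}_K$ (Proposition \ref{prop:functor}) applied to the isomorphism $\mc{H}\cong G\ltimes Y$ identifies $(\EuScript{B}_K(\mc{H}),w_0\circ q)$ with $\EuScript{B}_K(G\ltimes Y)$ as a $G$-extension, and the computation of the first paragraph identifies $\EuScript{B}_K(G\ltimes Y)\cong G\ltimes(K^G\otimes Y)$ as $G$-extensions. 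Transporting $c_\theta$ through these identifications yields a conjugacy of $G$-actions $G\cc X\cong(G\cc K^G)\otimes(G\cc Y)$, exhibiting $G\cc K^G$ as a direct Bernoulli factor of $G\cc X$. Hence $h^{\mathrm{WP}}(G\cc X)\ge H(K)>r$, and letting $r\uparrow h^{\mathrm{gWP}}(G\ltimes X)$ completes the proof. The main obstacle is exactly this last upgrade: the hypothesis only supplies a groupoid isomorphism (an orbit-equivalence datum), and the whole point is to promote it to a measure conjugacy of $G$-actions; superrigidity is what allows $\pi$ to be descended to $\mc{H}$, after which Lemma \ref{lem:GOEcocycle} untwists the descended cocycle by the bisection $\theta$ so that it genuinely arises from a $G$-action, turning the orbit equivalence into a conjugacy.
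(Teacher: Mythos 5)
Your proposal is correct and follows essentially the same route as the paper's proof: Corollary \ref{cor:principal}, transfer of Bernoulli $\mathscr{G}_{\mathrm{dsc}}$-superrigidity to $\mc{H}$ (your two applications of Lemma \ref{lem:srext} are exactly what the paper's citation of Theorem \ref{thm:Gmain} amounts to in this situation), Proposition \ref{prop:furman0} applied to the Bernoulli extension $q$, Lemma \ref{lem:GOEcocycle} to produce $\theta$ and $w_0$, and the same assembly of $G$-extension isomorphisms via Propositions \ref{prop:BernIsom} and \ref{prop:ExtActCorr} and Remark \ref{rem:translext}. The only differences are cosmetic: you spell out the easy inequality $h^{\mathrm{WP}}(G\cc X)\le h^{\mathrm{gWP}}(G\ltimes X)$, which the paper merely asserts, and you permute the order of the final identifications by invoking functoriality of $\EuScript{B}_K$ (Proposition \ref{prop:functor}) explicitly rather than applying Proposition \ref{prop:BernIsom} to the extension $\mc{H}\xra{w_0}G$ first.
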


\begin{proof}
If $G$ is finite then both entropies are $0$, so we may assume that $G$ is infinite. Let $w:G\ltimes X \rightarrow G$ denote the projection to $G$. The inequality $h^{\mathrm{gWP}} (G\ltimes X) \geq h^{\mathrm{WP}}(G\cc X)$ always holds, so it suffices to show that $h^{\mathrm{gWP}} (G\ltimes X ) \leq h^{\mathrm{WP}}(G\cc X)$. If $h^{\mathrm{gWP}} (G\ltimes X )=0$ then this is trivial, so we may assume that $h^{\mathrm{gWP}} (G\ltimes X) >0$. Fix $r$ with $0<r <h^{\mathrm{gWP}}(G\ltimes X)$. It is enough to show that $h^{\mathrm{WP}}(G\cc X) > r$.

Since $h^{\mathrm{gWP}}(G\ltimes X )>r$, by Corollary \ref{cor:principal} we may find a principal ergodic discrete p.m.p.\ groupoid $\mc{H}$ and a probability space $K$ with $H(K)>r$, such that $G\ltimes X$ is isomorphic to the translation groupoid $\mc{H}\ltimes K^{\otimes \mc{H}}$, associated to the Bernoulli action of $\mc{H}$ with base $K$. Let $q:G\ltimes X \rightarrow \mc{H}\ltimes K^{\otimes \mc{H}}$ be an isomorphism, and let $p : G\ltimes X \rightarrow \mc{H}$ be the composition of $q$ with the projection $\mc{H}\ltimes K^{\otimes \mc{H}}\rightarrow \mc{H}$ to $\mc{H}$. Since $G\ltimes X$ is an extension of both $G$ and $\mc{H}$, the groupoids $G$ and $\mc{H}$ are measure equivalent, and hence $\mc{H}$ is Bernoulli $\mathscr{G}_{\mathrm{dsc}}$-superrigid by Theorem \ref{thm:Gmain}. Since the extension $G\ltimes X \xra{p}\mc{H}$ is isomorphic to a Bernoulli extension of $\mc{H}$, Proposition \ref{prop:furman0} shows that $p$ is relatively $\mathscr{G}_{\mathrm{dsc}}$-superrigid. Since $w$ takes values in the discrete group $G$, it follows that $w$ is cohomologous to a cocycle $v$ which descends through $p$ to $\mc{H}$. Applying Lemma \ref{lem:GOEcocycle}, we may find a bisection $\theta \in [G\ltimes X]$ and a groupoid extension $w_0 : \mc{H}\rightarrow G$ such that $w\circ c_{\theta}= w_0\circ p$.

We then have the following commutative diagram of extensions, where the top row consists of isomorphisms, and the bottom row is the identity map on $G$:
\[
\xymatrix{
G\ltimes X \ar[dd]_w \ar[r]^{c_{\theta}^{-1}}  &G\ltimes X \ar[dd]_{w\circ c_{\theta}} \ar[r]^{q \, \, \, \, \, \, }            & \mc{H}\ltimes K^{\otimes \mc{H}} \ar[d]\ar[r]^{\cong \, \, \, \, \, \, \, \, \, \, \, }     & \mc{H}\otimes _G (G\ltimes K^G ) \ar[d] \ar[r]^{\cong}    & G\ltimes (\mc{H}^0 \otimes K^G) \ar[dd] \\
                            &                                            &\mc{H}\ar[d]_{w_0}\ar[r]^{\mathrm{id}_{\mc{H}}}                     &\mc{H}\ar[d]_{w_0}                                         &    \\
G\ar[r]         &G\ar[r]                       &G\ar[r]                                      &G\ar[r]                                      & G
}
\]
In the top row, the third isomorphism is given by Proposition \ref{prop:BernIsom} applied to the extension $\mc{H}\xra{w_0}G$, and the rightmost isomorphism is obtained by composing the isomorphism $\mc{H}\otimes _G (G\ltimes K^G ) \cong (G\ltimes \mc{H}^0) \otimes _G (G\ltimes K^G )$ coming from Proposition \ref{prop:ExtActCorr} applied to $w_0$, with the isomorphism $(G\ltimes \mc{H}^0) \otimes _G (G\ltimes K^G )  \cong G\ltimes (\mc{H}^0\otimes K^G)$ coming from Remark \ref{rem:translext}. This shows that the $G$-extensions $G\ltimes X\xra{w} G$ and $G\ltimes (\mc{H}^0\otimes K^G )\rightarrow G$ are isomorphic, and hence the action $G\cc X$ is measurably conjugate to the product action $G\cc \mc{H}^0\otimes K^G$. Therefore, $h^{\mathrm{WP}}(G\cc X ) \geq H(K) >r$.\qedhere
\end{proof}

\begin{theorem}\label{thm:gWPred}
Let $\mc{G}$ be an aperiodic ergodic discrete p.m.p.\ groupoid and let $A$ be a positive measure subset of $\mc{G}^0$. Then $h^{\mathrm{gWP}}(\mc{G}_A) = \frac{1}{\mu _{\mc{G}}(A)}h^{\mathrm{gWP}}(\mc{G})$.
\end{theorem}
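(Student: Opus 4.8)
The plan is to prove the equality $h^{\mathrm{gWP}}(\mc{G}_A)=\mu_{\mc{G}}(A)^{-1}h^{\mathrm{gWP}}(\mc{G})$ first for a dense set of ratios, via amplification, and then to fill in the remaining ratios by a soft monotonicity-plus-squeezing argument, thereby sidestepping any general ``scaling'' version of the Bernoulli reduction lemma. Write $t:=\mu_{\mc{G}}(A)$. If $\mu_{\mc{G}^0}$ has atoms then, by ergodicity, $\mc{G}^0$ is finite; since the unit space of any nontrivial Bernoulli extension $\mc{H}\ltimes K^{\otimes\mc{H}}$ of an aperiodic groupoid is atomless, both $h^{\mathrm{gWP}}(\mc{G})$ and $h^{\mathrm{gWP}}(\mc{G}_A)$ vanish and the identity is trivial. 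So I assume throughout that $\mu_{\mc{G}^0}$ is atomless.

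The computational heart is one clean fact about $n$-fold amplifications. Let $\mc{J}_n$ be the full equivalence relation on $n$ atoms (as in the proof of Proposition \ref{prop:amp}), and for a groupoid $\mc{P}$ set $E:=\mc{P}^0\times\{(0,0)\}\subseteq(\mc{P}\otimes\mc{J}_n)^0$. Restricting the standard Bernoulli action $\beta^{\mc{P}\otimes\mc{J}_n}_K$ to the corner $(\mc{P}\otimes\mc{J}_n)_E\cong\mc{P}$ turns the index action $\ell_{\mc{P}\otimes\mc{J}_n}$ into $n$ disjoint copies of $\ell_{\mc{P}}$, so Proposition \ref{prop:clear} yields $\EuScript{B}_K(\mc{P}\otimes\mc{J}_n)_E\cong\EuScript{B}_{K^n}(\mc{P})$. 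Combining this with Proposition \ref{prop:ExtIsoRed} (which gives $\mc{G}\cong\mc{G}_A\otimes\mc{J}_n$ when $t=1/n$), Corollary \ref{cor:principal}, the Ornstein property (Corollary \ref{cor:GOrnIso}), and Proposition \ref{prop:IsoRed}, I will deduce both inequalities in the case $t=1/n$, i.e.\ $h^{\mathrm{gWP}}(\mc{G}_A)=n\,h^{\mathrm{gWP}}(\mc{G})$. For the lower bound one writes $\mc{G}\cong\EuScript{B}_K(\mc{H})$ with $H(K)$ close to $h^{\mathrm{gWP}}(\mc{G})$ and $\mc{H}$ principal, amplifies $\mc{H}$, and reads off $\EuScript{B}_{K^n}(\mc{H}_B)$; for the upper bound one realizes $\mc{G}$ as the essentially unique $1/n$-amplification of $\mc{G}_A\cong\EuScript{B}_M(\mc{N})$ and identifies it, via Ornstein, with $\EuScript{B}_{M'}(\mc{N}\otimes\mc{J}_n)$ where $H(M')=H(M)/n$. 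Bootstrapping to an arbitrary rational $t=j/n$ is then immediate: amplify $\mc{G}$ by the integer $j$ (Proposition \ref{prop:amp}) to obtain $\mc{G}'$ with $\mc{G}\cong\mc{G}'_C$ and $\mu_{\mc{G}'}(C)=1/j$, observe $\mu_{\mc{G}'}(A)=1/n$ and $\mc{G}_A=\mc{G}'_A$, and apply the $1/n$-case to both reductions to get $h^{\mathrm{gWP}}(\mc{G}_A)=n\,h^{\mathrm{gWP}}(\mc{G}')=(n/j)\,h^{\mathrm{gWP}}(\mc{G})=t^{-1}h^{\mathrm{gWP}}(\mc{G})$.

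The second ingredient is a soft monotonicity: if $A\subseteq B\subseteq\mc{G}^0$ then $h^{\mathrm{gWP}}(\mc{G}_A)\ge h^{\mathrm{gWP}}(\mc{G}_B)$. Here I need only a lower bound, so no exact entropy computation is required. Suppose $h^{\mathrm{gWP}}(\mc{G}_B)>r$, so $\mc{G}_B\cong\EuScript{B}_K(\mc{P})$ with $H(K)>r$ and $\mc{P}$ principal. Using Proposition \ref{prop:IsoRed} I may take the reduction set inside $\mc{G}_B$ to be a cylinder $p^{-1}(A_0)$ for $A_0\subseteq\mc{P}^0$. The key observation is that the index set $A_0\mc{P}$ of the restricted action contains the left-translation space $\mc{P}_{A_0}$ as an $\mc{P}_{A_0}$-invariant subset, with complement $W$; hence by Proposition \ref{prop:clear} the restricted Bernoulli action splits off a full copy of $\beta^{\mc{P}_{A_0}}_K$, giving (via Remark \ref{rem:translext}) $\mc{G}_A\cong\EuScript{B}_K(\mc{P}_{A_0})\otimes_{\mc{P}_{A_0}}\mc{R}$ with $\mc{R}:=\mc{P}_{A_0}\ltimes K^{\otimes W}$. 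Proposition \ref{prop:BernIsom}, applied to the extension $\mc{R}\to\mc{P}_{A_0}$, then absorbs the remainder: $\mc{R}\otimes_{\mc{P}_{A_0}}\EuScript{B}_K(\mc{P}_{A_0})\cong\EuScript{B}_K(\mc{R})$. Thus $\mc{G}_A\cong\EuScript{B}_K(\mc{R})$ and $h^{\mathrm{gWP}}(\mc{G}_A)\ge H(K)>r$, which proves monotonicity.

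Finally, for an arbitrary $t\in(0,1)$ I will squeeze. Choosing rationals $q_-<t<q_+$ and nested sets $A_-\subseteq A\subseteq A_+$ with $\mu_{\mc{G}}(A_\pm)=q_\pm$ (possible since $\mc{G}^0$ is atomless), monotonicity gives $h^{\mathrm{gWP}}(\mc{G}_{A_-})\ge h^{\mathrm{gWP}}(\mc{G}_A)\ge h^{\mathrm{gWP}}(\mc{G}_{A_+})$, while the rational case evaluates the outer terms as $q_\pm^{-1}h^{\mathrm{gWP}}(\mc{G})$. Letting $q_\pm\to t$ yields $h^{\mathrm{gWP}}(\mc{G}_A)=t^{-1}h^{\mathrm{gWP}}(\mc{G})$ (the degenerate values $h^{\mathrm{gWP}}(\mc{G})\in\{0,+\infty\}$ and the case $t=1$ being handled directly). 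I expect the main obstacle to be precisely this passage to irrational $t$: unlike the rational case, there is no amplification producing the exact factor, so the argument must route through monotonicity and density rather than through a single structural isomorphism. A secondary subtlety is the bookkeeping of the atomless and ergodicity hypotheses, so that Propositions \ref{prop:amp} and \ref{prop:ExtIsoRed} and Corollaries \ref{cor:principal} and \ref{cor:GOrnIso} all apply to the auxiliary groupoids $\mc{H}$, $\mc{N}$, $\mc{P}$, and $\mc{G}'$ produced along the way.
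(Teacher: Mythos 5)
Your proposal is correct, but it follows a genuinely different route from the paper's proof. The paper establishes both inequalities directly for an \emph{arbitrary} ratio $\mu_{\mc{G}}(A)$ by invoking \cite[Theorem 3.3]{BHI15} --- the statement that the reduction of a Bernoulli translation groupoid $\mc{H}\ltimes K^{\otimes\mc{H}}$ of a principal ergodic groupoid to a cylinder $p_{\mc{H}}^{-1}(C)$ is again Bernoulli, over $\mc{H}_C$, with base entropy $\mu_{\mc{H}}(C)^{-1}H(K)$. This is exactly the ``general scaling version of the Bernoulli reduction lemma'' you set out to sidestep: the paper combines it with Corollary \ref{cor:principal}, Proposition \ref{prop:amp}, Proposition \ref{prop:ExtIsoRed} (to promote a corner isomorphism to a global one) and Proposition \ref{prop:IsoRed} (to move $A$ onto a cylinder), and both inequalities then fall out in a few lines with no rationality restriction, no monotonicity lemma, and no squeezing. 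Your argument replaces that single external input by three internal steps: the elementary amplification identity $\EuScript{B}_K(\mc{P}\otimes\mc{J}_n)_E\cong\EuScript{B}_{K^n}(\mc{P})$ from Proposition \ref{prop:clear}, which together with the Ornstein property (Corollary \ref{cor:GOrnIso}) and Proposition \ref{prop:ExtIsoRed} yields exact scaling at rational ratios; the monotonicity lemma, where splitting the index set $A_0\mc{P}$ into the $\mc{P}_{A_0}$-invariant pieces $\mc{P}_{A_0}$ and $W$ and absorbing $\mc{R}=\mc{P}_{A_0}\ltimes K^{\otimes W}$ via Proposition \ref{prop:BernIsom} exhibits any corner of a Bernoulli groupoid as Bernoulli with the same base $K$ over a larger base groupoid (this is in the same spirit as Lemma \ref{lem:Bsubg}); and the squeeze over rational approximants, which needs the atomless reduction you perform at the outset. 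I checked the delicate points and they hold: the corner computation for $\mc{P}\otimes\mc{J}_n$ is exactly the mechanism of Lemma \ref{lem:Biso}, Case 1; principality plus aperiodicity makes all auxiliary unit spaces atomless so that Propositions \ref{prop:amp} and \ref{prop:IsoRed} apply; and the degenerate values $0$ and $+\infty$ pass through the squeeze. What each approach buys: the paper's proof is shorter but leans on \cite{BHI15} twice (Theorem 3.1 inside Corollary \ref{cor:GOrnIso} \emph{and} Theorem 3.3 here), whereas yours needs only the Ornstein input already required by Corollary \ref{cor:principal}, so it is self-contained relative to the paper's toolkit; the trade-off is that you recover only the entropy identity, not the structural fact that every corner of a Bernoulli groupoid is Bernoulli over the reduced base with scaled base entropy, which is stronger than what Theorem \ref{thm:gWPred} asserts but is handed to the paper for free by \cite[Theorem 3.3]{BHI15}.
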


\begin{proof}
We first show $h^{\mathrm{gWP}}(\mc{G}_A) \leq \mu _{\mc{G}} (A)^{-1}h^{\mathrm{gWP}}(\mc{G})$. This is trivial if $h^{\mathrm{gWP}}(\mc{G}_A)=0$, so assume that $h^{\mathrm{gWP}}(\mc{G}_A) >0$. Given $r$ with $0<r<h^{\mathrm{gWP}}(\mc{G}_A)$, we will show that $r< \mu_{\mc{G}} (A)^{-1} h^{\mathrm{gWP}}(\mc{G})$. Since $0<r<h^{\mathrm{gWP}}(\mc{G}_A)$, by Corollary \ref{cor:principal} we may find a principal, ergodic discrete p.m.p.\ groupoid $\mc{H}_0$ and a probability space $K$ with $H(K)>r$ such that $\mc{G}_A$ is isomorphic to the translation groupoid $\mc{H}_0\ltimes K^{\otimes \mc{H}_0}$. By considering the $\mu _{\mc{G}}(A)^{-1}$-amplification of $\mc{H}_0$ (i.e., Proposition \ref{prop:amp} with $t=\mu _{\mc{G}}(A)^{-1}$), we may assume without loss of generality that $\mc{H}_0$ is the reduction $\mc{H}_0=\mc{H}_B$, of a principal ergodic discrete p.m.p.\ groupoid $\mc{H}$ to a subset $B$ of $\mc{H}^0$ with $\mu _{\mc{H}}(B)=\mu _{\mc{G}}(A)$. Let $q_0: \mc{H}_B\ltimes K^{\otimes \mc{H}_B}\rightarrow \mc{G}_A$ be an isomorphism.

Let $L$ be a probability space with $H(L)=\mu _{\mc{G}}(A)H(K)$. By \cite[Theorem 3.3]{BHI15}, there is an isomorphism of $\mc{H}_B$-extensions, call it $q_1$, from the reduction $(\mc{H}\ltimes L^{\otimes \mc{H}} )_B \rightarrow \mc{H}_B$ to the groupoid $\mc{H}_B\ltimes K^{\otimes \mc{H}_B}\rightarrow \mc{H}_B$. The sets $B$ and $A$ have the same measure, and the groupoids $\mc{H}\ltimes L^{\otimes \mc{H}}$ and $\mc{G}$ are both ergodic, so the isomorphism $q_0\circ q_1$, from $(\mc{H}\ltimes L^{\otimes \mc{H}})_B$ to $\mc{G}_A$, extends to an isomorphism from $\mc{H}\ltimes L^{\otimes \mc{H}}$ to $\mc{G}$ by Proposition \ref{prop:ExtIsoRed}. Therefore, $h^{\mathrm{gWP}}(\mc{G})\geq H(L)=\mu _{\mc{G}}(A) H(K) > \mu _{\mc{G}}(A)r$, as was to be shown.

We now show $h^{\mathrm{gWP}}(\mc{G})\leq \mu _{\mc{G}} (A)h^{\mathrm{gWP}}(\mc{G}_A)$. Once again we may assume $0<h^{\mathrm{gWP}}(\mc{G})$. Fix $0<r<h^{\mathrm{gWP}}(\mc{G})$ toward the goal of showing $r<\mu _{\mc{G}} (A)h^{\mathrm{gWP}}(\mc{G}_A)$. By Corollary \ref{cor:principal} we may find a principal ergodic discrete p.m.p.\ groupoid $\mc{H}$, a probability space $K$ with $H(K)>r$, and an isomorphism $q$ from $\mc{G}$ to the translation groupoid $\mc{H}\ltimes K^{\otimes \mc{H}}$. Let $p_{\mc{H}}:\mc{H}\ltimes K^{\otimes \mc{H}}\rightarrow \mc{H}$ be the projection map. Let $C$ be a subset of $\mc{H}^0$ with $\mu _{\mc{H}}(C)= \mu _{\mc{G}}(A)$. Then the set $B:= q^{-1}(p_{\mc{H}}^{-1}(C))\subseteq \mc{G}^0$ has the same measure as $A$, so since $\mc{G}$ is ergodic the groupoids $\mc{G}_A$ and $\mc{G}_B$ are isomorphic by Proposition \ref{prop:IsoRed}. Thus, $\mc{G}_A$ is isomorphic to the reduction $(\mc{H}\ltimes K^{\otimes \mc{H}})_{p_{\mc{H}}^{-1}(C)}$, and by \cite[Theorem 3.3]{BHI15} this is in turn isomorphic to the translation groupoid $\mc{H}_C \ltimes L^{\otimes \mc{H}_C}$ associated to the Bernoulli shift of $\mc{H}_C$ with base space entropy $H(L)=\mu _{\mc{H}}(C)^{-1}H(K)=\mu _{\mc{G}}(A)^{-1}H(K)$. This proves the inequality $h^{\mathrm{gWP}}(\mc{G}_A)\geq \mu _{\mc{G}}(A)^{-1}H(K)>\mu _{\mc{G}}(A)^{-1}r$.
\end{proof}

Theorem \ref{thm:soe} now follows easily from Theorems \ref{thm:gWPvsWP}, \ref{thm:gWPred}, and \ref{thm:main}.

\bibliographystyle{plain}
\bibliography{biblio-1}

\end{document}